\newtheorem{thm}{Theorem}[section]
\newtheorem{lem}[thm]{Lemma}
\newtheorem{prop}[thm]{Proposition}
\newtheorem{cor}[thm]{Corollary}
\newtheorem{NN}[thm]{}
\theoremstyle{definition}\newtheorem{df}[thm]{Definition}
\theoremstyle{definition}\newtheorem{rem}[thm]{Remark}
\theoremstyle{definition}
\newcommand{\N}{\mathbb{N}}
\newcommand{\Z}{\mathbb{Z}}
\newcommand{\R}{\mathbb{R}}
\newcommand{\C}{\mathbb{C}}
\newcommand{\T}{\mathbb{T}}
\newcommand{\morp}{contractive completely positive linear map}
\newcommand{\hm}{homomorphism}
\newcommand{\dt}{\delta}
\newcommand{\ep}{\epsilon}
\newcommand{\andeqn}{\,\,\,{\rm and}\,\,\,}
\newcommand{\rforal}{\,\,\,{\rm for\,\,\,all}\,\,\,}
\newcommand{\CA}{$C^*$-algebra}
\newcommand{\SCA}{$C^*$-subalgebra}
\newcommand{\bt}{{\beta}}
\newcommand{\beq}{\begin{eqnarray}}
\newcommand{\eneq}{\end{eqnarray}}
\newcommand{\tforal}{\,\,\,\text{for\,\,\,all}\,\,\,}
\newcommand{\tand}{\,\,\,\text{and}\,\,\,}
\title{Approximate Unitary Equivalence in Simple $C^*$-algebras of Tracial Rank One }
\author{Huaxin Lin\\
 }
\date{}
\begin{document}

\maketitle

\begin{abstract}

Let $C$ be a unital AH-algebra and let $A$ be a unital separable
simple \CA\, with tracial rank no more than one. Suppose that $\phi,
\psi: C\to A$ are two unital monomorphisms. With some restriction on
$C,$ we show that $\phi$ and $\psi$ are approximately unitarily
equivalent if and only if
\beq\nonumber
[\phi]&=&[\psi]\,\,\,{\rm in}\,\,\, KL(C,A)\\\nonumber \tau\circ
\phi&=&\tau\circ \psi\tforal {\rm tracial\,\,\,states \,\,\,
of}\,\,\, A\andeqn\\\nonumber
 \phi^{\ddag}&=&\psi^{\ddag},
\eneq
where $\phi^{\ddag}$ and $\psi^{\ddag}$ are \hm s from
$U(C)/CU(C)\to U(A)/CU(A)$ induced by $\phi$ and $\psi,$
respectively, and where $CU(C)$ and $CU(A)$ are {\it closures
 } of   the subgroup generated by commutators of the unitary groups of $C$ and $B.$

A more practical but approximate version of the above is also
presented.

\end{abstract}


\section{Introduction}

 Let $T_1$ and $T_2$ be two normal operators in $M_n,$ the algebra
of $n\times n$ matrices. Then
 $T_1$ and $T_2 $ are unitary equivalent, or, there exists a unitary
$U$ such that $U^*T_1U=T_2$ if and only if
 $$ sp(T_1)=sp(T_2)$$ counting the multiplicities.
 Let $X=sp(T_1).$ Define $\phi_i: C(X)\to M_n$ by\\
   $$\phi(f)=f(T_i)\,\,\,{\rm
  for}\,\,\, f\in C(X),\,\,\,i=1,2.$$
Let  $\tau: M_n\to \C$ be  the normalized tracial state on $M_n.$
Then $\tau\circ \phi_i$ ($i=1,2$) gives a Borel probability
  measure $\mu_i$ on $C(X),$ $i=1,2.$ Then
    $\phi_1$ and $\phi_2$ are unitarily equivalent if and only if
    $\mu_1=\mu_2.$ More generally, one may formulate the following theorem:

\begin{NN}\label{T0} Let $X$ be a compact metric space and let $\phi_1, \phi_2:
 C(X)\to M_n$ be two \hm s. Then $\phi_1$ and $\phi_2$ are
 unitarily equivalent if and only if
 \beq\label{t0}
 \tau\circ \phi_1=\tau\circ \phi_2.
 \eneq
 \end{NN}

For infinite dimensional situation, one has the following classical
result: two bounded normal operators on an infinite dimensional
separable Hilbert space are unitary equivalent if and only if
      they have the same equivalent spectral measures and multiplicity functions
(cf. Theorem 10.21 of \cite{jbC}). Perhaps a more
 interesting and useful statement is the following:
Let $T_1$ and $T_2$ be two bounded normal operators in $B(l^2).$
Then there exists a sequence of unitary $U_n\in B(l^2)$ such that
$$
\lim_{n\to\infty} \|U_n^*T_1U_n-T_2\|=0\andeqn
$$
$$
U_n^*T_1U-T_2\,\,\,{\rm is\,\,\, compact}
$$
if and only if

{\rm (i) } ${\rm sp_e}(T_1)={\rm sp_e}(T_2).$

{\rm (ii)}  ${\rm dim\, null}(T_1-\lambda I)={\rm dim\,
null}(T_2-\lambda I)$ for all $\lambda\in \C\setminus {\rm
sp_e}(T_1).$

\vspace{0.1in}

Here ${\rm sp_e}(T_i)$ is the essential spectrum of $T_i,$ i.e.,
${\rm sp_e}(T_i)={\rm sp}(\pi(T_i)),$ where  $\pi: B(l^2)\to
B(l^2)/{\cal K}$ is the quotient map, $i=1,2.$ Let $X$ be a compact
subset  of the plane and let  $\phi_1, \phi_2: C(X)\to B(l^2)/{\cal
K}$
   be two unital monomorphisms.  In the study of essentially normal operators on the infinite dimensional separable Hilbert space, one
   asks
 when  $\phi_1$ and $\phi_2$ are unitarily equivalent?
This was answered by the celebrated Brown-Douglas-Fillmore
Theorem: $\phi_1$ and $\phi_2$ are unitarily equivalent if and
only if
         $(\phi_1)_{*1}=(\phi_2)_{*1},$ where $(\phi_i)_{*1}: K_1(C(X))\to
         K_1((B(l^2)/{\cal K}))\cong \Z$ is the induced \hm\,
         (Fredholm index),
         $i=1,2$ (cf. \cite{BDF1}).
In fact, one has the following more general BDF-theorem:
 \begin{thm}\label{T3}
       If $X$ is a compact metric space, then $\phi_1$ and
       $\phi_2$ are unitarily equivalent if and only
         $$[\phi_1]=[\phi_2]\,\,\,{\rm in}\,\,\,KK(C(X), B(l^2)/{\cal K})$$
         \end{thm}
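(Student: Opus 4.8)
The plan is to deduce Theorem \ref{T3} from two classical ingredients: Voiculescu's non-commutative Weyl--von Neumann theorem, and the Brown--Douglas--Fillmore classification of extensions of $C(X)$ by the compact operators, in the form extended by Kasparov. Write $Q = B(l^2)/{\cal K}$ for the Calkin algebra and $\pi : B(l^2) \to Q$ for the quotient map. The ``only if'' direction is immediate: if a unitary $u \in Q$ satisfies $u^*\phi_1(f)u = \phi_2(f)$ for all $f\in C(X)$, then $\phi_1$ and $\phi_2$ are unitarily equivalent as $*$-homomorphisms into $Q$, so they determine the same element of $KK(C(X),Q)$, this being a homotopy invariant.

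For the converse I would first set up the extension picture. A unital monomorphism $\phi : C(X)\to Q$ is the Busby invariant of a unital essential extension $0 \to {\cal K} \to E_\phi \to C(X) \to 0$; since $C(X)$ is nuclear, the Choi--Effros lifting theorem supplies a unital completely positive section, so the extension is semisplit. Consequently the unital extensions of $C(X)$ by ${\cal K}$, taken modulo the equivalence relation generated by unitary equivalence in $Q$ and addition (via a fixed isomorphism $l^2\oplus l^2 \cong l^2$) of \emph{trivial} extensions --- those of the form $\pi\circ\rho$ for a unital $*$-homomorphism $\rho : C(X) \to B(l^2)$ --- form an abelian group $\mathrm{Ext}(X)$, and $\phi\mapsto[\phi]$ defines a group homomorphism $\gamma : \mathrm{Ext}(X) \to KK(C(X),Q)$; under the standard identification $KK(C(X),Q)\cong KK^1(C(X),\C)$ this is the canonical comparison map to $K$-homology.

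The core step is that $\gamma$ is an \emph{isomorphism}. For $X$ a finite CW complex this is proved by induction on the cells, using Mayer--Vietoris sequences for both $\mathrm{Ext}(-)$ and $K$-homology together with the values of $\mathrm{Ext}$ on spheres, which in low dimensions reduce to the planar computation (via the Fredholm index) recalled just before the theorem. For a general compact metric $X$ one writes $C(X)$ as an inductive limit of the $C^*$-algebras of the nerves of finite open covers of $X$ and passes to the limit, the natural Milnor $\varprojlim{}^1$ sequences for $\mathrm{Ext}(-)$ and for $K$-homology together with the five lemma yielding the isomorphism for $C(X)$; alternatively one invokes Kasparov's realization of $KK^1(A,\C)$, for separable nuclear $A$, as the group of semisplit extensions of $A$ by ${\cal K}$. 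In particular $\gamma$ is injective, so $[\phi_1]=[\phi_2]$ in $KK(C(X),Q)$ forces $\phi_1\oplus\tau_1 \sim_u \phi_2\oplus\tau_2$ in $Q$ for some trivial extensions $\tau_1,\tau_2$.

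It remains to cancel the trivial summands, and this is where Voiculescu's theorem enters: since $C(X)$ is separable, unital and nuclear and $\phi_1,\phi_2$ are monomorphisms, each $\phi_i$ is an \emph{absorbing} extension, i.e.\ $\phi_i\oplus\tau \sim_u \phi_i$ in $Q$ for every trivial extension $\tau$. Hence $\phi_1 \sim_u \phi_1\oplus\tau_1 \sim_u \phi_2\oplus\tau_2 \sim_u \phi_2$, which is the assertion. The main obstacle is the injectivity --- equivalently, the bijectivity --- of $\gamma$: this is the assertion that the $KK$-class is a complete invariant for \emph{stable} unitary equivalence, and it carries the genuine analytic and topological content, requiring either the cell-by-cell Brown--Douglas--Fillmore machinery with the inverse-limit passage or, packaged differently, Kasparov's $KK$-theory. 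By comparison, the final descent from stable to honest unitary equivalence is a relatively soft consequence of Voiculescu's theorem, once absorption of monomorphisms of $C(X)$ is available.
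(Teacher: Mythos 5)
The paper does not prove Theorem \ref{T3} at all: it is quoted as the classical Brown--Douglas--Fillmore theorem with a citation to \cite{BDF2}, so there is no internal argument to compare yours with. Judged on its own terms, your sketch is the standard proof and its architecture is sound: the ``only if'' direction by invariance of the $KK$-class under unitary equivalence, the translation of unital monomorphisms $C(X)\to B(l^2)/{\cal K}$ into Busby invariants of unital essential semisplit extensions (Choi--Effros), the identification of the resulting group $\mathrm{Ext}(X)$ with $K$-homology (the genuine content, via the cell-by-cell BDF machinery with the Milnor $\varprojlim^1$ passage for general compact metric $X$, or via Kasparov's $\mathrm{Ext}\cong KK^1$), and finally Voiculescu's theorem to upgrade stable unitary equivalence to unitary equivalence, since every unital essential extension of a separable unital $C^*$-algebra absorbs trivial ones. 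Two small points deserve care if you write this out. First, the statement is phrased in terms of $KK(C(X), B(l^2)/{\cal K})$, and the Calkin algebra is neither separable nor $\sigma$-unital and its quotient map admits no completely positive splitting; the clean way to make your map $\gamma$ and its bijectivity precise is to pass to $KK^1(C(X),\C)$ using $KK(C(X), B(l^2))=0$ and half-exactness in the second variable (available here because $C(X)$ is nuclear), rather than treating $KK(C(X),Q)$ naively. Second, in the absorption step one should take the trivial summands to be represented by $\pi\circ\rho$ with $\rho$ a unital representation meeting ${\cal K}$ trivially, which is always arrangeable (e.g.\ by inflating $\rho$); with that normalization Voiculescu's theorem applies verbatim and the cancellation $\phi_1\sim_u\phi_1\oplus\tau_1\sim_u\phi_2\oplus\tau_2\sim_u\phi_2$ goes through as you state.
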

\noindent(cf. \cite{BDF2}).

   It is known that the Calkin algebra $B(l^2)/{\cal K}$ is a unital simple \CA\, with real rank zero.
   It is also purely infinite.
   In this paper, we will study  approximate unitary equivalence in
    a unital separable simple stably finite \CA.

\begin{df}\label{D1}
Let $A$ and $B$ be two unital \CA s and let $\phi_1, \phi_2: A\to B$
be two \hm s. We say that $\phi_1$ and $\phi_2$ are approximately
unitarily equivalent if there exists a sequence of unitaries
$\{u_n\}\subset B$ such that
\beq\label{d1}
\lim_{n\to\infty}{\rm ad}\, u_n\circ \phi_1(a)=\phi_2(a)\tforal a\in
A.
\eneq
\end{df}

In definition \ref{D1}, suppose that $J={\rm ker}\phi_1.$ Then ${\rm
ker}\phi_2=J$ if $\phi_1$ and $\phi_2$ are approximately unitarily
equivalent. Thus one may study the induced monomorphisms from $A/I$
to $B$ instead of \hm s from $A.$  To simplify matters, we will only
study monomorphisms.

We note that $M_n$ is a unital finite dimensional simple \CA\, with
a unique tracial state. We now replace $A$ by an infinite
dimensional simple \CA. First we consider AF-algebras, approximately
finite dimensional \CA s.

Let $A$ be a unital simple AF-algebra and let $X$ be a
      compact metric space. Let $\phi_1, \phi_2: C(X)\to A$ be two unital monomorphisms.
When are $\phi_1$ and $\phi_2$ approximately unitarily equivalent?
or, when are there  unitaries $u_n\in A$ such that
$$
\lim_{n\to\infty} u_n^*\phi_1(a)u_n=\phi_2(a)
$$
for all $a\in C(X)?$

\begin{NN} {\rm Let $C$ be a unital stably finite \CA. Denote by $T(C),$ throughout this paper, the
tracial state space of $C.$}

\end{NN}

 Suppose that $\phi_1,\phi_2: C(X)\to A$
are two unital monomorphisms. Let $\tau\in T(A)$ be a tracial state.
Then $\tau\circ \phi_j$ is a normalized positive linear functional
($j=1,2$). It gives a Borel probability measure $\mu_j.$
Furthermore, it is strictly positive in the sense that $\mu_j(O)>0$
for every non-empty open subset $O\subset X.$ If $\phi_1$ and
$\phi_2$ are approximately unitarily equivalent, then it is obvious
that $\mu_1=\mu_2,$ or equivalently, $\tau\circ \phi_1=\tau\circ
\phi_2.$ In fact, one has the following :

\begin{NN}\label{A2} Let $X$ be a compact metric space and let $A$ be a unital simple
AF-algebra with a unique tracial state $\tau.$ Suppose that $\phi_1,
\phi_2: C(X)\to A$ are two unital monomorphisms. Then $\phi_1$ and
$\phi_2$ are approximately unitarily equivalent if and only if
$$
(\phi_1)_{*0}=(\phi_2)_{*0} \,\,\, and \,\,\,\tau\circ
\phi_1=\tau\circ \phi_2.
$$
\end{NN}

Here $(\phi_i)_{*0}$ is induced \hm\, from $K_0(C(X))$ into
$K_0(A).$ Note in the case that $X$ is connected and $K_0(A)$ has no
infinitesimal elements, i.e., $\tau(p)=\tau(q)$ implies $[p]=[q]$ in
$K_0(A)$ for every projection $p$ and $q,$ as in the case that
$A=M_n,$ or in the case that $A$ is a UHF-algebra, the condition
$(\phi_1)_{*0}=(\phi_2)_{*0}$ is automatically satisfied if the two
measures are the same. Therefore, one may view that answer \ref{A2}
is a generalization of \ref{T0}.

Note also that $K_1(A)=\{0\}.$ In general, $\phi_j$ also gives
another  \hm s: $(\phi_j)_{*1}: K_1(C(X))\to K_1(A),$ $j=1,2.$

The above answer \ref{A2} follows from a much more general result
which serves as a uniqueness theorem in the Elliott program of
classification of amenable \CA s:

\begin{thm}{\rm( Gong-Lin  1996 \cite{GL})}\label{T4}\,\,
      Let $X$ be a compact metric space and let $A$ be a unital simple \CA\, with real rank zero, stable rank one,
      weakly unperforated $K_0(A)$ and with a unique tracial state $\tau.$ Suppose that
      $\phi_1,\, \phi_2: C(X)\to A$ are two unital monomorphisms. Then $\phi_1$ and $\phi_2$ are approximately unitarily equivalent if and only if
$$
[\phi_1]=[\phi_2]\,\,\,{\rm in} \,\,\, KL(C(X), A)\,\,\,and
\,\,\,\tau\circ \phi_1=\tau\circ \phi_2.
$$
\end{thm}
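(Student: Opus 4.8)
The ``only if'' direction is routine. If $\phi_1$ and $\phi_2$ are approximately unitarily equivalent, then they induce the same morphism on $K$-theory with arbitrary finite coefficients: a projection or unitary over $C(X)$ is carried by $\mathrm{ad}\,u_n\circ\phi_1$ to an element arbitrarily close to its image under $\phi_2$, while nearby projections in $A$ are Murray--von Neumann equivalent and nearby unitaries are homotopic (stable rank one), so the induced maps on each $K_i(C(X);\Z/n)$ and on $K_i(C(X))$ agree; hence $[\phi_1]=[\phi_2]$ in $KL(C(X),A)$. Since $\tau$ is the unique tracial state, $\tau\circ\mathrm{ad}\,u_n=\tau$, and continuity of $\tau$ gives $\tau\circ\phi_1=\tau\circ\phi_2$.

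For the converse the plan is to prove an approximate, finitary uniqueness statement and then run a standard iteration. First I would reduce to the case that $X$ is a finite CW complex, writing $C(X)=\varinjlim_n C(X_n)$ with each $X_n$ a finite CW complex and surjective connecting maps, so that a prescribed finite $\mathcal F\subset C(X)$ and tolerance $\epsilon$ can be treated inside some $C(X_n)$. The core is then a statement of the following shape: given a finite $\mathcal F\subset C(X)$ and $\epsilon>0$, there exist $\delta>0$, a finite $\mathcal G\subset C(X)$ and a finite subset $\mathcal P$ of the total $K$-theory of $C(X)$ such that whenever $L_1,L_2\colon C(X)\to A$ are $\mathcal G$-$\delta$-multiplicative \morp s that induce the same data on $\mathcal P$ and satisfy $|\tau\circ L_1(g)-\tau\circ L_2(g)|<\delta$ for all $g\in\mathcal G$, there is a unitary $u\in A$ with $\|\mathrm{ad}\,u\circ L_1(f)-L_2(f)\|<\epsilon$ for all $f\in\mathcal F$. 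Granting this, applying it along a sequence $\epsilon_n\to0$ and iteratively conjugating $\phi_1$ toward $\phi_2$ on an increasing exhaustion of $C(X)$ produces the required unitaries.

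To prove the uniqueness statement I would exploit the structure of $A$ as follows. Using real rank zero, I would build for each $j$ a partition of unity by projections almost commuting with $\phi_j(\mathcal F)$, and thereby approximate $\phi_j$ to within $\epsilon$ on $\mathcal F$ by a direct sum $\psi_j^{(0)}\oplus h_j$, where $\psi_j^{(0)}$ has finite spectrum and trace mass $>1-\eta$, while $h_j$ is a homomorphism into a hereditary corner $q_jAq_j$ with $\tau(q_j)<\eta$ carrying the full $KL$-class of $\phi_j$. The finite-spectrum parts $\psi_1^{(0)},\psi_2^{(0)}$ are matched up to approximate unitary equivalence by pairing their point masses: this uses that the two Borel probability measures on $X$ determined by $\tau\circ\phi_j$ coincide, and that projections in $A$ admit comparison by the trace (strict comparison, following from real rank zero, stable rank one and weak unperforation of $K_0(A)$), together with equality of $(\phi_1)_{*0}$ and $(\phi_2)_{*0}$ where needed. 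The corner parts $h_1,h_2$ are then disposed of by a Brown--Douglas--Fillmore/Kasparov-type stable uniqueness theorem inside the simple corner $q_jAq_j$: since $[h_1]=[h_2]$ in $KL$ and $\tau(q_j)$ is as small as we please, the discrepancy is absorbed without disturbing the global estimate on $\mathcal F$.

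The main obstacle is this last step together with the bookkeeping behind the decomposition $\phi_j\approx\psi_j^{(0)}\oplus h_j$: one must push the entire $KK$-content of $\phi_j$ into a corner of arbitrarily small trace while the complementary finite-spectrum summand stays genuinely close to $\phi_j$ on $\mathcal F$, and the stable-uniqueness argument in the corner must be carried out with enough uniformity that the unitary it produces extends, after a controlled perturbation, to a unitary of $A$ implementing the equivalence. This is precisely where real rank zero (to manufacture the almost-central projections and finite-spectrum approximations), stable rank one (cancellation and lifting unitaries from corners) and weak unperforation of $K_0$ (comparison of the pieces) are all used simultaneously; uniqueness of the trace enters both to identify the two measures and to rule out any obstruction to these comparison arguments beyond $K_0$.
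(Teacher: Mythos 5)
You should note first that the paper does not prove Theorem \ref{T4} at all: it is quoted as background from Gong--Lin \cite{GL}, so the only meaningful comparison is with that proof and with the analogous machinery this paper builds for $TR(A)\le 1$ (Lemma \ref{3p}, Theorem \ref{MT1}). Your ``only if'' direction is fine, and your overall architecture --- reduce to a finitary uniqueness statement for almost multiplicative maps, use real rank zero to decompose each map into a finite-spectrum summand of large trace plus a small-trace piece retaining the $KL$-data, match the finite-spectrum summands through the common measure, and absorb the small pieces by a stable uniqueness theorem --- is indeed the standard route and is essentially the scheme of \cite{GL} and of Sections 8--10 of this paper.

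There are, however, two concrete gaps. First, the matching of the finite-spectrum parts: under the stated hypotheses $K_0(A)$ may well contain infinitesimal elements (a unique trace together with weak unperforation does not exclude them), so the fact that $\mu_{\tau\circ\phi_1}=\mu_{\tau\circ\phi_2}$ does not make the paired point-mass projections Murray--von Neumann equivalent, and equality of $(\phi_1)_{*0}$ and $(\phi_2)_{*0}$ on $K_0(C(X))$ says nothing directly about those individual projections. The actual proofs arrange that the matched point masses are honestly equivalent projections, e.g.\ as in \ref{ProM} and Lemma \ref{3p} of this paper, by cutting down to subprojections with equal $K_0$-class and pushing the (small-trace) difference into the remainder; your sketch supplies no mechanism for this. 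Second, the disposal of the corners: a stable uniqueness theorem is not an intrinsic uniqueness statement ``inside the simple corner $q_jAq_j$''; it conjugates $h_1\oplus\sigma$ to $h_2\oplus\sigma$ only after a common absorbing summand $\sigma$ has been added, whose point masses must be $\ep_1$-dense and whose multiplicities must dominate the corner by a constant fixed in advance by $\ep,$ ${\cal F}$ and the $KL$-data (compare \ref{GL2}, \ref{Tegl} and the inequalities (\ref{3p-4}) in Lemma \ref{3p}). Smallness of $\tau(q_j)$ by itself does nothing; the domination comes from the strictly positive measures $\mu_{\tau\circ\phi_j}$ (this is exactly where injectivity of the $\phi_j$ enters), strict comparison, and a careful order of quantifiers in which the stable-uniqueness constants are chosen before the decomposition is made. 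You yourself flag this as ``the main obstacle,'' but you do not resolve it, so as written your text is an accurate outline of the known strategy rather than a proof.
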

  In the case that $K_*(C(X))$ is torsion free, the condition that $[\phi_1]=[\phi_2]$ in $KL(C(X), A)$ can be replaced by
  $(\phi_1)_{*i}=(\phi_2)_{*i},$ where $(\phi_j)_{*i}: K_i(C(X))\to K_i(A)$ ($i=0,1$ and $j=1,2$) is the induced \hm s.

Recall that an AH-algebra is an inductive limit of \CA s with the
form $P_nM_{k(n)}(C(X_n))P_n,$ where $X_n$ is (not necessarily
connected) finite CW complex and $P_n$ is a projection in
$M_{k(n)}(C(X_n)).$ More recently, for the situation that $T(A)$ has
no restriction, we have the following:

\begin{thm}{\rm ( \cite{Lncd})}\label{T7}
Let $C$ be a unital AH-algebra and let $A$ be a unital
      separable simple \CA\, with tracial rank zero.
Suppose that $\phi_1, \phi_2: C\to A$ are two unital
      monomorphisms.
Then $\phi_1$ and $\phi_2$ are approximately unitarily equivalent
       if and only if
       \beq
       [\phi_1]=[\phi_2]\,\,\, KL(C, A)\andeqn
       \tau\circ \phi_1=\tau\circ \phi_2\rforal \tau\in T(A).
       \eneq
\end{thm}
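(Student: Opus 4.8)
\emph{Strategy for a proof.} The necessity is routine: if $u_n\in A$ are unitaries with $u_n^*\phi_1(c)u_n\to\phi_2(c)$ for all $c\in C$, then $\tau(\phi_1(c))=\lim_n\tau(u_n^*\phi_1(c)u_n)=\tau(\phi_2(c))$ for every $\tau\in T(A)$, and the class in $KL(C,A)$ is computed on a countable cofinal family of projections and unitaries over $C$, on which a sufficiently small perturbation does not change the $K$-theory class, so $[\phi_1]=[\phi_2]$. For sufficiency I would first reduce the problem. Write $C=\overline{\bigcup_m C_m}$ with $C_m=P_mM_{k(m)}(C(X_m))P_m$, $X_m$ a finite CW complex. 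By Elliott's approximate-intertwining argument it suffices to prove the estimate form: for each $m$, each finite $\mathcal F\subset C_m$ and each $\varepsilon>0$ there is a unitary $u\in A$ with $\|u^*\phi_1(c)u-\phi_2(c)\|<\varepsilon$ for all $c\in\mathcal F$. The $KL$-hypothesis restricts to $C_m$ by naturality, the trace hypothesis trivially, and the projection/matrix structure of $C_m$ is handled by the usual device (a matrix amplification replacing $P_m$ by a trivial subprojection, bookkeeping the resulting shift in $K_0$), so one is essentially reduced to homomorphisms out of $C(X_m)$. This statement generalizes the sufficiency direction of Theorem \ref{T4}, to which the case of a single trace reduces.

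The argument then rests on two engines. The first is the stable uniqueness theorem (Dadarlat--Eilers; Lin): since $C_m$ is nuclear and satisfies the UCT, there is $N\in\N$ and a unital \hm\ $\sigma\colon C_m\to A$ with finite-dimensional range (factoring through evaluation at finitely many points of $X_m$) such that for any two unital \hm s $\Phi,\Psi\colon C_m\to A$ with $[\Phi]=[\Psi]$ in $KL(C_m,A)$, the maps $\Phi\oplus\sigma^{(N)}$ and $\Psi\oplus\sigma^{(N)}$ are approximately unitarily equivalent, where $\sigma^{(N)}$ denotes the direct sum of $N$ copies of $\sigma$ transported back into $A$ through a system of matrix units --- here simplicity of $A$ together with the abundance of projections coming from $\tr(A)=0$ lets one embed a matrix algebra and carry out this compression. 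The second engine is the elementary classification of homomorphisms into a fixed finite-dimensional \CA, namely Theorem \ref{T0}: two such unital \hm s are unitarily equivalent if and only if they induce the same traces.

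The synthesis uses $\tr(A)=0$. Fix $(\mathcal F,\varepsilon)$. Being tracially AF, $A$ admits a projection $p$ and a finite-dimensional \SCA\ $B\subset pAp$ with $1_B=p$ such that $p$ almost commutes with $\phi_i(\mathcal F)$, $p\phi_i(c)p$ is within $\varepsilon$ of $B$ for $c\in\mathcal F$, and $\tau(1-p)$ is as small as we wish. Perturbing the almost-multiplicative maps $c\mapsto p\phi_i(c)p$ to genuine homomorphisms into $B$ (legitimate for a finite $\mathcal F$ and small $\varepsilon$, using the local structure of $C_m$) yields, up to $\varepsilon$ on $\mathcal F$ and a unitary,
\[
\phi_i|_{C_m}\ \approx\ \psi_i^{(0)}\oplus\psi_i^{(1)},\qquad \psi_i^{(0)}\colon C_m\to(1-p)A(1-p),\quad \psi_i^{(1)}\colon C_m\to B .
\]
Using the freedom in the choice of $B$ (and of $p$) together with $\tau\circ\phi_1=\tau\circ\phi_2$, one arranges $\tau\circ\psi_1^{(1)}=\tau\circ\psi_2^{(1)}$, so by Theorem \ref{T0} the maps $\psi_1^{(1)},\psi_2^{(1)}$ are unitarily equivalent in $B$; from $[\phi_1]=[\phi_2]$ and $[\psi_1^{(1)}]=[\psi_2^{(1)}]$ in $KL(C_m,A)$ one then gets $[\psi_1^{(0)}]=[\psi_2^{(0)}]$. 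Finally, since $(1-p)A(1-p)$ is again simple with $\tr=0$, a further application of the tracial-AF splitting inside this corner (splitting $1-p$ into mutually equivalent subprojections and making the multiplicities of the finite-dimensional part large) puts each $\psi_i^{(0)}$, up to $\varepsilon$ on $\mathcal F$ and a unitary, in the form $\eta_i\oplus\sigma^{(N)}$; the stable uniqueness theorem then gives $\psi_1^{(0)}\approx\psi_2^{(0)}$ approximately unitarily equivalently in $(1-p)A(1-p)$. Combining the three matchings produces the desired $u$, and passing back through the reduction step completes the proof.

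The main obstacle is precisely this synthesis: one must peel off the finite-dimensional part of each $\phi_i$ so that the finite-dimensional pieces carry \emph{exactly} matching traces (to invoke Theorem \ref{T0}), the complementary small pieces inherit equal $KL$-classes, and the small pieces can simultaneously be brought into the standard form $(\text{remainder})\oplus\sigma^{(N)}$ demanded by the stable uniqueness theorem --- all while the total $K_0$, $K_1$ and trace data reassemble to the given invariants (in particular accounting for the classes $[P_m]$ and $[1-p]$). It is in controlling these $K$-theory and trace data simultaneously through the decomposition that weak unperforation of $K_0(A)$, the density of the trace-affine functions, and the internal structure theory of the AH blocks $C_m$ are all brought to bear.
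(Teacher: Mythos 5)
Your outline has the right architecture --- Elliott intertwining to reduce to a finite stage $C_m$ and then essentially to $C(X_m)$, a tracially-AF corner decomposition of $A$, stable uniqueness for the piece supported on $(1-p)A(1-p)$, and a finite-dimensional uniqueness criterion for the piece in the corner $B$ --- but there is a genuine gap at the central synthesis step. You assert that by choosing $p$ and $B$ suitably one can ``arrange $\tau\circ\psi_1^{(1)}=\tau\circ\psi_2^{(1)}$'' on the finite-dimensional corner, and then invoke Theorem~\ref{T0} to get \emph{exact} unitary equivalence inside $B$. This is not available. The hypothesis $\tau\circ\phi_1=\tau\circ\phi_2$ is an equality of traces on $A$; after compressing by a non-central projection $p$ and pushing into $B$, the two finite-dimensional maps $\psi_1^{(1)},\psi_2^{(1)}$ have only \emph{approximately} equal traces, with error controlled by $\|[p,\phi_i(c)]\|$ and $\tau(1-p)$, and there is no residual freedom in $p$ or $B$ to restore exactness. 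Already for $\phi_i\colon C([0,1])\to$ UHF with equal spectral measures, the finite lists of approximate eigenvalues obtained by cutting down agree in distribution but not on the nose, so Theorem~\ref{T0} does not apply.

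The correct replacement is a perturbed finite-dimensional matching, and this is precisely where the ingredient missing from your write-up enters: the spectral distribution (sdp) condition. Since the $\phi_i$ are \emph{monomorphisms} and $A$ is simple, there is a non-decreasing $\Delta\colon(0,1)\to(0,1)$ with $\mu_{\tau\circ\phi_i}(O_a)\ge\Delta(a)$ for every tracial state $\tau$ and every ball $O_a$ (this is Proposition~\ref{Padd} of the present paper, and is the reason the hypothesis says ``monomorphism'' rather than ``homomorphism''). One must propagate this $\Delta$-lower bound to the corner and then argue that two finite-dimensional representations of $C(X_m)$ whose measures are simultaneously $\Delta$-spread and close in total variation have their evaluation points paired, via Hall's marriage lemma, up to a distance controlled by $\Delta$; this produces an approximate unitary equivalence of $\psi_1^{(1)}$ and $\psi_2^{(1)}$ in $B$ compatible with the bookkeeping on the complementary piece. (Compare Lemma~\ref{F2l} and its use of the marriage lemma, and note that the corresponding uniqueness theorem from \cite{Lncd} reflected in Lemma~\ref{CD} here carries an explicit measure-spreading hypothesis, not bare trace agreement.) Without the sdp condition the matching genuinely fails --- one representation can clump where the other is sparse --- and so does the proof. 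The rest of your synthesis (transferring $[\phi_1]=[\phi_2]$ to $[\psi_1^{(0)}]=[\psi_2^{(0)}]$, absorbing a large point-evaluation $\sigma^{(N)}$, and applying stable uniqueness in $(1-p)A(1-p)$) is sound in outline once this step is repaired, though you should also note that the multiplicity $N$ required by stable uniqueness depends on $(\mathcal F,\varepsilon)$ and must therefore be fixed \emph{before} choosing $p$ and $B$, which is part of why the decomposition has to be set up with the measure-spreading data in hand.
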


Theorem \ref{T7} was established in the connection with the Elliott
program. Versions of \ref{T7} plays important roles in the Elliott
theory of classification of amenable \CA s. It also has application
in the study of minimal dynamical systems (see \cite{Lncd},
\cite{LM1}, \cite{LM2}, \cite{LM3} and \cite{Lnfur}). More recently,
Theorem \ref{T7} is used to study the so-called Basic Homotopy Lemma
(in simple \CA s with real rank zero---see \cite{Lnhomp}) and the
asymptotic unitary equivalence in simple \CA s with tracial rank
zero (\cite{Lnaut}) which, in turn, plays crucial roles in the
recent work of AF-embedding (\cite{Z2}) and classification of amenable simple
finite \CA s which are {\it not} of finite tracial rank (see
\cite{W1} and \cite{Lnapn}). It is now clear that approximately unitary equivalence and asymptotic unitary
equivalence in simple \CA s with tracial rank one becomes very
important and useful. Moreover, to establish a theorem about asymptotic
unitary equivalence in simple \CA s with tracial rank one, one has
first to establish a theorem about approximately unitary equivalence
which can also be used to establish required Basic Homotopy Lemmas.
This is the main purpose of this paper.


A consequence of  the main results of this paper may be stated as
follows:

\begin{thm}

Let $C$ be a unital AH-algebra with property (J) and let $A$ be a
unital simple \CA\, with $TR(A)\le 1.$  Suppose that $\phi, \psi:
C\to A$ are two unital monomorphisms. Then $\phi$ and $\psi$ are
approximately unitarily equivalent if and only if
\beq\
[\phi] &=&[\psi]\,\,\,{\rm in}\,\,\,KL(C,A),\\
\phi_{\sharp}&=&\psi_{\sharp}\andeqn \phi^{\ddag}=\psi^{\ddag}.
\eneq

\end{thm}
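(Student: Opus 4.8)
The plan is to reduce the stated consequence to the already‐established Theorem~\ref{T7} via a tracial‐rank‐one to tracial‐rank‐zero approximation argument, combined with the main (full‐strength) uniqueness theorem of the present paper for $TR(A)\le 1$ applied to monomorphisms out of an AH-algebra with property (J). First I would note that property (J) is exactly what is needed to guarantee that the invariant data $([\,\cdot\,], \,\cdot_\sharp, \,\cdot^{\ddag})$ — i.e., the $KL$-class, the affine map on tracial states, and the induced map $U(C)/CU(C)\to U(A)/CU(A)$ — determines $\phi$ up to approximate unitary equivalence; the ``only if'' direction is routine (approximate unitary equivalence preserves each of these invariants: $KL$ by homotopy invariance and the fact that unitary conjugation is an inner automorphism, the trace data by continuity of $\tau$, and the $U/CU$ data because $\mathrm{ad}\,u$ acts trivially on $U(A)/CU(A)$). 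So the content is the ``if'' direction, and the strategy is to show that agreement of these three invariants is enough to invoke the general uniqueness theorem established in the body of the paper.

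The key steps, in order, are as follows. Step one: set up the standard approximation framework. Since $TR(A)\le 1$, for any finite subset $\mathcal F\subset C$ and $\varepsilon>0$ there is a projection $p\in A$ and a \CA\ $B\subset pAp$ with $B$ a finite direct sum of matrix algebras over $C([0,1])$ (or over points), with $1-p$ small in trace and $\|\,p\phi(c)p - \phi(c)\,\|$ small, $\|\,p\phi(c) - \phi(c)p\,\|$ small on $\mathcal F$, and $p\phi(c)p$ approximately in $B$. Step two: handle the two ``corner'' pieces separately. On the small corner $(1-p)A(1-p)$ one controls things by a measure/trace estimate (the trace of $1-p$ is tiny, so by the hypothesis $\phi_\sharp=\psi_\sharp$ the compressions of $\phi$ and $\psi$ to this corner are close in an appropriate Cuntz/trace sense and can be matched up by a partial isometry). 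On the large corner, the essential point is that the restricted maps land (approximately) in an interval algebra $B$, so one needs a uniqueness statement for maps from $C$ into $B\subset pAp$. Step three: apply the full main theorem of the paper — which is precisely a uniqueness theorem for unital monomorphisms from an AH-algebra (with property (J)) into a unital simple \CA\ with $TR\le 1$, under equality of $[\,\cdot\,]$ in $KL$, of $\,\cdot_\sharp$, and of $\,\cdot^{\ddag}$ — to conclude that the two maps are approximately unitarily equivalent. Property (J) is invoked here to ensure that $U(C)/CU(C)$ carries enough information (e.g.\ that its de la Harpe–Skandalis determinant / rotation-map obstruction is the expected one) so that the $\phi^{\ddag}=\psi^{\ddag}$ hypothesis is usable; it also typically guarantees that $C$ has the form needed for the approximate factorization through interval algebras to be compatible with the $K$-theoretic and tracial bookkeeping.

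The main obstacle I expect is the bookkeeping on the interval-algebra corner: on $B=\bigoplus M_{r_i}(C([0,1]))$ the group $U(B)/CU(B)$ is genuinely larger than what $K_1$ sees — it carries the "winding-number / determinant" information coming from $C([0,1])$ — and so passing the hypothesis $\phi^{\ddag}=\psi^{\ddag}$ (a statement about $U(C)/CU(C)\to U(A)/CU(A)$) down to a usable statement about the compressions into $B$ requires compatibility between the $CU$'s at the level of $C$, of $B$, and of $A$, and requires that property (J) make $U(C)/CU(C)$ computable (essentially $K_1(C)\oplus \mathrm{Aff}(T(C))/\overline{\rho_C(K_0(C))}$). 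Controlling the interaction of this continuous part of the unitary-group invariant with the $KL$-class and the trace under the two cut-downs — and making sure the partial isometries produced on the two corners can be assembled into a single unitary of $A$ implementing the approximate equivalence globally — is where the real work sits; everything else is the now-standard corner decomposition plus an appeal to Theorem~\ref{T7} for the matrix-algebra pieces and to the paper's main theorem for the general case.
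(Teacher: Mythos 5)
There is a genuine gap, and it sits at the heart of your Step three: you propose to conclude by applying ``the full main theorem of the paper,'' which you describe as a uniqueness theorem for unital monomorphisms from an AH-algebra with property (J) into a unital simple \CA\, with $TR\le 1$ under equality of the $KL$-class, of $\cdot_\sharp$ and of $\cdot^{\ddag}$ --- but that is verbatim the statement you are asked to prove, so the argument is circular. The actual main theorem of the paper (Theorem \ref{MT1}, and its matrix-bundle version \ref{MC1}) is the uniqueness statement for (approximately multiplicative) maps from $C(X)$, respectively $PM_k(C(X))P$, with $X\in {\bf X}$, proven by the long chain of Sections 3--10 (the decomposition Lemma \ref{3p}, Theorem \ref{TAM} that sufficiently injective almost multiplicative maps into $C([0,1],M_n)$ with trivial $KK$-data are close to \hm s, the homotopy lemmas of Sections 5--7, and the Elliott--Gong--Li type uniqueness results \ref{Tegl}, \ref{egl2}, \ref{GL2}). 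The AH statement is then derived from \ref{MC1} by a different and much softer reduction than the one you sketch: property (J) writes $C$ as an inductive limit of blocks $PM_{k}(C(X))P$ with $X\in{\bf X}$ (or one-dimensional), one fixes a finite subset inside some $C_n$ and reduces to a single block, and Proposition \ref{Padd} converts injectivity of $\phi$ into a uniform measure-density function $\Delta$ so that the hypotheses of \ref{MC1} hold; your proposal never supplies this step nor any substitute for the content of \ref{MT1}.

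Your Steps one and two (cut $A$ by a TAI projection $p$, control the trace-small corner, and treat the compression into the interval algebra $B$) do describe, in rough form, the internal strategy of the proof of \ref{MT1} (this is Lemma \ref{3p}), but the way you propose to finish on the large corner does not work: a ``tracial-rank-one to tracial-rank-zero approximation'' reducing to Theorem \ref{T7} is not available, because $TR(A)\le1$ gives local approximation by \emph{interval} algebras, not finite-dimensional ones, and on $\oplus_j C(X_j,M_{r(j)})$ the group $U(B)/CU(B)$ carries exactly the continuous determinant data (the difficulty you yourself flag) that the $KL$-plus-trace hypotheses of \ref{T7} cannot see. In the paper this is precisely where the new work lies: the $\phi^{\ddag}$-hypothesis is pushed through the decomposition of Lemma \ref{3p} by determinant estimates (as in the proof of \ref{MT1}, using \cite{Ph1}) and the corner maps are matched using \ref{egl2} and \ref{GL2}, not by an appeal to the tracial-rank-zero theorem. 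So the ``only if'' direction and the general shape of the cut-down are fine, but the proposal is missing the actual uniqueness mechanism and is circular where it matters.
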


 (See \ref{DU2} and \ref{DT1} for the definition of $\phi^{\ddag}$
 and $\phi_{\sharp},$ and see \ref{DJ} for the property (J)).
It should be noted that it is an approximate version of the above
which actually plays the role in the subsequent papers.

The paper is organized as follows. Section 2 collects some
notation and conventions which will be used throughout the paper.
Section 3 contains a generalization of a theorem of Elliott, Gong
and Li which will be used at the end of the paper. In section 4,
we show that two approximately multiplicative completely positive linear maps from $C(X)$ to a finite dimensional \CA\,
are almost unitarily equivalent if they induce the same
$KK$-information and satisfy some rigidity conditions, where $X$
is a path connected compact metric space. These are reformulation
of results in \cite{Lncd}. Section 5 contains a version of the
so-called Basic Homotopy Lemma which is a reformulation of some
results in \cite{Lnhomp}. Section 6 contains the following result.
Two \hm s from $C(X)$ into a finite dimensional \CA\, are
unitarily equivalent, modulo a small homotopy, if they are close
to each other and they are ``very injective" in a measure
theoretic sense. In section 7, by applying the Basic Homotopy
Lemma in section 5, we show that two unital unitarily equivalent
\hm s from $C(X)$ into a finite dimensional \CA \, are homotopic
by a nearby path, if they are also close and ``very injective", at
least for some special finite CW complexes.  In section 8, we
establish the following. With the restriction of $X$ as in section
7, an approximately multiplicative \morp\, $\phi: C(X)\to C([0,1],
M_n)$ (for any $n$) is close to a \hm\, provided that the $KK$-map
induced by $\phi$ is consistent to a \hm\, and it is ``very
injective". This is one of the main technical lemma of the paper.
In fact, section 3, 4, 5, 6 and 7 are all preparation for the proof
of Theorem \ref{TAM}. Section 9 contains a number of elementary
results about simple \CA s of tracial rank one (or less). In
section 10, we present the main result (Theorem \ref{MT1}).
Finally, in section 11, we present a number of variations of the
main results in section 10.

\vspace{0.2in}

 {\bf Acknowledgments}:
This work began during the summer of 2007 when the author was
visiting the East China Normal University. Major part of this work
was done when the author was in the Fields Institute for Research in
Mathematical Sciences in Fall 2007. The author would like to take
this opportunity to express his sincere gratitude to the support and
great research environment the Fields Institute provided during the
Thematic Program on Operator Algebras.

\section{Some notation and definitions}

\begin{NN} {\rm Let $X$ be a compact metric space, let $x\in X$  and let $a>0.$ Denote by
$B_a(x)$ the open ball of $X$ with radius $a$ and center $x.$ Let
$A$ be a unital \CA\, and $\xi\in X.$  Denote by $\pi_\xi: C(X)\to
A$ the point-evaluation defined by $\pi_\xi(f)=f(\xi)\cdot 1_A$
for all $f\in C(X).$ }

\end{NN}

\begin{NN}
{\rm Let $A$ and $B$ be two \CA s and let $L_1, L_2: A\to B$ be two
maps. Suppose that ${\cal F}\subset A$ is a subset and $\ep>0.$ We
write
$$
L_1\approx_{\ep} L_2 \,\,\,{\rm on}\,\,\,{\cal F}
$$
if $\|L_1(a)-L_2(a)\|<\ep$ for all $a\in {\cal F}.$

Map $L_1$ is said to be $\ep$-${\cal F}$-multiplicative if
$$
\|L_1(ab)-L_1(a)L_1(b)\|<\ep\tforal a, b\in {\cal F}.
$$
}

\end{NN}

\begin{NN}
{\rm Let $A$ be a \CA. Set
$M_{\infty}(A)=\cup_{n=1}^{\infty}M_n(A).$}

\end{NN}

\begin{NN}
{\rm Let $A$ be a unital \CA. Denote by $U(A)$ the unitary group
of $A.$ Denote by $U_0(A)$ the normal subgroup of $U(A)$
consisting of the path connected component of $U(A)$ containing
the identity. Suppose that  $u\in U_0(A)$ and $\{u(t): t\in
[0,1]\}$ is  a continuous path with $u(0)=u$ and $u(1)=1.$ Denote
by ${\rm length}(\{u(t)\})$ the length of the path. Put
$$
{\rm cel}(u)=\inf\{ {\rm length}(\{u(t)\})\}.
$$
}
\end{NN}

\begin{df}\label{DD}
{\rm Let $X$ be a compact metric space and let $P\in M_l(C(X))$ be a
projection. Put $C=PM_l(C(X))P.$ Let $u\in U(C).$ Define, as in
\cite{Ph1},
\beq\label{DD1}
D_C(u)=\inf\{\|a\|: a\in A_{s.a} \,\,\, {\rm such
\,\,\,that}\,\,\,{\rm det}(e^{ia}\cdot u)=1\}.
\eneq
}
\end{df}

\begin{NN}\label{DU1}
{\rm  Let $A$ be a unital \CA. Denote  by $CU(A)$ the {\it
closure} of the subgroup generated by the commutators of $U(A).$
For $u\in U(A),$ we will use ${\bar u}$ for the image of $u$ in
$U(A)/CU(A).$

If ${\bar u}, {\bar v}\in U(A)/CU(A),$ define
$$
{\rm dist}({\bar u}, {\bar v})=\inf\{\|x-y\|: x, y\in U(A)\,\,\,{\rm
such\,\,\, that}\,\,\, {\bar x}={\bar u}, {\bar y}={\bar v}\}.
$$
If $u, v\in U(A),$ then
$$
{\rm dist}({\bar u}, {\bar v})=\inf\{\|uv^*-x\|: x\in CU(A)\}.
$$

}

\end{NN}

\begin{NN}\label{DU2}
{\rm Let $A$ and $B$ be two unital \CA s and let $\phi: A\to B$ be
a unital \hm. It is easy to check that $\phi$ maps $CU(A)$ to
$CU(B).$ Denote by $\phi^{\ddag}$ the \hm\, from $U(A)/CU(A)$
into $U(B)/CU(B)$ induced by $\phi.$  We also use $\phi^{\ddag}$
for the \hm\, from $U(M_k(A))/CU(M_k(A))$ into
$U(M_k(B))/CU(M_k(B))$ ($k=1,2,...,$).

}
\end{NN}

\begin{df}\label{Kund}
{\rm Let $A$ be a \CA.  Following Dadarlat and Loring (\cite{DL2}),
denote
$$
\underline{K}(A)=\oplus_{i=0,1}
K_i(A)\bigoplus_{i=0,1}\bigoplus_{k\ge 2}K_i(A,\Z/k\Z).
$$
Let $B$ be a unital \CA. If furthermore, $A$ is assumed to be
separable and satisfy the Universal Coefficient Theorem (\cite{RS}),
by \cite{DL2},
$$
Hom_{\Lambda}(\underline{K}(A), \underline{K}(B))=KL(A,B).
$$

Here $KL(A,B)=KK(A,B)/Pext(K_*(A), K_*(B))$ (see \cite{DL2} for
details).

 Let $k\ge 1$ be an integer. Denote
$$
F_k\underline{K}(A)=\oplus_{i=0,1}K_i(A)\bigoplus_{n|k}K_i(A,\Z/k\Z).
$$
Suppose that $K_i(A)$ is finitely generated ($i=0,2$). It follows
from \cite{DL2} that there is an integer $k\ge 1$ such that
\beq\label{dkl1}
Hom_{\Lambda}(F_k\underline{K}(A), F_k\underline{K}(B))
=Hom_{\Lambda}(\underline{K}(A), \underline{K}(B)).
\eneq
}
\end{df}

\begin{NN}\label{Dppu}
{\rm Let $A$ and $B$ be two unital \CA s and let $L: A\to B$ be a
unital \morp. Let ${\cal P}\subset \underline{K}(A)$ be a finite
subset. It is well known that, for some small $\dt$ and large finite
subset ${\cal G}\subset A,$ if $L$ is also $\dt$-${\cal
G}$-multiplicative, then $[L]|_{\cal P}$ is well defined. In what
follows whenever we write $[L]|_{\cal P}$ we mean $\dt$ is
sufficiently small and ${\cal G}$ is sufficiently large so that it
is well defined (see 2.3 of \cite{Lnhomp}).
If $u\in U(A),$ we will use $ \langle L\rangle (u) $ for the unitary
$L(u)|L(u)^*L(u)|^{-1}.$

For an integer $m\ge 1$ and a  finite subset ${\cal U}\subset
U(M_m(A)),$ let $F\subset U(A)$ be the subgroup generated by
${\cal U}.$ As in 6.2 of \cite{Lntr1}, there exists a finite
subset ${\cal G}$ and a small $\dt>0$ such that a $\dt$-${\cal
G}$-multiplicative \morp\, $L$ induces a \hm\, $L^{\ddag}:
{\overline{F}}\to U(M_m(B))/CU(M_m(B)).$  Moreover, we may assume,
${\overline{\langle L\rangle (u)}}=L^{\ddag}({\bar u}).$

If there are $L_1, L_2: A\to B$ and $\ep>0$ is given. Suppose that
both $L_1$ and $L_2$ are $\dt$-${\cal G}$-multiplicative and
$L_1^{\ddag}$ and $L_2^{\ddag}$ are well defined on
${\overline{F}},$ whenever, we write
$$
{\rm dist}(L_1^{\ddag}({\bar u}), L_2^{\ddag}({\bar u}))<\ep
$$
for all $u\in {\cal U},$ we also assume that $\dt$ is sufficiently
small and ${\cal G}$ is sufficiently large so that
$$
{\rm dist}{\overline{\langle L_1\rangle(u)}}, \overline{\langle
L_2\rangle(u)})<\ep\rforal
u\in {\cal U}.
$$

}

\end{NN}

\begin{df}\label{Dbot2}
{\rm Let
$A$ and $B$ be  two unital \CA s.  Let $h: A\to B$ be a \hm\, and
$v\in U(B)$ such that
$$
h(g)v=vh(g)\,\rforal\, g\in A.
$$
 Thus we
obtain a \hm\, ${\bar h}: A\otimes C(\T)\to B$ by ${\bar h}(f\otimes
g)=h(f)g(v)$ for $f\in A$ and $g\in C(\T).$ The tensor product
induces two injective \hm s:
\beq\label{dbot01}
\bt^{(0)}&:& K_0(A)\to K_1(A\otimes C(\T))\andeqn\\
 \bt^{(1)}&:&
K_1(A)\to K_0(A\otimes C(\T)).
\eneq
The second one is the usual Bott map. Note, in this way, one writes
$$K_i(A\otimes C(\T))=K_i(A)\oplus \bt^{(i-1)}(K_{i-1}(A)).$$
We use $\widehat{\bt^{(i)}}: K_i(A\otimes C(\T))\to
\bt^{(i-1)}(K_{i-1}(A))$ for the projection to
$\bt^{(i-1)}(K_{i-1}(A)).$

For each integer $k\ge 2,$ one also obtains the following injective
\hm s:
\beq\label{dbot02}
\bt^{(i)}_k: K_i(A, \Z/k\Z))\to K_{i-1}(A\otimes C(\T), \Z/k\Z),
i=0,1.
\eneq
Thus we write
\beq\label{dbot02-1}
K_{i-1}(A\otimes C(\T), \Z/k\Z)=K_{i-1}(A,\Z/k\Z)\oplus
\bt^{(i)}_k(K_i(A, \Z/k\Z)),\,\,i=0,1.
\eneq
Denote by $\widehat{\bt^{(i)}_k}: K_{i}(A\otimes C(\T), \Z/k\Z)\to
\bt^{(i-1)}_k(K_{i-1}(A,\Z/k\Z))$ similarly to that of
$\widehat{\bt^{(i)}}.,$ $i=1,2.$ If $x\in \underline{K}(A),$ we use
${\boldsymbol{\beta}}(x)$ for $\bt^{(i)}(x)$ if $x\in K_i(A)$ and
for $\bt^{(i)}_k(x)$ if $x\in K_i(A, \Z/k\Z).$ Thus we have a map
${\boldsymbol{ \bt}}: \underline{K}(A)\to \underline{K}(A\otimes
C(\T))$ as well as $\widehat{\boldsymbol{\bt}}:
\underline{K}(A\otimes C(\T))\to
 {\boldsymbol{ \bt}}(\underline{K}(A)).$ Therefore one may write
 $\underline{K}(A\otimes C(\T))=\underline{K}(A)\oplus {\boldsymbol{ \bt}}( \underline{K}(A)).$
On the other hand ${\bar h}$ induces \hm s ${\bar h}_{*i,k}:
K_i(A\otimes C(\T)), \Z/k\Z)\to K_i(B,\Z/k\Z),$ $k=0,2,...,$ and
$i=0,1.$

We use $\text{Bott}(h,v)$ for all  \hm s ${\bar h}_{*i,k}\circ
\bt^{(i)}_k.$ We write
$$
\text{Bott}(h,v)=0,
$$
if ${\bar h}_{*i,k}\circ \bt^{(i)}_k=0$ for all $k\ge 1$ and
$i=0,1.$
We will use $\text{bott}_1(h,v)$ for the \hm\, ${\bar h}_{1,0}\circ
\bt^{(1)}: K_1(A)\to K_0(B),$ and $\text{bott}_0(h,u)$ for the \hm\,
${\bar h}_{0,0}\circ \bt^{(0)}: K_0(A)\to K_1(B).$
Since $A$ is unital, if $\text{bott}_0(h,v)=0,$ then $[v]=0$ in
$K_1(B).$

In what follows, we will use $z$ for the standard generator of
$C(\T)$ and we will often identify $\T$ with the unit circle without
further explanation. With this identification $z$ is the identity
map from the circle to the circle.

}
\end{df}

\begin{NN}\label{ddbot}
{\rm Given a finite subset ${\cal P}\subset \underline{K}(A),$ there
exists a finite subset ${\cal F}\subset A$ and $\dt_0>0$ such that
$$
\text{Bott}(h, v)|_{\cal P}
$$
is well defined, if
$$
\|[h(a),\, v]\|=\|h(a)v-vh(a)\|<\dt_0\tforal a\in {\cal F}
$$
(see 2.10 of \cite{Lnhomp}).
There is $\dt_1>0$ (\cite{Lo}) such that $\text{bott}_1(u,v)$ is
well defined for any pair of unitaries $u$ and $v$ such that
$\|[u,\, v]\|<\dt_1.$ As in 2.2 of \cite{ER}, if $v_1,v_2,...,v_n$
are unitaries such that
$$
\|[u, \, v_j]\|<\dt_1/n,\,\,\,j=1,2,...,n,
$$
then
$$
\text{bott}_1(u,\,v_1v_2\cdots v_n)=\sum_{j=1}^n\text{bott}_1 (u,\,
v_j).
$$
By considering  unitaries $z\in  {\widetilde{A\otimes C}}$ ($C=C_n$
for some commutative \CA\, with torsion $K_0$ and $C=SC_n$), from
the above, for a given unital  \CA\, $A$ and a
given finite subset ${\cal P}\subset \underline{K}(A),$ one obtains
a universal constant $\dt>0$ and a finite subset ${\cal F}\subset A$
satisfying the following:
\beq\label{ddbot-1}
\text{Bott}(h,\, v_j)|_{\cal P}\,\,\, \text{is well defined}\andeqn
\text{Bott}(h,\, v_1v_2\cdots v_n)=\sum_{j=1}^n \text{Bott}(h,\,
v_j),
\eneq
for any unital \hm\, $h$ and unitaries $v_1, v_2,...,v_n$ for which
\beq\label{ddbot-2}
\|[h(a),\, v_j]\|<\dt/n,\,\,\,j=1,2,...,n,
\rforal a\in {\cal F}.
\eneq

If furthermore, $K_i(A)$ is finitely generated, then (\ref{dkl1})
holds. Therefore, there is a finite subset ${\cal Q}\subset
\underline{K}(A),$ such that
$$
\text{Bott}(h,v)
$$
is well defined if $\text{Bott}(h, v)|_{\cal Q}$ is well defined
(see also 2.3 of \cite{Lnhomp}).
See Section 2 of \cite{Lnhomp} for the further information. }

\end{NN}

\begin{NN}\label{DT1}
{\rm Let $A$ be a unital \CA. Denote by $T(A)$ the tracial state
space of $A.$ Suppose that $T(A)\not=\emptyset.$ Let $B$ be another
unital \CA\, with $T(B)\not=\emptyset.$ Suppose that $\phi: A\to B$
is a unital \hm. Denote by $\phi_{\sharp}:Aff(T(A))\to Aff(T(B))$
the positive \hm\, defined by
$\phi_{\sharp}(\hat{a})(\tau)=\tau\circ \phi(a)$ for all $a\in
A_{s.a}.$}

\end{NN}

\begin{NN}

{\rm Let $X$ be a compact metric space and let $A$ be a unital \CA\,
with $T(A)\not=\emptyset.$ Let $L: C(X)\to A$ be a unital positive
linear map. For each $\tau\in T(A)$ denote by $\mu_{\tau\circ L}$
the Borel  probability measure induced by $\tau\circ L.$

}

\end{NN}

\begin{NN}
{\rm Let $X_1, X_2,...,X_m$ be compact metric spaces. Fix a base
point $\xi_i\in X_i,$ $i=1,2,...,m.$ We write $X_1\vee X_2\vee
\cdots \vee X_m$ the space resulted by gluing  $X_1,X_2,...,X_m$
together at $\xi_i$ (by identifying all base points at one point
$\xi_1$).  Denote by $\xi_0$ the common point.
 If  $x, y\in X_i,$ then ${\rm dist}(x, y)$ is defined to be the same as that in $X_i.$ If
 $x\in X_i, y\in X_j$ with $i\not=j,$ and $x\not=\xi_0,$ $y\not=\xi_0,$
 then we define
 $$
 {\rm dist}(x,y)={\rm dist}(x, \xi_0)+{\rm dist}(y,\xi_0).
 $$

}

\end{NN}

\begin{df}{\rm (\cite{Lnplms})}\label{dtr1}\,\,\,
{\rm Let $A$ be a unital simple \CA. $A$ is said to have tracial
rank no more than one ($TR(A)\le 1$) if the following hold: For
any $\ep>0,$ any $a\in A_+\setminus\{0\}$ and any finite subset
${\cal F}\subset A$ there exists a projection $p\in A$ and a
\SCA\, $B=\oplus_{i=1}^kM_{r(i)}(C(X_i)),$ where each $X_i$ is a
finite CW complex with covering dimension no more than $1,$ with
$1_B=p$ such that
\begin{enumerate}\label{dtr1-1}
\item $\|px-xp\| < \ep\rforal x\in {\cal F},$
\item ${\rm dist}(pxp, B)<\ep\tforal {\cal F}\andeqn $
\item $1-p\,\,\,{\rm is \,\,\,equivalent \,\,\, to\,\,\, a \,\,\,
projection\,\,\,in}\,\,\, \overline{aAa}.$
\end{enumerate}

If in the above definition, $X_i$ can always be chosen to be a
point, then we say $A$ has tracial rank zero and write $TR(A)=0.$ If
$TR(A)\le 1$ but $TR(A)\not=0,$ then we write $TR(A)=1$ and say $A$
has tracial rank one.
By 7.1  of \cite{Lnplms}, if $TR(A)\le 1,$ then $A$ has TAI, i.e.,
in the above definition, one may replace $X_i$ by $[0,1]$ or by a
point.

}

\end{df}

\begin{NN}\label{tdvi}
{\rm Let $A$ be a unital separable simple \CA\, with $TR(A)\le 1.$
Then $A$ is  tracially approximately divisible: i.e., for any
$\ep>0,$ any finite subset ${\cal F}\subset A,$ any $a\in
A_+\setminus \{0\}$ and any integer $N\ge 1,$ there exists a
projection $p\in A$ and a finite dimensional \SCA\,
$D=\oplus_{i=1}^k M_{r(i)}$ with $r(j)\ge N$ and with $1_D=p$ such
that

\begin{enumerate}
\item $\|[x, y]\|<\ep\tforal x\in {\cal F}\andeqn \tforal y\in D$
 with  $\|y\|\le 1;$

\item $1-p$ is  equivalent to a projection in ${\overline{aAa}}$

\end{enumerate}
(see 5.4 of \cite{Lntr1}).

}
\end{NN}

\section{A uniqueness theorem}

This section will not be used until the proof of \ref{MT1}.

\begin{lem}{\rm (Proposition 4.47' of \cite{G})}\label{G}
Let $X$ be a connected simplicial complex, let ${\cal F}\subset
C(X)$ be a finite subset, let $\ep>0,$ $\ep_1>0$ be positive
numbers, and  let  $N\ge 1$ be an integer.  There exists $\eta_1>0$
with the following properties.

For any $\sigma_1>0$ and any $\sigma>0,$  there exists a positive
number  $\eta>0$ and an integer $K>4/\ep$ (which are  independent of
$\sigma$), there exists a positive number $\dt>0,$ an integer $L>0$
and a finite subset ${\cal G}\subset C(X)$ satisfying the following:

Suppose that $\phi, \psi: C(X)\to PM_n(C(Y))P$ (where $Y$ is a
connected simplicial complex with ${\rm dim}Y\le 3$), where $rank
(P)\ge L,$  are two unital \hm s such that
\beq\label{G1}
\mu_{\tau\circ \phi}(O_{\eta_1})\ge
\sigma_1\eta_1,\,\,\,\mu_{\tau\circ \phi}(O_{\eta})\ge \sigma\eta
\tforal \tau\in T(PM_n(C(X))P)
\eneq
and for all open balls $O_{\eta_1}$ with radius $\eta_1$ and open balls $O_\eta$ with radius
$\eta_2,$ respectively,
and
\beq\label{G2}
|\tau\circ \phi(g)-\tau\circ \psi(g)|<\dt\tforal g\in {\cal G}.
\eneq

Then there exist mutually orthogonal projections $P_0$ and $P_1$
(with $P_0+P_1=P$), a unital \hm\, $\phi_1: C(X)\to
P_1(M_n(C(Y))P_1)$ factoring through $C([0,1]),$ and a unitary $u\in
P(M_n(C(Y)))P$ such that
\beq\label{G3}
\|\phi(f)-[P_0\phi(f)P_0+\phi_1(f)]\|&<&1/4K\andeqn \\\label{G3+}
\|{\rm ad}\, u\circ \psi(f)-[P_0({\rm ad}\, u\circ
\psi(f))P_0+\phi_1(f)]\|&<&1/4K\tforal f\in {\cal F},
\eneq
\beq\label{G4}
{\rm rank }P_0\ge {{\rm rank}P\over{K}},
\eneq
there are mutually orthogonal projections $q_1,q_2,...,q_m\in
P_1(M_n(C(Y)))P_1$ and an $\ep_1$-dense subset $\{x_1,x_2,...,x_m\}$
such that
\beq\label{G5}
\|\phi_1(f)-[(P_1-\sum_{j=1}^mq_j)\phi_1(f)(P_1-\sum_{j=1}^m
q_j)+\sum_{j=1}^m f(x_j)q_j]\|<\ep
\eneq
for all $f\in {\cal F}$ and
\beq\label{G6}
{\rm rank}(q_j)\ge N\cdot ({\rm rank }P_0+2{\rm dim
Y}),\,\,\,j=1,2,...,m.
\eneq

\end{lem}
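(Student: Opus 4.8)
Since the statement is a mild reformulation of Proposition 4.47' of \cite{G}, the natural plan is to follow Gong's argument: decompose $\phi$ and $\psi$ separately into a bounded-rank piece plus a large direct sum of point-evaluations, extract the largest common point-evaluation sub-\hm\ $\phi_1$, absorb everything else into $P_0$, and conjugate $\psi$ by a unitary that makes the two copies of $\phi_1$ literally coincide. The order of quantifiers is what makes this work. First, $\eta_1$ is chosen depending only on ${\cal F},\ep,\ep_1$ and $N$, small enough to govern both the $\ep_1$-density of the point set and the scale at which point-evaluations approximate elements of ${\cal F}$. Given $\sigma_1$ and $\sigma$, one then chooses $K$ (depending on $\sigma_1$, hence on $\eta_1$, but \emph{not} on $\sigma$) with $1/4K<\ep/16$ and $N/K<\sigma_1\eta_1/2$. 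Only afterwards are $\eta$ (small relative to $\eta_1$), $\dt$ (small relative to $\sigma\eta$), $L$ (large), and ${\cal G}$ (a finite set of functions fine enough to detect the mass of every $\eta$-ball) selected.

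\textbf{Decomposing $\phi$ and $\psi$.} Because ${\rm rank}(P)\ge L$ and the measure conditions (\ref{G1}) hold, I apply Gong's decomposition theorem (the main technical device of \cite{G}) to $\phi$: up to $1/8K$ on ${\cal F}$ and after an inner unitary, $\phi=\phi^{(0)}\oplus h_\phi$, where $h_\phi=\bigoplus_j \ell_j^\phi\,\pi_{x_j}$ is a genuine direct sum of point-evaluations at an $\ep_1/2$-dense set $\{x_1,\dots,x_m\}\subset X$, with ${\rm rank}(\phi^{(0)})$ bounded by a constant depending only on the data fixed so far and each $\ell_j^\phi\ge ({\rm rank}\,P)\sigma_1\eta_1/2$. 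The hypothesis (\ref{G2}) with ${\cal G},\dt$ chosen as above forces $\mu_{\tau\circ\psi}(O_{\eta_1})\ge \sigma_1\eta_1/2$ and $\mu_{\tau\circ\psi}(O_\eta)\ge\sigma\eta/2$ for all $\tau$ and all balls, so the same theorem applies to $\psi$; refining to a common $\ep_1/2$-net I get $\psi=\psi^{(0)}\oplus h_\psi$ up to $1/8K$ on ${\cal F}$ after an inner unitary, with $h_\psi=\bigoplus_j\ell_j^\psi\,\pi_{x_j}$, ${\rm rank}(\psi^{(0)})$ bounded, and $\ell_j^\psi$ of the same order as $\ell_j^\phi$.

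\textbf{Extracting the common part.} Each $\ell_j^\phi$, $\ell_j^\psi$ differs from $({\rm rank}\,P)\,\mu_{\tau\circ\phi}(B_{x_j})$ only by a bounded amount coming from ${\rm rank}(\phi^{(0)}),{\rm rank}(\psi^{(0)})$, the overlap of the net, and the trace discrepancy $\dt\cdot{\rm rank}(P)$; hence $|\ell_j^\phi-\ell_j^\psi|\le R_0$ for a constant $R_0$ independent of ${\rm rank}(P)$. Set $\ell_j=\min(\ell_j^\phi,\ell_j^\psi)$ and $\phi_1=\bigoplus_j \ell_j\,\pi_{x_j}$, realized on a subprojection $P_1\le P$, with the summand projections as the $q_j$, so (\ref{G5}) holds with error $0$ and $P_1=\sum_j q_j$. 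Since $X$ is a connected simplicial complex, it is path connected; a path $\gamma\colon[0,1]\to X$ through $x_1,\dots,x_m$ gives $\phi_1=\beta\circ\gamma^*$ with $\beta\colon C([0,1])\to P_1M_n(C(Y))P_1$ the point-evaluations at $\gamma^{-1}(x_j)$, so $\phi_1$ factors through $C([0,1])$. Now $h_\phi=\phi_1\oplus\bigl(\bigoplus_j(\ell_j^\phi-\ell_j)\pi_{x_j}\bigr)$ exactly, and because two point-evaluation \hm s into $PM_n(C(Y))P$ of the same multiplicity type are unitarily equivalent once the multiplicities exceed ${\rm dim}\,Y$ (trivialise the rank-$\ell_j$ bundles over $Y$, using ${\rm dim}\,Y\le 3$ and the slack $2{\rm dim}\,Y$ in (\ref{G6})), there is a unitary $w$ with $w^*h_\psi w=\phi_1\oplus\bigl(\bigoplus_j(\ell_j^\psi-\ell_j)\pi_{x_j}\bigr)$, the copy of $\phi_1$ matching literally.

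\textbf{Assembly, rank bookkeeping, and the main obstacle.} Put $P_0=P-P_1$ and let $u$ be the inner unitary from the $\psi$-step composed with $w$; cutting $\phi$ and ${\rm ad}\,u\circ\psi$ down by $P_0$ and $P_1$ yields (\ref{G3}) and (\ref{G3+}) within $1/8K+1/8K=1/4K$ on ${\cal F}$, since both maps nearly respect the splitting $P_0\oplus P_1$ on ${\cal F}$. If ${\rm rank}(P_0)<{\rm rank}(P)/K$ one transfers multiplicity from $\phi_1$ to $P_0$ by lowering finitely many $\ell_j$ (keeping each large) until (\ref{G4}) holds, while arranging ${\rm rank}(P_0)\le 2{\rm rank}(P)/K$; then from $\ell_j\ge ({\rm rank}\,P)\sigma_1\eta_1/2-R_0$, ${\rm rank}(P)\ge L$ large, and $N/K<\sigma_1\eta_1/2$ one deduces $\ell_j\ge N({\rm rank}(P_0)+2{\rm dim}\,Y)$, which is (\ref{G6}). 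The delicate point, and the place where the quantifier structure is forced, is exactly this last bookkeeping: one must hold ${\rm rank}(P_0)$ between ${\rm rank}(P)/K$ and a small multiple of ${\rm rank}(P)/K$, keep every multiplicity of $\phi_1$ above $N({\rm rank}(P_0)+2{\rm dim}\,Y)$, and do so with $K$ independent of the fine-scale constant $\sigma$ — which is why $K$ is permitted to depend on $\sigma_1$ (through $\eta_1$) while $\eta,\dt,L,{\cal G}$ are chosen last. The secondary technical nuisance is the honest (not merely approximate) unitary matching of point-evaluation homomorphisms over the three-dimensional base $Y$, which is precisely what the $2{\rm dim}\,Y$ in (\ref{G6}) and the hypothesis ${\rm dim}\,Y\le 3$ are there to supply.
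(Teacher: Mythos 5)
Your argument breaks down at its very first step. You invoke a ``decomposition theorem'' asserting that, after an inner unitary, $\phi$ is within $1/8K$ on ${\cal F}$ of $\phi^{(0)}\oplus\bigoplus_j\ell_j^{\phi}\pi_{x_j}$ with the non--point-evaluation part $\phi^{(0)}$ of rank bounded independently of ${\rm rank}\,P$. No such statement holds, even under the measure hypotheses (\ref{G1}), and it is not what the decomposition results of \cite{G} provide: there the exceptional corner is only small \emph{relative} to ${\rm rank}\,P$, and, more importantly, the large part is not a sum of point evaluations but contains a summand factoring through one-dimensional spaces. A concrete obstruction: take $X=Y=[0,1]$ and $\phi$ into $M_n(C[0,1])$ diagonal, with half of the eigenvalue functions constant at a uniform grid (this guarantees $\mu_{\tau\circ\phi}(O_a)\gtrsim a/2$ at every scale, so both sdp-type hypotheses hold) and the other half equal to $t\mapsto t$. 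For $f_0(x)=x$, the number of eigenvalues of $\phi(f_0)(t)$ lying within $1/4$ of a fixed grid point jumps by $n/2$ as $t$ crosses that point, whereas for any map of the form $\phi^{(0)}(f_0)\oplus\sum_jf_0(x_j)q_j$ this count is constant in $t$ up to ${\rm rank}\,\phi^{(0)}$; hence no approximation of the kind you assume exists unless ${\rm rank}\,\phi^{(0)}$ is comparable to $n/2$. This is exactly why the conclusion of the lemma retains a homomorphism $\phi_1$ that merely \emph{factors through} $C([0,1])$, with the point evaluations $\sum_jf(x_j)q_j$ occupying only part of $P_1$ in (\ref{G5}); your outline, which would give (\ref{G5}) with error zero and $P_1=\sum_jq_j$, proves ``too much.'' A secondary flaw: two point-evaluation homomorphisms with equal multiplicities need not be unitarily equivalent over a base of dimension $\le 3$ (the $q_j$ must have matching $K_0$-classes, e.g.\ be trivial projections), so your literal matching of the common part also needs repair, though that is a fixable detail.

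The paper's proof is of an entirely different nature: it does not reconstruct the decomposition at all, but quotes Proposition 4.47$'$ of \cite{G} verbatim, which is already a two-homomorphism statement whose hypotheses are the sdp-type conditions and trace closeness, and whose conclusion already contains the common $\phi_1$ factoring through $C([0,1])$, the unitary $u$, the bound ${\rm rank}\,P_0\ge{\rm rank}\,P/K$, and point evaluations at an $\ep_2/4$-dense set with ${\rm rank}\,e_i\ge{\rm rank}\,P_0+2\,{\rm dim}\,Y$. The only work done in the paper is the translation of (\ref{G1})--(\ref{G2}) into the sdp data (which dictates the quantifier order you noticed) and the final regrouping: since an open $\ep_1$-ball contains at least $N$ disjoint $\ep_2$-balls, one may move points by at most $N\ep_2<\min\{\ep_1/2,\eta_0\}$ and sum the corresponding $e_i$ into the $q_j$, which yields the factor $N$ in (\ref{G6}) at the cost of enlarging the error from $\ep/2$ to $\ep$. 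If you want a self-contained proof rather than a citation, you must reprove Gong's proposition itself, and any such proof has to produce the interval-factoring summand rather than a pure point-evaluation part.
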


\begin{proof}

This is a reformulation of Proposition 4.47' of \cite{G} and follows
from that  immediately.



We now will apply Proposition 4.47' of \cite{G}. Let $\ep>0,$
$\ep_1>0,$ $N$ and ${\cal F}$ be given. Choose $\eta_0>0$ such that
\beq\label{G-7}
|f(x)-f(x')|<\ep/2\tforal f\in {\cal F}.
\eneq
Choose $\ep_2=\min\{ \ep_1/3N, \eta_0/3N\}.$ Let $\eta_1'>0$ (in place of $\eta$) be as in  Proposition
4.47' of \cite{G} for $\ep/2,$  $\ep_2$ (in place of $\ep_1$) and
${\cal F}.$
Let $\sigma_1>0$ and $\sigma>0.$  Put  $\dt_1=\sigma_1\cdot
\eta_1'/32.$ Let $K>4/\ep$  and ${\tilde \eta}$
 be as in Proposition 4.47' of \cite{G} for the above
$\ep/4,$ $\ep_2$ (in place of $\ep_1$) and $\dt_1$ ( in place of
$\dt$).  Let ${\tilde \dt}=\sigma\cdot {\tilde \eta}/32.$ Let $L\ge
1$ be an integer and let ${\cal G}\subset C(X)$ be a finite subset
which corresponds the finite subset $H$ in Proposition 4.47' of
\cite{G}.
Let $\eta_1=\eta_1'/32,$ $\eta={\tilde \eta}/32$ and let $0<\dt<
{\tilde \dt}/4.$ Suppose that $\phi$ and $\psi$ satisfy the
assumption of the lemma for the above $\eta_1,$ $\eta,$ $\dt,$ $K,$
$L$ and ${\cal G}.$

It follows that $\phi$ has the properties ${\rm sdp}(\eta_1/32,
\dt_1)$ and ${\rm sdp}({\tilde \eta}/32, {\tilde \dt})$ (see 2.1 of
\cite{G}).
One then applies Proposition 4.47' of \cite{G}
to obtain
\beq\label{ng3}
\|\phi(f)-[P_0\phi(f)P_0+\phi_1(f)]\|&<&1/4K\andeqn \\\label{ng3+}
\|{\rm ad}\, u\circ \psi(f)-[P_0({\rm ad}\, u\circ
\psi(f))P_0+\phi_1(f)]\|&<&1/4K\tforal f\in {\cal F},
 \eneq
and
   mutually orthogonal projections $e_1,e_2,...,e_{m_1}$
in $P_1(M_n(C(Y)))P_1$ and $\ep_2/4$-dense subset $\{x_1',
x_2',...,x_{m_1}'\}$ of $X$ such that
\beq\label{G-9}
\|\phi_1(f)-[(P_1-\sum_{i=1}^{m_1}e_i)\phi_1(f)(P_1-\sum_{i=1}^{m_1}e_i)+\sum_{i=1}^{m_1}f(x_i')e_i]\|<\ep/2
\eneq
for all $f\in {\cal F},$
\beq\label{G-10}
{\rm rank P}_0\ge {{\rm rank P}\over{K}}\andeqn {\rm rank} e_i\ge
{\rm rank}P_0+2{\rm dim}Y.
\eneq

Since there are at least $N$ many disjoint open balls with radius
$\ep_2$ in an open ball of radius $\ep_1,$   by moving points within
$N\ep_2<\min\{\ep_1/2, \eta_0\},$ by (\ref{G-7}), one may write
\beq\label{G-11}
\|\phi_1(f)-[(P_1-\sum_{i=1}^{m_1}e_i)\phi_1(f)(P_1-\sum_{i=1}^{m_1}e_i)+\sum_{i=1}^{m}f(x_i)q_i]\|<\ep
\eneq
for all $f\in {\cal F}$ and
\beq\label{G-12}
{\rm rank} q_i\ge N({\rm rank}P_0+2{\rm dim}Y),
\eneq
where $\sum_{i=1}^mq_i=\sum_{i=1}^{m_1}e_i.$

\end{proof}

The following is a generalization of Theorem 2.11 of \cite{EGL2}.
The proof is essentially the same but we will also apply \cite{GLk}.

\begin{thm}{\rm (cf. Theorem 2.11 of \cite{EGL2})}\label{Tegl}
Let $X$ be a finite simplicial  complex, let ${\cal F}\subset C(X)$
be a finite subset and let $\ep>0.$
There exists $\eta_1>0$ with the following property.

For any $\sigma_1>0$ and $\sigma>0,$ there exists $\eta>0$ and an
integer $K$ (which are independent of  $\sigma$), there exists
 $\dt>0,$  a finite subset ${\cal
G}\subset C(X),$ a finite subset ${\cal P}\subset
\underline{K}(C(X)),$ a finite subset ${\cal U}\subset {\bf
P}^{(1)}(C(X)))$ and a positive integer $L$ satisfying the
following:

Suppose that $\phi, \psi: C(X)\to PM_k(C(Y))P,$ where $Y$ is a
connected simplicial complex with ${\rm dim}Y\le 3,$  are two unital
\hm s such that
\beq\label{tegl-1}
\mu_{\tau\circ \phi}(O_{\eta_1})\ge \sigma_1\eta_1\andeqn
\mu_{\tau\circ \phi}(O_\eta)\ge \sigma\eta
\eneq
for all open balls $O_{\eta_1}$ with radius $\eta_1$ and open balls $O_\eta$ with radius
$\eta,$ and
\beq\label{tegl-2}
&&|\tau\circ \phi(g)-\tau\circ \psi(g)|<\dt\tforal g\in {\cal G}
\eneq
and for all $\tau\in T(PM_k(C(Y))P),$
\beq\label{tegl-3}
 {\rm rank}(P)&\ge& L,\\
\label{tegl-4} [\phi]|_{\cal P}&=&[\psi]|_{\cal P}\andeqn
\\\label{tegl-4+}
 {\rm dist}(\phi^{\ddag}({\bar z}),\psi^{\ddag}({\bar z}))&<&1/8K\pi
\eneq
for all $z\in {\cal U}.$ Then there exists a unitary $u\in
PM_k(C(X))P$ such that
\beq\label{tegl-5}
\|\phi(f)-{\rm ad}\, u\circ \psi(f)\|<\ep\tforal f\in {\cal F}.
\eneq

\end{thm}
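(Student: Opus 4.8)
The plan is to combine the decomposition Lemma~\ref{G} with the uniqueness machinery of \cite{EGL2} (Theorem~2.11), upgraded to account for the determinant/commutator-quotient data via \cite{GLk}. Since $X$ is a finite simplicial complex, it is a finite disjoint union of connected finite simplicial complexes; after a routine reduction (choosing $\eta_1$ smaller than the distance between components and treating each component separately) we may assume $X$ is connected, which is the setting of Lemma~\ref{G}. The strategy is the standard ``cutting down to a controlled remainder plus a dominating path-algebra part'' argument: first invoke Lemma~\ref{G} to split $\phi$ and $\mathrm{ad}\,u_0\circ\psi$ (for a suitable first unitary $u_0$) into a small piece $P_0(\cdot)P_0$ of controlled rank together with a common part $\phi_1$ factoring through $C([0,1])$, and then to split $\phi_1$ further into a remainder on $P_1-\sum q_j$ plus point evaluations $\sum f(x_j)q_j$ at an $\ep_1$-dense set, with each $q_j$ of rank at least $N(\mathrm{rank}\,P_0+2\dim Y)$.

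First I would fix the constants. Given ${\cal F}$ and $\ep$, I would enlarge ${\cal F}$ to a subset ${\cal F}_1$ fine enough that the uniqueness theorem for maps into finite-dimensional (or $C([0,1],M_k)$) algebras from \cite{Lncd}/\cite{EGL2}, enhanced by the $\phi^{\ddag}$-rigidity from \cite{GLk}, applies with tolerance $\ep/4$; this fixes the data $\dt,{\cal G},{\cal P},{\cal U},N$ and the integer $K$ needed there. Then I would feed $\ep/4$, an auxiliary $\ep_1$ (an $\ep_1$-dense set suffices to recover a point-evaluation model up to $\ep/4$), $N$ and ${\cal F}_1$ into Lemma~\ref{G}, which outputs $\eta_1$ (depending only on ${\cal F}_1,\ep,\ep_1,N$, not on $\sigma,\sigma_1$), and then for each $\sigma_1,\sigma$ the numbers $\eta, K, \dt, L$ and ${\cal G}$; I would intersect all these with the requirements of the finite-dimensional uniqueness theorem, shrinking $\dt$ and enlarging ${\cal G},{\cal P},{\cal U},L$ as needed. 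The measure hypotheses \eqref{tegl-1} are exactly \eqref{G1}, and \eqref{tegl-2}, \eqref{tegl-3} match \eqref{G2}, \eqref{tegl-3}.

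With the decomposition in hand, the core step is: on the dominating part the two maps are now genuinely a common summand $\phi_1$ (after conjugation by $u_0$), so up to $1/2K<\ep/4$ on ${\cal F}_1$ both $\phi$ and $\mathrm{ad}\,u_0\circ\psi$ look like $P_0(\cdot)P_0 \oplus (\text{remainder of }\phi_1)\oplus \sum_j f(x_j)q_j$. The $KK$-identity \eqref{tegl-4} together with $[\,\text{common part}\,]$ being equal forces the compressions to $P_0$ to carry the \emph{same} $\underline{K}$-data on ${\cal P}$ (by subtraction in $\underline K$), and similarly \eqref{tegl-4+} together with equality on the common part forces $\mathrm{dist}\big((P_0\phi P_0)^{\ddag}(\bar z),(P_0\,\mathrm{ad}\,u_0\psi\,P_0)^{\ddag}(\bar z)\big)<1/8K\pi\cdot(\text{const})$ on ${\cal U}$; here the rank bound $\mathrm{rank}(q_j)\ge N(\mathrm{rank}\,P_0+2\dim Y)$ is what lets the point-evaluation masses at the $\ep_1$-dense $\{x_j\}$ swallow the small piece $P_0(\cdot)P_0$ in a ``spectral multiplicity'' sense — this is precisely the hypothesis under which the finite-dimensional uniqueness theorem (with $\ddag$-rigidity, \cite{GLk} plus Theorem~2.11 of \cite{EGL2}) produces a unitary $w\in P_0 M_k(C(Y))P_0$ conjugating the two $P_0$-compressions to within $\ep/4$ on the relevant finite set, while leaving $\phi_1$ fixed. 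Setting $u=u_0(w\oplus 1_{P_1})$ and assembling the three estimates $1/4K+1/4K+\ep/4<\ep$ yields \eqref{tegl-5}.

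The main obstacle is the bookkeeping at the rigidity level: ensuring that the decomposition in Lemma~\ref{G} is compatible with the $KK$- and $U/CU$-data, i.e.\ that $[\phi]|_{\cal P}$ and $\phi^{\ddag}|_{\cal U}$ for the original maps transfer correctly to the compressed maps $P_0\phi P_0$ so that \eqref{tegl-4}, \eqref{tegl-4+} become usable hypotheses for the finite-dimensional theorem. This requires choosing ${\cal P}$ and ${\cal U}$ (and the multiplicativity tolerance $\dt$, via \ref{Dppu} and \ref{ddbot}) so that all the relevant $\underline K$-classes and $\langle\cdot\rangle(z)$-unitaries are well defined for the pieces, and controlling the cumulative error in $\mathrm{dist}$ on $U/CU$ after two cut-downs (the factor $1/8K\pi$ in \eqref{tegl-4+} is sized for exactly this). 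The remaining work — verifying that ``$\mathrm{sdp}$'' conditions in \cite{G} follow from the $\sigma,\sigma_1$-lower bounds, and that moving the $x_j'$ to an $\ep_1$-dense set costs at most $\ep$ — is routine and parallels the proof of Lemma~\ref{G} above.
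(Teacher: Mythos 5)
Your overall skeleton---reduce to connected $X$, apply Lemma \ref{G} to write $\phi$ and ${\rm ad}\,u_0\circ\psi$ as a small corner $P_0(\cdot)P_0$ plus a common homomorphism $\phi_1$ with dominating point evaluations, then feed the corner together with the point evaluations into the uniqueness machinery of \cite{GLk}---is the same as the paper's. But there is a genuine gap at the one step that is the heart of the theorem, namely how hypothesis (\ref{tegl-4+}) is actually used. You assert that ``by subtraction'' the compressions $P_0\phi P_0$ and $P_0({\rm ad}\,u_0\circ\psi)P_0$ inherit a $U/CU$-distance bound of order $1/8K\pi$, and that this smallness is ``precisely the hypothesis'' of the finite-dimensional uniqueness theorem. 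Neither half of this is correct. First, $U/CU$-distance (equivalently the determinant $D_C$ of \ref{DD}) does not pass to corners with the same constant: cutting to $P_0$, whose relative rank is only about $1/K_1$, multiplies the determinant bound by the rank ratio (Lemma 3.3(2) of \cite{Ph1}), so the smallness is entirely consumed---what survives on the corner is only a bound of fixed size, roughly $(1+6N_1^2\pi)/4N_1$, and this is exactly why the hypothesis is calibrated as $1/8K\pi$ with $K=2N_1K_1$ tied to the rank ratio coming from Lemma \ref{G}. Second, the theorem that does the work (Theorem 1.1 of \cite{GLk}; compare \ref{GL2}) does not take a $U/CU$-distance hypothesis at all: it requires a bound on the exponential length ${\rm cel}$ of the relevant unitaries, measured against a prescribed length function. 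The missing chain of argument is: the $U/CU$ estimate gives a determinant bound for $z_1'(z_2')^*\oplus({\bar P}-{\bar P_0})$ in the big algebra; Phillips' Lemma 3.3(2) and the rank condition ${\rm rank}\,P_0\ge {\rm rank}\,P/K_1$ transfer it to the $P_0$-corner; the $KK$-hypothesis (\ref{tegl-4}) together with ${\rm dim}\,Y\le 3$ puts $z_1'(z_2')^*$ (after adding $3k_0m_1$ copies of $P_0'$) into $U_0$; and then 3.4 of \cite{Ph1} yields ${\rm cel}\le (2N_1+7)\pi$, which is the bound demanded by \cite{GLk}. Without this conversion the hypothesis (\ref{tegl-4+}) cannot be brought to bear, and your appeal to the uniqueness theorem is unsupported.

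A smaller but related slip: the unitary produced by \cite{GLk} does not lie in $P_0M_k(C(Y))P_0$ and does not ``leave $\phi_1$ fixed''; it lies in the corner $(P_0+\sum_j q_j)PM_k(C(Y))P(P_0+\sum_j q_j)$ and conjugates $\psi'\oplus\sum_j f(x_j)q_j$ to $\phi'\oplus\sum_j f(x_j)q_j$. Allowing the high-rank point evaluations to mix with the $P_0$-part is the whole content of the stable (absorption) uniqueness theorem; only the remaining part of $\phi_1$ living under $P_1-\sum_j q_j$ is left untouched, and the final unitary is assembled accordingly.
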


\begin{proof}
It is clear that we may assume that $X$ is connected.
Since $X$ is a simplicial simplex, there is  $k_0\ge 1$ such that
for any unital separable \CA\, $A,$
$$
Hom_{\Lambda}(\underline{K}(C(X)),
\underline{K}(A))=Hom_{\Lambda}(F_{k_0}\underline{K}(C(X)),
F_{k_0}\underline{K}(A))
$$
(see \cite{DL2}).

Let $C_j$ be a commutative \CA\, with $K_0(C_j)=\Z/j\Z$ and
$K_1(C_j)=\{0\},$ $j=1,2,...,k_0.$ Put $D_0=C(X)$ and
$D_j=(C(X)\otimes C_j{\tilde )},$ $j=1,2,...,k_0.$
There is  an integer $m_1\ge 1$ such that
 $U(M_{m_1}(D_j))/U_0(M_{m_1}(D_j))=K_1(D_j),$ $j=0,1,2,...,k_0.$
Put $N_1=(m_1)^2.$
Let $r: \N\to \N$ such that $r(n)=3k_0n.$ Let $b:
 U(M_{\infty}(C(X)))\to \R_+$ be defined by $b(u)=(8+2N_1)\pi .$

Let $\ep>0$ and ${\cal F}$ be given. We may assume, without loss of
generality, that ${\cal F}$ is in the unit ball of $C(X).$
Let $1>\dt_1>0$ (in place of $\dt$), let ${\cal G}_1\subset
 C(X),$ let $l\ge 1$ be an integer, let ${\cal P}_0\subset {\bf
 P}^{(0)}(C(X))$ and let ${\cal U}\subset {\bf P}^{(1)}(C(X))$ be as
 required by Theorem 1.1 of \cite{GLk} for $\ep/4$ and ${\cal F}$ (and
 for the above $r$ and $b$).
We may assume that ${\cal U}\subset \cup_{j=0}^{k_0}M_{m_1}(D_j)).$
We may also assume that there is $l_1\ge 1$ such that ${\cal
P}_0\subset \cup_{j=0}^{k_0}M_{l_1}(D_j).$

  We also assume that, for any unital \CA\, $A,$  if
  $u$ is a unitary and $e$ is a projection for which
  $$
  \|eu-ue\|<\dt',
  $$
  there is a unitary $v\in eAe$ such that
  $$
  \|eue-v\|<2\dt'
  $$
for any $0<\dt'<\dt_1.$

Set ${\cal F}_1={\cal F}\cup {\cal G}_1.$
Let $\ep_1>0$ be such that
\beq\label{tegl-10}
|f(x)-f(x')|<\ep/4\tforal  f\in {\cal F}_1,
\eneq
if ${\rm dist}(x,x')<\ep_1.$

Put  $N=l+1$ and $\ep_2=\min\{\dt_1/4, \ep/4\}.$
Let $\eta_1>0$ be required by \ref{G} for $\ep/2$ (in place of
$\ep$), $\ep_1,$ ${\cal F}_1$ (in place of ${\cal F}$) and $N.$
Fix $\sigma_1>0.$ Let
 $\eta>0$  and  $K_1>4N_1/\ep_2$ (in place of $K$)  be required by
 \ref{G}.
 Fix  $\sigma>0.$
Let $\dt>0,$ an integer $L>0$ and let ${\cal G}\subset C(X)$ be a
finite subset required by \ref{G} for  $\ep_2$ ( in place of $\ep$),
${\cal F}_1$ ( in place of ${\cal F}$), $\sigma,$ $\sigma_1,$ and
$N.$

We may assume that ${\cal G}\supset {\cal F}_1.$ Let ${\cal
P}\subset \underline{K}(C(X))$ be a finite subset which consists of
the image of ${\cal P}_0$ and image of ${\cal U}$ in
$\underline{K}(C(X)),$ and let $K=2N_1K_1.$

Now suppose that $\phi, \psi: C(X)\to PM_k(C(Y))P$ are unital \hm s
such that (\ref{tegl-1}), (\ref{tegl-2}), (\ref{tegl-3}),
(\ref{tegl-4}) and (\ref{tegl-4+}) hold.
It follows from \ref{G} that there are mutually orthogonal
projections $P_0$ and $P_1$ with $P_0+P_1=P,$ a unital \hm\,
$\phi_1: C(X)\to P_1(M_n(C(Y))P_1)$ factoring through $C([0,1]),$
and a unitary $v\in P(M_n(C(Y)))P$ such that
\beq\label{g3}
\|\phi(f)-[P_0\phi(f)P_0+\phi_1(f)]\|&<&1/4K_1 \andeqn \\\label{g3+}
\|{\rm ad}\, v\circ \psi(f)-[P_0({\rm ad}\, v\circ
\psi(f))P_0+\phi_1(f)]\|&<&1/4K_1\tforal f\in {\cal F}_1,
\eneq
\beq\label{g4}
{\rm rank }P_0\ge {{\rm rank}P\over{K_1}},
\eneq
there are mutually orthogonal projections $q_1,q_2,...,q_m\in
P_1(M_n(C(Y)))P_1$ and an $\ep_1$-dense subset $\{x_1,x_2,...,x_m\}$
such that
\beq\label{g5}
\|\phi_1(f)-[(P_1-\sum_{j=1}^mq_j)\phi_1(f)(P_1-\sum_{j=1}^m
q_j)+\sum_{j=1}^m f(x_j)q_j]\|<\ep_2
\eneq
for all $f\in {\cal F}_1$ and
\beq\label{g6}
{\rm rank}(q_j)\ge N({\rm rank }P_0+2{\rm dim }Y),\,\,\,j=1,2,...,m.
\eneq

Note that $1/4K_1<\dt_1/16(N_1).$ For each $C_j,$ we may assume that
$C_j=C_0(Z_j\setminus \{\xi_j\}),$ where  $Z_j$ is a path connected
CW complex with $K_0(Z_j)=\Z\oplus \Z/j\Z$ and $K_1(Z_j)=\{0\}$ and
$\xi_j\in Z_j$ is a point. $j=1,2,...,k_0.$

For each $z\in {\cal U}$ and $z\in M_{m_1}(D_j),$ denote by
$z_1=({\tilde \phi}\otimes {\rm id}_{m_1})(z)$ and
$z_2=({\widetilde{{\rm ad}\,v\circ \psi)}}\otimes {\rm
id}_{m_1}(z),$ where ${\tilde \phi},\, {\widetilde{{\rm ad}\, v\circ
\psi}}: D_j\to (C(Y)\otimes C_j{\tilde)}$ is the induced \hm.

Identifying  $(M_k(C(Y)\otimes C_j{\tilde ))}$ with a \SCA\, of
$C(Z_j, M_k(C(Y)))$ and denote by $P_0'$ the constant projection
which is $P_0$ at each point of $Z_j$ and denote  by $P'$ the
constant  projection which is $P$ at each point of $Z_j.$
There are unitaries $z_1', z_2'\in
M_{m_1}(P_0'M_k((C(Y)\otimes C_j)P_0'{\tilde )})$ such that
\beq
\hspace{-0.4in}&&\|z_1'-{\bar P}_0z_1{\bar
P}_0\|<{2N_1\over{4K_1}}<\dt_1/8,\,\,\,
\,\,\,\|z_2'-{\bar P}_0z_2{\bar P}_0\|<{2N_1\over{4K_1}}<\dt_1/8,\\
\hspace{-0.4in}&&\|z_1-z_1'\oplus
\phi_1(z)\|<{3(N_1)^2\over{4K_1}}<\dt_1/4\andeqn
\|z_2-z_2'\oplus \phi_1(z)\|<{3(N_1)^2\over{4K_1}}<\dt_1/4,
\eneq
where ${\bar P}={\rm diag}(\overbrace{P', P',..., P'}^{m_1})$ and
${\bar P_0}={\rm diag}(\overbrace{P_0',P_0',...,P_0'}^{m_1}).$
By (\ref{tegl-4+}), one computes
that
\beq\label{tegl-11}
{\rm dist}(\overline{z_1'\oplus \phi_1(z)},\overline{(z_2'\oplus
\phi_1(z))})\le
{1\over{4K\pi}}+{6N_1\over{4K_1}}<{1+6N_1^2\pi\over{4N_1K_1\pi}},
\eneq
where $\overline{z_1'\oplus \phi_1(z)}$ and $\overline{(z_2'\oplus
\phi_1(z))})$ are the images of $z_1'\oplus \phi_1(z)$ and
$z_2'\oplus \phi_1(z)).$
It follows that
\beq\label{tegl-12}
D((z_1'(z_2')^*\oplus ({\bar P}-{\bar
P_0}))<{1+6N_1^2\pi\over{4N_1K_1}},
\eneq
where $D$ is the determinant defined in \ref{DD}.

Since ${\rm rank}P_0\ge {{\rm rank }P\over{K_1}},$ by Lemma 3.3 (2)
of \cite{Ph1},
\beq\label{tegl-13}
D_{P_0M_k(C(Y))P_0}(z_1'(z_2')^*)\le {1+6N_1^2\pi\over{4N_1}}.
\eneq
By the choice of ${\cal P}$ and the assumption (\ref{tegl-4}), since
${\rm dim}Y\le 3,$
\beq\label{tegl-14}
z_1'(z_2')^*\oplus {\rm diag}(\overbrace{P_0',
P_0',...,P_0'}^{3k_0m_1})\in U_0(M_{3k_0m_1}(P_0'M_k(D_j)P_0')).
\eneq
By 3.4 of \cite{Ph1},
\beq\label{tegl-15}
{\rm cel}(z_1'(z_2')^*\oplus {\rm diag}(\overbrace{P_0',
P_0',...,P_0'}^{3k_0m_1}))\le (2N_1\pi+\pi)+6\pi \le (2N_1+7)\pi
\eneq
for all $z\in {\cal U}.$
Denote by $\phi'=P_0\phi P_0$ and $\psi'=P_0({\rm ad}\, u\circ \psi)
P_0.$ Then both are $\dt_1$-${\cal F}_1$-multiplicative. By the
assumption (\ref{tegl-4}),
\beq\label{tegl-17}
[\phi']|_{\cal P}=[\psi']|_{\cal P}.
\eneq
Since ${\rm dim}Y\le 3,$ for any $p\in {\cal P}_0,$ it follows that
\beq\label{egl-17+}
[\phi'](p)\oplus {\rm
diag}(\overbrace{P_0,P_0,...,P_0}^{3k_0l_1})\sim [\psi'](p)\oplus
{\rm diag}(\overbrace{P_0,P_0,...,P_0}^{3k_0l_1} )
\eneq
for all $p\in {\cal P}_0.$
Note that $3k_0l_1=r(l_1)$ and $(2N_1+7)\pi+\dt_1/4<b(z)$ for any $z.$
Since (\ref{g6}) holds, $N\ge l$ and $\{x_1,x_2,...,x_m\}$ is
$\ep_1$-dense in $X,$ by Theorem 1.1 (and its remark) of \cite{GLk},
there exists a unitary $u_1\in (P_0+\sum_{j=1}^m
q_j)PM_k(C(Y))P(P_0+\sum_{j=1}^m q_j)$ such that
\beq\label{tegl-18}
\|u_1^*(\psi'(f)\oplus \sum_{j=1}^m f(x_j)q_j)u_1-\phi'(f)\oplus
\sum_{j=1}^m f(x_j)q_j)\|<\ep/4\tforal f\in {\cal F}.
\eneq
Define  $u =(u_1\oplus P-(P_0\oplus \sum_{j=1}^m q_j)v\in
PM_k(C(Y))P.$ Then, by (\ref{tegl-18}), (\ref{g5}), (\ref{g3}) and
(\ref{g3+}),
\beq\label{tegl-19}
\|{\rm ad}\, u\circ \psi(f)-\phi(f)\|<\ep\tforal f\in {\cal F}.
\eneq

\end{proof}

\begin{rem}

{\rm This statement could also be used in the proof of \cite{EGL2} to simplify some steps.

The statement and the proof of the above theorem could be
simplified a slightly if ${\rm dim}Y\le 1$ because of the
following version of Theorem 3.2 of \cite{GL}. }

\end{rem}

\begin{thm}\label{GL2}
Let $X$ be a compact metric space and $L: U(M_{\infty}(A))\to R_+$ be a map.
For any $\ep>0$ and any finite subset ${\cal F}\subset C(X),$ there
exists a positive number $\dt>0,$ a finite subset ${\cal G},$ a
finite subset ${\cal P}\subset \underline{K}(C(X)),$ a finite subset
${\cal U}\subset U(M_{\infty}(A)),$ an integer $l\ge 1$ and
$\ep_1>0$ satisfying the following: if $\phi, \psi: C(X)\to B,$
where $B=\oplus_{j=1}^mC(X_j,M_{r(j)}),$ $X_j=[0,1],$ or $X_j$ is a
point, are two unital $\dt$-${\cal G}$-multiplicative \morp s with
\beq\label{CL2-1}
[\phi]|_{\cal P}=[\psi]|_{\cal P}\andeqn {\rm
cel}(\phi(v)^*\psi(v))\le L(u)
\eneq
for all $v\in {\cal U},$ then there exists a unitary $u\in
M_{lm+1}(B)$ such that
\beq\label{CL2-2}
\|u^*{\rm diag}(\phi(f), \sigma(f))u-{\rm diag}(\psi(f),
\sigma(f))\|<\ep
\eneq
for all $f\in {\cal F},$ where $\sigma(f)=\sum_{i=1}^m f(x_i)e_i$
for any $\ep_1$-dense set $\{x_1,x_2,...,x_m\}$ and any set of
mutually orthogonal projections $\{e_1,e_2,...,e_m\}$ in $M_{lm}(B)$
such that $e_i$ is equivalent to ${\rm id}_{M_l(B)}.$

\end{thm}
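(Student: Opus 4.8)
The plan is to reduce Theorem \ref{GL2} to the more elaborate Theorem \ref{Tegl} (or rather to the machinery behind it), exploiting the fact that the target algebra $B=\oplus_j C(X_j, M_{r(j)})$ has each $X_j$ equal to $[0,1]$ or a point, so that $\dim X_j\le 1$. Since the statement only asks for a unitary equivalence after adding a large point-mass summand $\sigma$, we have a great deal of room: the ``spatial'' part of the argument can all be absorbed into $\sigma$, and what remains is a finite-dimensional-plus-interval version of the Gong--Lin decomposition. First I would split $B$ into its summands and argue componentwise, so without loss of generality $B=C([0,1],M_r)$ or $B=M_r$. The case $B=M_r$ is classical: two $\dt$-${\cal G}$-multiplicative maps with the same $[\cdot]|_{\cal P}$ data (here the $K$-theoretic constraints reduce to comparing traces, i.e.\ comparing eigenvalue multiplicities up to a small error), so after stabilizing by $\sigma$ one gets the desired unitary; this is essentially Theorem \ref{T0} in approximate form and is contained in the results of \cite{Lncd} reformulated in section 4.

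For $B=C([0,1],M_r)$, the key step is to use the structure of maps into an interval. I would first perturb $\phi,\psi$ to genuine homomorphisms (possible since they are $\dt$-${\cal G}$-multiplicative with $\dt$ small and ${\cal G}$ large — again from \cite{Lncd}), then decompose each along the interval into a sum of a ``large'' part that factors through point-evaluations at an $\ep_1$-dense set and carries all the multiplicity, plus a ``small'' correction. The hypothesis $[\phi]|_{\cal P}=[\psi]|_{\cal P}$ together with ${\rm cel}(\phi(v)^*\psi(v))\le L(u)$ for $v\in{\cal U}$ controls exactly the two obstructions to conjugating one homomorphism from $C(X)$ into $C([0,1],M_r)$ to another: the $K_1$-valued winding/index data along the interval, and the determinant/commutator-length data governing paths of unitaries. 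This is where I would invoke Theorem 3.2 of \cite{GL} (the remark after the statement flags precisely this), whose conclusion is of the same ``stable unitary equivalence after adding a point-mass'' shape; the present Theorem \ref{GL2} is a repackaging of it with the ${\cal U}$, ${\cal P}$, $L$, $\ep_1$, $l$ data made explicit and uniform.

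The main obstacle, and the reason the statement is phrased with the free parameter $L$ and the cel-bound rather than a bound like $1/8K\pi$ on $\dist(\phi^{\ddag},\psi^{\ddag})$, is handling the unitary group data: a $\dt$-${\cal G}$-multiplicative map does not literally induce a homomorphism on $U(C(X))/CU(C(X))$, so one must work with approximate versions of $\phi^{\ddag}$ as in \ref{Dppu}, and one must convert the commutator-length hypothesis into a genuine path of unitaries in $M_{lm+1}(B)$ connecting the conjugated images. Concretely, after the decomposition one has $\phi'=(\text{large point-mass part})\oplus(\text{small part})$ and similarly for $\psi$; the point-mass parts agree after conjugation by a permutation-type unitary once the multiplicities match (forced by $[\phi]|_{\cal P}=[\psi]|_{\cal P}$ and the dense-set hypothesis), and the small parts are identified using the cel-bound to build an explicit path, choosing $l$ large enough that everything fits inside $M_{lm}(B)$ with $e_i\sim {\rm id}_{M_l(B)}$. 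Assembling these, and tracking that all the finitely many constants ($\dt$, ${\cal G}$, ${\cal P}$, ${\cal U}$, $l$, $\ep_1$) can be chosen in advance depending only on $\ep$, ${\cal F}$, $X$ and $L$ — not on $m$, $r(j)$ or the particular $\phi,\psi$ — is the bookkeeping heart of the proof, and follows the template of Theorem 3.2 of \cite{GL} and Theorem 1.1 of \cite{GLk}.
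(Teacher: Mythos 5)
Your proposal ultimately leans on Theorem 3.2 of \cite{GL}, which is indeed the right source, but the scaffolding you build around it contains steps that fail, and those steps are not needed. The fatal one is the claim that, since $\phi,\psi$ are $\dt$-${\cal G}$-multiplicative, they can ``first be perturbed to genuine homomorphisms'' into $C([0,1],M_r)$ or $M_r$. Theorem \ref{GL2} is stated for an \emph{arbitrary} compact metric space $X$ and has no injectivity/measure-distribution hypothesis of the form $\mu_{\tau\circ\phi}(O_\eta)\ge\sigma\eta$; without such hypotheses (and without restricting $X$), almost multiplicative maps are in general far from homomorphisms --- this is exactly Voiculescu's example recalled at the start of Section 4, and the whole point of Sections 4--8 (Lemma \ref{TAML2}, Theorem \ref{TAM}) is that such perturbations are available only for $X\in{\bf X}_0$, under measure-density and $KK$-consistency assumptions. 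The results of \cite{Lncd} you invoke also do not apply here: they concern maps into simple $C^*$-algebras of tracial rank zero and likewise require the measure conditions. A second misstep is your treatment of the $M_r$ case: Theorem \ref{GL2} assumes \emph{no} closeness of $\tau\circ\phi$ and $\tau\circ\psi$, so ``comparing eigenvalue multiplicities up to a small error'' is not available; the theorem is a stable uniqueness statement in which the added point-evaluation summand $\sigma$, with each $e_i$ equivalent to ${\rm id}_{M_l(B)}$, absorbs an arbitrary tracial discrepancy. That absorption is precisely what Theorem 1.1 of \cite{GLk} and Theorem 3.2 of \cite{GL} provide, and it cannot be replaced by a trace-matching argument.

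The paper's proof is much shorter than what you sketch and contains no decomposition or perturbation at all: one simply checks that the target algebras $B=\oplus_{j=1}^m C(X_j,M_{r(j)})$ (with $X_j=[0,1]$ or a point) have stable rank one, $K_0$-divisible rank $T(n,k)=[n/k]+1$, and exponential length divisible rank $E(L,n)=8\pi+L/n$ (Remark 1.1 of \cite{GL}, using \cite{Ph1}), and then Theorem 3.2 of \cite{GL} applies verbatim with the data $\dt,{\cal G},{\cal P},{\cal U},l,\ep_1$ it supplies; the ${\rm cel}$-hypothesis in (\ref{CL2-1}) is exactly the hypothesis of that theorem, which is why Theorem \ref{GL2} is phrased with the map $L$ rather than with a bound on ${\rm dist}(\phi^{\ddag},\psi^{\ddag})$. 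If you delete the perturbation and trace-comparison steps and instead verify these three structural properties of $B$, your argument collapses to the paper's; as written, it does not go through.
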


To prove the above theorem, we  note that $B$ has stable rank one,
$K_0$-divisible rank $T(n,k)=[n/k]+1,$ exponential length divisible
rank $E(L,n)=8\pi+L/n$ (see also Remark 1.1 of \cite{GL}). Therefore we have the following:

\begin{cor}\label{egl2}
Let $X$ be a simplicial finite CW  complex, let ${\cal F}\subset
C(X)$ be a finite subset and let $\ep>0.$
There exists $\eta_1>0$ with the following property .

For any $\sigma_1>0$ and $\sigma>0,$ there exists $\eta>0$ and an
integer $K$ (which are independent of  $\sigma$), there exists
 $\dt>0,$  a finite subset ${\cal
G}\subset C(X),$ a finite subset ${\cal P}\subset
\underline{K}(C(X)),$ a finite subset ${\cal U}\subset
U(M_{\infty}((C(X)))$ and a positive integer $L$ satisfying the
following:

Suppose that $\phi, \psi: C(X)\to B=\oplus_{j=1}^mC(X_j, M_{r(j)}),$
where $X_j=[0,1],$ or $X_j$ is a point, are two unital \hm s such
that
\beq\label{Cegl-1}
&&\mu_{\tau\circ \phi}(O_{\eta_1})\ge \sigma_1\eta_1\andeqn
\mu_{\tau\circ \phi}(O_\eta)\ge \sigma\eta \\\label{Cegl-2}
&&|\tau\circ \phi(g)-\tau\circ \psi(g)|<\dt\tforal g\in {\cal G}
\eneq
and for all $\tau\in T(B),$
\beq\label{Cegl-3}
&& \min_j\{{\rm rank}(r(j))\}\ge  L,\,\,\,
\label{Cegl-4} [\phi]|_{\cal P}=[\psi]|_{\cal P}\andeqn
\\\label{Cegl-4+}
&& {\rm dist}(\phi^{\ddag}(z),\psi^{\ddag}(z))<1/8K\pi
\eneq
for all $z\in {\cal U}.$ Then there exists a unitary $u\in B$ such
that
\beq\label{Cegl-5}
\|\phi(f)-{\rm ad}\, u\circ \psi(f)\|<\ep\tforal f\in {\cal F}.
\eneq

\end{cor}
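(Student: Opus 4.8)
The plan is to transcribe the proof of Theorem \ref{Tegl} almost word for word, making two substitutions. First, $B=\oplus_{j=1}^m C(X_j,M_{r(j)})$ is a finite direct sum of algebras of the form $PM_n(C(Y))P$ with $Y=X_j$ connected and $\dim X_j\le 1\le 3$, so Lemma \ref{G} applies to each direct summand; and since every tracial state of a summand $C(X_j,M_{r(j)})$ pulls back along the coordinate projection to a tracial state of $B$, the hypotheses (\ref{Cegl-1}) and (\ref{Cegl-2}) descend to each summand. Second, at the last step the role of Theorem 1.1 of \cite{GLk} in the proof of \ref{Tegl} is taken over by Theorem \ref{GL2}; this is legitimate because $B$ has stable rank one, $K_0$-divisible rank $T(n,k)=[n/k]+1$ and exponential length divisible rank $E(L,n)=8\pi+L/n$, as recorded just before the statement. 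As in \ref{Tegl} we may assume $X$ is connected; we fix an integer $k_0$ with $Hom_{\Lambda}(\underline{K}(C(X)),\underline{K}(A))=Hom_{\Lambda}(F_{k_0}\underline{K}(C(X)),F_{k_0}\underline{K}(A))$ for every unital separable $A$, introduce the auxiliary algebras $D_j$ built from $C(X)\otimes C_j$ ($j=1,\dots,k_0$, with $K_0(C_j)=\Z/j\Z$ and $K_1(C_j)=0$) and an integer $m_1$ realizing $U(M_{m_1}(D_j))/U_0(M_{m_1}(D_j))=K_1(D_j)$, and set $N_1=m_1^2$.

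For the constants: feed $\ep/4$, ${\cal F}$ and the constant exponential-length function $L_0=(2N_1+8)\pi$ into Theorem \ref{GL2}, obtaining $\dt_1$, a finite ${\cal G}_1\subset C(X)$, finite subsets ${\cal P}_0\subset{\bf P}^{(0)}(C(X))$ and ${\cal U}\subset U(M_{\infty}(C(X)))$, an integer $l$ and $\ep_1>0$. Put ${\cal F}_1={\cal F}\cup{\cal G}_1$, $N=l+1$ and $\ep_2=\min\{\dt_1/4,\ep/4\}$, and after shrinking $\ep_1$ so that $|f(x)-f(x')|<\ep/4$ for all $f\in{\cal F}_1$ whenever $\dist(x,x')<\ep_1$, invoke Lemma \ref{G} with inputs $\ep/2$, $\ep_1$, ${\cal F}_1$ and $N$ to get $\eta_1$; for given $\sigma_1,\sigma$ it then returns $\eta$, an integer $K_1>4N_1/\ep_2$ (depending only on $\sigma_1$), a $\dt>0$, an integer $L>0$ and a finite ${\cal G}\supset{\cal F}_1$. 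Put $K=2N_1K_1$ and let ${\cal P}\subset\underline{K}(C(X))$ contain the images of ${\cal P}_0$ and of ${\cal U}$. These $\eta_1,\eta,K,\dt,{\cal G},{\cal P},{\cal U},L$ are the data claimed by the corollary.

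Now suppose $\phi,\psi\colon C(X)\to B$ satisfy (\ref{Cegl-1})--(\ref{Cegl-4+}); by (\ref{Cegl-3}) each summand of $B$ has rank at least $L$, so applying Lemma \ref{G} in each summand and reassembling produces mutually orthogonal projections $P_0,P_1$ with $P_0+P_1=1_B$, a common unital homomorphism $\phi_1\colon C(X)\to P_1BP_1$ factoring through an interval algebra, a unitary $v\in B$, and mutually orthogonal projections $q_1,\dots,q_m\le P_1$ over an $\ep_1$-dense set $\{x_1,\dots,x_m\}\subset X$, satisfying the exact analogues of (\ref{g3})--(\ref{g6}) (with $1_B$ in place of $P$); note $P_0$ has nonzero, hence constant, rank on each summand, so $P_0BP_0$ is again of the form $\oplus_j C(X_j,M_{s(j)})$. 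Put $\phi'=P_0\phi P_0$ and $\psi'=P_0({\rm ad}\,v\circ\psi)P_0$, viewed as unital $\dt_1$-${\cal F}_1$-multiplicative maps $C(X)\to P_0BP_0$. One then repeats the argument of \ref{Tegl}: for each $z\in{\cal U}$, cutting down by the diagonal copies of $P_0$ yields, up to an error of order $N_1/K_1$, a unitary in the corresponding corner; (\ref{Cegl-4+}) together with ${\rm rank}P_0\ge{\rm rank}(1_B)/K_1$ and Lemmas 3.3 and 3.4 of \cite{Ph1} (with the $K_0$-divisible rank and exponential-length divisible rank of $B$, recorded before the statement, in place of the functions $r$ and $b$ fed to \cite{GLk}) bound the determinant and hence, after the standard stabilization, ${\rm cel}(\langle\phi'\rangle(z)^*\langle\psi'\rangle(z))\le(2N_1+7)\pi\le L_0$; and $[\phi']|_{\cal P}=[\psi']|_{\cal P}$ by (\ref{Cegl-4}). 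Feeding $\phi',\psi'$ into Theorem \ref{GL2} over $P_0BP_0$, with its $\sigma$-term realized inside $\sum_j f(x_j)q_j$ --- legitimate because (\ref{g6}) holds with $N=l+1\ge l$ and $\{x_j\}$ is $\ep_1$-dense --- yields a unitary absorbing the corner discrepancy between $\phi$ and ${\rm ad}\,v\circ\psi$; combining it with $v$ gives a unitary $u\in B$ with $\|\phi(f)-{\rm ad}\,u\circ\psi(f)\|<\ep$ for all $f\in{\cal F}$.

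The step I expect to require the most care is this last one: reconciling the output of Lemma \ref{G} --- in particular the rank relation ${\rm rank}(q_j)\ge N({\rm rank}P_0+2\dim X_j)$ between the $q_j$ and the corner projection $P_0$ --- with the rigid shape of the stable summand $\sigma(f)=\sum f(x_i)e_i$ (with $e_i$ equivalent to ${\rm id}_{M_l(B)}$ and the conjugating unitary living in $M_{lm+1}(B)$) demanded by Theorem \ref{GL2}, so that the absorption genuinely takes place inside a corner of $B$ rather than a proper matrix amplification. This is pure bookkeeping, but it is where the choices $N=l+1$ and $K=2N_1K_1$ must be calibrated; everything else is a verbatim transcription of the proof of \ref{Tegl} with $\dim Y\le 3$ replaced by $\dim X_j\le 1$ and with the divisibility-rank facts about $B$ standing in for the corresponding properties invoked from \cite{GLk}.
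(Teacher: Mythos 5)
Your proposal is correct and matches the paper's intent: the paper gives no separate proof of this corollary, asserting only (just before Theorem \ref{GL2}) that it follows because $B$ has stable rank one, $K_0$-divisible rank $[n/k]+1$ and exponential length divisible rank $8\pi+L/n$ — i.e., by running the proof of Theorem \ref{Tegl} with Theorem \ref{GL2} in place of Theorem 1.1 of \cite{GLk} and $\dim X_j\le 1$ in place of $\dim Y\le 3$, which are exactly your two substitutions. The bookkeeping you flag (realizing $\sigma(f)=\sum_i f(x_i)e_i$ with $e_i\sim l[P_0]$ under the $q_i$, using ${\rm rank}\,q_i\ge (l+1)\,{\rm rank}\,P_0$ and cancellation in dimension $\le 1$, and doing this summand by summand since the $\ep_1$-dense sets produced by Lemma \ref{G} may differ across summands) is indeed the only point requiring care, and it goes through with your calibration $N=l+1$, $K=2N_1K_1$.
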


\section{Almost multiplicative maps in finite dimensional \CA s}

To begin, we would like to remind the reader that there exists a
sequence of unital completely positive linear maps $\phi_n:
C(\T\times \T)\to M_n$ such that
\beq\label{AA1}
\lim_{n\to\infty}\|\phi_n(f)\phi_n(g)-\phi_n(fg)\|=0\tforal f,
g\in C(\T\times \T)
\eneq
and $\{\phi_n\}$ is away from \hm s (this was first
discovered by D. Voiculescu \cite{V4}). Therefore $\{\phi_n\}$ are not approximately unitarily equivalent
to \hm s. This is because
$[\phi_n](b)\not=0$ where $b$ is the bott element. However, even
when $X$ is contractive, as long as ${\rm dim} X>2,$ one always
has a sequence of \morp s  $\phi_n: C(X)\to M_n$ such that
(\ref{AA1}) holds and $\{\phi_n\}$ is away from any \hm s (see Theorem 4.2 of \cite{GL2}).
Therefore the condition on $KK$-theory (\ref{AMl-2}) as well as
the condition on the measure (\ref{AMl-4}) in \ref{TAML2} are
essential.
 \vspace{0.1in}

The following is a version of Theorem 4.6 of \cite{Lncd} and follows from
that immediately.

\begin{lem}\label{CD}
Let $X$ be a compact metric space, let $\ep>0$ and ${\cal F}\subset
C(X)$ be a finite subset. There exists  $\eta>0$ which depends on  $\ep$ and ${\cal F}$ for which
$$
|f(x)-f(x')|<\ep/8 \tforal f\in {\cal F},
$$
if ${\rm dist}(x,x')<\eta,$ and for which the following holds:

For any $\eta/2$-dense subset $\{x_1,x_2,...,x_m\}$ and any
integer $s\ge 1$ for which $O_i\cap O_j=\emptyset$ ($i\not=j$),
where
$$
O_i=\{x\in X: {\rm dist}(x_i, x)<\eta/2s\}, $$
 and for any  $\sigma>0$ for which $1/2s>\sigma>0,$
there exist $\dt>0,$ a finite subset ${\cal G}\subset C(X)$ and a
finite subset ${\cal P}\subset \underline{K}(C(X))$ satisfying the
following:

Suppose that $\phi, \psi: C(X)\to A$ (for any unital simple \CA\,
with tracial rank zero, infinite dimensional or finite dimensional)
are two unital $\dt$-${\cal G}$-multiplicative \morp s such that
\beq\label{CD1}
[\phi]|_{\cal P}=[\psi]|_{\cal P},
\eneq
\beq\label{CD2}
&&|\tau\circ \phi(g)-\tau\circ \psi(g)|<\dt\tforal g\in {\cal G},
\tau\in T(A)\\
&& \mu_{\tau\circ \phi}(O_ i)\ge \sigma\eta\tand \mu_{\tau\circ
\psi}(O_i)\ge \sigma\eta
\eneq
$i=1,2,...,m.$

Then there exists a unitary $u\in A$ such that
\beq\label{CD3}
{\rm ad}\, u\circ \phi\approx_{\ep} \psi\,\,\,{\rm on}\,\,\,{\cal
F}.
\eneq

\end{lem}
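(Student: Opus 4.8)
The plan is to obtain this as a direct transcription of Theorem 4.6 of \cite{Lncd}, which is the uniqueness statement for unital $\dt$-${\cal G}$-multiplicative \morp s from $C(X)$ into a unital simple \CA\ of tracial rank zero whose measure hypothesis is a uniform lower bound on the mass of every open ball of a fixed radius. The work consists of nothing more than converting the present, application-friendly, spread-out condition --- pairwise disjoint balls $O_i$ of radius $\eta/2s$ around an $\eta/2$-dense set, each of mass at least $\sigma\eta$ --- into that uniform ball estimate, and then quoting 4.6 verbatim.

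First, with $\ep$ and ${\cal F}$ given, choose $\eta>0$ so small that ${\rm dist}(x,x')<\eta$ forces $|f(x)-f(x')|<\ep/8$ for all $f\in{\cal F}$, shrinking $\eta$ further so that it is also an admissible continuity scale for Theorem 4.6 of \cite{Lncd} at $\ep$ and ${\cal F}$; this depends only on $\ep$ and ${\cal F}$, as required. Now suppose given an $\eta/2$-dense set $\{x_1,\dots,x_m\}$, an integer $s\ge 1$ for which the $O_i=B_{\eta/2s}(x_i)$ are pairwise disjoint, and $\sigma$ with $0<\sigma<1/2s$. The elementary geometric observation is that every open ball $B_\eta(y)$ of radius $\eta$ contains some $O_i$: by $\eta/2$-density pick $x_i$ with ${\rm dist}(y,x_i)<\eta/2$, and then any $z\in O_i$ satisfies ${\rm dist}(y,z)\le {\rm dist}(y,x_i)+{\rm dist}(x_i,z)<\eta/2+\eta/2s\le\eta$ since $s\ge 1$. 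Hence, whenever $\mu_{\tau\circ\phi}(O_i)\ge\sigma\eta$ and $\mu_{\tau\circ\psi}(O_i)\ge\sigma\eta$ for all $i$, we get $\mu_{\tau\circ\phi}(B_\eta(y))\ge\sigma\eta$ and $\mu_{\tau\circ\psi}(B_\eta(y))\ge\sigma\eta$ for every $y\in X$ and every $\tau\in T(A)$ --- precisely the measure input demanded by Theorem 4.6 of \cite{Lncd} with constant $\sigma$.

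It then remains to feed $\sigma$ (with $\eta$, $\ep$, ${\cal F}$ already fixed) into Theorem 4.6 of \cite{Lncd}, which returns $\dt>0$, a finite subset ${\cal G}\subset C(X)$ and a finite subset ${\cal P}\subset\underline{K}(C(X))$ such that any two unital $\dt$-${\cal G}$-multiplicative \morp s $\phi,\psi:C(X)\to A$ into a unital simple \CA\ $A$ of tracial rank zero (finite- or infinite-dimensional; recall $M_n$ has tracial rank zero in the sense of \ref{dtr1}) with $[\phi]|_{\cal P}=[\psi]|_{\cal P}$, with traces $\dt$-close on ${\cal G}$, and with the ball estimates above, admit a unitary $u\in A$ with ${\rm ad}\,u\circ\phi\approx_\ep\psi$ on ${\cal F}$. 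After enlarging ${\cal G}$ if necessary so that the trace-closeness hypothesis (\ref{CD2}) subsumes the one required by 4.6, the conclusion (\ref{CD3}) follows.

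The only genuine point to verify --- and the reason this is stated as following immediately rather than being reproved --- is the bookkeeping: one must check that the normalization of the mass bound used in Theorem 4.6 of \cite{Lncd} (over balls of radius $\eta$) is the one reached above, and that the quantifier structure is honored, i.e.\ that $\eta$ can be extracted before the spread-out data $\{x_i\}$, $s$, $\sigma$ while $\dt,{\cal G},{\cal P}$ are allowed to depend on all of them. Should a self-contained argument ever be wanted, one would instead invoke tracial approximate divisibility (\ref{tdvi}) together with the TAI structure of $A$ to cut $\phi$ and $\psi$ down to a dominant finite-dimensional summand plus a corner of arbitrarily small trace, apply Corollary \ref{egl2} to the finite-dimensional summand, and reassemble --- but that route merely re-derives Theorem 4.6 of \cite{Lncd} and is not needed here.
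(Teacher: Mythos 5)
Your proposal is correct and is essentially the paper's own route: the paper offers no written proof at all beyond the remark that the lemma ``is a version of Theorem 4.6 of \cite{Lncd} and follows from that immediately,'' and your argument simply makes that immediacy explicit by translating the spread-out hypothesis (disjoint balls $O_i$ of radius $\eta/2s$ around an $\eta/2$-dense set, each of mass at least $\sigma\eta$) into the uniform lower bound $\mu_{\tau\circ\phi}(B_\eta(y)),\mu_{\tau\circ\psi}(B_\eta(y))\ge\sigma\eta$ and then quoting Theorem 4.6 of \cite{Lncd} with the stated quantifier order. The geometric containment $O_i\subset B_\eta(y)$ and the bookkeeping about which data $\dt,{\cal G},{\cal P}$ may depend on are exactly the (routine) content the paper leaves implicit, so no further comment is needed.
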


\begin{lem}\label{AMl1}
Let $X$ be a compact metric space, let $\sigma_1>0,$ $1>\eta_1>0$
and let $\sigma>0.$ For any $\ep>0$ and any finite subset ${\cal
F}\subset C(X),$ there exist $\eta>0$ (which depends on $\ep$ and
${\cal F}$ but not $\sigma_1,$ $\sigma,$ or $\eta_1$), $\dt>0,$ a
finite subset ${\cal G}$ (both depend on $\ep,$ ${\cal F},$
$\sigma_1,$ $\sigma$ and $\eta_1$) satisfying the following:

Suppose that $\phi: C(X)\to M_n$ (for any integer $n\ge 1$) is a
$\dt$-${\cal G}$-multiplicative  \morp\, such that
\beq\label{AMl1-1}
\mu_{\tau\circ \phi}(O_{\eta_1})\ge \sigma_1\eta_1\andeqn
\mu_{\tau\circ \phi}(O_\eta)\ge \sigma\eta
\eneq
for all open balls with radius $\eta_1$ and $\eta,$ respectively.

 Then there exists a unital \hm\, $h: C(X)\to M_n$ such that
\beq\label{AMl1-2}
&&|\tau\circ h(f)-\tau\circ \phi(f)|<\ep\tforal f\in {\cal F}\\
&&\mu_{\tau\circ h}(O_{\eta_1})\ge (\sigma_1/2)\eta_1\tand
\mu_{\tau\circ h}(O_\eta)\ge (\sigma/2)\eta
\eneq
for all $\tau\in T(A).$
\end{lem}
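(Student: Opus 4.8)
The plan is to reduce the problem of finding an honest homomorphism $h$ to two ingredients already in hand: a discretization of $\phi$ by point-evaluations at an $\eta$-dense set, controlled by the measure lower bounds, and an application of the structure of finite-dimensional \CA s, namely that $M_n$ has a plentiful supply of mutually orthogonal projections of prescribed small trace. First I would fix $\eta>0$ small enough (depending only on $\ep$ and ${\cal F}$) that $|f(x)-f(x')|<\ep/4$ for $f\in{\cal F}$ whenever ${\rm dist}(x,x')<\eta$; this is the only place where $\eta$ gets pinned down, and it manifestly does not see $\sigma_1,\sigma,\eta_1.$ Then I would choose a maximal $\eta/2$-separated set $\{x_1,\dots,x_m\}$ in $X$; it is automatically $\eta/2$-dense, and $m$ depends only on $X$ and $\eta.$ The disjoint balls $O_i=B_{\eta/2}(x_i)$ (after perhaps shrinking the radius to $\eta/(2\cdot 2)$ to force disjointness, which costs nothing in what follows) carry measure $\mu_{\tau\circ\phi}(O_i)\ge (\sigma/2)\eta$ by \eqref{AMl1-1}, once $\eta$ is at most $\eta_1$ (which we may assume).

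The next step is to transfer these measure estimates from the positive linear functional $\tau\circ\phi$ to actual projections in $M_n.$ Because $\phi$ is $\dt$-${\cal G}$-multiplicative for a sufficiently large finite ${\cal G}\subset C(X)$ and sufficiently small $\dt$ (here is where $\dt,{\cal G}$ must depend on all of $\ep,{\cal F},\sigma_1,\sigma,\eta_1$), standard perturbation arguments (as in 2.3 of \cite{Lnhomp}) produce mutually orthogonal projections $p_1,\dots,p_m\in M_n$ with $\|\phi(g_i)-p_i\|$ small for a partition-of-unity-type family $g_i$ subordinate to the $O_i$, hence with normalized trace $\tau(p_i)$ within $\dt'$ of $\mu_{\tau\circ\phi}(O_i)$; in particular $\tau(p_i)\ge (\sigma/4)\eta$ for every $i$, uniformly in $n$, provided $n$ is large enough — and for the finitely many small $n$ one handles the statement directly since $T(M_n)$ is a single point and $\phi$ is nearly a homomorphism into a matrix algebra of small dimension, where one simply takes $h$ to be any homomorphism with spectrum a multiset of the $x_i$ with the right multiplicities. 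Having the $p_i$, define
\[
h(f)=\sum_{i=1}^m f(x_i)p_i + e_0\,\pi_{x_1}(f),
\]
where $e_0=1_{M_n}-\sum p_i$ is the leftover projection; $h$ is a unital homomorphism $C(X)\to M_n$ by construction.

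It remains to verify the two conclusions. For \eqref{AMl1-2}: on ${\cal F}$, $\tau\circ h(f)$ is within $\ep/4$ of $\sum_i f(x_i)\tau(p_i)+\tau(e_0)f(x_1)$, and since the $p_i$ approximately implement $\phi$ composed with evaluation at nearby points, while $e_0$ has small trace if we have chosen the $O_i$ to nearly exhaust a unit's worth of $\phi$-mass — which requires that $\{x_i\}$ be $\eta/2$-dense so $\bigcup O_i$ has full measure up to the disjointness loss, controllable by shrinking radii only finitely and uniformly — one gets $|\tau\circ h(f)-\tau\circ\phi(f)|<\ep.$ For the measure conclusion: any $O_{\eta_1}$ or $O_\eta$ contains at least one $x_i$ (by density, once radii are adjusted), and $\mu_{\tau\circ h}$ of that ball is at least $\tau(p_i)\ge(\sigma/4)\eta\ge(\sigma/2)\eta$ after trivially reabsorbing the constant — more carefully one arranges the bookkeeping so that the halved constants $\sigma_1/2,\sigma/2$ come out, using that $\mu_{\tau\circ h}(O_{\eta_1})$ collects all $p_i$ with $x_i\in O_{\eta_1}$, whose total trace is close to $\mu_{\tau\circ\phi}$ of a slightly enlarged ball, itself $\ge\sigma_1\eta_1$ minus a boundary error made negligible by the choice of $\dt.$ The main obstacle I anticipate is precisely this last bookkeeping: one must keep the measure-to-projection transfer error strictly below the gap between $\sigma\eta$ and $(\sigma/2)\eta$ (and likewise for $\eta_1$) simultaneously for all the finitely many balls needed, while also keeping $\tau(e_0)$ small — all uniformly in $n$. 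This is a matter of ordering the choices correctly (fix $\eta$ from $\ep,{\cal F}$; then $m$ and the $x_i$; then demand $\dt$ small and ${\cal G}$ large relative to $\sigma_1\eta_1,\sigma\eta,m$), rather than a conceptual difficulty.
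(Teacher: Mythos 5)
Your construction of $h$ has two genuine gaps. First, the projections you want do not exist as claimed: for a bump function $g_i$ subordinate to $O_i$, the positive contraction $\phi(g_i)$ is in general not norm-close to any projection, because $g_i$ is not an idempotent of $C(X)$ --- $\dt$-${\cal G}$-multiplicativity gives $\phi(g_i)^2\approx \phi(g_i^2)$, not $\phi(g_i)^2\approx\phi(g_i)$, and even for an honest \hm\, the element $\phi(g_i)$ has spectrum spread through $[0,1]$. One can instead take spectral projections of the $\phi(g_i)$ (using $g_ig_j=0$ to make them almost, hence exactly, orthogonal), but then a lower bound on $\tau(p_i)$ comes only from the mass of balls of radius strictly smaller than $\eta$ (where $g_i=1$), and the hypothesis (\ref{AMl1-1}) gives no lower bound at those radii. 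The same radius-loss problem ruins your final bookkeeping: to bound $\mu_{\tau\circ h}(O_{\eta_1})$ and $\mu_{\tau\circ h}(O_\eta)$ from below you need the $\phi$-mass of a \emph{shrunken} ball, not of a ``slightly enlarged'' one, and shrunken balls are exactly what (\ref{AMl1-1}) does not control (your ``$\ge(\sigma/4)\eta\ge(\sigma/2)\eta$'' even points the wrong way). Second, and independently, the trace estimate fails: disjoint balls of radius about $\eta/4$ centered at an $\eta/2$-dense set need not capture most of the mass of $\mu_{\tau\circ\phi}$ --- the mass can concentrate in the gaps between the $O_i$ --- so $\tau(e_0)$ need not be small, and dumping the whole corner $e_0$ onto the point evaluation $\pi_{x_1}$ can shift $\tau\circ h(f)$ away from $\tau\circ\phi(f)$ by an amount of order $\|f\|$, not $\ep$. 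A trace-level repair would have to distribute \emph{all} of the mass (e.g.\ via a partition of unity $\sum_ig_i=1$, giving the atom at $x_i$ the mass $\tau(\phi(g_i))$) and place atoms of definite mass inside every ball of radius $\eta$ and $\eta_1$ directly from the hypothesis, which your scheme does not do.

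For comparison, the paper's proof is a short reduction to Lemma 4.3 of \cite{Lncd}: that lemma already yields a projection $p$ with $\tau(1-p)<\gamma$ and a genuine unital \hm\, $h_0:C(X)\to pM_np$ with $\|\phi(f)-[(1-p)\phi(f)(1-p)+h_0(f)]\|<\gamma$ on a prescribed finite set and with $\mu_{\tau\circ h_0}(O_\eta)>(\sigma/2)\eta$ for every radius-$\eta$ ball; one then sets $h=h_1\oplus h_0$ for an arbitrary unital \hm\, $h_1$ on $(1-p)M_n(1-p)$, and both conclusions of (\ref{AMl1-2}) follow by taking $\gamma$ small and the finite set large (which also gives the $\eta_1$-estimate). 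The real work --- producing a homomorphism on a large corner together with the measure spread at the \emph{same} radius $\eta$ --- lives in that cited lemma, and your proposal would essentially have to reprove it; as written, the steps meant to replace it do not hold.
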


\begin{proof}
We apply Lemma 4.3 of \cite{Lncd}. Let $\gamma>0$ and ${\cal
F}_1\subset C(X)$ be a finite subset.

It follows from Lemma 4.3 of \cite{Lncd}  that,  for a choice of
$\dt$ and ${\cal G},$  there is a projection $p\in M_n$ and a unital
\hm\, $h_0: C(X)\to pM_np$ such that
\beq\label{AMl1-5}
&&\|\phi(f)-[(1-p)\phi(f)(1-p)+h_0(f)]\|<\gamma\tforal f\in {\cal
F}_1\\
&&\andeqn \tau(1-p)<\gamma.
\eneq
Moreover, for any open ball $O_\eta$ with radius $\eta,$
\beq\label{AMl1-6}
\int_{O_\eta}h_0 d\mu_{\tau\circ h_0}>(\sigma/2)\eta
\eneq
(note $\tau(p_k)>(\sigma\eta)/2$ in the statement of 4.3 of
\cite{Lncd}).

Let $h_1: C(X)\to (1-p)M_n(1-p)$ be a unital \hm\, and define
$h=h_1\oplus h_0.$ Therefore
\beq\label{AMl1-7}
\mu_{\tau\circ h}(O_\eta)>(\sigma/2)\eta
\eneq
for any open ball with radius $\eta.$
Moreover,
\beq\label{AMl1-8}
|\tau\circ \phi(f)-\tau\circ h(f)|<2\gamma\rforal f\in {\cal F}_1.
\eneq
We choose $\gamma<\ep/2$ and ${\cal F}_1\supset {\cal F}.$  It is
easy to see that, if we choose sufficiently small $\gamma$ and
sufficiently large ${\cal F}_1,$ we may also have
$$
\mu_{\tau\circ h}(O_{\eta_1})\ge (\sigma_1/2)\eta_1.
$$

\end{proof}

\begin{lem}\label{TAML2}
Let $X$ be a path connected compact metric space, let $\ep>0,$
${\cal F}\subset C(X)$ be a finite subset, let $\sigma_1>0,$
$\sigma>0$ and $1>\eta_1>0.$
Then, there exists $\eta>0$ (which depends on $\ep$ and ${\cal F}$
but not $\sigma_1,$ $\sigma$ or $\eta_1$), $\dt>0,$ a finite subset
${\cal G}\subset C(X)$ and a finite subset ${\cal P}\subset
\underline{K}(C(X))$ satisfying the following:

Suppose that $\phi: C(X)\to M_n$ (for any integer $n\ge 1$) is a
$\dt$-${\cal G}$-multiplicative \morp\, such that
\beq\label{AMl2-1}
\mu_{\tau\circ \phi}(O_\eta)\ge \sigma\cdot \eta\tand \mu_{\tau\circ
\phi}(O_{\eta_1})\ge \sigma_1\cdot \eta_1
\eneq
for all open balls with radius $\eta$  and $\eta_1,$ respectively,
and
\beq\label{AMl-2}
[\phi]|_{\cal P}=[\pi_\xi]|_{\cal P}
\eneq
for some point $\xi\in X.$ Then there exists a unital \hm\, $h:
C(X)\to M_n$ such that
\beq\label{AMl-3}
&&\|\phi(f)-h(f)\|<\ep\tforal f\in {\cal F},\\\label{AMl-4}
&&\mu_{\tau\circ h}(O_{\eta_1})\ge (\sigma_1/2)\eta_1\tand
\mu_{\tau\circ h}(O_\eta)\ge (\sigma/2)\eta.
\eneq

\end{lem}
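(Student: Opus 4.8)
The plan is to deduce this from Lemma \ref{AMl1}, which---at the cost of a small trace error---replaces $\phi$ by an honest unital \hm\, into $M_n$ carrying essentially the same measure estimates, together with the uniqueness theorem Lemma \ref{CD}, which promotes trace-closeness to norm-closeness. Since $M_n$ is a finite-dimensional simple \CA\, it has tracial rank zero, so \ref{CD} applies with $A=M_n$. In outline: produce a unital \hm\, $h\colon C(X)\to M_n$ with $\tau\circ h\approx\tau\circ\phi$ on a large finite subset and with (halved) ball estimates; verify that $(\phi,h)$ meets the hypotheses of \ref{CD}; obtain a unitary $u$ with ${\rm ad}\,u\circ\phi\approx_{\ep}h$ on ${\cal F}$; and take $h'={\rm ad}\,u^*\circ h$, which is again a unital \hm\, with $\tau\circ h'=\tau\circ h$ for every tracial $\tau$, hence carrying the same measures. (We may assume $\phi$ is unital; the nearly-unital case reduces to this by a standard perturbation.)

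The only genuinely $K$-theoretic input is hypothesis (\ref{AMl-2}), and here path connectedness of $X$ does the work. Any unital \hm\, $h\colon C(X)\to M_n$ is, up to unitary equivalence, a finite direct sum of point-evaluations $f\mapsto f(y_i)p_i$ with $\{p_i\}$ mutually orthogonal projections of sum $1_{M_n}$; sliding each $y_i$ to $\xi$ along a path in $X$ gives a homotopy of unital \hm s from that map to $\pi_\xi$, and since conjugation fixes $\pi_\xi$ (its values are scalars) we get that $h$ is homotopic to $\pi_\xi$. Hence $[h]=[\pi_\xi]$ in $KL(C(X),M_n)$, so $[h]|_{\cal P}=[\pi_\xi]|_{\cal P}=[\phi]|_{\cal P}$ for every finite ${\cal P}\subset\underline{K}(C(X))$---exactly the $\underline{K}$-condition \ref{CD} asks of the pair $(\phi,h)$.

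For the quantitative bookkeeping I would feed $(\ep,{\cal F})$ into Lemma \ref{CD} to get the associated radius $\eta^{(0)}$, fix a maximal $(\eta^{(0)}/2)$-separated---hence $(\eta^{(0)}/2)$-dense---finite set $\{x_1,\dots,x_m\}\subset X$ and take $s=2$, so the balls $B_{\eta^{(0)}/4}(x_i)$ are pairwise disjoint, and, after choosing a small \ref{CD}-weight, obtain $\dt_0,{\cal G}_0,{\cal P}_0$. Then I would apply Lemma \ref{AMl1} with accuracy $\ep'\le\dt_0$, finite subset ${\cal F}'\supset{\cal F}\cup{\cal G}_0$, the given $\sigma_1,\eta_1$, and a suitable weight at \ref{AMl1}'s threshold radius $\eta^{(1)}$; finally I would set the output $\eta$ to be any positive number with $\eta\le\min(\eta^{(1)},\eta^{(0)}/4)$, let $\dt$ be smaller than the multiplicativity constants required by both lemmas, ${\cal G}$ the union of the relevant finite sets with ${\cal F}$, and ${\cal P}\supset{\cal P}_0$ (enlarged so that $[\phi]|_{\cal P}$ is defined and determines $[\phi]|_{{\cal P}_0}$). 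Now suppose $\phi$ satisfies (\ref{AMl2-1}) and (\ref{AMl-2}) for these data: since $\eta\le\eta^{(1)}$, comparing concentric balls upgrades the radius-$\eta$ estimate to the radius-$\eta^{(1)}$ estimate needed by \ref{AMl1}, which then delivers $h$ as in the first paragraph, with $\tau\circ h\approx_{\ep'}\tau\circ\phi$ on ${\cal G}_0$ and the halved estimates; since $\eta\le\eta^{(0)}/4$, these (for $\phi$ and for $h$) supply the measure lower bounds on the $B_{\eta^{(0)}/4}(x_i)$ that \ref{CD} wants, and the previous paragraph gives $[h]|_{{\cal P}_0}=[\phi]|_{{\cal P}_0}$. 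Hence \ref{CD} yields $u\in M_n$ with ${\rm ad}\,u\circ\phi\approx_{\ep}h$ on ${\cal F}$, i.e.\ (\ref{AMl-3}) holds with $h'={\rm ad}\,u^*\circ h$ in place of $h$, and $\tau\circ h'=\tau\circ h$ for all tracial $\tau$ gives (\ref{AMl-4}).

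The crux---and where care is needed---is the order of quantifiers, not any single estimate: the output $\eta$ must depend only on $\ep$ and ${\cal F}$, yet it is forced to be at most Lemma \ref{AMl1}'s threshold $\eta^{(1)}$, which is built from $(\ep',{\cal F}')$ and so threatens to inherit dependence on $\sigma_1,\sigma,\eta_1$ through ${\cal G}_0$ and $\dt_0$. The argument goes through provided one arranges that the finite subsets and the multiplicativity constant produced by Lemma \ref{CD} be independent of its measure weight (only the measure lower bounds themselves depending on it), so that ${\cal F}',\ep'$, and hence $\eta^{(1)}$, depend on $\ep$ and ${\cal F}$ alone; granting this, the rest is the routine monotonicity of Borel probability measures under inclusion of concentric balls, with all weights rescaled accordingly and each ball-scale $\eta_1,\eta,\eta^{(1)},\eta^{(0)}/4$ fed into the lemma expecting it.
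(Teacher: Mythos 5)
Your proposal is correct and follows essentially the same route as the paper: apply Lemma \ref{AMl1} to replace $\phi$ by a unital homomorphism with nearby traces and retained measure spread, use path connectedness to see that any such homomorphism (a direct sum of point evaluations) agrees with $[\pi_\xi]$, hence with $[\phi]|_{\cal P}$, invoke Lemma \ref{CD} with $A=M_n$ to produce the conjugating unitary, and conjugate back to keep the measure estimates. The quantifier subtlety you flag — that $\eta$ must depend only on $\ep$ and ${\cal F}$ even though the data fed into \ref{AMl1} pass through the output of \ref{CD} — is present in, and treated no more carefully by, the paper's own proof, so your explicit acknowledgment of it is if anything an improvement rather than a deviation.
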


\begin{proof}
Fix $\ep>0,$ a finite subset ${\cal F}\subset C(X),$ $\sigma_1,$
$\sigma$ and $1>\eta_1>0.$  Let $\eta_2>0$ be a positive number such
that
$$
|f(x)-f(x')|<\ep/16,
$$
if ${\rm dist}(x,x')<\eta_2.$
We may assume that $\eta_2<\eta_1.$
Let $s,$ ${\cal G}_1$ (in place of ${\cal G}$), $\dt_1$ (in place of
$\dt$) and ${\cal P}\subset {\bf P}(C(X))$ as in \ref{CD} (for the
above $\ep/2,$ $\eta_2$ and $\sigma$).

Let $\eta>0,$ $\dt>0$ and a finite subset ${\cal G}\subset C(X)$ be
as in Lemma \ref{AMl1} required for $\gamma$ (in place of $\ep$),
${\cal G}_1\cup {\cal F}$ ( in place of ${\cal F}$),  $\sigma$ (
with $\sigma_1=\sigma$) and $\eta_2$ ( in place of $\eta_1$) above.

Now suppose that $\phi: C(X)\to M_n$ is a $\dt$-${\cal
G}$-multiplicative \morp\, satisfying the assumption with the above
$\eta,$  $\dt,$ ${\cal G}$ and ${\cal P}.$
By applying \ref{AMl1}, one obtains a unital \hm\, $h_1: C(X)\to
M_n$ such that
\beq\label{AML2-1}
|\tau\circ \phi(g)-\tau\circ h_1(g)|&<&\dt_1\tforal g\in {\cal G}_1\\
\mu_{\tau\circ h_1}(O_\eta)&\ge&(\sigma)/2)\eta\andeqn\\
\mu_{\tau\circ h_1}(O_{\eta_2})&\ge&
(\sigma/2)\eta_2.
\eneq

Since $X$ is a path connected,
$$
[h_1]=[\pi_\xi].
$$
It follows that
$$
[h_1]|_{\cal P}=[\phi]|_{\cal P}.
$$
It then follows from
\ref{CD} that there exists a unitary $u\in M_n$ such that
$$
{\rm ad}\, u\circ h\approx_{\ep} \phi\,\,\,{\rm on}\,\,\, {\cal F}.
$$
Put $h={\rm ad}\, u\circ h_1.$  One also has that
$$
\mu_{\tau\circ h}(O_\eta)=\mu_{\tau\circ h_1}(O_\eta)\ge \sigma\cdot
\eta/2.
$$

Note that, if one choose $\dt_1$ sufficiently smaller and ${\cal
G}_1$ sufficiently larger, one may also require that
$$
\mu_{\tau\circ h}(O_{\eta_1})\ge (\sigma_1/2)\eta_1.
$$

\end{proof}

\section{The Basic Homotopy Lemma revisited}

The purpose of this section is to present Lemma \ref{h1l}. It is
slightly different from Theorem 7.4 of \cite{Lnhomp}.  We will give
a brief  proof.

We begin with an easy fact:

\begin{lem}\label{LL0}
Let $X$ be a compact metric space and let $A$ be a finite
dimensional \CA. Suppose that $\phi: C(X)\to A$ is a unital \hm\,
and $u\in A$ is a unitary such that
$$
\phi(f)u=u\phi(f)\tforal f\in C(X).
$$
Then there exists a continuous path of unitaries $\{u(t): t\in
[0,1]\}$ such that
$$
u(0)=u,\,\,\, u(1)=u,\,\,\,\phi(f)u(t)=u(t)\phi(f)\tforal f\in
C(X)\andeqn
$$
$$
{\rm Length}(\{u(t)\})\le \pi.
$$
\end{lem}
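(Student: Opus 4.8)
The plan is to decompose $A$ as a finite direct sum of full matrix algebras, $A=\bigoplus_{i=1}^{\ell}M_{k(i)}$, and reduce to the case $A=M_k$, since a path in each summand of length $\le\pi$ glues to a path in $A$ of length $\le\pi$. So suppose $A=M_k$. Since $\phi$ and $u$ commute, the $C^*$-algebra generated by $\phi(C(X))$ and $u$ is commutative; equivalently, the (finitely many) minimal spectral projections of $u$ each reduce $\phi$, so we may further reduce to the situation where $u=\lambda\cdot p$ on a single spectral block. In fact the cleanest route is: decompose $1_A$ into minimal projections of the abelian algebra generated by $\phi(C(X))$ (these are the point-evaluation blocks of $\phi$), on each of which $u$ acts as a unitary commuting with scalars, i.e. as an arbitrary unitary in that matrix block. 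Thus the whole problem reduces to the trivial-$\phi$ case: given any unitary $v\in M_m$, find a loop of unitaries $\{v(t)\}$ in $M_m$ with $v(0)=v(1)=v$ and length $\le\pi$.

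For that final reduction I would diagonalize $v$, $v=w^*\,\mathrm{diag}(e^{i\theta_1},\dots,e^{i\theta_m})\,w$ with each $\theta_j\in(-\pi,\pi]$, and set
\[
v(t)=w^*\,\mathrm{diag}\big(e^{i\theta_1(1-|2t-1|)},\dots,e^{i\theta_m(1-|2t-1|)}\big)\,w,\qquad t\in[0,1],
\]
which goes from $v$ at $t=0$ out to $1$ at $t=1/2$ and back to $v$ at $t=1$. The length of this path is $2\cdot\sup_t\|v'(t)\|\cdot(1/2)$-type estimate; more directly, on $[0,1/2]$ the path $t\mapsto w^*\mathrm{diag}(e^{2it\theta_j})w$ has length $\sup_j|\theta_j|\le\pi$, and the return leg on $[1/2,1]$ retraces it, but since we only need a loop we can instead run out and back along a path of total length $2\max_j|\theta_j|$ — that gives $2\pi$, which is too big. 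To get the sharp bound $\pi$ I would instead not return to $1$: observe that since we need $u(1)=u$ rather than $u(1)=1$, the constant path works if we are willing to accept length $0$. Indeed the statement only requires $u(0)=u(1)=u$ (it does NOT require passing through $1$), so the constant path $u(t)\equiv u$ already satisfies $\mathrm{Length}(\{u(t)\})=0\le\pi$ with the commutation preserved.

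The only real content, then, is checking that the constant path is admissible, which is immediate; the bound $\pi$ is there presumably for uniformity with later, genuinely nontrivial versions of the Basic Homotopy Lemma. If instead the intended statement tacitly wants a \emph{nonconstant} loop that is in some sense nontrivial, the main obstacle would be the sharp length estimate, and I would handle it via the standard fact (cf. the exponential length estimates of Phillips used elsewhere in this paper) that any unitary in a matrix algebra lies on a loop based at itself of length $\le\pi$ by diagonalizing and moving only half the ``angular distance'' in each direction; but as literally stated, no such work is needed. I expect the author's proof to be the one-line observation that the constant path suffices, or the short diagonalization argument above.
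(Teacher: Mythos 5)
The weak point of your proposal is that its final conclusion rests on what is plainly a misprint. The "$u(1)=u$" in the statement should be "$u(1)=1$": the lemma is invoked in the proof of Lemma \ref{fullsp1} exactly to produce a path with $u(1/4)=H(1\otimes z)$, $u(1)=1$, commuting with $H(g\otimes 1)$, of length at most $\pi$, so the constant path proves only the misprinted statement and misses the lemma's actual content, which is that $u$ can be connected to the identity \emph{within the commutant of $\phi(C(X))$} with length at most $\pi$. Declaring the constant path to be "the only real content" is therefore the wrong call; the length bound $\pi$ is not decorative, it is exactly what the spectral argument delivers for a path ending at $1$.

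That said, the nontrivial argument you sketch before retreating to the constant path is correct and is essentially the paper's proof. Since $\phi(C(X))$ is commutative and $u$ commutes with it, everything lies in a finite dimensional commutative \SCA\, of $A$ (the paper packages this by setting $H(f\otimes g)=\phi(f)g(u)$, a homomorphism from $C(X\times\T)$, whose image is finite dimensional and commutative). Writing $u=\sum_j e^{i\theta_j}q_j$ with $\theta_j\in(-\pi,\pi]$ and $q_j$ the minimal projections of that algebra, the path $u(t)=\sum_j e^{i\theta_j(1-t)}q_j$ stays in the commutative algebra, hence commutes with every $\phi(f)$, satisfies $u(0)=u$, $u(1)=1$, and has length $\max_j|\theta_j|\le\pi$. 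Your detour about needing a loop through $1$ of length $2\pi$, and the claim that every unitary lies on a nonconstant based loop of length $\le\pi$, are both unnecessary (and the latter is dubious); the block-by-block reduction via the minimal projections of $\phi(C(X))$ is fine but can be bypassed entirely by the one-step commutative-algebra observation above.
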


\begin{proof}
Define $H: C(X\times \T)\to A$ by $H(f\otimes g)=\phi(f)g(u)$ for
$f\in C(X)$ and $g\in C(\T).$ Note that $H(C(X))$ is a commutative
finite dimensional \CA. The lemma follows immediately.

\end{proof}

\begin{lem}\label{fullsp1}
Let $X$ be a compact path connected metric space, let $\ep>0$ and
let ${\cal F}\subset C(X)$ be a finite subset.
There exists  $\eta>0$ such that the following holds:

For any $\sigma>0,$  there exists an integer $s\ge 1,$ $\dt>0,$ a finite subset
${\cal G}\subset C(X)$ and a finite subset ${\cal P}\subset
\underline{K}(C(X))$ satisfying the following:

Suppose that $\phi:C(X)\to M_n$ (for some integer $n$)  is  unital
\hm\, and a unitary $u\in M_n$  such that there is  $\dt$-${\cal
G}$-multiplicative \morp\, $\Phi: C(X\times \T)\to M_n$  such that
\beq\label{F1}
\|\Phi(f\otimes 1)-\phi(f)\|<\dt\tforal f\in {\cal G},\,\,\,
\|u-\Phi(1\otimes z)\|<\dt,
\eneq
where $z$ is the identity map on the unit circle,
\beq\label{F2}
{\rm Bott}(\phi, u)|_{\cal P}=\{0\}\tand
\eneq
\beq\label{F3}
\mu_{\tau\circ \Phi}(O_{\eta/2s})\ge \sigma\eta
\eneq
for any open ball $O_{\eta/2s}$ of $X\times \T$ with radius
$\eta/2s.$

Then there is a continuous path of unitaries $\{u(t): t\in [0,1]\}$
such that
$$
u(0)=u,\,\,\, u(1)=1\,\,\, \|[\phi(f), \, u(t)]\|<\ep\tforal f\in
{\cal F}\tand
$$
$$
{\rm length}((\{u(t)\})\le  \pi +\ep\pi.
$$

\end{lem}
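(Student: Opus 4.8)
The statement is a finite-dimensional version of the Basic Homotopy Lemma in which the target is a full matrix algebra $M_n$. The point is that the only obstruction to connecting $u$ to $1$ through a path that almost commutes with $\phi(C(X))$ is the Bott element, and the measure condition (\ref{F3}) is what forces $\Phi$ to be essentially a homomorphism so that the Bott element is the \emph{only} invariant. I would therefore first use Lemma \ref{TAML2} (or rather its two-variable incarnation applied to $C(X\times\T)$, which is again a path connected compact metric space) to replace the approximately multiplicative map $\Phi$ by an honest unital homomorphism.

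The first step: fix $\ep$ and ${\cal F}$; choose an auxiliary finite subset ${\cal F}'\subset C(X\times\T)$ containing ${\cal F}\otimes 1$ and $1\otimes z$, and an auxiliary tolerance $\ep'\ll\ep$. Apply Lemma \ref{TAML2} to $C(X\times\T)$ with data $\ep',\,{\cal F}',\,\sigma$ and the given $\eta_1$ (coming from (\ref{F3})) to obtain $\eta,\dt,{\cal G},{\cal P}$. The $KK$-hypothesis (\ref{AMl-2}) needed there, namely $[\Phi]|_{\cal P}=[\pi_\xi]|_{\cal P}$, is supplied by (\ref{F2}): the condition ${\rm Bott}(\phi,u)|_{\cal P}=0$ together with (\ref{F1}) says exactly that the part of $[\Phi]$ in the Bott summand ${\boldsymbol\beta}(\underline K(C(X)))$ of $\underline K(C(X\times\T))$ vanishes on ${\cal P}$, and since $M_n$ is a matrix algebra the complementary part $\underline K(C(X))$ is carried by $\phi$, which being a genuine homomorphism into a matrix algebra over a point-like space agrees with a point evaluation on the relevant finite subset (here one uses that $X$ is path connected, so $[\phi]=[\pi_\xi]$ on $\underline K(C(X))$ modulo the already-handled torsion data — this is the same bookkeeping as in the proof of \ref{TAML2}). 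So the hypotheses of \ref{TAML2} are met for $\Phi$, yielding a unital homomorphism $\Psi:C(X\times\T)\to M_n$ with $\Psi\approx_{\ep'}\Phi$ on ${\cal F}'$ and with a comparable lower measure bound.

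The second step: the homomorphism $\Psi$ restricts to a unital homomorphism $\phi':=\Psi(\,\cdot\otimes 1):C(X)\to M_n$ and produces a unitary $v:=\Psi(1\otimes z)$ that \emph{exactly} commutes with $\phi'(C(X))$, and by construction $\phi'\approx_{\ep'}\phi$ on ${\cal G}$ and $v\approx_{\ep'}u$. Now I invoke Lemma \ref{LL0}: since $A=M_n$ is finite dimensional and $v$ commutes with the homomorphism $\phi'$, there is a continuous path $\{v(t)\}$ from $v$ to $1$ with ${\rm length}\le\pi$, and along this path one keeps $\phi'(f)v(t)=v(t)\phi'(f)$ for all $f$ — actually Lemma \ref{LL0} as stated gives a loop at $v$, so what I really need is the obvious variant (diagonalize $v$ inside the commutant of the commutative algebra generated by $\phi'(C(X))$ and $v$, then rotate eigenvalues to $1$), which costs length at most $\pi$. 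Finally I transfer this path back: replace $v(t)$ by a path $u(t)$ with $u(0)=u$, $u(1)=1$, by prepending a short path of length $O(\ep')$ from $u$ to $v$ (possible since $\|u-v\|$ is small, using ${\rm cel}$ estimates for unitaries close to each other, cost $\le 2\arcsin(\ep'/2)<\ep\pi$ for $\ep'$ small). Along $u(t)$ one has $\|[\phi(f),u(t)]\|\le\|[\phi'(f),v(t)]\|+2\ep'\cdot(\text{const})<\ep$ on ${\cal F}$, and the total length is $\le\pi+\ep\pi$.

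**Main obstacle.** The delicate point is not the homotopy construction — once everything is genuinely multiplicative, Lemma \ref{LL0} and elementary rotation finish it — but rather verifying that the $K$-theoretic hypothesis (\ref{F2}) about the \emph{Bott map} translates correctly into the hypothesis (\ref{AMl-2}) of Lemma \ref{TAML2} about $[\Phi]$ on a finite subset ${\cal P}$ of $\underline K(C(X\times\T))$. One must choose the finite subsets compatibly: ${\cal P}$ for \ref{TAML2} must be contained in (the image of) $\underline K(C(X))\oplus{\boldsymbol\beta}({\cal P}_0)$ where ${\cal P}_0$ is the finite subset in the Bott hypothesis, and one must argue that $[\Phi]$ agrees with $[\pi_\xi\otimes\pi_\zeta]$ on all of this ${\cal P}$, not just on the Bott part; the $\underline K(C(X))$ part is handled by path connectedness of $X$ exactly as inside the proof of \ref{TAML2}, but one has to be careful that the constants ($\eta$, $\dt$, ${\cal G}$, ${\cal P}$, and $s$) are quantified in the right order — $\eta$ depending only on $\ep,{\cal F}$, and then $s,\dt,{\cal G},{\cal P}$ allowed to depend on $\sigma$ as well — matching the stated dependence structure, which is inherited directly from the corresponding structure in \ref{TAML2} and \ref{CD}.
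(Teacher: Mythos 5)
Your proposal is correct and follows essentially the same route as the paper: pass to $C(X\times\T)$, use the Bott hypothesis (plus path connectedness of $X$ and the fact that the target is $M_n$) to verify the $KK$-condition of Lemma \ref{TAML2}, replace $\Phi$ by a genuine homomorphism $H$, wind $H(1\otimes z)$ down to $1$ inside the commutant via Lemma \ref{LL0} with length at most $\pi$, and bridge the small gap from $u$ to $H(1\otimes z)$ by a short path of length at most $\ep\pi$. Your remark about \ref{LL0} is well taken (its displayed conclusion $u(1)=u$ is a typo for $u(1)=1$), and your diagonalization sketch is exactly what its proof provides.
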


\begin{proof}
Let $\ep>0$ and ${\cal F}$ be as in the statement. We may assume
that $\ep<1/4.$
Let $Y=X\times \T$ and $${\cal F}_1=\{f\times g: f\in {\cal
F}\cup\{1\}, g=1\andeqn g=z\},$$ where $z$ is the identity map of the
unit circle.

Let $\eta>0$ be as in Lemma \ref{TAML2} for ${\cal F}_1$ (instead of
${\cal F}$) and $\ep/4$ (instead of $\ep$) for $Y$. Fix
$\sigma_1=\sigma>0$ (and $\eta_1=\eta$).
Let $s\ge 1,$ $\dt_0$ (in place of $\dt$), ${\cal G}_1$ (in place of
${\cal G}$) and ${\cal Q}\subset \underline{K}(C(X\times \T))$ (in
place of ${\cal P}$) be as required by \ref{TAML2} for the above
$\ep/4,$ ${\cal F},$ $\eta$ and $\sigma_1$ (and for $Y$).
There is $\dt_1>0,$ a finite subset ${\cal G}_1\subset C(X\times
\T)$ and   a finite subset ${\cal Q}\subset
{\boldsymbol{\bt}}(\underline{K}(C(X))$ such that
$$
[\Psi]|_{\boldsymbol{\bt}(\cal Q)}=[\pi_\xi]|_{\boldsymbol{\bt}(\cal
Q)}
$$
for any $\dt_1$-${\cal G}_1$-multiplicative \morp\, for which
\beq\label{F10}
&&\|\Psi(f\otimes 1)-\phi(f)\|<\dt_1 \tforal f\in {\cal G}_1,\,\,\,
\|\Phi(1\otimes z)-v\|<\dt_1\\
&&\andeqn {\rm Bott}(\phi, \, v)|_{\cal P}=\{0\},
\eneq
(for any unitary $v\in M_n$ satisfying the above).

Now suppose that $\phi$ and $u$ satisfy the assumption for the above
$\eta,$ $\dt,$ ${\cal G}$ and ${\cal P}.$ It follows from
\ref{TAML2} that there is a unital \hm\, $H: C(X\times \T)\to M_n$
such that
\beq\label{F11}
\|\Phi(g)-H(g)\|<\ep/4\tforal g\in {\cal F}_1.
\eneq

It follows from \ref{LL0} that there exist a continuous path of
unitaries $\{u(t): t\in [1/4, 1]\}$ such that
\beq\label{F12}
u(1/4)&=&H(1\otimes z),\,\,\,u(1)=1,\\
 u(t)H(g\otimes 1)&=&H(g\otimes
1)u(t)\tforal g\in C(X),\,\,\, t\in \andeqn
\eneq
\beq\label{F13}
{\rm Length}(\{u(t):t\in [1/4, 1]\})\le \pi.
\eneq

Since
$$
\|u-H(1\otimes z)\|<\ep/2,
$$
There is a continuous path of unitaries $\{u(t): t\in [0,1/4]\}$
such that
$$
u(0)=u,\,\,\, u(1/4)=H(1\otimes z)\andeqn {\rm Length}(\{u(t):t\in
[0,1/4]\})\le \ep\cdot \pi.
$$
The lemma then follows.

\end{proof}

\begin{lem}\label{fullsp}
Let $X$ be a compact  metric space without isolated points, $\ep>0$
and $1\in {\cal F}\subset C(X)$ be a finite subset. Let $l$ be a
positive integer for which $256\pi M/l<\ep,$ where $M=\max\{1,
\max\{\|f\|: f\in {\cal F}\} \}.$
Then, there exists $\eta>0$ (which depends on $\ep$ and ${\cal F}$)
for any finite $\eta/2$-dense subset $\{x_1,x_2,...,x_N\}$ of $X$
for which $O_i\cap O_j=\emptyset$ ($i\not=j$),where
$$
O_i=\{x\in X: {\rm dist}(x, x_i)<\eta/2s\}
$$
for some integer $s\ge 1$ and for any $\sigma>0$ for which
$\sigma<1/2s,$ and for any $\dt_0>0$ and any finite subset ${\cal
G}_0\subset C(X\otimes \T),$
 there exists
a finite subset ${\cal G}\subset C(X)$ and there exists $\dt>0$
satisfying the following:

Suppose that $A$ is a unital separable simple \CA\, with tracial
rank zero (infinite dimensional or finite dimensional), $h: C(X)\to
A$ is a unital \hm\, and $u\in A$ is a unitary such that
\beq\label{f1}
\|[h(a), u]\|<\dt\,\,\,for\,\,all \, \,a\in {\cal G}\andeqn
\mu_{\tau\circ h}(O_i)\ge \sigma\eta\,\,for\,\,all \,\, \tau\in
T(A).
\eneq

Then there is  a $\dt_0$-${\cal G}_0$-multiplicative \morp\, $\phi:
C(X)\otimes C(\T)\to A$ and a rectifiable continuous path
$\{u_t:t\in [0,1]\}$ such that
\beq\label{f2}
u_0=u,\,\, \,\|[\phi(a\otimes 1), u_t]\|<\ep\,\rforal \, a\in {\cal
F},
\eneq
\beq\label{f3}
\|\phi(a\otimes 1)-h(a)\|<\ep,\,\,\, \|\phi(a\otimes
z)-h(a)u\|<\ep\rforal \, a\in {\cal F},
\eneq
where $z\in C(\T)$ is the standard unitary generator of $C(\T),$ and
\beq\label{f4}
\mu_{\tau\circ \phi}(O(x_i\times
t_j))>{\sigma_1\over{2l}}\eta,\,\,\, i=1,2,...,m, j=1,2,...,l
\eneq
for all $\tau\in T(A),$ where $t_1,t_2, ...,t_l$ are $l$ points on
the unit circle which divide $\T$ into $l$ arcs evenly and where
$$
O(x_i\times t_j)=\{x\times t\in X\times \T: {\rm dist}(x,
x_i)<\eta/2s \andeqn {\rm dist}(t,t_j)<\pi/4sl\}\rforal \, \tau\in
T(A)
$$
 (so that $O(x_i\times t_j)\cap O(x_{i'}\times t_{j'})=\emptyset$
if $(i,j)\not=(i', j')$).
Moreover,
\beq\label{f5}
{\rm Length}(\{u_t\})\le \pi+\ep\pi.
\eneq
\end{lem}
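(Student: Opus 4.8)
The plan is to reduce Lemma~\ref{fullsp} to the ``finite--dimensional'' version Lemma~\ref{fullsp1} by a tracial approximation argument, exploiting that $A$ has tracial rank zero. First I would set up parameters carefully. Given $\ep>0$ and ${\cal F}$, choose $l$ as in the statement, then let $\eta>0$ be the number produced by Lemma~\ref{fullsp1} for $Y=X\times\T$, the finite subset ${\cal F}_1=\{f\otimes g: f\in{\cal F}\cup\{1\},\ g\in\{1,z\}\}$ and tolerance $\ep/4$; this $\eta$ depends only on $\ep$ and ${\cal F}$, as required. Given the $\eta/2$-dense set $\{x_i\}$, the integer $s$, and $\sigma<1/2s$, feed $\sigma$ into Lemma~\ref{fullsp1} to obtain the integer $s'$, $\dt'$, ${\cal G}'\subset C(Y)$, and ${\cal P}'\subset\underline K(C(Y))$. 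I would also fix the finite subset ${\cal P}\subset\underline K(C(X))$ (controlling $\mathrm{Bott}(h,u)|_{\cal P}$) large enough so that $\dt$-${\cal G}$-commutation of $h$ and $u$ makes $\mathrm{Bott}(h,u)|_{\cal P}$ well defined and so that vanishing on ${\cal P}$ forces the relevant classes in $\underline K(C(Y))$ to agree with those of a point evaluation; this uses \ref{ddbot}.

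Next, the core construction. Since $A$ has tracial rank zero and $h,u$ almost commute on a large finite set, I would invoke the standard structure theorem (essentially Theorem~7.4 of \cite{Lnhomp}, or the cut-down argument behind it): there is a projection $p\in A$, almost central for $h({\cal G})$ and for $u$, with $\tau(1-p)$ small for all $\tau$, such that $ph({\cal G})p$ is within a small tolerance of a finite--dimensional subalgebra $B\subset pAp$ of the form $\oplus M_{r(i)}$ with each $r(i)$ large. One then obtains a unital $\dt_1$-${\cal G}_1$-multiplicative map $\Psi_0: C(X)\otimes C(\T)\to pAp$ compressing $h$ and $u$, landing (up to small perturbation) in $B$, and on the complementary corner $(1-p)A(1-p)$ one simply keeps $h$ and $u$ as they are (they already almost commute there, and $\tau(1-p)$ is negligible so measure estimates survive). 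The measure hypothesis $\mu_{\tau\circ h}(O_i)\ge\sigma\eta$ transfers, for $\tau(1-p)$ small, to a lower bound on the corresponding measure of $\Psi_0$ restricted to each block of $B$, hence to a bound of the form $\mu_{\tau\circ\Psi_0}(O_{\eta/2s'})\ge\sigma'\eta$ needed to apply Lemma~\ref{fullsp1} inside each matrix block $M_{r(i)}$ of $B$.

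Now apply Lemma~\ref{fullsp1} block by block inside $B$: the hypotheses $\mathrm{Bott}(\cdot,\cdot)|_{{\cal P}'}=0$ and the measure lower bound hold by the parameter choices above, so for each block we get a continuous path of unitaries from the compressed $u$ to $1$, almost commuting with the compressed $h$, of length $\le\pi+(\ep/4)\pi$. Splicing these across blocks, adjoining the trivial path $1$ on the $(1-p)$-corner (using Lemma~\ref{LL0}-type reasoning, or just that the corner is small), and connecting the perturbed pieces back to the genuine $h$ and $u$ by short paths (costing $O(\ep)\pi$ in length), yields the desired path $\{u_t\}$ with $u_0=u$, $u_1=1$, the commutation estimate \eqref{f2}, the closeness estimates \eqref{f3} for a suitable $\dt_0$-${\cal G}_0$-multiplicative $\phi$ built from $H(f\otimes g)=h(f)g(u_\cdot)$, and total length $\le\pi+\ep\pi$. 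The measure estimate \eqref{f4} on the finer open sets $O(x_i\times t_j)$ follows because the path produced by Lemma~\ref{fullsp1} was itself built (via Lemma~\ref{TAML2}) out of homomorphisms with controlled measure, so one tracks the $\eta/2l$-scale lower bounds through the $l$ arcs.

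\textbf{Main obstacle.} The delicate point is the bookkeeping of measures under the cut-down: one must ensure that after compressing by $p$ and passing into $B=\oplus M_{r(i)}$, the probability measures $\mu_{\tau\circ\Psi_0}$ restricted to each summand still put mass $\gtrsim\sigma\eta$ on every $\eta/2s$-ball, uniformly over $\tau\in T(A)$, so that Lemma~\ref{fullsp1} is genuinely applicable in each block with a $\sigma$ independent of the block. This requires choosing $\tau(1-p)$ small relative to $\sigma\eta$ and, more subtly, controlling how the trace on $A$ restricts to each block of $B$ (normalizing correctly), which is exactly the kind of estimate the tracial-rank-zero machinery is designed to supply but which must be stated with care here.
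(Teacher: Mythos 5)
Your proposal runs the logic backwards and, in doing so, uses hypotheses that Lemma \ref{fullsp} does not have. Lemma \ref{fullsp} is the measure-spreading preparation step: its hypotheses are only almost-commutation of $(h,u)$ and the lower bound $\mu_{\tau\circ h}(O_i)\ge \sigma\eta$ on $X$; there is \emph{no} Bott-vanishing assumption and nothing whatsoever is assumed about how $u$ distributes mass in the circle direction (for instance $u=1$ is allowed, in which case any naive map on $C(X\times\T)$ built from $(h,u)$ concentrates all measure on one circle fibre). Yet your reduction applies Lemma \ref{fullsp1}, whose hypotheses are exactly ${\rm Bott}(\phi,u)|_{\cal P}=0$ (\eqref{F2}) and a measure lower bound \eqref{F3} for a map $\Phi$ on $C(X\times\T)$ — i.e.\ measure already spread over $X\times\T$. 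The second of these is precisely what Lemma \ref{fullsp} is supposed to \emph{produce} (conclusion \eqref{f4}), and the first is simply unavailable: you write that you ``fix ${\cal P}$ ... so that vanishing on ${\cal P}$ forces the relevant classes to agree,'' but no hypothesis of \ref{fullsp} gives you that vanishing. Relatedly, your outline ends with a path from $u$ to $1$; the lemma does not assert $u_1=1$, and it cannot in general: a path from $u$ to $1$ satisfying \eqref{f2} would force ${\rm bott}_1(h,u)$ to vanish, which the hypotheses do not guarantee. In the intended architecture (proof of \ref{h1l}), \ref{fullsp} feeds \emph{into} \ref{fullsp1}, not the other way around. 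Finally, your claim that \eqref{f4} ``follows because the path produced by Lemma \ref{fullsp1} was itself built out of homomorphisms with controlled measure'' is not an argument; the conclusion of \ref{fullsp1} contains no measure statement at all.

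The paper's own proof is a short reduction to Lemma 6.4 of \cite{Lnhomp}, which is this statement with $h$ assumed to be a monomorphism: one observes that in the infinite-dimensional case the injectivity of $h$ enters that proof only through the measure condition $\mu_{\tau\circ h}(O_i)\ge\sigma\eta$, which is here taken as a hypothesis, and that for $A=M_n$ a small adjustment is needed so that the relevant corner has enough projections — one chooses the points and the integer $s$ so that each ball $G_i$ contains $2l$ of the $x_j$'s, whence $\mu_{\tau\circ h}(G_i)\ge 2l\sigma\eta$ and the corner contains $2l-1$ mutually orthogonal, mutually equivalent nonzero projections, after which the proof of Lemma 6.4 of \cite{Lnhomp} applies. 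The actual construction (cutting out small projections under $h$ over each ball and replacing $u$ by $u$ times a unitary that shifts these projections so as to distribute the induced measure over the $l$ arcs, at a length cost of at most $\pi+\ep\pi$) lives in that cited proof and is not reproduced by your tracial cut-down outline, so even setting aside the misapplied Lemma \ref{fullsp1}, your sketch does not contain the mechanism that creates the product-space measure estimate \eqref{f4}.
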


\begin{proof}
The only difference of this lemma and Lemma 6.4 of \cite{Lnhomp} is
that in the statement of Lemma 6.4 of \cite{Lnhomp} $h$ is a assumed
to be monomorphism. However, for the case that $A$ is infinite
dimensional, it is the condition that
$$
\mu_{\tau\circ h}(O_i)\ge \sigma\cdot \eta
$$
for all $\tau\in T(A)$ which is actually used.  The existence of a
monomorphism $h$ implies that $A$ is infinite dimensional.

In the case that $M_n,$ $pAp$ may not have enough projections, a
modification is needed for the case that $A=M_n$ for some integer
$n.$
Let $\eta>0$ be such that
$$
|f(x)-f(x')|<\ep/32\tforal f\in {\cal F},
$$
if ${\rm dist}(x,x')<\eta.$
Suppose that $y_1, y_2,...,y_m\in X$ and $s_1\ge 1$ such that
$$
G_i\cap G_j=\emptyset\,\,\,{\rm if}\,\,\, i\not=j,
$$ where
$G_i=B_{\eta_1/2s_1}(y_i),$ $i=1,2,...,m.$ Let
$\{x_1,x_2,...,x_{2ml}\}$ be another subset of $X$ such that each
$G_i$ contains $2l$ many points.

Now let $\dt_0$ and  ${\cal G}_0$ be given.
Then there is $s>s_1$ such that
$$
O_i\cap O_j=\emptyset,\,\,\,{\rm if}\,\,\,i\not=j,
$$
where $O_j=B_{\eta_1/2s}(x_j),$ $j=1,2,...,m+2l.$ Let
$0<\sigma<1/2s.$ Let $\sigma_1=2l\sigma.$
Let $\dt$ and ${\cal G}$ be required by Lemma 6.4 of \cite{Lnhomp}
for the above $\ep,$ ${\cal F},$ $l,$ $\eta,$ $s,$ $\sigma_1,$
$\dt_0$ and ${\cal G}_0.$

Now suppose that $h: C(X)\to A$ is a unital \hm\, and $u\in A$ is a
unitary such that
$$
\|[h(f), \,u]\|<\dt\tforal f\in {\cal G}\andeqn \mu_{\tau\circ h}(O_i)\ge \sigma\eta.
$$
Then
$$
\mu_{\tau\circ h}(G_i)\ge \sigma_1\eta\ge 2l\sigma\eta.
$$
In particular $pAp$ contains $2l-1$ mutually orthogonal and mutually
equivalent non-zero projections. Thus the proof of Lemma 6.4 of
\cite{Lnhomp} applies.

\end{proof}

\begin{lem}\label{h1l}
Let $X$ be a finite CW complex, ${\cal F}\subset C(X)$ be a finite
subset and $\ep>0$ be a positive number.  Let $\sigma>0.$
There exists $\eta>0$ (which depends on $\ep$ and ${\cal F}$ but not
on $\sigma$),  $\dt>0,$ a finite subset ${\cal G}\subset C(X)$ and a
finite subset ${\cal P}\subset \underline{K}(C(X))$ satisfying the
following:

Suppose that $\phi: C(X)\to A,$ where $A$ is a unital separable
simple \CA\, with tracial rank zero (infinite or finite dimensional),
is a unital \hm\, with
\beq\label{hl1-1}
\mu_{\tau\circ \phi}(O_{\eta/2})\ge \sigma\eta
\eneq
for any open ball with radius $\eta/2$ and a unitary $u\in A$ such
that
\beq\label{hl1-2}
\|[\phi(g),\,u]\|<\dt\tforal g\in {\cal G}\tand
{\rm Bott}(\phi, u)|_{\cal P}=\{0\}.
\eneq
Then there exists a continuous path of unitaries $\{u_t: t\in
[0,1]\}$ such that
\beq\label{hl1-3}
u_0=u,\,\,\,u_1=1,\,\,\,\|[\phi(f),\,u_t]\|<\ep
\eneq
for all $f\in {\cal F}$ and $t\in [0,1]$ and
$$
{\rm length}(\{u_t\})\le 2\pi+\ep.
$$

\end{lem}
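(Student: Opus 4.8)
The plan is to reduce this statement to the two building blocks already assembled in this section, namely Lemma \ref{fullsp} (which produces from the almost-commuting pair $(\phi,u)$ an honest almost multiplicative map $\Phi\colon C(X)\otimes C(\T)\to A$ together with a short path moving $u$ to a unitary of the form $h(\cdot)$-controlled, while keeping measure lower bounds) and Lemma \ref{fullsp1} (which, given such a $\Phi$ with vanishing Bott data and the measure estimates, manufactures a path of unitaries of length at most $\pi+\ep\pi$ connecting the new unitary to $1$ while almost commuting with $\phi$). First I would fix $\ep>0$, ${\cal F}$, $\sigma$, choose $\eta$ as demanded by Lemma \ref{fullsp1} for the data $(\ep/2,{\cal F})$ applied to the space $X$, and then feed that $\eta$ (together with $\sigma$) into Lemma \ref{fullsp} to extract the integer $s$, the tolerance $\dt_0$, the finite set ${\cal G}_0\subset C(X\otimes\T)$, and the corresponding ${\cal G}\subset C(X)$, ${\cal P}\subset\underline K(C(X))$ and $\dt$. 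The finite set ${\cal P}$ must be taken large enough to contain (the preimage under ${\boldsymbol\bt}$ of) the Bott-relevant subset ${\cal Q}$ demanded in Lemma \ref{fullsp1}, using \ref{ddbot}; this is the place where one has to be slightly careful about matching $KK$-data between the two lemmas.

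The execution then runs as follows. Assume $\phi\colon C(X)\to A$ is a unital homomorphism with $\mu_{\tau\circ\phi}(O_{\eta/2})\ge\sigma\eta$ and $u$ a unitary with $\|[\phi(g),u]\|<\dt$ on ${\cal G}$ and $\mathrm{Bott}(\phi,u)|_{\cal P}=\{0\}$. Applying Lemma \ref{fullsp} gives a $\dt_0$-${\cal G}_0$-multiplicative map $\Phi\colon C(X)\otimes C(\T)\to A$ and a rectifiable path $\{u_t:t\in[0,1]\}$ with $u_0=u$, with $\Phi(a\otimes1)\approx_\ep h(a)$ and $\Phi(a\otimes z)\approx_\ep h(a)u_1$ on ${\cal F}$, with the refined measure lower bound \eqref{f4} on balls of $X\times\T$, and with $\mathrm{Length}(\{u_t\})\le\pi+\ep\pi$. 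The measure estimate \eqref{f4} is exactly of the form $\mu_{\tau\circ\Phi}(O_{\eta/2s})\ge\sigma'\eta$ required as hypothesis \eqref{F3} of Lemma \ref{fullsp1}, and since $\mathrm{Bott}(\phi,u)|_{\cal P}=\{0\}$ controls $[\Phi]|_{{\boldsymbol\bt}({\cal Q})}$ one gets the hypothesis \eqref{F2}. Lemma \ref{fullsp1} then supplies a second path $\{v_t:t\in[0,1]\}$ from $v_0=u_1$ (or rather the unitary $\Phi(1\otimes z)$, which is within $\dt$ of $u_1$) to $v_1=1$ with $\|[\phi(f),v_t]\|<\ep/2$ on ${\cal F}$ and $\mathrm{Length}(\{v_t\})\le\pi+\ep\pi$. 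Concatenating $\{u_t\}$ with $\{v_t\}$ (and absorbing the small gap between $u_1$ and $\Phi(1\otimes z)$ into a path of length $<\ep\pi$) yields a path from $u$ to $1$ of total length at most $2\pi+\ep$, after adjusting the role of $\ep$ at the start; one checks the commutator bound $\|[\phi(f),u_t]\|<\ep$ holds throughout, using \eqref{f3} to compare $\phi$ with $\Phi(\cdot\otimes1)$ on the first segment.

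The step I expect to be the main obstacle is the careful bookkeeping of the finite sets and tolerances so that the output of Lemma \ref{fullsp} legitimately feeds the hypotheses of Lemma \ref{fullsp1}: in particular choosing ${\cal G}_0$, $\dt_0$ (inputs to \ref{fullsp}) to be precisely the ${\cal G}_1$, $\dt_1$ that \ref{fullsp1} requires of its auxiliary map $\Phi$, and arranging ${\cal P}$ to simultaneously govern the Bott map (via \ref{ddbot}, so that $\mathrm{Bott}(\phi,u)|_{\cal P}=\{0\}$ forces the relevant $\underline K$-class of $\Phi$ to agree with a point evaluation on $X\times\T$) and the measure indices needed to split off the finite-dimensional part. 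The length estimate itself is routine once the two paths are in hand: $\pi+\ep\pi$ from \ref{fullsp}, $\pi+\ep\pi$ from \ref{fullsp1}, plus a negligible correction, all of which can be made to sum to $2\pi+\ep$ by shrinking the working $\ep$ at the outset. The commutator control along the concatenated path follows from \eqref{f2}, \eqref{f3} and \eqref{hl1-3}-type estimates from \ref{fullsp1}, again by a triangle-inequality argument with no genuine difficulty.
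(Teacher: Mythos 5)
Your proposal is correct and follows essentially the same route as the paper: the paper's proof of Lemma \ref{h1l} simply states that it is the combination of Lemma \ref{fullsp} and Lemma \ref{fullsp1} (as in the proof of Theorem 7.4 of \cite{Lnhomp}), which is exactly the reduction you carry out, including the concatenation of the two paths and the $2\pi+\ep$ length bookkeeping.
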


\begin{proof}
For the case that $A$ is infinite dimensional, the proof is exactly
the same of that Theorem 7.4 of \cite{Lnhomp}. The proof is slightly
different from that of Theorem 7.4 of \cite{Lnhomp} when $A$ is
finite dimensional. However,  it is a combination of \ref{fullsp}
and \ref{fullsp1} just as the proof of Theorem 7.4 of \cite{Lnhomp}.

\end{proof}

\begin{rem}\label{RM}

 Consider the case that $A$ is finite dimensional. Let $X$ be a
finite CW complex. Suppose that $\psi: C(X\times \T)\to A$ is a
$\dt$-${\cal G}$-multiplicative \morp. Since $K_1(M_n)=\{0\},$ it is
clear that, with sufficiently small $\dt>0$ and sufficiently large
finite subset ${\cal G},$ $[\psi]|_{\bt^{(0)}(K_0(C(X)))}=\{0\}.$ It
follows that
$$
{\rm bott}_0(\phi, u)=\{0\},
$$
if $\|[\phi(f),\, u]\|<\dt$ for any unitary and all $f\in {\cal F}$
for a sufficiently large finite subset ${\cal F}\subset C(X)$ (and
sufficiently small $\dt$).

Fix an integer $k.$ With sufficiently large ${\cal G}$ and
sufficiently small $\dt,$ it is clear  that
$[\psi]|_{\boldsymbol{\bt}(K_0(C(X)/k\Z))}=\{0\}.$ Suppose
that $K_0(C(X))$ is torsion free and
$[\psi]|_{{\boldsymbol{\bt}}(K_1(C(X)))}=0.$  It is then easy to check that
$$
[\psi]|_{{\boldsymbol{\bt}}(K_1(C(X)/k\Z))}=0,
$$
provided that $\dt$ is sufficiently small and ${\cal G}$ is
sufficiently large.



From this, in the statement of Theorem \ref{h1l}, it suffices to
replace $\underline{K}(C(X))$ by $K_1(C(X))$ and to replace
(\ref{F2}) by
$$
{\rm bott}_1(\phi, u)|_{\cal P}=0,
$$
provided that $K_0(C(X))$ is torsion free.

\end{rem}

\section{ Homotopy and  unitary equivalence }

Let $X$ be a locally path connected compact metric space. Let $\phi,
\psi: C(X)\to A$ be two unital \hm s, where $A$ is a finite
dimensional \SCA. In this  section, we will show that $\phi$ and
$\psi,$  up to some homotopy, are unitary equivalent if they are
close and they induce similar measure. See \ref{F2l} below.

\begin{lem}\label{F2l-0}
Let $X$ be a connected compact metric space. For any $\eta>0$ and
$\sigma>0,$ there is $\dt={\rm (}\sigma\eta/16{\rm )}$ and there is a finite
subset ${\cal G}\subset C(X)$ such that if $\phi, \psi: C(X)\to A$
are two unital \hm s, where $A$ is a unital \CA\, with a tracial
state $\tau,$ such that
\beq\label{F2l-0-1}
|\tau\circ \phi(g)-\tau\circ \psi(g)|<\dt\tforal g\in {\cal G}
\eneq
\beq\label{F2l-0-2}
\mu_{\tau\circ\phi}(O_{\eta/8})\ge \sigma\eta/8
\,\,\,\tand\mu_{\tau\circ \psi}(O_{\eta/8})\ge \sigma\eta/8,
\eneq
then, for any compact subset $F\subset X,$
\beq\label{F2l-0-3}
\mu_{\tau\circ \phi}(F)\le \mu_{\tau\circ \psi}(B_\eta(F))\andeqn
\mu_{\tau\circ \psi}(F)\le \mu_{\tau\circ \phi}(B_\eta(F)),
\eneq
where
$$
B_\eta(F)=\{x\in X: {\rm dist}(x, F)<\eta\}.
$$
\end{lem}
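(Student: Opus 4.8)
The plan is to prove the two inequalities in (\ref{F2l-0-3}) symmetrically, so I will concentrate on showing $\mu_{\tau\circ\phi}(F)\le\mu_{\tau\circ\psi}(B_\eta(F))$ for an arbitrary compact $F\subset X$; the other follows by exchanging the roles of $\phi$ and $\psi$. The basic idea is to separate $F$ from the complement of $B_\eta(F)$ by a fixed continuous ``bump'' function and then exploit the closeness of $\tau\circ\phi$ and $\tau\circ\psi$ on that function, having arranged in advance that the finite subset ${\cal G}$ is large enough to contain all the functions we need. Of course $F$ is not fixed, so the subtlety is that a single finite ${\cal G}$ must work for all compact $F$ simultaneously; this is handled by a covering/net argument on $X$ at scale $\eta$.

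First I would fix a finite $(\eta/8)$-dense subset $\{y_1,\dots,y_m\}$ of the connected compact space $X$, and for each $i$ choose a continuous function $g_i:X\to[0,1]$ with $g_i=1$ on $\overline{B_{\eta/8}(y_i)}$ and $g_i=0$ off $B_{\eta/4}(y_i)$. Set ${\cal G}=\{g_1,\dots,g_m\}$ (together with $1$), a finite set depending only on $X$, $\eta$ and $\sigma$ through the choice of the net, and put $\dt=\sigma\eta/16$. Now given compact $F$, let $S=\{i: \overline{B_{\eta/8}(y_i)}\cap F\ne\emptyset\}$ and $g=\max_{i\in S}g_i\in C(X)$ (a finite max of elements of the unit ball, so $0\le g\le 1$). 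Since the $\overline{B_{\eta/8}(y_i)}$ cover $X$, every point of $F$ lies in some $\overline{B_{\eta/8}(y_i)}$ with $i\in S$, whence $g=1$ on $F$, so $\mu_{\tau\circ\phi}(F)\le\int g\,d\mu_{\tau\circ\phi}=\tau\circ\phi(g)$. On the other hand, if $g(x)>0$ then $x\in B_{\eta/4}(y_i)$ for some $i\in S$, and by definition of $S$ there is a point of $F$ within $\eta/8$ of $y_i$, so $\dist(x,F)<\eta/4+\eta/8<\eta$; thus $\mathrm{supp}(g)\subset B_\eta(F)$ and $\tau\circ\psi(g)=\int g\,d\mu_{\tau\circ\psi}\le\mu_{\tau\circ\psi}(B_\eta(F))$.

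Combining these with (\ref{F2l-0-1}) applied to $g$ (an element of the subgroup of $C(X)$ generated by ${\cal G}$ — here I would actually take ${\cal G}$ to consist of all finite maxima $\max_{i\in T}g_i$ over subsets $T\subset\{1,\dots,m\}$, still a finite set) gives
\[
\mu_{\tau\circ\phi}(F)\ \le\ \tau\circ\phi(g)\ <\ \tau\circ\psi(g)+\dt\ \le\ \mu_{\tau\circ\psi}(B_\eta(F))+\sigma\eta/16 .
\]
To remove the extra $\sigma\eta/16$ and get the clean inequality, I would instead run the argument with $g_i$ supported in $B_{\eta/8}(y_i)$ and equal to $1$ on a slightly smaller ball, arranging that the ``leakage'' set $\{g>0\}$ has $\mu_{\tau\circ\phi}$-measure controlled from below by the hypothesis (\ref{F2l-0-2}) (each nonempty open ball of radius $\eta/8$ has $\phi$-mass $\ge\sigma\eta/8$), and then play this measure against $\dt=\sigma\eta/16$ so that it absorbs the error; alternatively one uses that (\ref{F2l-0-3}) with $B_\eta$ is a closed condition and passes to a limit from $B_{\eta'}$ with $\eta'<\eta$ arbitrarily close to $\eta$, which makes the $\sigma\eta/16$ slack harmless. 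The main obstacle is precisely this bookkeeping: choosing one finite ${\cal G}$, independent of $F$, that simultaneously separates every compact $F$ from the complement of its $\eta$-neighbourhood, and making the constant $\dt=\sigma\eta/16$ exactly tight rather than merely up to a harmless error; the hypothesis (\ref{F2l-0-2}) on the lower mass of small balls is exactly what is there to close this gap, and connectedness of $X$ guarantees the net-based bump functions behave well. The reverse inequality is identical with $\phi$ and $\psi$ interchanged.
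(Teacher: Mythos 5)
Your first half is fine and is essentially the paper's own device in functional clothing: a finite $(\eta/8)$-net, test functions (the paper uses the indicator-comparison for unions of balls $B_{\eta/8}(x_i)$, you use bump functions and finite maxima over subsets $S$), and the trace hypothesis to get $\mu_{\tau\circ\phi}(F)\le \mu_{\tau\circ\psi}(B_\eta(F))+\dt$ with $\dt=\sigma\eta/16$. But the whole point of the lemma is the clean inequality without the additive $\dt$, and that is exactly the step you leave to a hand-wave. Your second suggested fix is simply wrong: letting $\eta'\uparrow\eta$ does nothing to the error, since $\dt=\sigma\eta/16$ is a fixed constant independent of $\eta'$; monotone convergence improves $\mu_{\tau\circ\psi}(B_{\eta'}(F))$ to $\mu_{\tau\circ\psi}(B_\eta(F))$ but the $+\sigma\eta/16$ persists. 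Your first suggested fix is aimed at the wrong measure: what must absorb $\dt$ is extra $\psi$-mass sitting inside $B_\eta(F)$ but outside the region where the $\phi$--$\psi$ comparison was made, not a lower bound on the $\phi$-mass of the ``leakage'' set.

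Producing that extra $\psi$-mass is precisely where hypothesis (\ref{F2l-0-2}) and the connectedness of $X$ enter, and your proposal never actually uses connectedness (your remark that it ``guarantees the bump functions behave well'' is not a use of it). The paper's argument runs a dichotomy: with $S=\{i: F\cap B_{\eta/8}(x_i)\ne\emptyset\}$, either the enlarged union $\cup_{i\in S}\overline{B_{3\eta/4}(x_i)}$ is all of $X$, in which case the right-hand side of the desired inequality is $1$ and there is nothing to prove; or it is a proper subset, and then connectedness of $X$ yields a whole open ball $O$ of radius $\eta/8$ disjoint from $\cup_{i\in S}B_{\eta/4}(x_i)$ yet contained in the $\eta$-neighbourhood of $F$, so that $\mu_{\tau\circ\psi}(O)\ge\sigma\eta/8>\dt$ and this disjoint mass swallows the error term. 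Without such an argument the statement is false for disconnected $X$ (take $F$ a clopen piece at distance more than $\eta$ from the rest: the conclusion would force $\mu_{\tau\circ\phi}(F)=\mu_{\tau\circ\psi}(F)$, which the $\dt$-closeness of the traces cannot deliver), so the gap is not cosmetic bookkeeping but the mathematical core of the lemma.
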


\begin{proof}
There are finitely many open balls $B_{\eta/8}(x_1),
B_{\eta/8}(x_2),...,B_{\eta/8}(x_N)$ with radius $\eta/8$ covers
$X.$ It is an easy exercise to show that there is a finite subset
${\cal G}$ of $C(X)$ satisfying the following: if (\ref{F2l-0-1})
holds, then, for any subset $S$ of $\{1,2,...,N\},$
\beq\label{F10-4}
\mu_{\tau\circ \phi}(\cup_{i\in S}B_{\eta/8}(x_i))&\le&
\mu_{\tau\circ \psi}(\cup_{i\in
S}B_{\eta/4}(x_i))+\dt\andeqn\\\label{F10-4+}
 \mu_{\tau\circ
\psi}(\cup_{i\in S}B_{\eta/8}(x_i))&\le& \mu_{\tau\circ
\phi}(\cup_{i\in S}B_{\eta/4}(x_i))+\dt.
\eneq
If $\overline{\cup_{i\in S}B_{3\eta/4}(x_i)}=X,$ then
\beq\label{F10-5}
\mu_{\tau\circ \phi}(\cup_{i\in S}B_{\eta/8}(x_i))\le\mu_{\tau\circ
\psi}(\cup_{i\in
S}\overline{B_{3\eta/4}(x_i)})\andeqn\\\label{F10-6} \mu_{\tau\circ
\psi}(\cup_{i\in S}B_{\eta/8}(x_i))\le \mu_{\tau\circ
\phi}(\cup_{i\in S}\overline{B_{3\eta/4}(x_i)}).
\eneq

Otherwise, since $X$ is path connected,  there is an open ball $O$
of $X$ with radius $\eta/8$ such that
$$
O\cap (\cup_{i\in S}B_{\eta/4}(x_i))=\emptyset \andeqn O\subset
\cup_{i\in S}B_{\eta}(x_i).
$$
 Thus, by (\ref{F10-4}), (\ref{F10-5}) and
(\ref{F2l-0-2}),
\beq\label{F10-7}
\mu_{\tau\circ \phi}(\cup_{i\in S}B_{\eta/8}(x_i))\le \mu_{\tau\circ
\psi}(\cup_{i\in S}B_{\eta}(x_i)).
\eneq
Now for any compact subset $F,$ there is $S\subset \{1,2,...,N\}$
such that
\beq\label{F10-8}
F\subset \cup_{i\in S}B_{\eta/8}(x_i)\andeqn F\cap
B_{\eta/8}(x_i)\not=\emptyset \rforal i\in S.
\eneq
It follows that
\beq\label{F10-9}
\mu_{\tau\circ \phi}(F)&\le &\mu_{\tau\circ \phi}(\cup_{i\in
S}B_{\eta/8}(x_i))\\
&\le & \mu_{\tau\circ \psi}(\cup_{i\in S}B_{\eta}(x_i))\le
\mu_{\tau\circ \psi}(B_{\eta}(F)).
\eneq
Exactly the same argument shows that the other inequality of
(\ref{F2l-0-3}) also holds.

\end{proof}

\begin{lem}\label{F2l}
Let $X$ be a locally path connected compact metric space without
isolated points, let $\ep>0$ and let ${\cal F}\subset C(X)$ be a
finite subset. Let $\eta>0$ be such that
$$
|f(x)-f(x')|<\ep/2\tforal f\in {\cal F},
$$
provided that ${\rm dist}(x,x')<\eta$ and such that any open ball
$B_{\eta}$ with radius $\eta$ is path connected.

Let $\sigma>0.$ There is $\dt>0$ and there exists a finite subset
${\cal G}\subset C(X)$ satisfying the following: For any two unital
\hm s $\phi, \psi: C(X)\to M_n$ (for any $n\ge 1$) for which
\beq\label{F2l-1}
&&\|\phi(f)-\psi(f)\|<\dt\tforal f\in {\cal G}
\andeqn\\\label{F2l-1+1}
 &&\mu_{\tau\circ
\phi}(O_{\eta/24}),\,\,\mu_{\tau\circ \psi}(O_{\eta/24})\ge
\sigma\eta
\eneq
for any open balls with radius $\eta/24,$ there exist two unital \hm
s $\Phi_1, \Phi_2: C([0,1], M_n)$ such that
\beq\label{F2l-2}
&&\pi_0\circ \Phi_1=\phi,\,\,\, \pi_0\circ
\Phi_2=\psi,\\\label{F2l-2+}
 &&\|\pi_t\circ
\Phi_1(f)-\phi(f)\|<\ep,\,\,\,\|\pi_t\circ \Phi_2(f)-\psi(f)\|<\ep
\eneq
for all $f\in {\cal F}$ and $t\in [0,1],$ and there is a unitary
$u\in M_n$ such that
\beq\label{F2l-3}
{\rm ad}\, u\circ \pi_1\circ \Phi_1=\pi_1\circ \Phi_2.
\eneq

\end{lem}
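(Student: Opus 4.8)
The plan is to reduce the problem to homotopy within finite-dimensional algebras by first replacing $\phi$ and $\psi$, up to a small homotopy, by homomorphisms whose images lie in commuting copies of matrix algebras, so that their difference becomes a genuinely ``one-dimensional'' deformation. Since $A = M_n$ has $K_1 = \{0\}$ and trivial $\underline{K}$-theory in the relevant range, the only obstruction to a homotopy between two close homomorphisms is the measure-theoretic data, which is controlled by the hypothesis \eqref{F2l-1+1}. Concretely, I would first apply Lemma \ref{F2l-0} (with $\eta$ replaced by a suitable small multiple of itself and with the given $\sigma$) to deduce from \eqref{F2l-1} that $\mu_{\tau\circ\phi}$ and $\mu_{\tau\circ\psi}$ are close in the sense of \eqref{F2l-0-3}: each measure of a compact set is dominated by the other measure of a small neighborhood. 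This is the quantitative input that lets one build the homotopy.

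Next, using the local path-connectedness of $X$ and the fact that balls of radius $\eta$ are path connected, I would construct the path $\Phi_1$ as follows. Choose a fine finite cover of $X$ by path-connected open balls, and on each eigenvalue of $\phi$ (viewing $\phi(f) = \sum f(x_i) e_i$ after a small perturbation to a homomorphism with finite spectrum, using that $A$ is finite-dimensional) move the point $x_i$ continuously along a path inside a small ball to a ``standard'' configuration of points determined only by the measure $\mu_{\tau\circ\phi}$ rounded to a common denominator. The length/smallness of this homotopy is controlled by $\eta$ and the modulus-of-continuity choice, giving \eqref{F2l-2} and \eqref{F2l-2+}. Do the same for $\psi$ to get $\Phi_2$. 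Because \eqref{F2l-0-3} holds, the two ``standard'' configurations reached at $t=1$ can be chosen to be the \emph{same} multiset of points with the same multiplicities (this is where one matches up $\mu_{\tau\circ\phi}$ and $\mu_{\tau\circ\psi}$ grain by grain, possibly after one more harmless rounding homotopy), so $\pi_1\circ\Phi_1$ and $\pi_1\circ\Phi_2$ are homomorphisms with identical spectral data; since $M_n$ has a unique trace and $K_0(C(X))\to K_0(M_n)=\Z$ is determined by the measure, they are unitarily equivalent, yielding \eqref{F2l-3}.

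The main obstacle will be the bookkeeping in the middle step: making the point-moving homotopies respect the disjointness conditions on the small balls $O_{\eta/24}$, keeping the induced measures uniformly bounded below throughout (so that the rounded target configuration is actually realizable with the right multiplicities), and ensuring that all the moves stay within path-connected balls so that no $K_1$-type obstruction appears. The cleanest way to organize this is to fix at the outset a finite $\eta/48$-dense set $\{y_1,\dots,y_N\}$, round both measures to rational numbers with common denominator $n$ on the associated partition, verify via Lemma \ref{F2l-0} that the rounded measures agree, and then realize each homomorphism as a direct sum of point evaluations that can be slid to the $y_j$'s. One should also remark that perturbing $\phi$ and $\psi$ to have finite spectrum is legitimate because $A$ is finite-dimensional and the perturbation can be absorbed into the $\ep$ of \eqref{F2l-2+} via an initial short homotopy; this is why the hypothesis that $A = M_n$ (rather than a general $C^*$-algebra) is used essentially here.
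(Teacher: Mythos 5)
Your overall strategy---use Lemma \ref{F2l-0} to compare the two spectral measures and then slide the eigenvalue points inside path-connected balls of radius roughly $\eta$ until the two configurations match---is the same as the paper's. The gap is in the step where you fix a finite $\eta/48$-dense partition and claim to ``verify via Lemma \ref{F2l-0} that the rounded measures agree.'' That claim is false in general: the conclusion (\ref{F2l-0-3}) only says that each measure of a compact set is dominated by the other measure of an $\eta$-neighborhood; it does not force the cell masses on a fixed partition to coincide. For example, $\phi$ may place its atoms just inside a cell and $\psi$ just across the cell boundary; then (\ref{F2l-0-3}) holds while the two (rational, denominator $n$) cell masses differ on every cell. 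Hence there is no single ``standard configuration determined only by the rounded measure'' to which both homomorphisms can be slid, and the ``one more harmless rounding homotopy'' you appeal to is exactly the nontrivial point: atoms must be allowed to migrate into neighbouring cells, and one has to prove that a globally consistent assignment with displacement less than $\eta$ exists. Also, no perturbation is needed to obtain finite spectrum: a unital homomorphism from $C(X)$ into $M_n$ is automatically of the form $\sum_i f(x_i)p_i$.

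The missing tool is the Halmos--Vaughan Marriage Lemma, which is how the paper closes this step. After refining $\phi$ and $\psi$ (through short homotopies, using that $X$ has no isolated points and that $\eta$-balls are path connected) to direct sums of rank-one point evaluations $\phi_1(f)=\sum_i f(x_i')e_i$ and $\psi_1(f)=\sum_i f(y_i')e_i'$, the inequalities (\ref{F2lm-1}), (\ref{F2lm-2}) and (\ref{F2lm-3}) combine to give exactly Hall's condition: $\mu_{\tau\circ \phi_1}(F)\le \mu_{\tau\circ\psi_1}(B_{\eta}(F))$ for every compact $F$, and symmetrically. The Marriage Lemma then produces a permutation $\Delta$ with ${\rm dist}(x_i', y_{\Delta(i)}')<\eta,$ and a final homotopy inside $\eta$-balls slides $y_{\Delta(i)}'$ to $x_i',$ so that the two endpoint homomorphisms have literally the same spectral data and the unitary $u$ in (\ref{F2l-3}) exists at once. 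With that matching argument supplied, your proposal becomes essentially the paper's proof.
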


\begin{proof}
$X$ is a union of finitely many connected and locally path connected
compact metric spaces. It is clear that the general case can be
reduced to the case that $X$ is a connected and locally path
connected compact metric space.

We will apply the so-called Marriage Lemma (see \cite{HV}).  Let
$\dt$ and ${\cal G}$ be in \ref{F2l-0} corresponding to $\eta/3$
and $\sigma.$ We may assume that ${\cal G}\supset {\cal F}.$
We may write that
\beq\label{F2l-4}
\phi(f)=\sum_{i=1}^{N_1}f(x_i)p_i\andeqn
\psi(f)=\sum_{j=1}^{N_2}f(y_j)q_j
\eneq
for all $f\in C(X),$ where $\{p_1,p_2,...,p_{N_1}\}$ and
$\{q_1,q_2,...,q_{N_2}\}$ are two sets of mutually orthogonal
projections such that $\sum_{i=1}^{N_1}p_i=1=\sum_{j=1}^{N_2}q_j.$

By \ref{F2l-0},
\beq\label{F2lm-1}
\mu_{\tau\circ \phi}(F)<\mu_{\tau\circ \psi}(B_{\eta/3}(F))\andeqn
\mu_{\tau\circ \psi}(F)<\mu_{\tau\circ \phi}(B_{\eta/3}(F))
\eneq
for any compact subset $F\subset X.$

Suppose that $p_i$ has rank $r(i).$ Choose $r(i)$ many points
$\{x_{i,1}, x_{i,2},...,x_{i,r(i)}\}\subset B_{\eta/3}(x_i)$ and
define
$$
\phi_1(f)=\sum_{i=1}^{N_1}(\sum_{k=1}^{r(i)}f(x_{i,k})e_{i,k})\tforal
f\in C(X),
$$
where $\{e_{i,1},e_{i,2},...,e_{i,r(i)}\}$ is a set of mutually
orthogonal rank one projections such that $\sum_{k=1}^{r(i)}
e_{i,k}=p_i.$ It follow that
\beq\label{F2lm-2}
\mu_{\tau\circ \phi_1}(F)\le \mu_{\tau\circ
\phi}(B_{\eta/3}(F))\andeqn \mu_{\tau\circ \phi}(F)\le
\mu_{\tau\circ \phi_1}(B_{\eta/3}(F))
\eneq
for any compact subset $F\subset X.$
Since $B_\eta(x)$ is path connected for every $x\in X,$ there is a
unital \hm\, $\Phi_1: C(X)\to C([0,1], M_n)$ such that
\beq\label{F2l-5}
&&\pi_0\circ \Phi_1=\phi, \,\,\,\pi_{1}\circ \Phi_1=\phi_1
\andeqn\\\label{F2l-5+1}
 &&\|\pi_t\circ
\Phi_1(f)-\phi(f)\|<\ep/2\tforal g\in {\cal F} \andeqn t\in [0,1].
\eneq
We rewrite
\beq\label{F2l-5+2}
\phi_1(f)=\sum_{i=1}^nf(x_i')e_i\tforal f\in C(X),
\eneq
where each $e_i$ is a rank one projection and $x_i'$ is a point in
$X,$ $i=1,2,...,n,$ and $\sum_{i=1}^ne_i=1.$ Similarly, there is a
unital \hm\, $\Phi_2': C(X)\to C([0,1/2], M_n)$ such that
\beq\label{F2l-6}
&&\pi_0\circ \Phi_2'=\psi,\,\,\,\pi_{1/2}\circ
\Phi_2'=\psi_1\andeqn\\\label{F2l-6+1}
&&\|\pi_t\circ\Phi_2'(f)-\psi(f)\|<\ep/2\tforal f\in {\cal F}\andeqn
t\in [0, 1/2],
\eneq
where
\beq\label{F2l-7}
\psi_1(f)=\sum_{i=1}^nf(y_i')e_i'\rforal f\in C(X),
\eneq
where each $e_i'$ is a rank projection, $y_i'$ is a point in $X,$
$i=1,2,...,n$ and $\sum_{i=1}^ne_i'=1.$ Moreover,
\beq\label{F2lm-3}
\mu_{\tau\circ \psi_1}(F)\le \mu_{\tau\circ
\psi}(B_{\eta/3}(F)\andeqn \mu_{\tau\circ \psi}(F)\le \mu_{\tau\circ
\psi_1}(B_{\eta/3}(F))
\eneq
for any compact subset $F\subset X.$ Combining (\ref{F2lm-1}),
(\ref{F2lm-2}) and (\ref{F2lm-3}), one has
\beq\label{F2l-8}
\mu_{\tau\circ \phi_1}(F)&\le &\mu_{\tau\circ \phi}(B_{\eta/3}(F))<
\mu_{\tau\circ \psi}(B_{2\eta/3}(F)\\
&\le &\mu_{\tau\circ \psi_1}(B_{\eta}(F))\andeqn\\
\mu_{\tau\circ \psi_1}(F)&<&\mu_{\tau\circ \phi_1}(B_\eta(F))
\eneq
for any compact subset $F\subset X.$

By the Marriage Lemma (see \cite{HV}), there is a permutation
$\Delta: \{1,2,...,n\}\to \{1,2,...,n\}$ such that
\beq\label{F2l-10}
{\rm dist}(x_i', y_{\Delta(i)}')<\eta,\,\,\,i=1,2,...,n.
\eneq
Define $\psi_2: C(X)\to M_n$ by
\beq\label{F2l-10+}
\psi_2(f)=\sum_{i=1}^n f(x_i')e_{\Delta(i)}'\tforal f\in C(X).
\eneq

Since every open ball of radius $\eta$ is path connected, one
obtains another unital \hm\, $\Phi_2'': C(X)\to C([1/2,1], M_n)$
\beq\label{F2l-11}
&&\pi_{1}\circ \Phi_2''=\psi_2,\,\,\,\pi_{1/2}\circ
\Phi_2''=\psi_1\andeqn\\\label{F2l-11+} &&\|\pi_t\circ
\Phi_2'(f)-\psi_1(f)\|<\ep/2\tforal f\in {\cal F}.
\eneq
Now define $\Phi_2: C([0,1], M_n)$ by $\pi_t\circ \Phi_2=\pi_t\circ
\Phi_2'$ for $t\in [0, 1/2]$ and $\pi_t\circ \Phi_2=\pi_t\circ
\Phi_2''$ for $t\in [1/2,1].$ Then $\Phi_1$ and $\Phi_2$ satisfy
(\ref{F2l-2}) and (\ref{F2l-2+}). Moreover, by (\ref{F2l-5+2}) and
(\ref{F2l-10+}), there exists a unitary $u\in M_n$ such that
$$
{\rm ad}\, u\circ \phi_1=\psi_2=\pi_1\circ \Phi_2.
$$

\end{proof}

\begin{rem}\label{RF2l}
{\rm
If $\phi(f)=\sum_{i=1}^{N_1}f(x_i)p_i$ as in the proof, let $C_0$ be the finite dimensional commutative
\SCA\, generated by mutually orthogonal projections $\{p_1,p_2,...,p_{N_1}\}.$ Then, we actually proved
that there is a finite dimensional commutative \SCA\, $C_1\supset C_0$ with $1_{C_1}=1_{C_0}$ such that
$\pi_t\circ \Phi(C(X))\subset C_1$ for all $t\in [0,1].$
 Similarly, if $\psi(f)=\sum_{j=1}^{N_2}f(y_i)q_j,$ let $C_0'$ be the finite dimensional
commutative \SCA\, generated by mutually orthogonal projections $\{q_1,q_2,...,q_{N_2}\}.$
Then, as in the proof, there is a finite dimensional commutative \SCA\, $C_1'\supset C_0'$ with
$1_{C_0'}=1_{C_1'}$ such that $\pi_t\circ \Psi(C(X))\subset C_1'$ for all $t\in [0,1].$

}
\end{rem}

\section{Local homotopy lemmas}

\begin{lem}\label{F1lG}
Let $X$ be a finite CW complex with torsion $K_1(C(X))$ and torsion free $K_0(C(X)).$ Let
$\ep>0,$ ${\cal F}\subset C(X)$ be a finite subset and let
$\sigma>0.$
There exist $\eta>0$ (which depends on $\ep$ and ${\cal F}$ but not
$\sigma$),   a finite subset ${\cal G}\subset C(X)$ and $\dt>0$
satisfying the following:

Suppose that $\phi, \psi: C(X)\to M_n$ (for any integer $n$) are
two unital \hm s such that
\beq\label{Fl1G-1}
&&\|\phi(f)-\psi(f)\|<\dt\tforal f\in {\cal G}\\
&&\mu_{\tau\circ\phi }(O_{\eta})\ge  \sigma\eta\tand \mu_{\tau\circ
\psi}(O_\eta)\ge \sigma\eta
\eneq
for any open ball $O_\eta$ of radius $\eta,$ where $\tau$ is the
normalized trace on $M_n,$ and
\beq\label{Fl1G-1-1}
{\rm ad}\, u\circ \phi=\psi
\eneq
 for some unitary $u\in
A.$ Then, there exists a \hm\, $\Phi: C(X)\to C([0,1], M_n)$ such
that
$$
\pi_0\circ \Phi=\phi,\,\,\,\pi_1\circ \Phi=\psi\tand
$$
$$
\|\psi(f)-\pi_t\circ \Phi(f)\|<\ep
\tforal f\in {\cal F}.
$$
\end{lem}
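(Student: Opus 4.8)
The plan is to obtain the homotopy by a single application of the Basic Homotopy Lemma \ref{h1l}. The hypotheses on $X$ are there precisely so that the Bott obstruction in \ref{h1l} vanishes automatically, and once that is granted the path $\Phi$ is produced simply by conjugating $\phi$ along a path of unitaries joining $u$ to $1$.

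To fix the constants, I would apply \ref{h1l} together with Remark \ref{RM}, with $\ep/2$ in place of $\ep$, the same finite set ${\cal F}$, and with $\sigma/2$ playing the role of $\sigma$. This produces $\eta_0>0$ (depending only on $\ep$ and ${\cal F}$), and then $\dt_0>0$, a finite subset ${\cal G}_0\subset C(X)$ and a finite subset ${\cal P}\subset K_1(C(X))$. Put $\eta=\eta_0/2$, $\dt=\min\{\dt_0,\ep/2\}$ and ${\cal G}={\cal G}_0\cup{\cal F}$.

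Now suppose $\phi,\psi:C(X)\to M_n$ are unital \hm s satisfying (\ref{Fl1G-1}) and (\ref{Fl1G-1-1}), so that $\psi=u\phi(\cdot)u^*$. Then for $g\in{\cal G}$ we get $\|[\phi(g),u]\|=\|u^*\phi(g)u-\phi(g)\|=\|u^*(\phi(g)-\psi(g))u\|<\dt\le\dt_0$, so the commutator hypothesis of \ref{h1l} holds on ${\cal G}_0$; and from $\mu_{\tau\circ\phi}(O_\eta)\ge\sigma\eta$ together with $\eta=\eta_0/2$ we obtain $\mu_{\tau\circ\phi}(O_{\eta_0/2})\ge(\sigma/2)\eta_0$, the measure hypothesis of \ref{h1l}. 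The essential point is the Bott condition: because $A=M_n$ is finite dimensional and $K_0(C(X))$ is torsion free, Remark \ref{RM} reduces the requirement ${\rm Bott}(\phi,u)|_{\cal P}=\{0\}$ to ${\rm bott}_1(\phi,u)|_{\cal P}=0$; but ${\rm bott}_1(\phi,u)$ is a \hm\ from $K_1(C(X))$ into $K_0(M_n)=\Z$, and $K_1(C(X))$ is torsion, so ${\rm bott}_1(\phi,u)=0$. Hence \ref{h1l} applies and yields a continuous path of unitaries $\{u_t:t\in[0,1]\}$ with $u_0=u$, $u_1=1$ and $\|[\phi(f),u_t]\|<\ep/2$ for all $f\in{\cal F}$ and all $t\in[0,1]$.

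It remains to package this as a \hm. Define $\Phi:C(X)\to C([0,1],M_n)$ by $(\pi_t\circ\Phi)(f)=u_{1-t}\phi(f)u_{1-t}^*$. Continuity of $t\mapsto u_t$ and the fact that each ${\rm ad}\,u_{1-t}\circ\phi$ is a unital \hm\ make $\Phi$ a unital \hm, and $\pi_0\circ\Phi={\rm ad}\,u_1\circ\phi=\phi$, $\pi_1\circ\Phi={\rm ad}\,u_0\circ\phi={\rm ad}\,u\circ\phi=\psi$. Finally, for $f\in{\cal F}$ and $t\in[0,1]$, using $\psi(f)=u\phi(f)u^*$ and $\|u_{1-t}\phi(f)u_{1-t}^*-\phi(f)\|=\|[\phi(f),u_{1-t}]\|$, we get $\|\pi_t\circ\Phi(f)-\psi(f)\|\le\|[\phi(f),u_{1-t}]\|+\|\phi(f)-\psi(f)\|<\ep/2+\dt\le\ep$, which is the required estimate. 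The only genuine work is the automatic vanishing of the Bott invariant via Remark \ref{RM}; everything else is bookkeeping of the constants.
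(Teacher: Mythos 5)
Your argument is correct and follows essentially the same route as the paper: apply the Basic Homotopy Lemma \ref{h1l} together with Remark \ref{RM} (with $\ep/2$, ${\cal F}$, $\sigma/2$ and $\eta=\eta_0/2$), observe that ${\rm bott}_1(\phi,u)$ vanishes because $K_1(C(X))$ is torsion while $K_0(M_n)=\Z$ is torsion free, and then define $\Phi$ by conjugating $\phi$ along the time-reversed unitary path, with the same final triangle-inequality estimate. No gaps worth noting beyond the paper's own level of detail.
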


\begin{proof}
It is easy to see that the general case can be reduced to the case
that $X$ is connected.

Let $\ep>0,$ ${\cal F}\subset C(X)$ be a finite subset and let
$\sigma>0.$
Let $\eta_1>0$ (in place of $\eta$), $\dt>0$ and a finite subset
${\cal G}\subset C(X)$ and a finite subset ${\cal P}\subset
\underline{K}(C(X))$ be required by \ref{h1l} for $\ep/2,$ ${\cal
F}$ and $\sigma/2$.
Let $\eta=\eta_1/2.$

By \ref{RM}, we may assume that ${\cal
P}\subset K_1(C(X)).$ Since $K_1(C(X))$ is torsion and $K_0(M_n)$ is
free, for sufficiently small $\dt$ and sufficiently large ${\cal
G},$ for any pair of $\phi$ and $u$ for which
$\|[\phi(g),\,u]\|<\dt$ for all $g\in {\cal G},$
$$
{\rm bott}_1(\phi,u)|_{\cal P}=0.
$$
We may assume that $\dt$ and ${\cal G}$ have this property. We may
further assume that $\dt<\ep/2$ and ${\cal F}\subset {\cal G}.$

Now we assume that $\phi,$ $\psi$ and $u$ satisfy the assumption of
the lemma for the above $\eta,$ $\dt$ and ${\cal G}.$
Then
\beq\label{F1lG-2}
\mu_{\tau\circ \phi}(O_{\eta_1/2})&\ge &\sigma
\eta_1/2=(\sigma/2)\eta_1\tand\\
\mu_{\tau\circ \psi}(O_{\eta_1/2})&\ge& (\sigma/2)\eta_1.
\eneq
 By
applying \ref{h1l} and \ref{RM}, one obtains a continuous path of
unitaries $\{u(t): t\in [0,1]\}$ such that
\beq\label{F1lG-3}
&&u(0)=u,\,\,\,u(1)=1\andeqn \\\label{F1lG-3+}
&&\|u(t)^*\phi(f)u(t)-\phi(f)\|<\ep/2\tforal f\in {\cal F}.
\eneq
Define $\Phi: C(X)\to C([0,1], M_n)$ by
$$
\pi_t\circ \Phi={\rm ad}\, u(1-t)\circ \phi\tforal t\in [0,1].
$$
Then,
$$
\pi_0\circ \Phi=\phi\andeqn \pi_1\circ \Phi=\psi.
$$
Moreover, by (\ref{F1lG-3+}) and (\ref{Fl1G-1-1}),
$$
\|\psi(f)-\pi_t\circ \Phi(f)\|<\ep\tforal f\in {\cal F}\andeqn t\in
[0,1].
$$

\end{proof}

\begin{lem}\label{Ntor}
Let $X$ be a finite CW complex with torsion $K_1(C(X))$ and let $k$ be the largest order
of torsion elements in $K_i(C(X))$ ($i=0,1$).
 Let
$\ep>0,$ ${\cal F}\subset C(X)$ be a finite subset and let
$\sigma>0.$
There exist  $\eta>0$ (which depends on $\ep$ and ${\cal F}$ but not
$\sigma$),   a finite subset ${\cal G}\subset C(X)$ and $\dt>0$
satisfying the following:

Suppose that $\phi, \psi: C(X)\to M_n$ (for any  integer $n$) are
two unital \hm s such that
\beq\label{cFl1G-1}
&&\|\phi(f)-\psi(f)\|<\dt\tforal f\in {\cal G},\\
&&\mu_{\tau\circ\phi }(O_{\eta})\ge  \sigma\eta\tand \mu_{\tau\circ
\psi}(O_\eta)\ge \sigma\eta
\eneq
for any open ball $O_\eta$ of radius $\eta,$ where $\tau$ is the
normalized trace on $M_n,$ and
\beq\label{cFl1G-1-1}
{\rm ad}\, u\circ \phi=\psi
\eneq
 for some unitary $u\in
A.$ Then, there exists a \hm\, $\Phi: C(X)\to M_{k_0}(C([0,1], M_n))$ such
that
$$
\pi_0\circ \Phi=\phi^{(k_0)},\,\,\,\pi_1\circ \Phi=\psi^{(k_0)}\tand
$$
$$
\|\psi^{(k_0)}(f)-\pi_t\circ \Phi(f)\|<\ep
\tforal f\in {\cal F},
$$
where $k_0=k!,$ and $\phi^{(k_0)}(f)={\rm diag}(\overbrace{\phi(f),\phi(f),...,\phi(f)}^{k_0})$ and
$\psi^{(k_0)}(f)={\rm diag}(\overbrace{\psi(f),\psi(f),...,\psi(f)}^{k_0})$ for all $f\in C(X),$
respectively.
\end{lem}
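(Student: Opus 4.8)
The plan is to reduce the statement to Lemma~\ref{F1lG} by absorbing the torsion. Observe that by passing from $\phi$ and $\psi$ to $\phi^{(k_0)}$ and $\psi^{(k_0)}$, where $k_0 = k!$, we kill all torsion phenomena: multiplication by $k_0$ annihilates every torsion element of $K_i(C(X))$ ($i=0,1$), since $k$ is the largest order of torsion elements. Thus, while $\ref{F1lG}$ was stated for $X$ with \emph{torsion free} $K_0$ (and torsion $K_1$), what we really need is only that the relevant ${\rm bott}_1$ obstruction vanishes. First I would use \ref{RM} to reduce the $\underline{K}(C(X))$-level obstruction in the Basic Homotopy Lemma \ref{h1l} to ${\rm bott}_1(\phi, u)|_{\cal P}$ for ${\cal P} \subset K_1(C(X))$ (this uses that $K_0(C(X))$ is torsion free and $K_1(M_n) = \{0\}$; here we only know $K_1(C(X))$ is torsion, but $K_0(C(X))$ need not be torsion free, so I would instead work with $\phi^{(k_0)}$ and argue that the obstructions for the amplified maps are automatically trivial after multiplication by $k_0$).

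The key steps, in order, are as follows. Given $\ep>0$, ${\cal F}$, $\sigma$, apply \ref{h1l} to $\ep/2$, ${\cal F}$, $\sigma/2$ to obtain $\eta_1>0$, $\dt_0>0$, ${\cal G}_0 \subset C(X)$ and ${\cal P} \subset \underline{K}(C(X))$. Set $\eta = \eta_1/2$. Since $\phi,\psi: C(X) \to M_n$ with ${\rm ad}\,u\circ\phi = \psi$, the pair $(\phi,u)$ satisfies $[\phi(g),u] = 0$ exactly, so $\phi$ and $u$ commute, hence ${\rm Bott}(\phi,u) = 0$ trivially --- in fact ${\bar h}: C(X\times\T) \to M_n$ is a genuine homomorphism and all its $\underline{K}$-data restricted to $\boldsymbol{\bt}(\underline{K}(C(X)))$ are determined; amplifying by $k_0$, the torsion part of $K_1(C(X))$ is annihilated, so ${\rm Bott}(\phi^{(k_0)}, u^{(k_0)})|_{\cal P'} = 0$ for the appropriate finite subset ${\cal P}'$ governing the homotopy of the amplified maps. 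Then, after checking the measure hypothesis $\mu_{\tau\circ\phi}(O_{\eta_1/2}) \ge (\sigma/2)\eta_1$ (which follows from $\mu_{\tau\circ\phi}(O_\eta)\ge\sigma\eta$ with $\eta = \eta_1/2$, noting that the normalized trace on $M_{k_0}(M_n)$ agrees with that on $M_n$ under the diagonal embedding), apply \ref{h1l} and \ref{RM} to the amplified unitary to produce a continuous path $\{u(t): t\in[0,1]\}$ in $M_{k_0}(M_n)$ with $u(0) = u^{(k_0)}$, $u(1) = 1$, ${\rm length}(\{u(t)\}) \le 2\pi+\ep$, and $\|[\phi^{(k_0)}(f), u(t)]\| < \ep$ for $f \in {\cal F}$. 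Finally define $\Phi: C(X) \to M_{k_0}(C([0,1],M_n))$ by $\pi_t\circ\Phi = {\rm ad}\,u(1-t)\circ\phi^{(k_0)}$; then $\pi_0\circ\Phi = \phi^{(k_0)}$, $\pi_1\circ\Phi = {\rm ad}\,u^{(k_0)}\circ\phi^{(k_0)} = \psi^{(k_0)}$, and $\|\psi^{(k_0)}(f) - \pi_t\circ\Phi(f)\| < \ep$ for $f\in{\cal F}$ using ${\rm ad}\,u\circ\phi = \psi$, exactly as in the proof of \ref{F1lG}.

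The main obstacle I anticipate is the bookkeeping of the $K$-theory obstruction for the \emph{amplified} maps: one must verify that \ref{RM}'s reduction (replace $\underline{K}(C(X))$ by $K_1(C(X))$, replace ${\rm Bott}$ by ${\rm bott}_1$) still applies when $K_0(C(X))$ is only known to become ``effectively torsion free after multiplication by $k_0$'' rather than genuinely torsion free. Concretely, one needs: for sufficiently small $\dt$ and large ${\cal G}$, any unital $\dt$-${\cal G}$-multiplicative $\Psi: C(X\times\T)\to M_{k_0 n}$ has $[\Psi]|_{\boldsymbol{\bt}(K_0(C(X))\otimes\Z/m\Z)} = 0$ for the finitely many moduli $m$ involved, and $[\Psi]|_{\boldsymbol{\bt}(K_1(C(X)))}$ is $k_0$-divisible hence, composed with the amplification, zero. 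Because the amplification $\phi \mapsto \phi^{(k_0)}$ multiplies all induced $K$-theory maps by $k_0$ and $k_0$ kills every torsion element of $K_*(C(X))$, this goes through; the verification is routine but must be stated carefully. Once that is in place, the rest is a verbatim repetition of the argument in \ref{F1lG}, now carried out one matrix amplification up.
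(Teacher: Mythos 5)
Your overall route is the same as the paper's: amplify by $k_0=k!$, check that the Bott obstruction of the amplified pair vanishes on the relevant finite subset, and then repeat the proof of \ref{F1lG} (apply \ref{h1l} to $\phi^{(k_0)}$ and $u^{(k_0)}$, and conjugate $\phi^{(k_0)}$ along the resulting path of unitaries to define $\Phi$); the transfer of the measure hypothesis under the diagonal embedding and the final estimate are handled exactly as in the paper.

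However, one assertion in your write-up is false and must be removed: ${\rm ad}\,u\circ\phi=\psi$ does \emph{not} say that $u$ commutes with $\phi(g)$; it says $u^*\phi(g)u=\psi(g)$, and together with $\|\phi(g)-\psi(g)\|<\dt$ on ${\cal G}$ this yields only $\|[u,\phi(g)]\|<\dt$, i.e.\ approximate commutation. Consequently ${\bar h}$ is not a genuine homomorphism and ${\rm Bott}(\phi,u)$ is not ``trivially zero''; if it were, no amplification and no torsion hypothesis would be needed at all, and the conclusion of \ref{F1lG} would hold for every finite CW complex, which is false. The vanishing you actually need, ${\rm Bott}(\phi^{(k_0)},u^{(k_0)})|_{\cal P}=0$, must be carried entirely by the second mechanism you mention: ${\rm bott}_1(\phi,u):K_1(C(X))\to K_0(M_n)=\Z$ is zero because $K_1(C(X))$ is torsion (this has nothing to do with the amplification, and is not a matter of ``$k_0$-divisibility''); the components with values in $K_1(M_n)=\{0\}$ and $K_1(M_n,\Z/m\Z)=\{0\}$ vanish for trivial reasons; and the only components that genuinely require the amplification are those on $K_1(C(X),\Z/m\Z)$ with values in $K_0(M_n,\Z/m\Z)\cong\Z/m\Z$, for the finitely many relevant moduli $m$ (which, by the Dadarlat--Loring reduction to $F_k\underline{K}$ that the paper invokes, may be taken to divide $k_0$). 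Since ${\rm Bott}(\phi^{(k_0)},u^{(k_0)})=k_0\cdot{\rm Bott}(\phi,u)$ under the canonical identification of $\underline{K}(M_{k_0n})$ with $\underline{K}(M_n)$, these components are multiplied by $k_0$ and hence die. With the erroneous sentence deleted and this bookkeeping stated, your argument coincides with the paper's proof of \ref{Ntor}.
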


\begin{proof}
By \cite{DL1}, one has
$$
Hom_{\Lambda}(\underline{K}(C(X)), \underline{K}(M_n))=
Hom_{\Lambda}(F_k\underline{K}(C(X)), F_k\underline{K}(M_n)).
$$
Let $k_0=k!.$
It follows that
$$
\overbrace{\lambda+\lambda+\cdots +\lambda}^{k_0}=0,
$$
for  any \hm\, $\lambda$  from $K_1(C(X), \Z/m\Z)$ with $m\le k_0.$
Thus the lemma follows from the proof of \ref{F1lG} (to
$\phi^{(k_0)}$ and $\psi^{(k_0)}$). The point is that
$$
{\rm Bott}(\phi^{(k_0)},\,u^{(k_0)})|_{{\cal P}'}=\{0\}
$$
for any finite subset ${\cal P}'\subset K_1(C(X),\Z/m\Z)$ for $0\le m\le k_0$
as long as it is defined, where $u^{(k_0)}={\rm diag}(\overbrace{u, u,...,u}^{k_0}).$
\end{proof}

\begin{lem}\label{fl1}
Let $X=\T$ or $X=I\times \T$  (with the product metric).  Let ${\cal
F}\subset C(X)$ be a finite subset and  let $\ep>0.$
There exists $\eta_1>0$  such that, for any $\sigma_1>0,$  the following holds:
There exists a finite subset ${\cal G}\subset  C(X)$ and there exists $\eta_2>0$ such that, for any $\sigma_2>0,$
 there exists
 $\dt>0$  satisfying the following:

Suppose that $\phi, \psi: C(X)\to M_n$ (for some integer $n$) are
two unital \hm s such that
\beq\label{fl1-1}
&&\|\phi(f)-\psi(f)\|<\dt\tforal f\in {\cal G}\\
&&\mu_{\tau\circ\phi }(O_{\eta_1})\ge \sigma_1\eta_1,\,\,\,  \mu_{\tau\circ
\psi}(O_{\eta_1})\ge \sigma_1\eta_1, \\
&&\mu_{\tau\circ \phi}(O_{\eta_2})\ge \sigma_2\eta_2\tand \mu_{\tau\circ \psi}(O_{\eta_2})\ge \sigma_2\eta_2
\eneq
for any open ball $O_{\eta_j}$ of radius $\eta_j,$  $j=1,2,$ where $\tau$ is the
normalized trace on $M_n,$ and
\beq\label{fl1-1-1}
{\rm ad}\, u\circ \phi=\psi
\eneq
 for some unitary $u\in
M_n.$ Then, there exists a \hm\, $\Phi: C(X)\to C([0,1], M_n)$ such
that
$$
\pi_0\circ \Phi=\phi,\,\,\,\pi_1\circ \Phi=\psi\andeqn
$$
$$
\|\psi(f)-\pi_t\circ \Phi(f)\|<\ep
\tforal f\in {\cal F}.
$$
\end{lem}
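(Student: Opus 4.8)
The plan is to reduce this to the homotopy lemmas already established, namely \ref{F1lG} and \ref{Ntor}, by exploiting the structure of $K_*(C(X))$ for $X=\T$ or $X=I\times\T$. For $X=\T$ we have $K_0(C(X))=\Z$ and $K_1(C(X))=\Z$, both torsion free; for $X=I\times\T$ the situation is the same since $I$ is contractible, so $K_0(C(X))=\Z$, $K_1(C(X))=\Z$, again torsion free. Thus \ref{F1lG} does not directly apply (it wants torsion $K_1$), but the key point is that the only nontrivial $K$-theoretic obstruction to building the homotopy is a $\mathrm{bott}_1$ invariant landing in $K_0(M_n)=\Z$, and we have a unitary $u$ implementing an \emph{exact} unitary equivalence ${\rm ad}\,u\circ\phi=\psi$. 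First I would use this to produce, from $\phi$ and $u$, an approximately multiplicative map $\Phi_0:C(X)\otimes C(\T)\to M_n$ with $\Phi_0(\cdot\otimes 1)\approx\phi$ and $\Phi_0(1\otimes z)\approx u$; since $\psi={\rm ad}\,u\circ\phi$ is literally a homomorphism, the pair $(\phi,u)$ is "compatible with a homomorphism" in the sense needed, and in particular $\mathrm{bott}_1(\phi,u)=0$ automatically — the usual argument is that $u$ together with $\phi(C(X))$ generates a representation of $C(X\times\T)$ up to small error whose $K$-theory factors through the exact equivalence, forcing the Bott element to vanish on the relevant finite set ${\cal P}\subset K_1(C(X))$. (Compare the reasoning in \ref{RM}.)

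With $\mathrm{Bott}(\phi,u)|_{\cal P}=\{0\}$ in hand, I would then run exactly the argument of \ref{F1lG}: apply \ref{h1l} to $(\phi,u)$ — the two measure lower bounds $\mu_{\tau\circ\phi}(O_{\eta_1})\ge\sigma_1\eta_1$ and $\mu_{\tau\circ\phi}(O_{\eta_2})\ge\sigma_2\eta_2$ are arranged precisely so that the $\eta$ and $\sigma$ demanded by \ref{h1l} can be met (one of the two scales, with its associated $\sigma$, is reserved for feeding \ref{h1l}, and the finite subset ${\cal G}$ and $\dt$ are chosen accordingly, which is why the quantifier structure of the statement is nested the way it is). This yields a continuous path $\{u(t):t\in[0,1]\}$ with $u(0)=u$, $u(1)=1$, and $\|[\phi(f),u(t)]\|<\ep$ for all $f\in{\cal F}$. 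Then I set $\pi_t\circ\Phi={\rm ad}\,u(1-t)\circ\phi$, so that $\pi_0\circ\Phi=\phi$, $\pi_1\circ\Phi={\rm ad}\,u\circ\phi=\psi$, and $\|\psi(f)-\pi_t\circ\Phi(f)\|=\|{\rm ad}\,u\circ\phi(f)-{\rm ad}\,u(1-t)\circ\phi(f)\|<\ep$ for $f\in{\cal F}$ after absorbing constants, using $\|[\phi(f),u(t)]\|<\ep$ together with ${\rm ad}\,u\circ\phi=\psi$.

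The main obstacle I anticipate is bookkeeping the two pairs of measure/density constants $(\eta_1,\sigma_1)$ and $(\eta_2,\sigma_2)$ so that the nested quantifier order in the statement is respected: $\eta_1$ must be chosen before $\sigma_1$, then ${\cal G}$ and $\eta_2$ before $\sigma_2$, then $\dt$ last. Tracing through \ref{h1l}, the scale $\eta$ it outputs depends only on $\ep$ and ${\cal F}$, which is compatible with declaring $\eta_1$ (or $\eta_2$) up front; but one has to be careful that the $\sigma$ that \ref{h1l} needs is the $\sigma_1$ (resp. $\sigma_2$) supplied afterwards, and that the two different radii $\eta_1,\eta_2$ are both genuinely available — one is used to guarantee the density hypothesis \eqref{hl1-1} of \ref{h1l} at the scale \ref{h1l} dictates, and the second gives the slack needed when the relevant scale turns out to be finer. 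A secondary technical point is verifying that $\mathrm{Bott}(\phi,u)|_{\cal P}=\{0\}$ really does follow from the \emph{exact} equality ${\rm ad}\,u\circ\phi=\psi$ rather than merely an approximate one; this is where one uses that $u\phi(\cdot)u^*$ is a bona fide homomorphism, so the homotopy $t\mapsto$ (path from $u$ to $1$ conjugating $\phi$) cannot exist a priori but the obstruction class is computed from a genuine $C(X\times\T)$-representation and hence vanishes — essentially \ref{RM} applied with $K_0(C(X))$ torsion free.
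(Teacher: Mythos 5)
Your key step — that the exact equality ${\rm ad}\,u\circ\phi=\psi$ forces ${\rm Bott}(\phi,u)|_{\cal P}=\{0\}$ (or $\mathrm{bott}_1(\phi,u)=0$) — is false, and this is precisely the crux of the lemma. A counterexample: let $\phi:C(\T)\to M_n$ be $\phi(f)=\sum_{j=1}^n f(w_j)e_{jj}$ with $w_j=e^{2\pi i j/n}$ the $n$-th roots of unity, and let $u$ be the cyclic shift with $u^*e_{jj}u=e_{j+1,j+1}$. Then $\psi={\rm ad}\,u\circ\phi$ is a genuine homomorphism, $\|\phi(f)-\psi(f)\|$ is small on any fixed finite set once $n$ is large, both measures are well spread, yet $u^*\phi(z)u\phi(z)^*=e^{-2\pi i/n}\cdot 1$, so $\frac{1}{2\pi i}\tau\bigl(\log(u^*\phi(z)u\phi(z)^*)\bigr)=-1/n\neq 0$ and $\mathrm{bott}_1(\phi,u)=-1\neq 0$. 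The remark \ref{RM} you invoke kills $\mathrm{bott}_1$ only when $K_1(C(X))$ is torsion (a torsion group has no nonzero homomorphism into $K_0(M_n)=\Z$); for $X=\T$ or $I\times\T$ one has $K_1(C(X))=\Z$, which is exactly why this lemma cannot be reduced to \ref{F1lG} and needs a separate argument. So in your scheme \ref{h1l} cannot be applied to the pair $(\phi,u)$, and the conjugation path ${\rm ad}\,u(1-t)\circ\phi$ need not exist.

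What the paper actually does is to correct $u$ before invoking \ref{h1l}. One computes $\gamma=\frac{1}{2\pi i}\tau(\log(u^*\phi(z)u\phi(z)^*))$, which satisfies $|\gamma|<\dt$ and, by the Exel formula, equals $m/n$. Using the measure hypothesis at the second scale — this is the real role of $(\eta_2,\sigma_2)$ and of the choice $\dt\le \sigma_2\eta_2/2$ — one finds, near $K$ evenly spaced points $\zeta_j$ on the circle, spectral projections $p_j$ of $\psi$ with $\tau(p_j)\ge\sigma_2\eta_2>2\dt$, hence subprojections $q_j\le p_j$ with $\tau(q_j)=|\gamma|$, and a unitary $v$ cyclically permuting the $q_j$. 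A trace computation shows $\mathrm{bott}_1(\phi,uv)=0$, and then \ref{h1l} applies to $(\phi,uv)$ (this is where $(\eta_1,\sigma_1)$ is consumed), producing a path from $uv$ to $1$ almost commuting with $\phi$; this gives a homotopy from $\phi$ to ${\rm ad}\,(uv)\circ\phi={\rm ad}\,v\circ\psi$. Finally the correction $v$ is undone by an explicit second homotopy through point evaluations (moving the evaluation points inside the small balls $B_{\eta_2}(\zeta_j)$ and rotating the permuted projections back), which stays within $\ep$ of $\psi$ precisely because the displaced points lie in balls of radius $\eta_2$. Without this correction-and-undoing mechanism your proposal has no way to handle a nonzero $\gamma$, so the argument as written does not go through.
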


\begin{proof}

Let $\dt_{00}>0$ be satisfying the following: for any pair of
unitaries $u_0, v_0$ in a unital \CA, ${\rm bott}_1(u_0,
v_0)$ is well defined whenever $\|[u_0, \, v_0]\|<\dt_{00}.$
 We will prove the case that $X=I\times \T.$ The
proof for the case that $X=\T$ follows from the same argument but
simpler.
 Let $\ep>0$ and
${\cal F}$  be given as in the lemma.
 Let
${\cal F}_1={\cal F}\cup\{z\},$ where
$$
z(t, e^{2\pi i s})=e^{2\pi is} \rforal t\in [0,1]\andeqn s\in [0,1].
$$


Let $\eta_1>0$ ( in place of $\eta$) be required by \ref{h1l}
for $\ep/4$ (in place of $\ep$) and ${\cal F}_1$ (in place of ${\cal F}$).
Let $\sigma_1>0.$

Let ${\cal G}\subset C(X)$ be a
finite subset, let $\dt_0>0$ (in place of $\dt$) and ${\cal
P}\subset \underline{K}(C(X))$ be a subset required by \ref{h1l} for
$\ep/4$ ( in place of $\ep$) and ${\cal F}_1$ ( in place of ${\cal
F}$) and $\sigma_1/2$) (as well as for $X=I\times \T$).

Since $K_0(C(X))=\Z$ and $K_1(C(X))=\Z,$ without loss of
generality, we may assume that ${\cal P}=\{[z]\}.$
We assume that $\dt_0<\dt_{00}/2.$  We may also assume that
$\dt_0$ satisfies the following: if $u_1,$ $u_2$ and $v$ are
unitaries with
$$
\|u_1-u_2\|<\dt_0 \andeqn \|[u_1, v]\|<\dt_0,
$$
then
\beq\label{Fl1-1+}
{\rm bott}_1(u_1,v)={\rm bott}_1(u_2,v)
\eneq
(whenever $\|[u_1,\, v]\|<\dt_{00}/2$).
Let $\eta_2'>0$ such that
\beq\label{Fl1-1++}
|f(x)-f(x')|<\min\{\dt_0/2, \ep/16\} \rforal f\in {\cal G}\cup {\cal F}
\eneq
provided that ${\rm dist}(x,y)<\eta_2.$
Choose an integer $K>1$ such that $2\pi/K<\min\{\eta_1/16,
\eta_2'/16\}$ and put
 $\eta_2=\pi/4K.$    Let $\sigma_2>0.$
 Choose $\dt=\min\{\dt_0/2, \sigma_2\eta_2/2\}.$

Suppose that $\phi$  and $\psi$ satisfy the assumption of the lemma for the
above ${\cal G},$ $\eta_1, \eta_2,$ $\sigma_1$ $\sigma_2$ and $\dt.$
Let $w_j=e^{2j\pi\sqrt{-1}/K}$ and $\zeta_j=1\times w_j,$
$j=1,2,...,K.$
Then, by the assumption,
\beq\label{Fl1-2}
\mu_{\tau\circ \psi} (B_{\eta_2}(\zeta_j))\ge \sigma_2\eta_2>2\dt,
\eneq
$j=1,2,...,K.$
Note that
\beq\label{F1l-2-1}
B_{\eta_2}(\zeta_j)\cap B_{\eta_2}(\zeta_j')=\emptyset,
\eneq
if $j\not=j',$ $j,j'=1,2,...,K.$

Write
\beq\label{Fl1-3}
\psi(f)=\sum_{l=1}^Nf(x_l)e_l\tforal f\in C(X),
\eneq
where $\{e_1,e_2,...,e_N\}$ is a set of mutually orthogonal
projections and $x_1,x_2,...,x_N$ are distinct points in $X.$ Define
$$
p_j=\sum_{x_l\in B_{\eta_2}(\zeta_j)}e_l,\,\,\,j=1,2,....,K.
$$
By (\ref{Fl1-2}),
\beq\label{Fl1-3n}
\tau(p_j)\ge \sigma_2\eta_2,\,\,\,j=1,2...,K.
\eneq

Put
\beq\label{Fl1-4}
\gamma={1\over{2\pi i}}\tau(\log(u^*\phi(z)u\phi(z)^*)),
\eneq
where $\tau$ is the normalized trace on $M_n.$  Then
\beq\label{Fl1-5}
|\gamma|<\dt.
\eneq
We first assume that $\gamma\not=0.$ For convenience, we may assume
that $\gamma<0.$ By the Exel formula (see \cite{Ex}), $\gamma=m/n$
for some integer $|m|<n.$

For each $j,$ there is a projection $q_j\le p_j$ such that
\beq\label{Fl1-6}
\tau(q_j)=|\gamma|\andeqn
q_je_l=e_lq_j,\,\,\,j=1,2,...,K,\,\,l=1,2,...,N.
\eneq
There is a unitary $v_1\in (\sum_{j=1}^Kq_j)M_n(\sum_{j=1}^Kq_j)$
such that
\beq\label{Fl1-7}
v_1^*q_jv_1=q_{j+1},\,\,\,j=1,2,...,K-1\andeqn v_1^*q_Kv_1=q_1.
\eneq
Define $v=(1-\sum_{j=1}^Kq_j)+v_1.$ Note that, by the choice of
$\dt,$ we have
\beq\label{Fl1-8-1}
\|[uv, \phi(f)]\|<\dt_0\tforal f\in {\cal G}.
\eneq

Write $x_l=s\times e^{2\pi \sqrt{-1} t_l},$ $l=1,2,...,N.$  Define
$z'=(1-\sum_{j=1}^K q_j)\psi(z)+\sum_{j=1}^K w_jq_j.$ Then
\beq\label{Fl1-8-2}
\|\psi(z)-z'\|<\dt_0\andeqn
v^*z'v=(1-\sum_{j=1}^K
q_j)\psi(z)+\sum_{j=1}^{K-1}w_jq_{j+1}+w_Kq_1.
\eneq
It follows that
\beq\label{Fl1-8-3}
{1\over{2\pi i}}\tau(\log (v^*z'v(z')^*))=\tau(q_j)=-\gamma.
\eneq
By the choice of $\dt_0,$ we have  that
\beq\label{Fl1-8}
{1\over{2\pi i}}\tau(\log(v^*\psi(z)v\psi(z)^*)=\tau(q_j)=-\gamma.
\eneq
By the choice of $\dt_0$ (see also \ref{Fl1-1+}) ) and the Exel
formula, we have
\beq\label{Fl1-9}
\hspace{-0.5in}{1\over{2\pi
i}}\tau(\log(v^*u^*\phi(z)uv\phi(z)^*)&=&
{1\over{2\pi i}}\tau(\log(u^*\phi(z)u\phi(z)^*)+{1\over{2\pi i}}\tau(\log(v^*\phi(z)v\phi(z)^*)\\
&=&{1\over{2\pi i}}\tau(\log(u^*\phi(z)u\phi(z)^*)+{1\over{2\pi i}}\tau(\log(v^*\psi(z)v\psi(z)^*)\\
&=&\gamma-\gamma=0.
\eneq
It follows from the Exel formula, ${\rm bott}_1(\phi, \,
uv)=\{0\}$ and ${\rm Bott}(\phi, \, uv)|_{\cal P}=\{0\}.$
It follows from \ref{h1l} that there exists a continuous path of
unitaries $\{u(t): t\in [0,1/2]\}$ such that
\beq\label{Fl1-9n}
u(0)=1,\,\,\, u(1/2)=uv\andeqn
\|[\phi(f),u(t)]\|<\ep/4
\eneq
for all $f\in {\cal F}$ and for all $ t\in [0,1/2].$

Define $\Phi_1: C(X)\to C([0,1/2], M_n)$ by
\beq\label{Fl1-10}
\pi_t\circ \Phi_1(f)=u(t)^*\phi(f)u(t)\tforal f\in C(X)\andeqn t\in
[0,1/2].
\eneq
Then
\beq\label{Fl1-11}
\|\pi_t\circ \Phi_1(f)-\phi(f)\|<\ep/2\tforal f\in {\cal F}\andeqn
t\in [0,1/2].
\eneq

Let
$$
q_j\psi(f)=\sum_{k=1}^{N(j)}f(\xi_{k,j})e_{k,j}'\tforal f\in C(X),
$$
where $\{e_{k,j}'\} $ is a set of mutually orthogonal projections
and $\xi_{k,j}\in B_{\eta_2}(\zeta_j),$ $j=1,2,...,K.$ Note that
\beq\label{F1l-12}
v^*u^*\phi(f)uv=\psi(f)(1-\sum_{j=1}^K
q_j)+\sum_{j=1}^K(\sum_{k=1}^{N(j)}f(\xi_{k,j})v_1^*e_{k,j}'v_1)
\eneq
for all $f\in C(X).$
It is easy to find a \hm\, $\Phi_2: C(X)\to C([1/2, 1],M_n)$ such
that (with $q_{K+1}=q_1,$ $e_{k,K+1}=e_{k,1}'$ and
$\xi_{k,K+1}=\xi_{k, 1}$)
\beq\label{F1l-13}
\pi_{1/2}\circ \Phi_2(f)&=&v^*u^*\phi(f)uv,\\
\pi_{3/4}\circ \Phi_2(f)&=&\psi(f)((1-\sum_{j=1}^K
q_j)+\sum_{j=1}^Kf(\zeta_j)(\sum_{k=1}^{N(j)}v_1^*e_{k,j}'v_1)\\
&=&\psi(f)(1-\sum_{j=1}^{K+1} q_j)+\sum_{j=1}^{K-1}f(\zeta_j)q_{j+1}
+f(\zeta_K)q_1
\eneq
and
\beq\label{F1l-14}
\hspace{-0.4in}\pi_1\circ \Phi_2(f)&=&\psi(f)(1-\sum_{j=1}^K
q_j)+\sum_{j=1}^{K-1}(\sum_{k=1}^{N(j+1)}f(\xi_{k,j+1})e_{k,j+1}')+\sum_{k=1}^{N(1)}f(\xi_{k,1})e_{k,1}'\\
&=&\psi(f)
\eneq
for all $f\in C(X).$ Moreover,
\beq\label{F1l-15}
\|\pi_t\circ \Phi_2(f)-\psi(f)\|<\ep/16\tforal f\in {\cal F}.
\eneq
Now define $\Phi: C(X)\to C([0,1], M_n)$ by
\beq\label{F1l-16}
\pi_t\circ \Phi=\pi_t\circ \Phi_1\tforal t\in [0,1/2]\andeqn
\pi_t\circ \Phi=\pi_t\circ \Phi_2\tforal t\in [1/2,1].
\eneq
One checks that
\beq\label{F1l-17}
\|\pi_t\circ \Phi(f)-\phi(f)\|<\ep\tforal f\in {\cal F}.
\eneq

Finally, if $\gamma=0,$ we do not need $v$ and can apply \ref{h1l}
directly.

\end{proof}

\begin{rem}\label{RN1}
{\rm  If  $\phi(f)=\sum_{l=1}^N f(x_l)p_l$ be as in the proof, let $C_0$ be the finite dimensional commutative
\SCA\, generated by mutually orthogonal projections $\{p_1,p_2,...,p_l\}.$
Then $\Phi$ has the following properties:
$\pi_t\circ \Phi(f)=u(t)^*\phi(f)u(t)$ for $t\in [0,1/2],$ with $u(0)=1$ and $u(1)=uv,$
$\pi_t\circ \Phi(f)\subset C_1,$ where $C_1\supset C_0$ is a finite dimensional commutative \SCA.

From this,  combining \ref{F2l} and \ref{RF2l}, we obtain the following which will be used in a subsequent paper:

}

\end{rem}

\begin{lem}\label{Uslater}
Let $X=\T$ or $X=I\times \T$  (with the product metric).  Let ${\cal
F}\subset C(X)$ be a finite subset and  let $\ep>0.$
Then  there exists $\eta_1>0,$ for any $\sigma_1>0,$  satisfying the following:
There exists  a finite subset ${\cal G}\subset C(X)$ and there exists $\eta_2>0$ such that, for any $\sigma_2>0,$
 there exists $\dt>0$
 such that the following holds:

Suppose that $\phi, \psi: C(X)\to M_n$ (for some integer $n$) are
two unital \hm s given by
$$
\phi(f)=\sum_{i=1}^{N_1}f(x_i)p_i\andeqn \psi(f)=\sum_{j=1}^{N_2}f(y_j)q_j
$$
for all $f\in C(X),$ where $\{x_1,x_2,...,x_{N_1}\}, \{y_1,y_2,...,y_{N_2}\}\subset X$ and where
$\{p_1,p_2,...,p_{N_1}\}$ and $\{q_1,q_2,...,q_{N_2}\}$ are two sets of mutually orthogonal projections,
such that
\beq\label{pfl1-1}
&&\|\phi(f)-\psi(f)\|<\dt\tforal f\in {\cal G}\\
&&\mu_{\tau\circ\phi }(O_{\eta_j})\ge \sigma_j\eta_j,\,\,\, \mu_{\tau\circ
\psi}(O_{\eta_j})\ge \sigma_j\eta_j
\eneq
for any open ball $O_{\eta_j}$ of radius $\eta_j,$ $j=1,2,$  where $\tau$ is the
normalized trace on $M_n.$
Then, there exists a \hm\, $\Phi: C(X)\to C([0,1], M_n)$ such
that
$$
\pi_0\circ \Phi=\phi,\,\,\,\pi_1\circ \Phi=\psi\andeqn
$$
$$
\|\psi(f)-\pi_t\circ \Phi(f)\|<\ep
\tforal f\in {\cal F}.
$$
Moreover, $\pi_t\circ \Phi(C(X))\subset C_1$ for $t\in [0,1/4],$ $\pi_t\circ \Phi(C(X))\subset C_2$
for $t\in [3/4,1]$ and
\beq
\pi_t\circ \Phi(f)=u(t)^*\phi(f)u(t)\tforal t\in [1/4,3/4]
\eneq
and for all $f\in C(X),$ where $C_1$ is a finite dimensional commutative \SCA\, containing projections
$p_1,p_2,...,p_{N_1},$ $C_2$ is a finite dimensional commutative \SCA\, containing $q_1,q_2,...,q_{N_2},$
$u(1/4)=1$ and $u(t)\in C([1/4, 3/4], M_n).$

\end{lem}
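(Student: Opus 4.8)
The plan is to combine Lemma \ref{fl1} (with Remark \ref{RN1}) and Lemma \ref{F2l} (with Remark \ref{RF2l}), exactly as anticipated in \ref{RN1}; the hypothesis $X=\T$ or $X=I\times \T$ is used only through the applicability of \ref{fl1}. After fixing $\ep$ and ${\cal F}$, I would pick an auxiliary $\ep_1\le \ep/4$ and an $\eta>0$ so that $|f(x)-f(x')|<\ep/4$ on a suitable enlarged finite set whenever ${\rm dist}(x,x')<\eta$; apply \ref{fl1} to $\ep_1,{\cal F}$ to get $\eta_1^{(0)}$ and then, in the nested order there, ${\cal G}^{(0)},\eta_2^{(0)},\dt^{(0)}$; put ${\cal F}_1={\cal F}\cup {\cal G}^{(0)}\cup\{z\}$; and apply \ref{F2l} (with \ref{RF2l}) to $\ep_1,{\cal F}_1$ to get ${\cal G}^{(1)},\dt^{(1)}$. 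The data $\eta_1,{\cal G},\eta_2,\dt$ of the statement are then assembled from these in the evident way, the radii and constants of the various measure lower bounds being reconciled using that all of them are linear in the radius; since the outputs of \ref{fl1} and \ref{F2l} appear in the order ``$\eta_1$ first; then, given $\sigma_1$, a finite subset and $\eta_2$; then, given $\sigma_2$, a $\dt$'', this respects the quantifier structure required.

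Now let $\phi,\psi$ be as in the statement. The first step is to replace $\psi$ by a homomorphism that is genuinely unitarily equivalent to $\phi$. Running the Marriage Lemma argument from the proof of \ref{F2l} (the measure hypothesis of \ref{F2l-0} being supplied by the hypotheses here, and \ref{RF2l} keeping track of the commutative algebra): I would refine $\psi$ inside a finite dimensional commutative \SCA\ $C_2$ containing $q_1,\dots,q_{N_2}$ to a point-evaluation with rank one spectral projections, match those points to the multiset in which each $x_i$ appears with multiplicity ${\rm rank}(p_i)$ within distance $\eta$, and move the points accordingly inside $C_2$. This produces a unital \hm\ $\Psi_0:C(X)\to C([7/8,1],M_n)$ with $\pi_t\circ \Psi_0(C(X))\subset C_2$ for all $t$, $\pi_1\circ \Psi_0=\psi$, $\|\pi_t\circ \Psi_0(f)-\psi(f)\|<\ep_1$ for $f\in {\cal F}_1$, and $\psi_1:=\pi_{7/8}\circ \Psi_0$ of the form $\psi_1(f)=\sum_i f(x_i)q_i'$ with $q_i'\in C_2$ and ${\rm rank}(q_i')={\rm rank}(p_i)$; in particular ${\rm ad}\,\tilde u\circ \phi=\psi_1$ for some unitary $\tilde u\in M_n$.

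Next I would apply \ref{fl1} with \ref{RN1} to the pair $(\phi,\psi_1)$: they are unitarily equivalent via $\tilde u$, they are $\dt^{(0)}$-close on ${\cal G}^{(0)}$ (since $\psi_1$ is $\ep_1$-close to $\psi$ on ${\cal F}_1\supset {\cal G}^{(0)}$ and $\phi$ is $\dt$-close to $\psi$ on ${\cal G}\supset {\cal G}^{(0)}$), and the measure lower bounds for $\phi$ and $\psi_1$ hold up to a factor $1/2$ because $\psi_1\approx\psi$. Rescaling the resulting homotopy, on $[1/4,3/4]$ one gets its conjugation part, $\pi_t\circ \Phi(f)=u(t)^*\phi(f)u(t)$ with $u(1/4)=1$, $u(3/4)=\tilde u v$ (for the auxiliary unitary $v$ of \ref{fl1}) and $\|u(t)^*\phi(f)u(t)-\psi_1(f)\|<\ep_1$; on $[3/4,7/8]$ one gets a homotopy inside a finite dimensional commutative \SCA\ $D$ joining $(\tilde u v)^*\phi(\tilde u v)=v^*\psi_1 v$ to $\psi_1$, where $D$ can be taken inside $C_2$ since in \ref{fl1} $v$ is built from sub-projections of the spectral projections of $\psi_1$, all of which lie in $C_2$. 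Finally I define $\Phi$ to be the constant homomorphism $\phi$ on $[0,1/4]$ (with image in the commutative \SCA\ $C_1$ generated by $p_1,\dots,p_{N_1}$), the rescaled conjugation homotopy on $[1/4,3/4]$, the rescaled $D$-homotopy on $[3/4,7/8]$, and $\Psi_0$ on $[7/8,1]$. Tracking the estimates — $\dt<\ep$ on $[0,1/4]$, and at most $2\ep_1\le\ep$ elsewhere because $\psi_1\approx\psi$ — gives $\|\pi_t\circ \Phi(f)-\psi(f)\|<\ep$ for all $f\in {\cal F}$, and all the ``moreover'' assertions hold with these $C_1,C_2,u(t)$.

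The step needing genuine care is the compatibility of the two commutative subalgebras: the $D$ coming out of \ref{fl1}/\ref{RN1} is built from the spectral projections of $\psi_1$ together with the cyclic-permutation unitary $v$, while $C_2$ comes from \ref{F2l}/\ref{RF2l}, and one must perform the refinement of $\psi$ in the first step so that the rank one spectral projections of $\psi_1$, the projections $q_j$ and the sub-projections used to build $v$ all lie in one finite dimensional masa, so that $D\subset C_2$ and the concatenation at $t=3/4$ and $t=7/8$ stays in a single commutative algebra containing the $q_j$. The remaining points are routine bookkeeping: the nested quantifiers, and the (at most factor $1/2$) losses in the measure lower bounds when passing from $\psi$ to $\psi_1$ and when feeding the constants of \ref{fl1} and \ref{F2l} into each other.
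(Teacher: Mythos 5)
Your overall architecture is the one the paper intends (it offers no proof beyond Remark \ref{RN1}: combine \ref{fl1}/\ref{RN1} with \ref{F2l}/\ref{RF2l}), and you correctly isolate the one structural point that needs care — arranging the marriage-lemma rearrangement of $\psi$, the subprojections $q_{j}$ and the permutation unitary $v$ inside a single finite dimensional masa containing $q_1,\dots,q_{N_2}$, so that the tail homotopy of \ref{fl1} stays in $C_2$ and the middle segment is conjugation of $\phi$ itself. That part is fine.

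The genuine gap is at the junction where you feed the pair $(\phi,\psi_1)$ into \ref{fl1}. You assert that $\psi_1$ being $\ep_1$-close to $\psi$ on ${\cal F}_1$ makes $(\phi,\psi_1)$ $\dt^{(0)}$-close on ${\cal G}^{(0)}$, but $\dt^{(0)}$ in \ref{fl1} is chosen \emph{after} $\sigma_2^{(0)}$ and is of the form $\min\{\dt_0/2,\sigma_2^{(0)}\eta_2^{(0)}/2\}$, whereas the matching displacement (hence $\|\psi-\psi_1\|$ on the coordinate $z$) is fixed at the $\sigma_1$-stage and cannot be made small relative to $\sigma_2\eta_2$. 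This is not bookkeeping: in the proof of \ref{fl1} the number $\gamma=\frac{1}{2\pi i}\tau(\log(\tilde u^*\phi(z)\tilde u\,\phi(z)^*))$ must satisfy $|\gamma|\le\tau(p_j)$, where $p_j$ is the spectral mass of $\psi_1$ in the ball of radius $\eta_2^{(0)}$ about the marked point $\zeta_j$, and that mass is only of size $\sigma_2\eta_2$. For your $\psi_1$, $\gamma$ decomposes (by additivity of the determinant) into a $\dt$-small part coming from $\|\phi-\psi\|<\dt$ plus the net angular displacement of the rearrangement $\psi\rightsquigarrow\psi_1$, which a Hall-matching at scale $\sim\eta_2$ can make as large as the matching scale; when $\sigma_2$ is small the required subprojections $q_j$ with $\tau(q_j)=|\gamma|$ need not exist, and the correcting unitary $v$ cannot be built. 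This is exactly the circularity that forces Theorem \ref{Tf} to carry a \emph{third} scale $\eta_3,\sigma_3$ (there the matching scale is chosen after $\sigma_2$, against a third measure hypothesis); \ref{Uslater} grants only two scales, so to close the argument you need an additional idea — e.g.\ use that $\phi(z)$ and $\psi(z)$ are $\dt$-close and that $\psi,\psi_1$ lie in the common masa to compute the winding of the rearrangement and choose the matching so that it essentially cancels, leaving $|\gamma|\lesssim\dt$ — or else strengthen the hypotheses as in \ref{Tf}. As written, the step "apply \ref{fl1} to $(\phi,\psi_1)$" would fail for small $\sigma_2$, so the proposal is incomplete at precisely this point.
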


\begin{df}\label{DH}
{\rm Let $X$ be a compact metric space. It is said to satisfy the
property (H) if the following holds.

For any finite subset ${\cal F}\subset C(X)$ and  for any  $\ep>0,$
There exists $\eta_1>0$ such that, for any $\sigma_1>0,$ the following holds:
There exists a finite subset ${\cal G}\subset C(X)$ and $\eta_2>0,$
for any $\sigma_2>0,$ there exists
 $\dt>0$
 satisfying the following:

Suppose that $\phi, \psi: C(X)\to M_n$ (for any integer $n$) are two
unital \hm s such that
\beq\label{DH-1}
&&\|\phi(f)-\psi(f)\|<\dt\tforal f\in {\cal G}\\\label{DH-1+1}
&&\mu_{\tau\circ\phi }(O_{\eta_j})\ge \sigma_j\eta_j,\,\,\, \mu_{\tau\circ
\psi}(O_{\eta_j})\ge \sigma_j\eta_j
\eneq
for any open ball $O_{\eta_j}$ of $X$ with radius $\eta_j,$  $j=1,2,$ where $\tau$
is the normalized trace on $M_n,$ and
\beq\label{DH-2}
{\rm ad}\, u\circ \phi=\psi
\eneq
 for some unitary $u\in
A.$ Then, there exists a \hm\, $\Phi: C(X)\to C([0,1], M_n)$ such
that
$$
\pi_0\circ \Phi=\phi,\,\,\,\pi_1\circ \Phi=\psi\andeqn
$$
$$
\|\psi(f)-\pi_t\circ \Phi(f)\|<\ep
\tforal f\in {\cal F}.
$$

We have proved in \ref{F1lG} that if $X$ is a finite CW complex with
torsion $K_1(C(X))$ and torsion free $K_0(C(X),$ then $X$ satisfies the property (H), and have
proved in \ref{fl1} that if $X=\T$ or $X=I\times \T,$ then $X$ has
the property (H). }
\end{df}

\vspace{0.1in}

\begin{lem}\label{nvv}
Let $X=\overbrace{\T\vee \T\vee\T\vee\cdots \vee \T}^m\vee Y,$ where
$Y$ is a finite CW complex with torsion $K_1(C(Y)$ and torsion free $K_0(C(Y)).$ Then $X$ has
the property (H).

\end{lem}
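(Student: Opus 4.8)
The plan is to run the rotation‑correction argument from the proof of Lemma \ref{fl1} simultaneously over the $m$ circle factors, and to use Remark \ref{RM} to dispose of the factor $Y$. First I would reduce to the case that $Y$, hence $X$, is connected: property (H) obviously passes to finite disjoint unions, and any summand of $X$ containing no circle is a finite CW complex with torsion $K_1$ and torsion free $K_0$, so has property (H) by Lemma \ref{F1lG}. Write $z_i\in U(C(X))$ for the canonical generator of $C(\T)$ on the $i$‑th circle, extended by $1$ off $\T_i$. Then $K_0(C(X))$ is torsion free and $K_1(C(X))=\Z[z_1]\oplus\cdots\oplus\Z[z_m]\oplus G$, where the torsion group $G$ comes from $K_1(C(Y))$.

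Given ${\cal F}$ and $\ep$, apply Lemma \ref{h1l} to $X$ with ${\cal F}\cup\{z_1,\dots,z_m\}$ in place of ${\cal F}$ and $\ep/4$ in place of $\ep$ to obtain $\eta$ (this is our $\eta_1$); for a given $\sigma_1$, take ${\cal G}\supset\{z_1,\dots,z_m\}$, $\dt_1$ and ${\cal P}\subset\underline{K}(C(X))$ as provided by Lemma \ref{h1l} for $\sigma_1/2$, also enlarging ${\cal G}$ to contain the finite set governing additivity of $\text{bott}_1$ (see \ref{ddbot}); pick $\eta_2>0$ small; and for a given $\sigma_2$ put $\dt<\sigma_2\eta_2/(C(m+1))$ with a suitably large constant $C$. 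Now let $\phi,\psi:C(X)\to M_n$ and $u\in U(M_n)$ satisfy the hypotheses. From ${\rm ad}\,u\circ\phi=\psi$ and $\|\phi(g)-\psi(g)\|<\dt$ on ${\cal G}$ we get $\|[\phi(g),u]\|<\dt$ on ${\cal G}$, so $u$ almost commutes with $\phi(C(X))$ and each $b_i:=\text{bott}_1(\phi(z_i),u)\in K_0(M_n)=\Z$ is defined with $|b_i|<\dt n<\sigma_2\eta_2 n$ by the Exel formula.

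For each $i$ with $b_i\ne0$ I would construct, exactly as in the proof of Lemma \ref{fl1}, a ``rotation'' unitary $v_i$; the only change is that the $K$ base points $\zeta_{i,1},\dots,\zeta_{i,K}$ must be placed on $\T_i$ at distance $>\eta_2$ from $\xi_0$ and running once around $\T_i$ in cyclic order, with all consecutive gaps and the short skipped arc near $\xi_i$ of size controlled by $\ep$ and ${\cal F}$. Since such $\zeta_{i,k}$ lie in genuine $\eta_2$‑balls of $X$, the measure hypothesis gives $\mu_{\tau\circ\phi}(B_{\eta_2}(\zeta_{i,k}))\ge\sigma_2\eta_2$, so the spectral subspace of $\phi$ at $\zeta_{i,k}$ has rank $>|b_i|$ and one may pick a subprojection $q_{i,k}$ of it, of rank $|b_i|$, commuting with the eigenprojections of $\phi$; let $v_i$ cyclically permute $q_{i,1},\dots,q_{i,K}$ in the direction making $\text{bott}_1(\phi(z_i),v_i)=-b_i$ (a telescoping Exel computation shows the value is $\pm|b_i|$, independent of the skipped arc), and set $v_i=1$ if $b_i=0$. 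Because the $q_{i,k}$ lie in distinct wedge summands, $v_1,\dots,v_m$ commute, and $v_i$ commutes exactly with $\phi$ on the complement of $\T_i$; each $v_i$ still almost commutes with $\phi(C(X))$ (small rotated rank and small step, as in \ref{fl1}). Put $w=uv_1\cdots v_m$. By additivity of $\text{bott}_1$ and $\text{bott}_1(\phi(z_j),v_i)=0$ for $i\ne j$, we get $\text{bott}_1(\phi(z_j),w)=b_j+\text{bott}_1(\phi(z_j),v_j)=0$ for every $j$; since every homomorphism $G\to\Z$ vanishes, $\text{bott}_1(\phi,w)=0$ on $K_1(C(X))$; and as $K_0(C(X))$ is torsion free, Remark \ref{RM} yields $\text{Bott}(\phi,w)|_{\cal P}=\{0\}$, while $\|[\phi(g),w]\|$ is small on ${\cal G}$.

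Finally apply Lemma \ref{h1l} to $\phi$ and $w$: there is a path $\{u(t)\}$ with $u(0)=w$, $u(1)=1$ and $\|[\phi(f),u(t)]\|<\ep/4$ for $f\in{\cal F}$. On $[0,1/2]$ set $\pi_t\circ\Phi={\rm ad}\,(u(1-2t))\circ\phi$; this runs from $\phi$ to ${\rm ad}\,(w)\circ\phi={\rm ad}\,u\circ({\rm ad}\,(v_1\cdots v_m)\circ\phi)$, staying within $\ep/4$ of $\phi$ and hence within $\ep$ of $\psi$. On $[1/2,1]$ undo the permutations: ${\rm ad}\,(v_1\cdots v_m)\circ\phi$ differs from $\phi$ only in that, inside each $\T_i$, the spectral packet carried by $q_{i,k}$ sits at $\zeta_{i,k-1}$ instead of $\zeta_{i,k}$, so move each packet back to $\zeta_{i,k}$ along $\T_i$ (through $\xi_i$ for the one crossing the skipped arc), merging to a point and re‑splitting as needed — a homotopy of homomorphisms from ${\rm ad}\,(v_1\cdots v_m)\circ\phi$ to $\phi$ whose variation on ${\cal F}$ is bounded by the (small) step size — and conjugate it by $u$, carrying ${\rm ad}\,(w)\circ\phi$ to ${\rm ad}\,u\circ\phi=\psi$ within $\ep$ of $\psi$. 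Concatenating the two pieces gives $\Phi$ with $\pi_0\circ\Phi=\phi$, $\pi_1\circ\Phi=\psi$ and $\|\psi(f)-\pi_t\circ\Phi(f)\|<\ep$ on ${\cal F}$. As in Lemma \ref{fl1}, the one genuinely delicate point is the construction of the $v_i$: placing the $K$ points to wind once around $\T_i$ while avoiding the $\eta_2$‑neighbourhood of $\xi_0$, checking that the resulting $v_i$ is central enough for $\text{bott}_1$ to be additive and that $\text{bott}_1(\phi(z_i),v_i)=-b_i$ exactly, and keeping the quantifier order so that $|b_i|\le n\sigma_2\eta_2$. The factor $Y$ costs nothing, since its $K_1$‑torsion is invisible to $\text{bott}_1$ and $K_0(C(X))$ stays torsion free.
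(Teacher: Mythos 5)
Your argument is essentially the paper's own proof of Lemma \ref{nvv}: place $K$ nearly equally spaced points on each circle $\T_i$ offset away from the wedge point, build cyclic-shift unitaries $v_i$ out of small spectral subprojections in the disjoint $\eta_2$-balls so that, by the Exel formula, $\text{bott}_1(\phi(z_i), uv_1\cdots v_m)=0$, use torsion of $K_1(C(Y))$, torsion-freeness of $K_0(C(X))$ and Remark \ref{RM} to upgrade this to $\text{Bott}(\phi,\cdot)|_{\cal P}=0$, contract via Lemma \ref{h1l}, and then undo the permutation by a short point-moving homotopy, exactly as in the proofs of \ref{fl1} and \ref{nvv}. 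The only deviations are cosmetic (you use $\phi$'s rather than $\psi$'s spectral projections, and the order in $w=uv_1\cdots v_m$ versus $v_1\cdots v_m u$ must be matched to the ${\rm ad}$ convention so that ${\rm ad}\,w\circ\phi$ is conjugate by $u$ of ${\rm ad}(v_1\cdots v_m)\circ\phi$), and they do not affect correctness.
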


\begin{proof}
Denote by $\T_i$ the $i$-th copy of $\T$ and denote by $\xi_0\in Y$
the common  point of $\T_i$'s and $Y.$  We identify $\T_i$ with the
unit circle and identify $\xi_0$ with $1.$

Let $z_i\in C(X)$ be defined as follows:
\beq\label{nvv-3}
z_i(e^{2\pi \sqrt{-1} t})&=&e^{2\pi \sqrt{-1}
t}\,\,\,{\rm  for}\,\,\,e^{2\pi \sqrt{-1} t}\in \T_i\andeqn \\
z_i(x)&=&1
\eneq
for all other points $x\in X,$ $i=1,2,...,m.$ Let $C_0\subset
C(X)$ be a unital \SCA\, which consists of those continuous
functions so that it is constant on $X\setminus (Y\setminus
\{\xi_0\}).$ Note that $C_0\cong C(Y).$

Let $\dt_{00}>0$ be as in the proof of \ref{fl1}. Let $\ep>0$ and
finite subset ${\cal F}\subset C(X)$ be given. Let
${\cal  F}_1={\cal F}\cup\{z_1, z_2,...,z_m\}.$
Let $\eta_1>0$ be as in the proof of \ref{fl1} and $\sigma_1>0.$
Let ${\cal G}\subset C(X),$ $\dt_0,$ ${\cal P}\subset
\underline{K}(C(X))$ be as exactly in the proof of \ref{fl1} (but
for this $X$). We may assume that ${\cal P}=\{[z_1],
[z_2],...,[z_m]\}\cup {\cal P}_1$ where ${\cal P}_1$ can be
identified with a finite subset of $\underline{K}(C_0).$

Let $\dt_0>0$ and $\eta_2'>0$ be as in the proof of \ref{fl1}. Let
$K$ be as in the proof of \ref{fl1}. Fix an irrational number
$\theta\in (2\pi/8K, 2\pi/6K).$
Let $\eta_2=\pi/16K$ and $\sigma_2>0.$ Choose $\dt=\min\{\dt_0, \sigma_2\eta_2/2\}.$

Suppose that $\phi$  and $\psi$ satisfy the assumption (\ref{DH-1}),
(\ref{DH-1+1}) and (\ref{DH-2}) for the above $\eta_1,$ $\eta_2,$ $\sigma_1,$ $\sigma_2,$ ${\cal G},$
 and $\dt.$

Let $w_j=e^{(2j\pi+\theta )\sqrt{-1} /K},$ $j=1,2,...,K.$ Choose
$\zeta_{j,i}=w_j$ be a point in $\T_i,$ $i=1,2,...,m.$
Then, by the assumption,
\beq\label{Fl1-2n}
\mu_{\tau\circ \psi} (B_{\eta_2}(\zeta_{j,i}))\ge \sigma_2\eta_2>2\dt,
\eneq
$j=1,2,...,K.$
Note that
\beq\label{nvv-4}
B_{\eta_2}(\zeta_{j,i})\cap B_{\eta_2}(\zeta_{j',i'})=\emptyset
\eneq
if $j\not=j',$ $j,j'=1,2,...,K, $ $i,i'=1,2,...,m.$ Moreover,
$1\not\in B_{\eta_2}(\zeta_{j,i}),$ $j=1,2,...,K$ and $i=1,2,...,m.$

Write
\beq\label{nvv-5}
\psi(f)=\sum_{l=1}^Nf(x_l)e_l\tforal f\in C(X),
\eneq
where $\{e_1,e_2,...,e_N\}$ is a set of mutually orthogonal
projections and $x_1,x_2,...,x_l$ are distinct points in $X.$
Define
$$
p_{j,i}=\sum_{x_l\in B_{\eta_2}(\zeta_{j,i})}e_l,\,\,\,j=1,2,....,K.
$$
By (\ref{Fl1-2n}),
\beq\label{nvv-6}
\tau(p_{j,i})\ge \sigma_2\eta_2,\,\,\,j=1,2...,K\andeqn i=1,2,...,m.
\eneq

Put
\beq\label{nvv-7}
\gamma_i={1\over{2\pi
\sqrt{-1}}}\tau(\log(u^*\phi(z_i)u\phi(z_i)^*)),
\eneq
where $\tau$ is the normalized trace on $M_n.$  Then
\beq\label{nvv-8}
|\gamma_i|<\dt.
\eneq
By the Exel's formula (see \cite{Ex}), $\gamma_i=m_i/n_i$ for some
integer $|m_i|<n_i.$

For each $i$ and $j,$ there is a projection $q_{j,i}\le p_{j,i}$
such that
\beq\label{nvv-9}
\tau(q_{j,i})=|\gamma_i|\andeqn
q_{j,i}e_l=e_lq_{j,i},\,\,\,j=1,2,...,K,\,\,\,i=1,2,...,m \andeqn
l=1,2,...,N.
\eneq
There is a unitary $v_i\in
(\sum_{j=1}^Kq_{j,i})M_n(\sum_{j=1}^Kq_{j,i})$ such that
\beq\label{nvv-10}
v_i^*q_{j,i}v_i=q_{j+1,i},\,\,\,j=1,2,...,K-1\andeqn
v_i^*q_{K,i}v_i=q_{1,i},
\eneq
if $\gamma<0,$ and
\beq\label{nvv-10+}
v_i^*q_{j,i}v_i=q_{j-1,i},\,\,\,j=1,2,...,K-1\andeqn
v_i^*q_{1,i}v_i=q_{K,i},
\eneq
if $\gamma_i>0.$ If $\gamma_i=0,$ define $v_i=1.$

 Define
$v=(1-\sum_{i=1}^m\sum_{j=1}^Kq_{j,i})+\sum_{i=1}^mv_i.$ Note
that, by the choice of $\dt,$ we have
\beq\label{nvv-11}
\|[uv, \phi(f)]\|<\dt_0\tforal f\in {\cal G}.
\eneq

Moreover, the same computation as in the proof of \ref{fl1} shows
that
\beq\label{nvv-12}
{1\over{2\pi\sqrt{-1}}}\tau(\log((uv)^*\phi(z_i)uv\phi(z_i)^*))=0,\,\,\,i=1,2,...,m
\eneq
Then, using the Exel formula and \ref{RM}, since $K_1(C(Y))$ is torsion and $K_0(C(Y))$ is torsion free,
one obtains that
\beq\label{nvv-13}
{\rm Bott}(\phi, uv)|_{\cal P}=\{0\}.
\eneq
It follows from \ref{h1l} that there exists a continuous path of
unitaries $\{u(t): t\in [0,1/2]\}\subset M_n$ such that
\beq\label{nvv-14}
u(0)=uv,\,\,\, u(1/2)=1\andeqn \|[\phi(f),\, uv]\|<\ep/4
\eneq
for all $f\in {\cal F}$ and $t\in [0,1/2].$ The rest of the proof is
exactly the same as that of \ref{fl1}.

\end{proof}

\begin{lem}\label{To2}
Let $X={\overbrace{\T\times \T\times \cdots \T}^m}.$ Then $X$ has
the property (H).

\end{lem}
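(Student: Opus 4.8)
The plan is to follow the strategy of the proofs of \ref{fl1} and \ref{nvv}: reduce the assertion to the Basic Homotopy Lemma \ref{h1l} after correcting the implementing unitary so that its Bott obstruction relative to $\phi$ vanishes. Write $z_1,\dots,z_m$ for the coordinate unitaries generating $C(X)=C(\T^m)$, let $\beta_i$ denote the standard degree-one generator of $K_1$ attached to the $i$-th coordinate circle $\T_i=\{(1,\dots,1,z,1,\dots,1)\}\subset\T^m$, and for $S\subseteq\{1,\dots,m\}$ put $\beta_S=\prod_{i\in S}\beta_i$. Recall that $K_0(C(\T^m))$ and $K_1(C(\T^m))$ are finitely generated and torsion free and that, as a group, $\underline K(C(\T^m))$ is generated by the classes $\beta_S$ and their mod-$k$ reductions. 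Given $\ep>0$ and a finite $\mathcal F\subset C(X)$, I would first apply \ref{h1l} with $\ep/4$ and $\mathcal F\cup\{z_1,\dots,z_m\}$ to obtain $\eta_1>0$; fixing $\sigma_1$, I then get from \ref{h1l} together with \ref{ddbot} a $\dt_0>0$, a finite $\mathcal G_0\subset C(X)$ and a generating finite set $\mathcal P\subset\underline K(C(X))$ so that $\text{Bott}(h,w)|_{\mathcal P}$ is defined and additive whenever $\|[h(g),w]\|<\dt_0$ on $\mathcal G_0$. As in \ref{nvv} I then fix an integer $K$, an irrational rotation parameter to push the relevant points off the base point, points $\zeta_{j,i}$ $(1\le j\le K)$ on the coordinate circle $\T_i$ with the balls $B_{\eta_2}(\zeta_{j,i})$ pairwise disjoint and missing the base point, and set $\eta_2$, a finite $\mathcal G\supseteq\mathcal G_0\cup\{z_1,\dots,z_m\}\cup\mathcal F$, and (after fixing $\sigma_2$) a $\dt>0$.

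Now suppose $\phi,\psi\colon C(\T^m)\to M_n$ satisfy $(\ref{DH-1})$, $(\ref{DH-1+1})$ and $(\ref{DH-2})$, say $\psi={\rm ad}\,u\circ\phi$. Since $z_i\in\mathcal G$, we get $\|[\phi(z_i),u]\|<\dt$, so the winding numbers $\gamma_i=\frac{1}{2\pi\sqrt{-1}}\tau(\log(u^*\phi(z_i)u\phi(z_i)^*))$ are defined with $|\gamma_i|<\dt$, and $\gamma_i=m_i/n$ by Exel's formula. Using $(\ref{DH-1+1})$ for $\psi$ and the spectral decomposition $\psi(f)=\sum_l f(x_l)e_l$, I extract for each $(i,j)$ the projection $p_{j,i}=\sum_{x_l\in B_{\eta_2}(\zeta_{j,i})}e_l$, which satisfies $\tau(p_{j,i})\ge\sigma_2\eta_2>2|\gamma_i|$, a subprojection $q_{j,i}\le p_{j,i}$ with $\tau(q_{j,i})=|\gamma_i|$ commuting with all $e_l$, and a cyclic unitary $v_i\in(\sum_j q_{j,i})M_n(\sum_j q_{j,i})$ permuting the $q_{j,i}$ in the direction determined by the sign of $\gamma_i$ (with $v_i=1$ when $\gamma_i=0$). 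Since the balls $B_{\eta_2}(\zeta_{j,i})$ are pairwise disjoint, all $q_{j,i}$ are mutually orthogonal, so I may set $v=(1-\sum_{i,j}q_{j,i})+\sum_i v_i$ and verify, exactly as in \ref{nvv}, that $\|[uv,\phi(g)]\|<\dt_0$ for $g\in\mathcal G$. Because every coordinate of $\zeta_{j,i}$ other than the $i$-th equals the base point, the computation of \ref{fl1} and \ref{nvv} gives $\frac{1}{2\pi\sqrt{-1}}\tau(\log((uv)^*\phi(z_i)uv\,\phi(z_i)^*))=0$ for every $i$, so $\text{bott}_1(\phi,uv)(\beta_i)=0$ for $i=1,\dots,m$.

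The essential new point --- the only place where $X=\T^m$ requires more than the wedge of circles handled in \ref{nvv} --- is the claim that $\text{Bott}(\phi,uv)|_{\mathcal P}=\{0\}$. Since $K_1(M_n)=0$, every component of $\text{Bott}$ landing in $K_1$ or $K_1(M_n;\Z/k)$ vanishes automatically, so it suffices to control $\text{bott}_1(\phi,uv)$ on the $K_1$-classes $\beta_S$ with $|S|$ odd and on their mod-$k$ reductions. For $|S|=1$ this was done above. For $|S|=k\ge3$, write $S=\{s_1,\dots,s_k\}$ and $\beta_S=(\beta_{s_1}\cdots\beta_{s_{k-1}})\cdot\beta_{s_k}$; because the map realizing $\text{Bott}(\phi,uv)$ restricts to the honest homomorphism $\phi$ on $C(\T^{k-1})\otimes 1$ (the sub-circle coordinates $z_{s_1},\dots,z_{s_{k-1}}$), it is a module map over $\phi_{*}$, whence $\text{bott}_1(\phi,uv)(\beta_S)=\phi_{*}(\beta_{s_1}\cdots\beta_{s_{k-1}})\cdot\text{bott}_1(\phi,uv)(\beta_{s_k})$ in $K_*(M_n)$. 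But $\phi$ factors through $C(F)$ for some finite $F\subseteq\T^m$, so $\phi_{*}$ annihilates every reduced $K_0$-class, and $\beta_{s_1}\cdots\beta_{s_{k-1}}$ is reduced of degree $k-1\ge2$; hence $\text{bott}_1(\phi,uv)(\beta_S)=0$. The identical argument after reduction mod $k$ disposes of the remaining generators of $\mathcal P$, so $\text{Bott}(\phi,uv)|_{\mathcal P}=\{0\}$.

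With $\text{Bott}(\phi,uv)|_{\mathcal P}=\{0\}$ and $\|[\phi(g),uv]\|<\dt_0$ in hand, \ref{h1l} furnishes (up to reversal of the path) a continuous path of unitaries from $1$ to $uv$ that almost commutes with $\phi(\mathcal F)$; conjugating $\phi$ along it gives a homomorphism $\Phi_1\colon C(\T^m)\to C([0,1/2],M_n)$ from $\phi$ to ${\rm ad}\,(uv)\circ\phi$ staying within $\ep/2$ of $\phi$ on $\mathcal F$. As at the end of the proof of \ref{fl1}, a second homomorphism $\Phi_2\colon C(\T^m)\to C([1/2,1],M_n)$ from ${\rm ad}\,(uv)\circ\phi$ to ${\rm ad}\,u\circ\phi=\psi$ is built directly inside a finite dimensional commutative subalgebra by undoing the permutations $v_i$ and sliding the support points $\zeta_{j,i}$; since the $q_{j,i}$ lie in the small balls $B_{\eta_2}(\zeta_{j,i})$, the error of $\Phi_2$ against $\psi$ on $\mathcal F$ is controlled by the oscillation of $\mathcal F$ over these balls. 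Splicing $\Phi_1$ and $\Phi_2$ produces $\Phi\colon C(\T^m)\to C([0,1],M_n)$ with $\pi_0\circ\Phi=\phi$, $\pi_1\circ\Phi=\psi$ and $\|\pi_t\circ\Phi(f)-\psi(f)\|<\ep$ for all $f\in\mathcal F$. The main obstacle is the vanishing of the higher Bott invariants established in the third paragraph; once that is isolated, the remainder is a routine transcription of the arguments in \ref{fl1} and \ref{nvv}.
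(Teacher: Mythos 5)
Your overall strategy is the same as the paper's: the paper's proof of \ref{To2} is a verbatim adaptation of \ref{fl1} and \ref{nvv} -- extract the winding numbers $\gamma_i$ of $(u,\phi(z_i))$, absorb them with a cyclic permutation unitary $v$ supported on small balls along the coordinate circles, check $\|[uv,\phi(g)]\|<\dt_0$ and ${\rm bott}_1(\phi,uv)([z_i])=0$, invoke \ref{h1l}, and splice the resulting path with a finite--dimensional straightening as at the end of \ref{fl1}. Your first, second and fourth paragraphs reproduce this faithfully. The paper itself takes ${\cal P}=\{[z_1],\dots,[z_m]\}$ (asserting $K_1(C(X))=\Z^m$), so the point you isolate in your third paragraph -- the generators $\beta_S$ of $K_1(C(\T^m))$ with $|S|\ge 3$ odd -- is indeed not covered by a mere citation of \ref{RM}, and it is exactly there that your proposal has a genuine gap.

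The gap is the asserted identity ${\rm bott}_1(\phi,uv)(\beta_S)=\phi_{*}(\beta_{s_1}\cdots\beta_{s_{k-1}})\cdot{\rm bott}_1(\phi,uv)(\beta_{s_k})$, justified only by the remark that the almost multiplicative map realizing ${\rm Bott}(\phi,uv)$ restricts to the honest homomorphism $\phi$ on $C(\T^m)\otimes 1$ and is ``therefore a module map over $\phi_*$.'' No such formal principle exists: $[L]$ is only a partially defined group homomorphism on $\underline{K}$, and it does not intertwine cup products with $\phi_*$. Indeed, for a non-reduced class the analogous identity already fails -- if $x=y$ is the class of a projection $\chi$ in the subalgebra on which $L$ is multiplicative, then $[L](x\cdot y)={\rm rank}\,\phi(\chi)$ while $\phi_*(x)\cdot[L](y)=({\rm rank}\,\phi(\chi))^2$ -- so the equality you need is not a consequence of $L|_{C(\T^m)}=\phi$ alone; it must exploit both that $\beta_{s_1}\cdots\beta_{s_{k-1}}$ is a reduced $K_0$-class and that $\phi$ is a genuine (point-evaluation) homomorphism, and that requires an actual argument. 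For example, one can fix projections $P,Q$ over $C(\T^m)$ with $[P]-[Q]=\beta_{s_1}\cdots\beta_{s_{k-1}}$, place their entries in ${\cal G}$ in advance, observe that $\phi_N(P)$ and $\phi_N(Q)$ are honest projections which almost commute with the Bott representative built from $(\phi(z_{s_k}),uv)$, and thereby express ${\rm bott}_1(\phi,uv)(\beta_S)$ as the difference of the Bott invariants of the two compressed pairs; but one must then still prove that the $P$- and $Q$-compressions yield the same integer (equality of ranks does not give this for free -- one needs, say, an Exel-formula computation for the compressed logarithms, or a homotopy from $P$ to $Q$ through projections commuting with $\phi(C(\T^m))$). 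None of this is carried out in the proposal, so the step you yourself single out as ``the essential new point'' is precisely the one that remains unproved.
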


\begin{proof}
Define $z_i(e^{2\pi\sqrt{-1}t_1},e^{2\pi\sqrt{-1}t_2},...,
e^{2\pi\sqrt{-1}t_m})=e^{2\pi\sqrt{-1}t_i},$ $i=1,2,...,m.$

Let $\dt_{00}>0$ be as in the proof of \ref{fl1}. Let $\ep>0,$
${\cal F}\subset C(X)$ be a finite subset be
given. Let ${\cal F}_1={\cal F}\cup\{z_1,z_2,...,z_m\}.$ Let
$\eta_1>0$  be as in the proof of \ref{fl1} and let $\sigma_1>0.$
Let ${\cal G}\subset C(X),$ $\dt_0>0$ and ${\cal P}\subset
\underline{K}(C(X))$ be as in the proof of \ref{fl1} (for this $X$).

Since $K_0(C(X))=\Z^m$ and $K_1(C(X))=\Z^m,$ by \ref{RM}, we may
assume that ${\cal P}=\{[z_1], [z_2],...,[z_m]\}.$ Let $\eta_2>0,$
$\sigma_2>0,$
$K$ and $\theta$  be as in the proof \ref{nvv}.

Let $w_j=e^{(2\pi j\sqrt{-1}+\theta)/K}$ be as in the proof of
\ref{nvv}.  Choose $\zeta_{j,i}=(\overbrace{1,...,1}^{i-1}, w_j,
\overbrace{1,...,1}^{m-i}),$ $j=1,2,...,K$ and $i=1,2,...,m.$
Note that
\beq\label{T0-1}
B_{\eta_2}(\zeta_{j,i})\cap B_{\eta_2}(\zeta_{j',i'})=\emptyset
\eneq
if $j\not=j',$ $j,j'=1,2,...,K, $ $i,i'=1,2,...,m.$ Moreover,
$1\not\in B_{\eta_2}(\zeta_{j,i}),$ $j=1,2,...,K$ and $i=1,2,...,m.$
Write
\beq\label{To-2}
\psi(f)=\sum_{l=1}^Nf(x_l)e_l\tforal f\in C(X),
\eneq
where $\{e_1,e_2,...,e_N\}$ is a set of mutually orthogonal
projections and $x_1,x_2,...,x_l$ are distinct points in $X.$ Define
$$
p_{j,i}=\sum_{x_l\in B_{\eta_2}(\zeta_{j,i})}e_l,\,\,\,j=1,2,....,K.
$$
By (\ref{To-3}),
\beq\label{To-3}
\tau(p_{j,i})\ge \sigma_2\eta_2,\,\,\,j=1,2...,K\andeqn i=1,2,...,m.
\eneq

Put
\beq\label{To-4}
\gamma_i={1\over{2\pi
\sqrt{-1}}}\tau(\log(u^*\phi(z_i)u\phi(z_i)^*)),
\eneq
where $\tau$ is the normalized trace on $M_n.$  Then
\beq\label{To-5}
|\gamma_i|<\dt.
\eneq
By the Exel's formula (see \cite{Ex}), $\gamma_i=m_i/n_i$ for some
integer $|m_i|<n_i.$
For each $i$ and $j,$ there is a projection $q_{j,i}\le p_{j,i}$
such that
\beq\label{To-6}
\tau(q_{j,i})=|\gamma_i|\andeqn
q_{j,i}e_l=e_lq_{j,i},\,\,\,j=1,2,...,K,\,\,\,i=1,2,...,m \andeqn
l=1,2,...,N.
\eneq
There is a unitary $v_i\in
(\sum_{j=1}^Kq_{j,i})M_n(\sum_{j=1}^Kq_{j,i})$ such that
\beq\label{To-7}
v_i^*q_{j,i}v_i=q_{j+1,i},\,\,\,j=1,2,...,K-1\andeqn
v_i^*q_{K,i}v_i=q_{1,i},
\eneq
if $\gamma<0,$ and
\beq\label{To-8}
v_i^*q_{j,i}v_i=q_{j-1,i},\,\,\,j=1,2,...,K-1\andeqn
v_i^*q_{1,i}v_i=q_{K,i},
\eneq
if $\gamma_i>0.$ If $\gamma_i=0,$ define $v_i=1.$
Define
$v=(1-\sum_{i=1}^m\sum_{j=1}^Kq_{j,i})+\sum_{i=1}^mv_i.$ Note that,
by the choice of $\dt,$ we have
\beq\label{To-9}
\|[uv, \phi(f)]\|<\dt_0\tforal f\in {\cal G}.
\eneq
Moreover, the same computation as in the proof of \ref{fl1} shows
that
\beq\label{To-10}
{1\over{2\pi\sqrt{-1}}}\tau(\log((uv)^*\phi(z_i)uv\phi(z_i)^*))=0,\,\,\,i=1,2,...,m
\eneq
Then, using the Exel formula and \ref{RM}, one obtains that
\beq\label{To-11}
{\rm Bott}(\phi, uv)|_{\cal P}=\{0\}.
\eneq
It follows from \ref{h1l} that there exists a continuous path of
unitaries $\{u(t): t\in [0,1/2]\}\subset M_n$ such that
\beq\label{To-12}
u(0)=uv,\,\,\, u(1/2)=1\andeqn \|[\phi(f),\, uv]\|<\ep/4
\eneq
for all $f\in {\cal F}$ and $t\in [0,1/2].$ The rest of the proof is
exactly the same as that of \ref{fl1}.

\end{proof}

\begin{thm}\label{Tf}
Let $X$ be a  finite CW complex which has the property (H).
  Let $\ep>0$ be a positive number and let ${\cal F}$ be a
finite subset  of $C(X).$  There exists $\eta_1>0$ such that, for each $\sigma_1>0,$ the following holds:
 There exists  $\eta_2>0$ such that, for any $\sigma_2>0,$  there exists $\eta_3>0$ such that, for any $\sigma_3>0,$
 there are a finite subset ${\cal G}\subset C(X)$ and $\dt>0$ satisfying the following:
Suppose that $\phi, \psi: C(X)\to M_n$ (for some integer $n$) are
two unital \hm s such that
\beq\label{Tf-1}
\|\phi(f)-\psi(f)\|<\dt\tforal f\in {\cal G}, \,\,\,
\mu_{\tau\circ\phi }(O_{\eta_j}) \ge \sigma_j\eta_j, \mu_{\tau\circ \psi}(O_{\eta_j})\ge \sigma_j\eta_j,\\
\eneq
for any open ball $O_{\eta_j}$ of radius $\eta_j,$ $j=1,2,3$ where $\tau$ is
the normalized trace. Then, there exists a \hm\, $\Phi: C(X)\to
C([0,1], M_n)$ such that
$$
\pi_0\circ \Phi=\phi,\,\,\,\pi_1\circ \Phi=\psi\andeqn
$$
$$
\|\psi(f)-\pi_t\circ \Phi(f)\|<\ep
\tforal f\in {\cal F}.
$$

\end{thm}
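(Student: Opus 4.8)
The strategy is to use Lemma \ref{F2l} to move $\phi$ and $\psi$, through small homotopies, to a pair of homomorphisms that are genuinely unitarily equivalent, then to connect that pair by a short homotopy via the property (H), and finally to concatenate the three pieces. First I would reduce to the case that $X$ is a connected finite CW complex with no isolated points: $C(X)$ is the direct sum of the $C$-algebras of the components of $X$, and since we may put each unit $1_{C(X_i)}$ into $\mathcal{G}$, after conjugating $\psi$ by a unitary close to $1$ (which changes $\psi$ only through a short homotopy) we may assume $\phi$ and $\psi$ carry each central projection to the same projection of $M_n$, and then argue componentwise; a one-point component is trivial, and a connected finite CW complex with more than one point has no isolated points, so Lemma \ref{F2l} applies to each remaining component.

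Next I would fix the constants in the order forced by the statement. Apply the property (H) to $(\mathcal{F},\ep/4)$ to obtain $\eta_1^{(H)}$ and set $\eta_1:=\eta_1^{(H)}/2$. Given $\sigma_1$, apply the property (H) with $\sigma_1^{(H)}:=\sigma_1/2$ to obtain a finite set $\mathcal{G}^{(H)}$ and $\eta_2^{(H)}$, and set $\eta_2:=\eta_2^{(H)}/2$. Given $\sigma_2$, apply the property (H) with $\sigma_2^{(H)}:=\sigma_2/2$ to obtain $\dt^{(H)}>0$. Put $\mathcal{F}^*:=\mathcal{F}\cup\mathcal{G}^{(H)}$ and $\ep^*:=\min\{\ep/4,\dt^{(H)}/4\}$, and apply Lemma \ref{F2l} to $(\mathcal{F}^*,\ep^*)$ to obtain a scale, which—since any smaller value is also admissible in \ref{F2l}—we may take to be an $\eta^*>0$ with $\eta^*<(1/2)\min\{\eta_1^{(H)},\eta_2^{(H)}\}$; set $\eta_3:=\eta^*/24$. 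Finally, given $\sigma_3$, apply Lemma \ref{F2l} with $\sigma^*:=\sigma_3/24$ to obtain $\dt^*>0$ and $\mathcal{G}^*\subset C(X)$, and set $\mathcal{G}:=\mathcal{F}\cup\mathcal{G}^{(H)}\cup\mathcal{G}^*$ and $\dt:=\min\{\ep/4,\dt^{(H)}/2,\dt^*\}$.

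Now suppose $\phi,\psi$ satisfy the hypotheses with these constants. The hypotheses of Lemma \ref{F2l} hold: the $\eta^*/24$-ball condition is exactly the $\eta_3$-condition because $\sigma^*\eta^*=\sigma_3\eta_3$, and $\dt\le\dt^*$, $\mathcal{G}\supseteq\mathcal{G}^*$. So Lemma \ref{F2l} produces unital homomorphisms $\Phi_1,\Phi_2:C(X)\to C([0,1],M_n)$ with $\pi_0\circ\Phi_1=\phi$, $\pi_0\circ\Phi_2=\psi$, with $\pi_t\circ\Phi_1\approx_{\ep^*}\phi$ and $\pi_t\circ\Phi_2\approx_{\ep^*}\psi$ on $\mathcal{F}^*$ for all $t$, and a unitary $u\in M_n$ with ${\rm ad}\,u\circ\phi_1=\psi_2$, where $\phi_1:=\pi_1\circ\Phi_1$ and $\psi_2:=\pi_1\circ\Phi_2$. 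I would then check that $(\phi_1,\psi_2)$ meets the hypotheses of the property (H): they are unitarily equivalent by construction; for $f\in\mathcal{G}^{(H)}$ one has $\|\phi_1(f)-\psi_2(f)\|\le\|\phi_1(f)-\phi(f)\|+\|\phi(f)-\psi(f)\|+\|\psi(f)-\psi_2(f)\|<2\ep^*+\dt<\dt^{(H)}$; and, since in the proof of Lemma \ref{F2l} the homomorphism $\pi_1\circ\Phi_1$ is obtained from $\phi$ by relocating each spectral point within its own ball of radius $\eta^*/3$, we have $\mu_{\tau\circ\phi_1}(B_{\eta^*/3}(F))\ge\mu_{\tau\circ\phi}(F)$ for every compact $F$, so taking $F$ the closed ball of radius $\eta_1^{(H)}-\eta^*/3>\eta_1$ concentric with a given $O_{\eta_1^{(H)}}$ gives $\mu_{\tau\circ\phi_1}(O_{\eta_1^{(H)}})\ge\mu_{\tau\circ\phi}(O_{\eta_1})\ge\sigma_1\eta_1=\sigma_1^{(H)}\eta_1^{(H)}$, and likewise at scale $\eta_2^{(H)}$; since $\mu_{\tau\circ\psi_2}=\mu_{\tau\circ\phi_1}$ (the normalized trace is conjugation invariant) the same lower bounds hold for $\psi_2$. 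Hence the property (H) yields $\Phi^{(H)}:C(X)\to C([0,1],M_n)$ with $\pi_0\circ\Phi^{(H)}=\phi_1$, $\pi_1\circ\Phi^{(H)}=\psi_2$, and $\pi_t\circ\Phi^{(H)}\approx_{\ep/4}\psi_2$ on $\mathcal{F}$. Reparametrizing and concatenating $\Phi_1$ on $[0,1/3]$, $\Phi^{(H)}$ on $[1/3,2/3]$, and the reverse of $\Phi_2$ on $[2/3,1]$ (these agree at the junctions since $\pi_1\circ\Phi_1=\phi_1=\pi_0\circ\Phi^{(H)}$ and $\pi_1\circ\Phi^{(H)}=\psi_2=\pi_1\circ\Phi_2$) gives a unital homomorphism $\Phi$ with $\pi_0\circ\Phi=\phi$, $\pi_1\circ\Phi=\psi$; on $\mathcal{F}$ it stays within $\ep^*+\dt$ of $\psi$ on the first piece, within $\ep/4+\ep^*$ on the second, and within $\ep^*$ on the third, all $<\ep$.

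I expect the main obstacle to be the bookkeeping of constants rather than any new idea: one must order the choices so that $\eta^*$—which depends on $\mathcal{G}^{(H)}$, hence on $\sigma_1$ and $\sigma_2$—is introduced only after $\sigma_1,\sigma_2$ have been given (matching the quantifier order in the statement), yet is small enough relative to $\eta_1^{(H)},\eta_2^{(H)}$; and one must confirm that the perturbed pair $(\phi_1,\psi_2)$ coming out of Lemma \ref{F2l} retains enough mass on balls to be fed into the property (H). All the genuine work is carried by Lemma \ref{F2l} and by the hypothesis that $X$ has the property (H).
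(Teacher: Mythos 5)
Your proposal is correct and takes essentially the same route as the paper's proof: apply Lemma \ref{F2l} to homotope $\phi$ and $\psi$ (through $\Phi_1,\Phi_2$) to an exactly unitarily equivalent pair, check that closeness on the test set and the ball-mass conditions at the scales $\eta_1^{(H)},\eta_2^{(H)}$ survive the perturbation, bridge the pair using property (H), and concatenate the three homotopies, with the quantifiers ordered exactly as in the statement. The only (harmless) divergence is in how the persistence of the measure condition is verified — you use the point-relocation by at most $\eta^*/3$ inside the proof of \ref{F2l} together with trace invariance under conjugation, while the paper runs a comparison in the spirit of Lemma \ref{F2l-0} using trace-closeness on a suitably enlarged ${\cal G}$.
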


\begin{proof}

It is clear that one can reduce the general  case to the case that
$X$ is connected.

Let $\eta_1'>0$ (in place of $\eta_1$) be given by \ref{DH} for $\ep/2$ and ${\cal F}.$
Let $\eta_1=\eta_1'/16.$
Let $\sigma_1>0.$
 Let ${\cal G}_1$ (in place of ${\cal G}$) be a finite subset of
 $C(X)$ and  $\eta_2'>0$ (in place of $\eta_2$)   be given by \ref{DH}
 for $\eta_1'$ and $\sigma_1/16$ (in place of $\sigma_1$).  Let $\sigma_2>0.$

 Choose  $\dt_1$ (in place of
 $\dt$) required by \ref{DH}  for the given $\ep/2>0,$ ${\cal
 F},$ ${\cal G}_1,$ $\eta_1',$ $\eta_2'$ and $\sigma_2/16.$  We may assume that $\eta_2'<\eta_1$ and
 ${\cal F}\subset {\cal G}.$
Denote $\eta_2=\eta_2'/16.$
We may assume that ${\cal G}_1$ is larger than that ${\cal G}$  required by
\ref{F2l-0} for $\eta_2'/2$ (in place of $\eta$) and
$\sigma_2/16$ (in place of $\sigma$).
Choose $\dt_2=\min\{\dt_1/2, \sigma_2\eta_2/64\}.$
Let $\eta_0>0$ be such that
$$
|f(x)-f(x')|<\dt_2/4\tforal f\in {\cal G}_1,
$$
provided that ${\rm dist}(x,x')<\eta_0.$

Let $0<\eta_3'\le \min\{\eta_0/2, \eta_2'/2\}.$
 We may also assume, by
choosing a smaller $\eta_0,$ that any open ball with radius $\eta_3'$
is path connected.
Let $\eta_3=\eta_3'/24$ and let $\sigma_3>0.$
Let $\dt_3>0$ (in place of $\dt$) and let  ${\cal
G}\subset C(X)$ be a finite subset required by Lemma \ref{F2l} for $\dt_2/2$ (in
place of $\ep$), ${\cal G}_1$ (in place of ${\cal F}$) and $\eta_3'$
(in place of $\eta$) and   $\sigma_3/24.$
Let $\dt=\min\{\dt_3/2, \dt_2/2\}.$

Now suppose that $\phi$ and $\psi$ satisfy conditions (\ref{Tf-1})
for the above $\eta_1,$ $\eta_2,$ $\eta_3,$ $\sigma_1,$ $\sigma_2,$ $\sigma_3,$ ${\cal G}$ and $\dt.$
In particular,
$$
\mu_{\tau\circ \phi}(O_{\eta_3'/24})\ge
(\sigma_3/24) \eta_3'\andeqn \mu_{\tau\circ \psi}(O_{\eta_3'/24})\ge(\sigma_3/24)\eta_3'
$$
for every open ball $O_{\eta_3'/24}$ with radius $\eta_3'/24.$  It follows from \ref{F2l} that there are unital \hm s
$\Phi_i: C(X)\to C([0,1], M_n)$ such that
\beq\label{Tf-2}
&&\pi_0\circ \Phi_1=\phi,\,\,\,\pi_0\circ \Phi_2=\psi\\\label{Tf-2+}
&&\|\pi_t\circ \Phi_1(g)-\phi(g)\|<\dt_2/2\andeqn
\|\pi_t\circ \Phi_2(g)-\psi(g)\|<\dt_2/2
\eneq
for all $g\in {\cal G}_1$ and $t\in [0,1],$ moreover, there is a
unitary $u\in M_n$ such that
\beq\label{Tf-3}
{\rm ad}\, u\circ \pi_1\circ \Phi_1=\pi_1\circ \Phi_2.
\eneq
Note that
$$
\mu_{\tau\circ \phi}(O_{\eta_2'/16})\ge \sigma_2\eta_2'/16
$$
It follows from  the proof of \ref{F2l-0}  (with possibly larger ${\cal G}$ which depends on $\eta_2$) that
\beq\label{Tf-4}
\mu_{\tau\circ \phi}({\overline{O_{\eta_2'/16}}})\le \mu_{\tau\circ
\pi_1\circ \Phi_1}(O_{\eta_2})
\eneq
It follows  that
\beq\label{Tf-5}
\mu_{\tau\circ \pi_1\circ \Phi_1}(O_{\eta_2'})\ge
 (\sigma_2/16)\eta_2'.
\eneq
Similarly,
\beq\label{Tf-6}
\mu_{\tau\circ \pi_1\circ \Phi_2}(O_{\eta_2})\ge
\ (\sigma_2/16)\eta_2.
\eneq
Moreover,
\beq\label{Tf-n-1}
\mu_{\tau\circ \pi_1\circ \Phi_1}(O_{\eta_1'})\ge (\sigma_1/16)\eta_1'\andeqn
\mu_{\tau\circ \pi_1\circ \Phi_2}(O_{\eta_1'})\ge (\sigma_1/16)\eta_1'.
\eneq
Since $X$ has the property (H), there is a unital \hm\, $\Phi_3: C(X)\to
C([0,1], M_n)$ such that
\beq\label{Tf-7}
&&\pi_0\circ \Phi_3=\pi_1\circ \Phi_1,\,\,\,\pi_1\circ
\Phi_3=\pi_1\circ \Phi_2\andeqn \\\label{Tf7+}
 &&\|\pi_t\circ
\Phi_3(f)-\pi_1\circ \Phi_1(f)\|<\ep/2\rforal f\in {\cal F}.
 \eneq

The theorem follows from the combination of (\ref{Tf-2}),
(\ref{Tf-2+}), (\ref{Tf-7}) and (\ref{Tf7+}).

\end{proof}

\section{Almost multiplicative maps}

The purpose of this section is to prove Theorem \ref{TAM} which
states that an approximately multiplicative maps from $C(X)$ (for
those $X$ which have the property (H)\,) into $C([0,1], M_n)$ may
be approximated by \hm s if the $KK$-information they carry is the
same as that of \hm s and they are also sufficiently injective.

\vspace{0.1in}

The following follows from a theorem of Terry Loring (\cite{Lo2}.

\begin{thm}\label{1dim}
Let $X$ be a finite CW complex with  dimension 1.   Let $\ep>0$
and let ${\cal F}\subset C(X)$ be a finite subset. There exists
$\dt>0$ and a finite subset ${\cal G}\subset C(X)$ satisfying the
following: For any unital $\dt$-${\cal G}$-multiplicative \morp\,
$\phi: C(X)\to C([0,1], M_n)$ (for any integer $n$), there is a
unital \hm\, $h: C(X)\to C([0,1],M_n)$ such that
$$
\|\phi(f)-h(f)\|<\ep
$$
for all $f\in {\cal F}.$
\end{thm}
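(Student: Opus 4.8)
The plan is to deduce this from the semiprojectivity of $C(X)$ when $\dim X\le 1$, together with the ``stable relations'' machinery of Loring \cite{Lo2}; we may assume throughout that $\ep<1$. First I would reduce to the case that $X$ is connected. A general $1$-dimensional finite CW complex is a finite disjoint union $X=\sqcup_{i=1}^kX_i$ of connected ones, so $C(X)=\bigoplus_{i=1}^kC(X_i)$, with the indicator functions $e_i$ of the $X_i$ central projections in $C(X)$. Putting the $e_i$ into ${\cal G}$ and taking $\dt$ small, a unital $\dt$-${\cal G}$-multiplicative \morp\, $\phi:C(X)\to C([0,1],M_n)$ produces approximately mutually orthogonal approximate projections $\phi(e_i)$ summing approximately to $1$, which can be replaced by genuine mutually orthogonal projections $E_i\in C([0,1],M_n)$ with $\sum_iE_i=1$; since $[0,1]$ is connected each $E_i$ has constant rank, so $E_iC([0,1],M_n)E_i\cong C([0,1],M_{r(i)})$, and it suffices to treat each cut-down $E_i\phi(\cdot)E_i|_{C(X_i)}:C(X_i)\to E_iC([0,1],M_n)E_i$ separately. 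So assume $X$ is a connected $1$-complex, i.e.\ a finite graph.

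By \cite{Lo2}, $C(X)$ is finitely generated and semiprojective, and more precisely admits a finite presentation by \emph{stable} relations: $C(\T)$ is generated by a single element subject to ``$z$ unitary'', $C([0,1])$ by a single self-adjoint contraction, and the relations gluing finitely many such building blocks along the graph structure remain stable. I would then invoke the standard consequence of stable relations, uniform in the target: for every $\ep>0$ and every finite ${\cal F}\subset C(X)$ there exist $\dt>0$ and a finite ${\cal G}\subset C(X)$, which we may take to contain ${\cal F}\cup\{1\}$, such that for \emph{any} unital \CA\, $B$ and any $\dt$-${\cal G}$-multiplicative \morp\, $\phi:C(X)\to B$ there is a \hm\, $h:C(X)\to B$ with $\|h(f)-\phi(f)\|<\ep$ for all $f\in{\cal F}$. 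The proof of this implication is the usual one: approximate each $f\in{\cal F}$ by a polynomial in the generators of $C(X)$; use approximate multiplicativity to conclude that the images of the generators under $\phi$ $\dt'$-satisfy the defining relations for a suitable $\dt'$ (controlled by $\dt$ and the chosen polynomials) on a suitable finite set; apply stability to replace them by elements exactly satisfying the relations; define $h$ by the universal property; and track the resulting errors.

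Applying this with $B=C([0,1],M_n)$ gives the desired \hm\, $h$. For unitality, note that since $1\in{\cal F}$ and $\phi$ is unital, $h(1)$ is a projection with $\|h(1)-1\|=\|h(1)-\phi(1)\|<\ep<1$, so $1-h(1)$ is a projection of norm strictly less than $1$ and hence is $0$; thus $h(1)=1$ and $h$ is automatically unital.

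The part I expect to be the only genuine work is locating and citing Loring's theorem in exactly the quantitative, target-independent form used above, and confirming that an \emph{arbitrary} finite graph --- not merely $\T$, $[0,1]$, or a wedge of circles --- has stable defining relations; the reduction to the connected case and the unitality remark are routine. It is worth stressing that the hypothesis $\dim X\le 1$ is essential here: the Voiculescu-type examples recalled at the start of Section~4 show that $C(X)$ with $\dim X\ge 2$ fails this perturbation property, so the argument genuinely relies on the semiprojectivity of one-dimensional noncommutative CW complexes.
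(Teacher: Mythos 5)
Your proposal is correct and follows essentially the same route as the paper, which offers no argument beyond the assertion that the statement "follows from a theorem of Terry Loring" in \cite{Lo2}; your reduction to connected components, the stable-relations perturbation, and the unitality fix via $\|h(1)-1\|<\ep<1$ are exactly the routine elaborations of that citation. The one point you flag as remaining work --- that Loring's semiprojectivity/stable-relations result for one-dimensional finite CW complexes holds uniformly over arbitrary unital targets --- is precisely what the paper's citation is invoking, so there is no gap relative to the paper's own treatment.
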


\begin{df}\label{Nota}

{\rm Let ${\bf X}_0$ be the family of finite CW complexes which
consists of all those with dimension no more than one and all those
which have property (H). Note that ${\bf X}_0$ contains all finite
CW complex  $X$ with finite $K_1(C(X))$ and torsion free $K_0(C(X)),$  $I\times \T,$
$n$-dimensional tori and those with the form $\T\vee \cdots \vee \T
\vee Y$ with some finite CW complex $Y$ with torsion $K_1(C(Y))$ and 
torsion free $K_0(C(Y)).$

Let ${\bf X}$ be the family of finite CW complexes which contains
all those in ${\bf X}_0$ and those with torsion $K_1(C(X)).$ }
\end{df}

Let $X$ be a finite CW complex and let  $h: C(X)\to C([0,1], M_n)$
be a unital \hm. It is easy to see that there are finitely many
mutually orthogonal projections $p_1,p_2,...,p_m$ and points
$\xi_1, \xi_2,...,\xi_m$ in $X$ with one point in each connected
component such that
$$
[h]=[\Phi]\,\,\,{\rm in}\,\,\, KK(C(X), C([0,1], M_n)),
$$
where $\Phi(f)=\sum_{i=1}^m f(\xi_i)p_i$ for all $f\in C(X).$

\begin{thm}\label{TAM}
Let $X\in {\bf X}_0.$
 Let $\ep>0$ and let ${\cal
F}\subset C(X)$ be a finite subset.
There exists $\eta_1>0$ such that, for any $\sigma_1>0,$
there exists $\eta_2>0$ such that, for any $\sigma_2>0,$ there
exists $\eta_3>0$ such that, for any $\sigma_3>0,$
there exists  a finite subset ${\cal G},$  $\dt>0,$  and a
finite subset ${\cal P}\subset \underline{K}(C(X))$ satisfying the
following:

Suppose that $\phi: C(X)\to C([0,1], M_n)$( for any integer $n\ge 1$)  is a unital $\dt$-${\cal
G}$-multiplicative \morp\, for which
\beq\label{TAM-1}
\mu_{\tau\circ \phi}(O_{\eta_j})\ge \sigma_j\eta_j
\eneq
for any open ball $O_{\eta_j}$ with radius $\eta_j,$ $j=1,2,3,$ and for all tracial
states $\tau$ of $C([0,1], M_n),$ and
\beq\label{TAM-2}
[\phi]|_{\cal P}=[\Phi]|_{\cal P},
\eneq
where $\Phi$ is a point-evaluation.

Then there exists a unital \hm\, $h: C(X)\to C([0,1],M_n)$ such that
\beq\label{ATM-3}
\|\phi(f)-h(f)\|<\ep
\eneq
for all $f\in {\cal F}.$
 \end{thm}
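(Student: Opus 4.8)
The plan is to reduce the interval-valued problem to two facts already established for maps into a finite-dimensional algebra and then to glue the resulting local pictures along a fine partition of $[0,1]$. The two facts are: Theorem \ref{TAML2}, which approximates a sufficiently full $\delta$-$\mathcal{G}$-multiplicative map $C(X)\to M_n$ whose $\underline K$-class agrees with that of a point-evaluation by an honest unital \hm\ (retaining fullness, with the constants halved, at the relevant scales---and, as its proof via Lemma \ref{AMl1} shows, at any prescribed finite set of radii); and Theorem \ref{Tf}, which joins two close full unital \hm s $C(X)\to M_n$ by a path of unital \hm s $C(X)\to C([0,1],M_n)$ staying within $\epsilon$ of the endpoints. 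Note that the nested-quantifier shape of \ref{Tf} (an outer $\eta_1$, then $\eta_2$, then $\eta_3$, then $\mathcal{G},\delta$) is exactly the shape we must produce.

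First I would reduce to $X$ connected: the central projections of the components of $X$ are sent by $\phi$ (for small $\delta$) to an almost-orthogonal almost-decomposition of $1$ in $C([0,1],M_n)$, which perturbs to an honest orthogonal decomposition $1=\sum_j P_j$; since $[0,1]$ is connected each $P_j$ has constant rank $r_j$, so $P_jC([0,1],M_n)P_j\cong C([0,1],M_{r_j})$, and compressing $\phi$ reduces to the components, the measure and $\underline K$ hypotheses passing to the pieces because $\Phi$ is a point-evaluation. If $\dim X\le 1$ the conclusion is immediate from Theorem \ref{1dim} (with $\mathcal{P}=\emptyset$ and any $\eta_j$; the measure and $\underline K$ hypotheses are not needed), so assume $X$ has property (H). To set the constants I would feed $\epsilon/4$ and $\mathcal{F}$ to Theorem \ref{Tf} to get its outer scale, proceed through its nested quantifiers with the constants $\sigma_j/2$ to obtain its remaining scales, a finite set $\mathcal{G}_1$ and a tolerance $\delta_1$; then apply the (multi-scale form of) Theorem \ref{TAML2} with tolerance $\min\{\epsilon/4,\delta_1/4\}$ and test set $\mathcal{F}\cup\mathcal{G}_1$ at the three radii coming from \ref{Tf}, and let its output fix $\eta_1,\eta_2,\eta_3$, $\mathcal{G}\supseteq\mathcal{G}_1\cup\mathcal{F}$, $\delta$ and a finite subset of $\underline K(C(X))$; finally enlarge $\mathcal{P}$, shrink $\delta$ and enlarge $\mathcal{G}$ so that $[\,\cdot\,]|_{\mathcal{P}}$ is defined on $\delta$-$\mathcal{G}$-multiplicative maps. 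This matching of constants is the one genuinely laborious step.

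Given $\phi$ as in the statement, each $\phi(g)$ ($g\in\mathcal{G}$) is a uniformly continuous $M_n$-valued function on $[0,1]$, so I would pick a partition $0=t_0<t_1<\cdots<t_N=1$ fine enough that $\|\pi_s\circ\phi(g)-\pi_{t_i}\circ\phi(g)\|$ is as small as desired for $s\in[t_{i-1},t_i]$. Since the extreme tracial states of $C([0,1],M_n)$ are $\mathrm{tr}\circ\pi_t$, each $\pi_{t_i}\circ\phi\colon C(X)\to M_n$ is $\delta$-$\mathcal{G}$-multiplicative, is full at the three scales with constants $\sigma_j$, and satisfies $[\pi_{t_i}\circ\phi]|_{\mathcal{P}}=[\pi_\xi]|_{\mathcal{P}}$ (because $\Phi$ is a point-evaluation and $[0,1]$ is connected). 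By Theorem \ref{TAML2} there is a unital \hm\ $h_i\colon C(X)\to M_n$ within $\min\{\epsilon/4,\delta_1/4\}$ of $\pi_{t_i}\circ\phi$ on $\mathcal{F}\cup\mathcal{G}_1$, still full at the three scales with constants $\sigma_j/2$. For adjacent $i$, $h_i$ and $h_{i+1}$ are then $\delta_1$-close on $\mathcal{G}_1$ (pass through $\pi_{t_i}\circ\phi$ and $\pi_{t_{i+1}}\circ\phi$, which are close by fineness of the partition), so Theorem \ref{Tf} on $[t_i,t_{i+1}]$ gives a unital \hm\ $\Psi_i\colon C(X)\to C([t_i,t_{i+1}],M_n)$ with $\pi_{t_i}\circ\Psi_i=h_i$, $\pi_{t_{i+1}}\circ\Psi_i=h_{i+1}$, and $\|\pi_s\circ\Psi_i(f)-h_{i+1}(f)\|<\epsilon/4$ for $f\in\mathcal{F}$.

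Finally I would glue: define $h\colon C(X)\to C([0,1],M_n)$ by $\pi_s\circ h=\pi_s\circ\Psi_i$ on $[t_i,t_{i+1}]$; the pieces agree at common endpoints since $\pi_{t_{i+1}}\circ\Psi_i=h_{i+1}=\pi_{t_{i+1}}\circ\Psi_{i+1}$, so $h$ is a unital \hm. For $f\in\mathcal{F}$ and $s\in[t_i,t_{i+1}]$, inserting $h_{i+1}(f)$ and $\pi_{t_{i+1}}\circ\phi(f)$ gives $\|h(f)(s)-\phi(f)(s)\|$ bounded by three terms each $<\epsilon/4$ (from \ref{Tf}, from \ref{TAML2}, and from the fineness of the partition), hence $\|\phi(f)-h(f)\|<\epsilon$, as required. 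The main obstacle is the multi-scale bookkeeping that keeps the $h_i$ full enough---at all three scales simultaneously---to be admissible inputs to Theorem \ref{Tf}; once the constants are threaded through correctly, the rest is the fine-partition-and-glue scheme plus routine $\epsilon/4$ estimates.
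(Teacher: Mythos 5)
Your proposal follows essentially the same route as the paper's own proof: reduce to $X$ connected (with the property (H) case being the substantive one), apply Theorem \ref{TAML2} to the point-evaluations $\pi_{t_i}\circ\phi$ along a sufficiently fine partition (using that $\underline{K}(C([0,1],M_n))=\underline{K}(M_n)$ so the $KK$-hypothesis passes to each fiber, and that fullness at the extra scales is preserved by enlarging the test set as in the proof of \ref{AMl1}), then join adjacent homomorphisms by Theorem \ref{Tf} on each subinterval and glue, finishing with the triangle inequality. The constant-threading through the nested quantifiers of \ref{Tf} and \ref{TAML2} is handled the same way as in the paper, so the argument is correct.
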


\begin{proof}

The cases that need to be considered are those $X$ which have
property (H). We may assume that $X$ is connected and
$\Phi=\pi_\xi$ for some point $\xi\in X.$
Let $\ep>0$ and  ${\cal F}\subset C(X)$  be given.

Let $\eta_1>0$  be required by \ref{Tf} for
$\ep/4$ (in place of $\ep$) and ${\cal F}$ above. Let $\sigma_1>0.$ Let $\eta_2>0$ be as required
by \ref{Tf} for $\ep/4$ (in place of $\ep$), ${\cal F},$ $\eta_1$ and $\sigma_1.$  Let $\sigma_2>0.$
Let $\eta_3'>0$  (in place of $\eta_3$) be required by \ref{Tf} for $\ep/4$ (in place of $\ep$), ${\cal F},$ $\eta_1,$ $\eta_2,$ $\sigma_1$ and $\sigma_2/4$ (in place of $\sigma_2$).
Let $\sigma_3>0.$

Let ${\cal G}_1\subset C(X)$ (in place of ${\cal G}$) be a finite
subset and $\dt_1>0$ (in place of $\dt$) required by \ref{Tf} for
$\ep/4,$ ${\cal F},$ $\eta_1,$ $\eta_2,$ $\eta_3'$ (in place of $\eta_3$), and $\sigma_j/4$ ($j=1,2,3$) as above. We may assume that ${\cal
F}\subset {\cal G}_1.$
Let ${\cal G}_2\subset C(X)$ be a finite subset which is larger than ${\cal G}_1$ and which also
depends on $\eta_1$ and $\sigma_1.$

Let $\ep_1=\min\{\ep/4, \dt_1/4\}.$  Let $\eta_3>0$ (in place of $\eta$), $\dt_2>0$ (in
place of $\dt$), ${\cal G}\subset C(X)$ be a finite subset and
${\cal P}\subset \underline{K}(C(X))$ be a finite subset required by
\ref{TAML2} for $\ep_1$ (in place of $\ep$), ${\cal G}_2$ (in place
of ${\cal F}$), $\sigma_2$ (in place of $\sigma_1$),
$\sigma_3/2$ (in place of $\sigma$) and $\eta_2$ (in place of $\eta_1$).  We may
assume that $\eta_3<\min\{\eta_3'/2, \eta_2/2\}.$

Suppose that $\phi$ satisfies the assumption of the theorem for the
above $\eta_j,$ $\sigma_j$ ($j=1,2,3$), $\dt,$ ${\cal G}$ and ${\cal P}.$
Consider $\pi_t\circ \phi$ for each $t\in [0,1].$ Note that
$\underline{K}(C([0,1], M_n))=\underline{K}(M_n).$ It follows that
\beq\label{TAM-4}
[\pi_t\circ \phi]|_{\cal P}=[\pi_\xi]|_{\cal P}.
\eneq
Note that
\beq\label{TAM-4+}
\mu_{\tau\circ \phi}(O_{\eta_3})\ge \sigma_3\eta_3\andeqn
\mu_{\tau\circ \phi}(O_{\eta_2})\ge \sigma_2\eta_2
\eneq
for all open balls $O_{\eta_3}$ with radius $\eta_3,$ all open balls $O_{\eta_2}$ with radius
$\eta_2$ and for all tracial states $\tau$ of $C([0,1], M_n).$

By applying \ref{TAML2}, one obtains, for each $t\in [0,1],$  a
unital \hm\, $h_t: C(X)\to M_n$ such that
\beq\label{TAM-5}
\|\pi_t\circ \phi(g)-h_t(g)\|<\dt_1/4\rforal g\in {\cal G}_1
\eneq
\beq\label{TAM-6}
\mu_{\tau\circ h_t}(O_{\eta_3})\ge
(\sigma_3/2)\eta_3\andeqn \mu_{\tau\circ h_t}(O_{\eta_2})\ge (\sigma_2/2)\eta_2,
\eneq
where $\tau$ is the unique tracial state on $M_n.$   Note that, by choosing the large ${\cal G}_2$
(depends on $\ep_1$ and $\sigma_1$) and smaller $\dt_1,$ we may also assume that
\beq\label{TAM-7}
\mu_{\tau\circ h_t}(O_{\eta_1})\ge (\sigma_1/2)
\eta_1.
\eneq

There is a partition
$
0=t_0<t_1<\cdots <t_m=1
$
such that
\beq\label{TAM-8}
\|\pi_{t_i}\circ \phi(g)-\pi_{t_{i-1}}\circ \phi(g)\|<\dt_1/4\tforal
g\in {\cal G}_1,
\eneq
$i=1,2,...,m.$ Therefore
\beq\label{TAM-9}
\|h_{t_i}(g)-h_{t_{i-1}}(g)\| &<& \|h_{t_i}(g)-\pi_{t_i}\circ
\phi(g)\|\\
&&\hspace{-0.8in}+\|\pi_{t_i}\circ \phi(g)-\pi_{t_{i-1}}\circ
\phi(g)\|+
\|\pi_{t_{i-1}}\circ \phi(g)-h_{t_{i-1}}(g)\|\\
&<&\dt_1/4+\dt_1/4+\dt_1/4<\dt_1
\eneq
for all $g\in {\cal G}_1.$
Thus, using (\ref{TAM-4+}) and (\ref{TAM-7}), and, by applying \ref{Tf},  there exists, for each $i,$ a
unital \hm\, $\Phi_i: C(X)\to C([t_{i-1}, t_i], M_n)$ such that
\beq\label{TAM-10}
\pi_{t_{i-1}}\circ \Phi_i=h_{t_{i-1}},\,\,\,\pi_{t_i}\circ
\Phi_i=h_{t_i}\andeqn
\|\pi_t\circ \Phi_i(f)-h_{t_i}(f)\|<\ep/4
\eneq
for all $f\in {\cal F},$ $i=1,2,...,m.$

Define $h: C(X)\to C([0,1], M_n)$ by
$$
\pi_t\circ h=\pi_t\circ \Phi_i\,\,\,{\rm if}\,\,\, t\in [t_{i-1},
t_i],
$$
$i=1,2,...,m.$ It follows that
$$
\|h(f)-\phi(f)\|<\ep\tforal f\in {\cal F}.
$$

\end{proof}

\begin{lem}\label{Ntorsion}
Let $X\in {\bf X}.$
Let $\ep>0$ and  ${\cal
F}\subset C(X)$ be a finite subset.
Suppose that $k_0=k!,$ where $k$ is the largest finite order
of torsion elements in $K_i(C(X)),$ $i=0,1.$

There exists
$\eta_1>0$  such that, for any $\sigma_1>0,$ there exists $\eta_2>0$ such that, for
any $\sigma_2>0,$ there exists $\eta_3>0$ such that, for any $\sigma_3>0,$ the following holds:
There is a finite subset ${\cal G}\subset C(X),$  there is $\dt>0$ and there is a
finite subset ${\cal P}\subset \underline{K}(C(X))$ satisfying the
following:

Suppose that $\phi: C(X)\to C([0,1], M_n)$ is a unital $\dt$-${\cal
G}$-multiplicative \morp\, for which
\beq\label{lTAM-1}
\mu_{\tau\circ \phi}(O_{\eta_j})\ge \sigma_j\eta_j
\eneq
for any open ball $O_{\eta_j}$ with radius $\eta_j,$ $j=1,2,3,$ and for all tracial
states $\tau$ of $C([0,1], M_n),$ and
\beq\label{lTAM-2}
[\phi]|_{\cal P}=[\Phi]|_{\cal P},
\eneq
where $\Phi$ is a point-evaluation.

Then there exists a unital \hm\, $h: C(X)\to M_{k_0}(C([0,1],M_n))$ such that
\beq\label{lATM-3}
\|\phi^{(k_0)}(f)-h(f)\|<\ep
\eneq
for all $f\in {\cal F},$
where $\phi^{(k_0)}(f)={\rm diag}(\overbrace{\phi(f),\phi(f),...,\phi(f)}^{k_0})$ for all $f\in C(X).$
 \end{lem}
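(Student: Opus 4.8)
The plan is to run the proof of Theorem \ref{TAM} essentially verbatim, the one change being that the appeal to Theorem \ref{Tf} (which is available only for $X$ with property (H)) is replaced by an amplified analogue, valid for every $X\in{\bf X}$ with torsion $K_1(C(X))$, in which the connecting path is allowed to land in $M_{k_0}(C([0,1],M_n))$ instead of $C([0,1],M_n)$. First I would dispose of the case $X\in{\bf X}_0$: there Theorem \ref{TAM} already gives a unital \hm\, $h_0\colon C(X)\to C([0,1],M_n)$ with $\|\phi(f)-h_0(f)\|<\ep$ on ${\cal F}$, and $h=h_0^{(k_0)}$ works. As usual we may assume $X$ is connected and the point-evaluation $\Phi$ equals $\pi_\xi$; in the genuinely new case $X$ is then a connected finite CW complex of dimension $\ge 1$ with torsion $K_1(C(X))$, hence also locally path connected, compact and without isolated points.

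The core ingredient is the following amplified version of Theorem \ref{Tf}, obtained from Lemma \ref{F2l}, Lemma \ref{F2l-0} and Lemma \ref{Ntor} exactly as Theorem \ref{Tf} is obtained from Lemma \ref{F2l}, Lemma \ref{F2l-0} and property (H): given $\ep>0$ and a finite ${\cal F}\subset C(X)$ there are nested scales $\eta_1>\eta_2>\eta_3>0$ (with the corresponding $\sigma_j$ chosen in the same nested order), a finite ${\cal G}\subset C(X)$ and $\dt>0$ such that whenever $\phi,\psi\colon C(X)\to M_n$ are unital \hm s with $\|\phi(f)-\psi(f)\|<\dt$ on ${\cal G}$ and $\mu_{\tau\circ\phi}(O_{\eta_j}),\mu_{\tau\circ\psi}(O_{\eta_j})\ge\sigma_j\eta_j$ for $j=1,2,3$, then there is a unital \hm\, $\Psi\colon C(X)\to M_{k_0}(C([0,1],M_n))$ with $\pi_0\circ\Psi=\phi^{(k_0)}$, $\pi_1\circ\Psi=\psi^{(k_0)}$ and $\|\psi^{(k_0)}(f)-\pi_t\circ\Psi(f)\|<\ep$ on ${\cal F}$. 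To prove this I would apply Lemma \ref{F2l} with a tolerance $\ep'\ll\dt$ to connect $\phi$ to $\phi_1:=\pi_1\circ\Phi_1$ and $\psi$ to $\psi_1:=\pi_1\circ\Phi_2$ by small homotopies with $\mathrm{ad}\,u\circ\phi_1=\psi_1$; the argument of Lemma \ref{F2l-0}, applied as in (\ref{Tf-4})--(\ref{Tf-6}), shows $\phi_1,\psi_1$ keep measure lower bounds at the coarser scales $\eta_1,\eta_2$, while $\|\phi_1(f)-\psi_1(f)\|$ stays below the $\dt$ demanded by Lemma \ref{Ntor}. Then Lemma \ref{Ntor} applied to $\phi_1,\psi_1$ produces a homotopy $\Psi_0\colon C(X)\to M_{k_0}(C([0,1],M_n))$ from $\phi_1^{(k_0)}$ to $\psi_1^{(k_0)}$ staying $\ep$-close to $\psi_1^{(k_0)}$ on ${\cal F}$; splicing $\Phi_1^{(k_0)}$, $\Psi_0$ and $\Phi_2^{(k_0)}$ (the last reversed) over a subdivided $[0,1]$ yields $\Psi$.

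With the amplified analogue in hand I would repeat the proof of Theorem \ref{TAM} line by line: choose $\eta_j,\sigma_j,{\cal G},\dt,{\cal P}$ by feeding the amplified analogue, Lemma \ref{TAML2} and a suitably enlarged ${\cal G}_2$ through the same nesting as in Theorem \ref{TAM}; given $\phi$ as in the hypothesis, note $[\pi_t\circ\phi]|_{\cal P}=[\pi_\xi]|_{\cal P}$ for each $t$ and use Lemma \ref{TAML2} to get unital \hm s $h_t\colon C(X)\to M_n$ with $\|\pi_t\circ\phi(g)-h_t(g)\|<\dt_1/4$ on ${\cal G}_1$ and the required measure bounds at $\eta_1,\eta_2,\eta_3$; partition $[0,1]$ so that consecutive $h_{t_{i-1}},h_{t_i}$ are within $\dt_1$ on ${\cal G}_1$; apply the amplified analogue on each $[t_{i-1},t_i]$ to obtain unital \hm s $\Psi_i\colon C(X)\to M_{k_0}(C([t_{i-1},t_i],M_n))$ with $\pi_{t_{i-1}}\circ\Psi_i=h_{t_{i-1}}^{(k_0)}$, $\pi_{t_i}\circ\Psi_i=h_{t_i}^{(k_0)}$ and $\|\pi_t\circ\Psi_i(f)-h_{t_i}^{(k_0)}(f)\|<\ep/4$ on ${\cal F}$; finally glue the $\Psi_i$ (they agree at the partition points) to a unital \hm\, $h\colon C(X)\to M_{k_0}(C([0,1],M_n))$, which then satisfies $\|\phi^{(k_0)}(f)-h(f)\|<\ep$ on ${\cal F}$.

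The main obstacle I anticipate is not a single hard estimate but the quantifier bookkeeping: Lemma \ref{Ntor}, Lemma \ref{F2l} and Lemma \ref{TAML2} each impose measure lower bounds at their own scales, and one must check that the three scales $\eta_1>\eta_2>\eta_3$ and the corresponding $\sigma_j$ can be fixed in a single consistent nested order so that the output measure bounds of \ref{TAML2} and of the \ref{F2l}-step feed correctly into \ref{Ntor} — this is exactly the structure already carried out in Theorems \ref{TAM} and \ref{Tf}, so it goes through, but it is where care is needed. A secondary point is that the homotopies produced by Lemma \ref{F2l} could a priori destroy the measure lower bounds; this is handled by the argument of Lemma \ref{F2l-0}, noting that the finite set ${\cal G}$ there must be enlarged in a way depending on the scale, precisely as in the derivation of (\ref{Tf-4}). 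The amplification by $k_0=k!$ is exactly what makes Lemma \ref{Ntor} applicable: the torsion in $K_1(C(X))$ (and $K_0(C(X))$) dies under $k_0$-fold direct sum, so the relevant Bott/${\rm Hom}_\Lambda$ obstruction vanishes, which is why no stronger conclusion than the one stated is available.
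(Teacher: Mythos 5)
Your proposal is correct and follows essentially the paper's own route: the paper simply says the proof is the same as that of Theorem \ref{TAM} with Lemma \ref{Ntor} used in place of the property (H) step, and your amplified analogue of Theorem \ref{Tf} (built from Lemmas \ref{F2l}, \ref{F2l-0} and \ref{Ntor}) together with the repetition of the \ref{TAM} argument is exactly the natural unpacking of that remark. The quantifier bookkeeping and the $k_0$-amplification killing the torsion obstructions are handled just as the paper intends.
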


\begin{proof}
The proof is exactly the same as that of \ref{TAM} but applying \ref{Ntor} instead.
\end{proof}

\begin{cor}\label{CTAM}
Let $X\in {\bf X}_0.$
 Let $\ep>0,$  let ${\cal
F}\subset C(X)$ be a finite subset
and $\Delta: (0,1)\to (0,1)$ be a
non-decreasing map.
There exists $\eta>0,$  a finite subset ${\cal G},$  $\dt>0,$  and a
finite subset ${\cal P}\subset \underline{K}(C(X))$ satisfying the
following:

Suppose that $\phi: C(X)\to C([0,1], M_n)$( for any integer $n\ge 1$)  is a unital $\dt$-${\cal
G}$-multiplicative \morp\, for which
\beq\label{CTAM-1}
\mu_{\tau\circ \phi}(O_a)\ge \Delta(a)
\eneq
for any open ball $O_a$ with radius $a\ge \eta$ and for all tracial
states $\tau$ of $C([0,1], M_n),$ and
\beq\label{CTAM-2}
[\phi]|_{\cal P}=[\Phi]|_{\cal P},
\eneq
where $\Phi$ is a point-evaluation.

Then there exists a unital \hm\, $h: C(X)\to C([0,1],M_n)$ such that
\beq\label{CATM-3}
\|\phi(f)-h(f)\|<\ep
\eneq
for all $f\in {\cal F}.$
 \end{cor}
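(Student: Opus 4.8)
The corollary is a uniform reformulation of Theorem~\ref{TAM}: instead of specifying a separate $(\eta_j,\sigma_j)$-condition for three scales, we are handed a single non-decreasing function $\Delta$ controlling the mass of all balls of radius at least $\eta$, and we want one output $\eta$, ${\cal G}$, $\dt$, ${\cal P}$. The plan is simply to feed the three-scale machinery of \ref{TAM} with scales and constants extracted from $\Delta$. Concretely, I would first run \ref{TAM} purely as a black box: apply it with the innermost scale, obtain $\eta_1$, then set $\sigma_1$ in terms of $\Delta$; obtain $\eta_2$, set $\sigma_2$; obtain $\eta_3$, set $\sigma_3$; and finally collect the resulting ${\cal G}$, $\dt$, ${\cal P}$. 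The subtlety is that in \ref{TAM} the choice of $\sigma_j$ is allowed to depend on $\eta_j$ (the statement only forbids $\eta$ from depending on $\sigma$, not conversely), so once \ref{TAM} hands us $\eta_1$ we are free to put $\sigma_1=\Delta(\eta_1)/\eta_1$, and similarly $\sigma_2=\Delta(\eta_2)/\eta_2$, $\sigma_3=\Delta(\eta_3)/\eta_3$. Then set $\eta=\min\{\eta_1,\eta_2,\eta_3\}$.

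The verification step is the routine part: suppose $\phi$ satisfies \eqref{CTAM-1}, i.e. $\mu_{\tau\circ\phi}(O_a)\ge\Delta(a)$ for every ball of radius $a\ge\eta$. In particular, for each $j=1,2,3$, since $\eta_j\ge\eta$, every ball $O_{\eta_j}$ of radius $\eta_j$ satisfies
\[
\mu_{\tau\circ\phi}(O_{\eta_j})\ge\Delta(\eta_j)=\sigma_j\,\eta_j,
\]
which is exactly hypothesis \eqref{TAM-1} of Theorem~\ref{TAM}. The $KK$-condition \eqref{CTAM-2} is identical to \eqref{TAM-2}. Hence \ref{TAM} produces a unital \hm\ $h\colon C(X)\to C([0,1],M_n)$ with $\|\phi(f)-h(f)\|<\ep$ on ${\cal F}$, which is \eqref{CATM-3}. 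One small point to check: $\Delta$ takes values in $(0,1)$, so each $\sigma_j=\Delta(\eta_j)/\eta_j$ is a genuine positive real, and that is all \ref{TAM} requires of the $\sigma_j$'s; monotonicity of $\Delta$ is not even needed here, only positivity, though it is the natural hypothesis guaranteeing the estimate is non-vacuous and consistent across scales.

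The only genuine obstacle I anticipate is bookkeeping: one must make sure the dependency order in \ref{TAM} is respected, namely that $\eta_1$ is chosen before $\sigma_1$, then $\eta_2$ after $\sigma_1$ is fixed, and so on, so that plugging $\sigma_j=\Delta(\eta_j)/\eta_j$ does not create a circular dependence. It does not, because $\sigma_j$ depends only on $\eta_j$, which \ref{TAM} has already delivered at that stage. Once that is arranged, everything else is a direct substitution, and no new analytic input (no further appeal to \ref{Tf}, \ref{TAML2}, or the property (H) machinery) is required beyond what is already packaged inside \ref{TAM}. So the proof is short: set up the constants from \ref{TAM} as above, observe $\Delta(\eta_j)=\sigma_j\eta_j$, and invoke \ref{TAM}.
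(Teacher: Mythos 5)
Your argument is exactly the paper's proof: one runs Theorem \ref{TAM} in its stated dependency order, setting $\sigma_j=\Delta(\eta_j)/\eta_j$ as soon as each $\eta_j$ is delivered, takes $\eta=\min\{\eta_1,\eta_2,\eta_3\}$, and observes that \eqref{CTAM-1} then yields $\mu_{\tau\circ\phi}(O_{\eta_j})\ge\Delta(\eta_j)=\sigma_j\eta_j$, so \ref{TAM} applies verbatim. Your bookkeeping remark about the non-circular order of choices is precisely the point the paper's proof relies on, so the proposal is correct and essentially identical to the published argument.
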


\begin{proof}
Let $\ep>0,$ ${\cal F}\subset C(X)$ be a finite subset and $\Delta$ be given
as described.
 Let $\eta_1>0$ be as required by \ref{TAM}. Let $\sigma_1=\Delta(\eta_1)/\eta_1).$
 Let $\eta_2>0$ be required by \ref{TAM} for the above $\ep,$ ${\cal F},$ $\eta_1$ and $\sigma_2.$
 Let $\sigma_2=\Delta(\eta_2).\eta_2.$ Let $\eta_3>0$ be required by the above
 $\ep,$ ${\cal F},$ $\eta_j$ and $\sigma_j,$ $j=1,2.$ Let $\sigma_3=\Delta(\eta_3)/\eta_3.$
 Choose $\eta=\min\{\eta_j: j=1,2,3\}.$
 We then choose $\dt>0,$ ${\cal G}$ and ${\cal P}$ as required by \ref{TAM} for
 the above $\ep,$ ${\cal F},$ $\eta_j$ and $\sigma_j$ $j=1,2,3.$
 Suppose that $\phi$ satisfies the assumption for the above $\eta,$ $\dt,$ ${\cal G}$ and ${\cal P}.$
 Then $\phi$ satisfies the assumption of \ref{TAM} for the above $\eta_j,$ $\sigma_j,$ $\dt$ and ${\cal P}.$
 We then apply \ref{TAM}.

\end{proof}

\begin{rem}\label{RTAM}
Note that \ref{Ntorsion}   also has its version of \ref{CTAM}.
\end{rem}

\section{Simple \CA s of tracial rank one}

This section collects a number of elementary facts about simple \CA
s with tracial rank one.

\begin{NN}\label{TTLB}
{\rm

Let $B=\oplus_{j=1}^m C(X_j, M_{r(j)}),$ where $X_j=[0,1]$ or $X_j$
is a point. For $j\le m,$ denote by $t_{j,x}$ the normalized trace
at $x\in X_j$ for the $j$-th summand, i.e., if $b\in B,$ then
$$
t_{j,x}(b)=\tau(\pi_j(b)(x)),
$$
where $\pi_j: B\to C([0,1], M_{r(j)})$ is the projection to the
$j$-th summand, $x\in X_j$  and $\tau$ is the normalized trace on
$M_{r(j)}.$

}

\end{NN}

\begin{lem}\label{TTL}
Let $A$ be a unital simple separable \CA\, with tracial rank one or
zero, let $\ep>0,$ let $\eta>0,$ let $0<r<1,$ let ${\cal F}\subset
A$ be a finite subset and let $a_1,a_2,...,a_l, b_1,b_2,...,b_m\in
A_+\setminus \{0\}$ be such that
\beq\label{TTL-1}
\tau(a_i)\ge \sigma_i\andeqn \tau(b_j)\le d_j,\tforal \tau\in T(A)
\eneq
for some $\sigma_i>0$ and $d_j>0,$ $i=1,2,...,l$ and $j=1,2,...,m.$

Then there exists a projection $p\in A$ and a \SCA\,
$B=\oplus_{k=1}^KC(X_k, M_{r(j)}),$ where $X_k=[0,1]$ or $X_k$ is a
point, with $1_B=p$ such that
\beq\label{TTL-2}
\|pc-cp\|<\ep,\,\,\,
{\rm dist}(pcp, B)< \ep\tforal c\in {\cal F},\\
\tau(1-p)<\eta\tforal \tau\in T(A),\\
t_{k,x}(L(a_i))\ge  r\cdot \sigma_i\tand
t_{k,x}(L(b_j))\le {1\over{r}}d_j,\,\,\,
\eneq
for $i=1,2,...,l,$ $j=1,2,...,m,$ for each $x\in X_k$ and
$k=1,2,...,K,$ for each normalized trace $t_{j,x}$ at each $x\in
X_j,$ for each of the $j$-th summand of $B,$ and for any $L(a_i),
L(b_j)\in B_+$ with
\beq\label{TTL-3}
\|L(a_i)-pa_ip\|<\ep\andeqn \|L(b_j)-pb_jp\|<\ep
\eneq

\end{lem}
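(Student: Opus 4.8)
The plan is to reduce the statement to the definition of tracial rank $\le 1$ (Definition \ref{dtr1}) together with the tracial approximate divisibility of $A$ (see \ref{tdvi}), and then use the fact that a large matrix amplification of a continuous function on $[0,1]$ forces all of its local traces to be close to the global average. First I would enlarge the finite subset ${\cal F}$ to include the positive elements $a_1,\dots,a_l,b_1,\dots,b_m,$ and fix a small auxiliary tolerance $\ep'>0$ (to be specified, depending on $\ep,$ $r,$ and the $\sigma_i,d_j$). Applying Definition \ref{dtr1} to $A$ with this enlarged finite set, with tolerance $\ep',$ and with a suitable $a\in A_+\setminus\{0\}$ chosen small enough that the complementary projection has trace $<\eta$ for every $\tau\in T(A)$ (possible since $A$ is simple), I obtain a projection $p$ and a \SCA\, $B_0=\oplus_k C(X_k,M_{r(k)})$ (with $X_k=[0,1]$ or a point) such that $p$ almost commutes with ${\cal F},$ ${\rm dist}(pcp,B_0)<\ep'$ for $c\in{\cal F},$ and $\tau(1-p)<\eta.$ The compressions $pa_ip$ and $pb_jp$ are then within $\ep'$ of elements of $B_0,$ so by a standard perturbation we may produce genuine positive elements $a_i',b_j'\in (B_0)_+$ close to $pa_ip,pb_jp;$ and since $\tau(a_i)\ge\sigma_i,$ $\tau(b_j)\le d_j$ and $\tau(1-p)<\eta,$ the normalized traces of $B_0$ evaluated on $a_i',b_j'$ (which are convex combinations of traces on $A$ restricted through a near-inclusion, up to $O(\ep')+O(\eta)$) satisfy the desired bounds \emph{on average over each $[0,1]$ summand}, but not necessarily pointwise at every $x\in X_k.$

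The main obstacle is precisely this last point: a trace bound of the form ``$\int_{X_k} t_{k,x}(a_i')\,dx\ge$ something'' does not by itself give $t_{k,x}(a_i')\ge r\sigma_i$ for \emph{every} $x.$ To fix this I would invoke tracial approximate divisibility (\ref{tdvi}): inside $A$ we may find a projection $q$ and a finite-dimensional \SCA\, $D=\oplus M_{s(i)}$ commuting with ${\cal F}$ (hence essentially with $B_0$) with all $s(i)\ge N$ for a large integer $N,$ and with $1-q$ of small trace. Cutting $B_0$ by $q$ and absorbing $D$ one replaces each summand $C(X_k,M_{r(k)})$ by $C(X_k,M_{Nr(k)}),$ i.e. one amplifies. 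The key estimate is then: if $g\in C([0,1],M_{Nr(k)})_+$ with $\|g\|\le \|a_i\|$ arises as an $N$-fold amplification (up to small error) of an element whose average trace is $\ge\sigma_i,$ and $N$ is chosen large relative to $1/(1-r)$ and to the modulus of continuity data, then one may further conjugate/perturb within the summand — using that $[0,1]$ is an interval and the matrix size is large — so that $t_{k,x}(g)\ge r\sigma_i$ holds for \emph{all} $x.$ Concretely: partition $[0,1]$ into finitely many subintervals on which $pa_ip$ varies by less than $\ep',$ apply a ``spreading'' inside the large matrix algebra $M_{Nr(k)}$ to homogenize the trace over each subinterval (this is where $N$ large is used, and it is exactly the kind of argument appearing in \ref{TTLB}–type manipulations and in \cite{Lntr1}), and simultaneously keep $t_{k,x}(b_j)\le \tfrac1r d_j$ by the same homogenization applied from above. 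Throughout, the perturbations are absorbed into the $\ep$ in \eqref{TTL-3} and into the near-inclusion ${\rm dist}(pcp,B)<\ep$ in \eqref{TTL-2}, after replacing $B_0$ by the amplified algebra $B=\oplus_k C(X_k,M_{Nr(k)}).$

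In carrying this out I would first dispose of the $X_k=\{{\rm pt}\}$ summands (where there is nothing to homogenize over, since the average \emph{is} the pointwise value, so the bounds are immediate up to $O(\ep')$), then concentrate on the $X_k=[0,1]$ summands. The bookkeeping amounts to: (i) choosing $\ep'$ so that $2\ep'+(\text{absorption from }D)<(1-r)\min_i\sigma_i$ and similarly $<(\tfrac1r-1)\min_j d_j;$ (ii) choosing $N$ so large that the homogenization over each subinterval of $[0,1]$ costs less than the slack in (i); (iii) choosing the partition of $[0,1]$ fine enough (via uniform continuity of $x\mapsto pa_ip(x)$ after identification with $B_0$) that $pa_ip$ and $pb_jp$ are nearly constant on each piece. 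The only genuinely nontrivial ingredient is the homogenization lemma inside $M_{Nr(k)},$ but that is standard for large matrix algebras and is implicitly used already in \ref{tdvi} and in the structure theory of TAI algebras cited from \cite{Lntr1}; I would state it as a short internal claim and prove it by an explicit unitary conjugation moving spectral projections around, then conclude.
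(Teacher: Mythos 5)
You correctly identify the real difficulty: the hypothesis $\tau(a_i)\ge\sigma_i$, $\tau(b_j)\le d_j$ only controls traces of $A$, hence only those traces of $B$ induced from $T(A)$, whereas the conclusion demands bounds for every point evaluation $t_{k,x}$. But your proposed fix --- amplifying via tracial approximate divisibility and then ``homogenizing'' the trace over $[0,1]$ by unitary conjugation inside the large matrix summand --- cannot work. The normalized pointwise trace $t_{k,x}$ is invariant under conjugation by unitaries of $C(X_k,M_{Nr(k)})$ (indeed under any inner perturbation of that summand), so no such conjugation changes the function $x\mapsto t_{k,x}(g)$. More decisively, the lemma requires the bounds for \emph{every} $L(a_i)\in B_+$ with $\|L(a_i)-pa_ip\|<\ep$; since any two admissible choices differ in norm by less than $2\ep$, their pointwise traces differ by less than $2\ep$, so the trace profile is determined, up to $\ep$, by $pa_ip$ itself, i.e.\ by the choice of $p$ and $B$. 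Hence no post-processing of the element (spreading over a partition of $[0,1]$, absorbing a finite-dimensional $D$, reparametrizing) can repair a profile that dips below $r\sigma_i$; the dip has to be excluded by the choice of the tracial approximation itself, and your argument supplies no mechanism for that.

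The way to get such a choice is a compactness/contradiction argument, which is what the paper does: using separability, pick projections $p_n$ and subalgebras $B_n$ with $\|[p_n,c]\|\to0$ for all $c\in A$, ${\rm dist}(p_ncp_n,B_n)\to0$, $\sup_{\tau\in T(A)}\tau(1-p_n)\to0$, together with contractive completely positive maps $L_n: p_nAp_n\to B_n$ satisfying $\|L_n(a)-p_nap_n\|\to0$ for all $a\in A$. If no $n$ worked, one could choose summands $j_k$ and points $x_k$ with $t_{j_k,x_k}(L_{n_k}(a_i))<r\sigma_i$ (or $t_{j_k,x_k}(L_{n_k}(b_j))>\tfrac1r d_j$) for all $k$; any weak-$*$ limit point of the states $a\mapsto t_{j_k,x_k}(L_{n_k}(a))$ is then a tracial state $T$ on $A$ (traciality follows from the asymptotic commutation and asymptotic multiplicativity of $a\mapsto L_n(a)$) with $T(a_i)\le r\sigma_i<\sigma_i$ (resp.\ $T(b_j)\ge\tfrac1r d_j>d_j$), contradicting the hypothesis. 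So a sufficiently deep approximation works outright, and the divisibility/amplification machinery in your proposal is not needed; if you rewrite, replace the homogenization step by this limit argument.
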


\begin{proof}
There exists of a sequence of projections $p_n\in A$ such that
\beq\label{TTL-4}
\lim_{n\to\infty}\|cp_n-p_nc\|=0\tforal c\in A,
\eneq
and there exists a sequence of \SCA s
$B_n=\oplus_{j=1}^{m(n)}C(X_{j,n}, M_{r(j,n)})$ (where
$X_{j,n}=[0,1]$ or $X$ is a single point) such that
\beq\label{TTL-5}
\lim_{n\to\infty}{\rm dist}(p_ncp_n, B_n)=0\andeqn
\lim_{n\to\infty} \sup_{\tau\in T(A)}\{\tau(1-p_n)\}=0.
\eneq

There exists a \morp\, $L_n: p_nAp_n \to B_n$ such that
$$
\lim_{n\to\infty}\|L_n(a)-p_nap_n\|=0 \tforal a\in A
$$
(see 2.3.9 of \cite{Lnbk}).

Let $1>r>0.$
Suppose that there exists $i$ (or $j$) and there exists a subsequence $\{n_k\},$
$\{j_k\}$ and $\{x_k\}\in [0,1]$ such that
\beq\label{TTL-6}
t_{j_k,x_k} (\pi_{j_k}(L_k(a_i)))<r\cdot \sigma_i \,\,\,{\rm (
or}\,\,\, t_{j_k,x_k}(\pi{j,k}(L_k(b_j)))>{1\over{r}}d_j {\rm )}
\eneq
for all $k.$
Define a state $T_k: A\to \C$ by $T_k(a)=t_{j_k, x_k}(a),$
$k=1,2,....$ Let $T$ be a limit point. Note $T_k(1_A)=1.$ Therefore
$T$ is a state on $A.$ Then, by (\ref{TTL-6}),
\beq\label{TTL-8}
T(a_i)\le r\cdot \sigma_i\,\,\,{\rm ( or}\,\,\, T(b_j)\ge
{1\over{r}}\cdot d_j\,{\rm )}
\eneq
However, it is easy to check that $T$ is a tracial state. This
contradicts with (\ref{TTL-1}). The lemma follows by choosing $p$ to
be $p_n$ and  $B$ to be $B_n$  for some sufficiently large $n.$

\end{proof}

\begin{lem}\label{Pro}

Let $\sigma>0$ and let $1>r>0.$ There exists $\dt>0$ for any pair of
$a,\, b\in A_+\setminus\{0\}$ with  $0\le a,\, b\le 1,$ where $A$ is
a unital separable simple \CA\, with tracial rank one or zero,
\beq\label{Pro-1}
\|ab-b\|<\dt \tand
\tau(b)\ge   \sigma\tforal \tau\in T(A),
\eneq
there exists a projection $e\in \overline{aAa}$ such that
\beq\label{Pro-2}
\tau(e)>r\sigma\tforal \tau\in T(A).
\eneq

\end{lem}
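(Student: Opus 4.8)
The plan is to observe that the two hypotheses serve only to force $a$ to have uniformly large trace, and then to produce the required projection inside $\overline{aAa}$ from the finite‑dimensional‑over‑$[0,1]$ building blocks supplied by Lemma \ref{TTL}. First, since $0\le b\le 1$ we have $a^{1/2}ba^{1/2}\le a$, so that for every $\tau\in T(A)$
\[
\tau(a)\ \ge\ \tau\big(a^{1/2}ba^{1/2}\big)\ =\ \tau(ab)\ >\ \tau(b)-\dt\ \ge\ \sigma-\dt .
\]
Hence it is enough to take $\dt<(1-r)\sigma$ (a quantity depending only on $\sigma$ and $r$, not on $A$) and then to prove: if $\tau(a)>\sigma_1$ for all $\tau\in T(A)$, where $\sigma_1:=\sigma-\dt>r\sigma$, then $\overline{aAa}$ contains a projection $e$ with $\tau(e)>r\sigma$ for all $\tau$. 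From here on $b$ is discarded, and since $\sigma_1>r\sigma$ we fix $\rho_2,\rho$ with $r\sigma/\sigma_1<\rho_2<\rho<1$.

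Next I would apply Lemma \ref{TTL} to $A$ with ${\cal F}=\{a\}$, with the single positive element $a_1=a$ of lower bound $\sigma_1$, with no elements $b_j$, with the constant denoted $r$ in that lemma replaced by $\rho$, and with the $\ep,\eta$ of that lemma chosen sufficiently small. This produces a projection $p\in A$, a \SCA\ $B=\bigoplus_{k=1}^{K}C(X_k,M_{r(k)})$ with each $X_k$ equal to $[0,1]$ or a point and $1_B=p$, and a positive element $L(a)\in B_+$ with $\|L(a)-pap\|<\ep$, with $\tau(1-p)<\eta$ for all $\tau\in T(A)$, and with $t_{k,x}(L(a))\ge\rho\sigma_1$ for every $x\in X_k$ and every $k$. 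In the $k$‑th summand, $\|L(a)_k(x)\|\le 1+\ep$ together with $\frac{1}{r(k)}\Tr(L(a)_k(x))\ge\rho\sigma_1$ forces the support projection of $L(a)_k(x)$ to have rank at least $r(k)\rho\sigma_1/(1+\ep)$, hence at least $r(k)\rho_2\sigma_1$ once $\ep$ is small, for every $x$. Using $\dim X_k\le 1$, I would extract from $L(a)_k$ a projection $e_k'$ of constant rank at least $r(k)\rho_2\sigma_1$ lying in $\overline{L(a)_k\,C(X_k,M_{r(k)})\,L(a)_k}$ and supported away from $0$, so that also $e_k'\precsim (L(a)_k-\ep)_+$. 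Then $e':=\bigoplus_k e_k'\in B$ satisfies $t_{k,x}(e')\ge\rho_2\sigma_1$ for all $x$ and $k$.

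It remains to move $e'$ into $\overline{aAa}$. On the one hand, for $\tau\in T(A)$ the normalization of $\tau|_{pAp}$ is a tracial state on $pAp$ whose restriction to $B$ is an average of the point traces $t_{k,x}$, so $\tau(e')\ge\tau(p)\,\rho_2\sigma_1\ge(1-\eta)\rho_2\sigma_1>r\sigma$ once $\eta$ is small enough. On the other hand $e'\precsim a$ in $A$: indeed $e'\precsim (L(a)-\ep)_+\precsim pap\precsim a$, using $\|L(a)-pap\|<\ep$ and $pap=(pa^{1/2})(pa^{1/2})^*\sim a^{1/2}pa^{1/2}\le a$. Finally, a unital simple \CA\ with $\tr(A)\le 1$ has stable rank one, and in an algebra of stable rank one a projection that is Cuntz subequivalent to a positive element $a$ is Murray--von Neumann equivalent to a projection $e\in\overline{aAa}$. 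Such an $e$ is a projection with $\tau(e)=\tau(e')>r\sigma$ for all $\tau\in T(A)$, which is the assertion; and every smallness requirement on $\dt,\ep,\eta,\rho,\rho_2$ above depends only on $\sigma$ and $r$.

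The genuinely delicate step is the extraction of $e_k'$ inside the one‑dimensional block $C(X_k,M_{r(k)})$: when $X_k$ is a point it is simply a spectral projection of $L(a)_k$, but when $X_k=[0,1]$ one needs the following fact, whose proof is where the covering‑dimension‑one clause in the definition of tracial rank $\le 1$ is used. If $c\in C([0,1],M_N)_+$ has $\frac{1}{N}\Tr(c(x))$ bounded below, then $\overline{c\,C([0,1],M_N)\,c}$ contains a projection of proportionally large rank supported away from $0$; one takes $R_0$ to be the minimum over $x\in[0,1]$ of the rank of the support projection of $(c-\beta)_+$ for a small threshold $\beta>0$, observes that on the closed set where this rank equals $R_0$ the support projection is continuous, and then extends it to a continuous rank‑$R_0$ subprojection over the complementary open intervals using that the relevant Grassmannian fibres are connected and $[0,1]$ is one‑dimensional. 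For higher‑dimensional $X_k$ this selection can be obstructed by $K$‑theory, which is precisely why the hypothesis $\tr(A)\le 1$ is what makes the argument go through.
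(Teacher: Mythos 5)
Your argument is correct, but it runs in the opposite direction from the paper's. The paper builds the projection out of $b$: it applies Lemma \ref{TTL} to $b$, uses the functional calculus $f_\ep$ and Lemma C of \cite{BDR} inside the interval blocks to produce a projection $e'$ with $e'f_{\ep/2}(L(b))=e'$ and uniformly large trace, and only then uses the hypothesis $\|ab-b\|<\dt$ to make $\|ae'-e'\|$ small, so that $ae'a$ lies within $1/8$ of $e'$ and a projection of the same trace exists in $C^*(ae'a)\subset\overline{aAa}$. You instead spend the hypothesis $\|ab-b\|<\dt$ once, to transfer the trace bound to $a$ (via $\tau(a)\ge\tau(ab)>\sigma-\dt$), and then prove directly the statement of Corollary \ref{ProC}, i.e.\ that a uniform trace lower bound on $a$ alone yields a large projection in $\overline{aAa}$ --- reversing the paper's logical order, since there \ref{ProC} is deduced from \ref{Pro}. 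Your route is more conceptual (it isolates exactly what $b$ is for) at the cost of invoking Cuntz comparison: $e'\precsim (L(a)-\ep)_+\precsim pap\precsim a$, together with the standard fact that a projection Cuntz-subequivalent to a positive element is Murray--von Neumann equivalent to a projection in its hereditary subalgebra (which, as you note, is available here, and in fact needs no stable rank one); the paper's perturbation argument stays entirely elementary. Two small repairs: the rank estimate must be applied to the thresholded element $(L(a)_k-\beta)_+$ rather than to $L(a)_k$ itself --- the trace bound still gives rank at least $r(k)(\rho\sigma_1-\beta)/(1+\ep)$ at every point, so nothing is lost, but as written the bound on the full support projection does not by itself give $e_k'\precsim (L(a)_k-\ep)_+$; and your hand-made selection of a constant-rank subprojection over $[0,1]$ (closed minimal-rank set, continuity of the support projection there, extension over the complementary intervals) must also be checked to be continuous at accumulation points of those intervals. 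Rather than elaborating that selection, you could simply quote Lemma C of \cite{BDR}, which is precisely how the paper disposes of this step.
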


\begin{proof}
For any $1>d>0,$ define a function $f_d\in C([0,\infty))$ as
follows:
$$
f_d(t)=\begin{cases} 0, & \text{if $0<t<d/2$;}\\
   \text{linear}, & \text{if $d/2\le t<d$;}\\
   1 , &\text{if $d\le t<\infty$}
   \end{cases}
   $$
Choose $1>r_1>r>0.$
There exists $\ep>0$ such that, for any $0\le c\le 1,$
\beq\label{Pro-1+}
\|f_\ep(c)c-c\|<(r_1-r)\sigma/8,
\eneq
Note that $\ep$ is independent of $c.$
There exists $\dt>0$ such that if $0\le a_1,\, b_1\le 1,$
\beq\label{Pro-2+}
\|a_1b_1-b_1\|<\dt,
\eneq
then
\beq\label{Pro-4}
\|a_1f_{\ep/2}(b_1)-f_{\ep/2}(b_1)\|<1/16.
\eneq
Moreover, there exists $\dt_1>0,$ if $0\le a_1,a_2\le 1$ and
\beq\label{Pro-4+}
\|a_1-a_2\|<\dt_1,
\eneq
then
\beq\label{Pro-5-}
\|f_{\ep/2}(a_1)-f_{\ep/2}(a_2)\|<\dt/2.
\eneq

For any $0<\eta<\min\{\dt_1/2, \dt/2, ({r_1-r\over{8}})\sigma\},$ by
applying \ref{TTL}, one chooses a projection $p\in A$ such that
there exists a \SCA\, $B=\oplus_{j=1}^m C(X_j, M_{r(j)}),$ where
$X_j=[0,1]$ or $X$ is a point, with $1_B=p,$
\beq\label{Pro-5}
\|pb-bp\|&<&\eta  \\
{\rm dist}(pbp, B)&<&\ep,\\\label{pro-5+1} \tau(1-p)&<&\eta\tforal
\tau\in T(A)\andeqn\\\label{Pro-5+2} t_{j,x}(L(b))&\ge & r_1\cdot
\sigma.
\eneq
for each normalized trace $t_{j,x}$ at each $x\in [0,1]$ for each of
the $j$-th summand of $B,$ and  for any $L(b)\in B_+$ with
\beq\label{Pro-6}
\|L(b)-eae\|<\eta.
\eneq
Fix such $L(a).$ Then, by (\ref{Pro-1+}) and (\ref{Pro-5+2}),
\beq\label{Pro-7}
t_{j,x}(f_\ep(L(b)))\ge ({7r_1+r\over{8}})\sigma.
\eneq
for all $j$ and $x.$
 Note that, since $A$ is simple, by Proposition
3.4 of \cite{Lntr1}, we may assume that
$$
r(j)\ge 8/(r_1-r)\sigma\,\,\,j=1,2,...,m.
$$
It follows from Lemma C of \cite{BDR} that there is a projection
$e'\in B$ such that
\beq\label{Pro-8}
e'f_{\ep/2}(L(b))&=&e'\andeqn\\
t_{j,x}(e')&\ge& ({7r_1+r\over{8}})\sigma -({r_1-r\over{8}})\sigma\\
&=&({6r_1+r\over{8}})\sigma
\eneq
for all $j$ and $x.$

By the choice of $\eta,$ one computes that
\beq\label{Pro-9}
\tau(e')\ge r\sigma\tforal \tau\in T(A).
\eneq
Put $c=(1-p)b(1-p)+L(b).$ Then
\beq\label{Pro-10}
\|b-c\|<\dt_1.
\eneq
It follows from (\ref{Pro-5-}) and (\ref{Pro-4}) that
\beq\label{Pro-11}
\|af_\ep(c)-f_\ep(c)\|<1/16.
\eneq
Thus
\beq\label{Pro-12}
\|ae'-e'\|<1/16.
\eneq
Therefore
\beq\label{Pro-13}
\|ae'a-e'\|<1/8.
\eneq
It follows (for example, Lemma 2.5.4 of \cite{Lnbk}) that there
exists a projection $e$ in the \SCA\, generated by $ae'a$ such that
\beq\label{Pro-14}
\|e-e'\|<1/4.
\eneq
Therefore
\beq\label{Pro-15}
\tau(e)=\tau(e')\ge r\sigma \rforal \tau\in T(A).
\eneq
Moreover, $e$ is in $\overline{aAa}.$

\end{proof}

\begin{cor}\label{ProC}
Let $A$ be a unital simple separable \CA\, with tracial rank one  or
zero and let $a\in A_+\setminus\{0\}$ with $\|a\|\le 1.$  Suppose
that
\beq\label{proc-1}
\tau(a)\ge \sigma\tforal \tau\in T(A)
\eneq
for some $\sigma>0.$ Then, for any $1>r>0,$ there is a projection
$e\in \overline{aAa}$ such that
\beq\label{proc-2}
\tau(e)\ge r\sigma\tforal \tau\in T(A).
\eneq
\end{cor}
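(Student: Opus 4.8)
The plan is to deduce this immediately from Lemma \ref{Pro}, which already does all the work; the only task is to feed it an appropriate pair of positive elements sitting inside $\overline{aAa}.$ First I would fix a real number $r'$ with $r<r'<1,$ and then choose $\ep>0$ small enough that
\beq\label{propc-eps}
\ep<\min\{1,\,\sigma(r'-r)/r'\},
\eneq
so that in particular $\sigma-\ep>0$ and $r'(\sigma-\ep)\ge r\sigma.$

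Next, using the cut-off function $f_\ep\in C([0,\infty))$ introduced in the proof of \ref{Pro} (namely $f_\ep\equiv 0$ on $[0,\ep/2],$ linear on $[\ep/2,\ep],$ and $\equiv 1$ on $[\ep,\infty)$), I would set $b=f_\ep(a)$ and $a'=f_{\ep/2}(a).$ Since $f_\ep$ and $f_{\ep/2}$ vanish at $0,$ both $b$ and $a'$ lie in $C^*(a)\subset \overline{aAa},$ and $0\le a',b\le 1.$ Because $f_{\ep/2}\equiv 1$ on $[\ep/2,\infty)$ while $f_\ep$ vanishes off $[\ep/2,\infty),$ we have $f_{\ep/2}(t)f_\ep(t)=f_\ep(t)$ for all $t\ge 0,$ hence $a'b=b$ and in particular $\|a'b-b\|=0.$ Moreover $t-f_\ep(t)\le \ep$ for all $t\in[0,1],$ so $a\le f_\ep(a)+\ep\cdot 1_A$ by functional calculus (the spectrum of $a$ lies in $[0,1]$), and therefore
\beq\label{propc-tr}
\tau(b)=\tau(f_\ep(a))\ge \tau(a)-\ep\ge \sigma-\ep\tforal \tau\in T(A).
\eneq

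Finally I would apply Lemma \ref{Pro} with $\sigma-\ep$ in place of $\sigma$ and $r'$ in place of $r,$ obtaining the associated constant $\dt>0.$ Since $\|a'b-b\|=0<\dt$ and \eqref{propc-tr} holds, \ref{Pro} provides a projection $e\in\overline{a'Aa'}$ with $\tau(e)>r'(\sigma-\ep)\ge r\sigma$ for all $\tau\in T(A).$ As $a'\in\overline{aAa}$ and $\overline{aAa}$ is hereditary, $\overline{a'Aa'}\subset\overline{aAa},$ so $e\in\overline{aAa},$ which is the desired projection. There is no genuine obstacle here; the only point needing a little care is producing the pair $(a',b)$ inside $\overline{aAa}$ with $a'b=b$ and $\tau(b)$ arbitrarily close to $\sigma,$ and that is exactly what the cut-offs $f_\ep$ and $f_{\ep/2}$ accomplish.
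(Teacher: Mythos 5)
Your argument is correct and is essentially the paper's own proof: the corollary is deduced immediately from Lemma \ref{Pro} by applying it to cut-offs of $a$ by the functions $f_\ep$, using that $f_\ep(a)\in\overline{aAa}$ and that $\overline{aAa}$ is hereditary. The only (harmless) difference is that the paper feeds \ref{Pro} the pair $(f_\ep(a),a)$, so the hypothesis $\tau(a)\ge\sigma$ is used directly and no auxiliary $r'$ is needed, whereas you use the pair $(f_{\ep/2}(a),f_\ep(a))$ with the exact relation $f_{\ep/2}(a)f_\ep(a)=f_\ep(a)$ and absorb the $\ep$-loss in the trace by taking $r'>r$.
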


\begin{proof}
For any $b\in A_+$ and any $\dt>0, $ there exists $\ep>0$ such
that
$$
\|f_\ep(b)b-b\|<\dt,
$$
where $f_{\ep}$ is as defined in the proof of \ref{Pro}.  Then one
sees that the corollary follows immediately from the previous
lemma.

\end{proof}

\begin{prop}\label{ProD}
Let $A$ be a unital separable simple \CA\, with tracial rank no
more than one and let $p\in A$ be a projection. Then, for any
$\sigma>0$ and integers $m>n\ge 1,$ there exists a projection
$q\le p$ such that
\beq\label{Prob-1}
{n+1\over{m}}\tau(p)> \tau(q)>{n\over{m}}\tau(p)\tforal \tau\in
T(A).
\eneq
\end{prop}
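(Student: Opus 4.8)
The plan is to reduce the problem to a statement about traces of projections inside a unital simple \CA\, with tracial rank at most one, and then to exploit the fact that such an algebra has a rich supply of projections coming from the tracial approximation by subalgebras of the form $\oplus_k C(X_k, M_{r(k)})$ with $X_k = [0,1]$ or a point. The key point is that the proposition is a uniform (over all of $T(A)$) comparability statement, and uniformity is exactly what Lemma \ref{TTL} is designed to deliver.

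First I would reduce to working inside $pAp$. Since $p A p$ is again a unital separable simple \CA\, with $TR(pAp) \le 1$ (by the hereditary behavior of tracial rank), and since every $\tau \in T(A)$ restricts, up to the scalar $\tau(p)$, to a tracial state on $pAp$, it suffices to find, for the given $m > n \ge 1$, a projection $q \le p$ with $\frac{n+1}{m} > \tau'(q) > \frac{n}{m}$ for every $\tau' \in T(pAp)$, and then rescale. So without loss of generality $p = 1_A$ and I must produce a projection $q$ with $\frac{n}{m} < \tau(q) < \frac{n+1}{m}$ for all $\tau \in T(A)$. Second, I would pick a target ``window'' strictly inside $(\tfrac{n}{m}, \tfrac{n+1}{m})$, say aiming for traces within $(\tfrac{n}{m} + 3\delta, \tfrac{n+1}{m} - 3\delta)$ for a small $\delta > 0$, so that small perturbations stay inside the open interval. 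Using Lemma \ref{TTL} applied to a finite subset containing no constraints other than $1_A$ (or, more simply, using the definition of $TR(A) \le 1$ directly together with the TAI property), I obtain a projection $e \in A$ close to commuting with a chosen finite subset and a subalgebra $B = \oplus_{k=1}^K C(X_k, M_{r(k)})$ with $1_B = e$, such that $\tau(1-e) < \delta$ for all $\tau \in T(A)$, and such that the $r(k)$ are as large as we like (using simplicity and Proposition 3.4 of \cite{Lntr1}).

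Now the core step: inside each summand $C(X_k, M_{r(k)})$ I build a constant-rank projection. If $X_k$ is a point this is trivial; if $X_k = [0,1]$, a rank-$s$ constant projection has normalized trace $s/r(k)$ at every point $x \in X_k$. Choosing for each $k$ an integer $s_k$ with $s_k / r(k)$ lying in the window $(\tfrac{n}{m} + 3\delta, \tfrac{n+1}{m} - 3\delta)$ — which is possible once $r(k)$ is large enough that $1/r(k)$ is smaller than the window width, i.e. $r(k) > m/(1 - 6m\delta)$ say — and letting $q_0 = \oplus_k q_k \in B$ with $q_k$ the constant rank-$s_k$ projection, we get $t_{k,x}(q_0) \in (\tfrac{n}{m}+3\delta, \tfrac{n+1}{m}-3\delta)$ for every normalized trace $t_{k,x}$ on $B$. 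The point of uniformity over all of $T(A)$ is then handled exactly as in the proof of Lemma \ref{TTL}: any $\tau \in T(A)$ restricted to $eAe$ and composed with a conditional-expectation-type map onto $B$ is approximated by a convex combination of the $t_{k,x}$'s, plus an error controlled by $\tau(1-e) < \delta$; hence $\tau(q_0)$ lies within $(\tfrac{n}{m}+2\delta, \tfrac{n+1}{m}-2\delta)$ for all $\tau \in T(A)$. Finally I set $q = q_0 \le e \le 1_A = p$ (after undoing the reduction, $q \le p$), and the displayed inequalities (\ref{Prob-1}) hold with room to spare.

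The main obstacle is the uniformity-over-traces bookkeeping in the last step: one must be careful that the projection $q_0 \in B$, when viewed back in $A$, has trace close to its ``$B$-trace'' uniformly over all of $T(A)$ and not merely for the traces factoring through $B$. This is precisely the compactness/limit-point argument used in the proof of Lemma \ref{TTL} (approximating an arbitrary tracial state by traces on the approximating subalgebra), so strictly speaking I would either quote Lemma \ref{TTL} in a form that already gives me a projection with prescribed trace bounds, or re-run that limit-point argument. A minor secondary point is ensuring the $r(k)$ can be taken uniformly large: this follows from Proposition 3.4 of \cite{Lntr1} since $A$ is simple, exactly as invoked in the proof of Lemma \ref{Pro}. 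Everything else is elementary.
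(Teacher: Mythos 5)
Your argument is correct, and it rests on the same mechanism as the paper's, though the paper disposes of the statement in one line by invoking tracial approximate divisibility (the statement labelled \ref{tdvi}): one takes a finite-dimensional subalgebra $D=\oplus_i M_{r(i)}$ with all $r(i)\ge N$ large and $\tau(1-1_D)$ uniformly small, and picks inside each summand a projection of the appropriate relative rank. You instead go back to the TAI approximation by $B=\oplus_k C(X_k,M_{r(k)})$ and use constant-rank projections; this buys nothing extra but also loses nothing, since a constant rank-$s_k$ projection has normalized trace exactly $s_k/r(k)$ under every tracial state of the summand. One remark on the step you flag as the ``main obstacle'': no Lemma \ref{TTL}-style limit-point argument (nor any conditional expectation) is needed, because your projection $q_0$ already lies in $B\subset A$, and the restriction of any $\tau\in T(A)$ to $B$ is a trace on $B$ of total mass $\tau(1_B)>1-\dt$; since every tracial state of $B$ gives $q_0$ a value in your chosen window, $\tau(q_0)\in(\tau(1_B)(\tfrac{n}{m}+3\dt),\,\tau(1_B)(\tfrac{n+1}{m}-3\dt))$, which lands strictly inside $(\tfrac{n}{m},\tfrac{n+1}{m})$ for $\dt$ small -- the compactness argument in \ref{TTL} is only needed for elements of $A$ that are merely close to $B$. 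Your reduction to $pAp$ (using that $TR(pAp)\le 1$ and that every $\tau\in T(A)$ normalizes to a tracial state of $pAp$ because traces on a simple unital \CA\ are faithful) is fine, and, like the paper, your proof implicitly assumes $A$ is infinite dimensional so that the matrix sizes $r(k)$ can be taken arbitrarily large (Proposition 3.4 of the cited reference), which is the implicit convention under which both \ref{tdvi} and the proposition are stated.
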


\begin{proof}
This follows from the fact that $A$ is tracially approximately
divisible (see \ref{tdvi}) .

\end{proof}

\begin{lem}\label{Dig}
Let $X$ be a compact metric space, let $\Delta: (0,1)\to (0,1)$ be a
non-decreasing map, let $\ep>0,$  let ${\cal F}\subset C(X)$ be a
finite subset and  let $\{x_1,x_2,...,x_m\}$ be a finite subset.
Let $\eta>0$ be such that
$$
|f(x)-f(x')|<\ep/4\tforal f\in {\cal F},
$$
if ${\rm dist}(x,x')<2\eta$ and
$$
O_{2\eta}(x_i)\cap O_{2\eta}(x_j)=\emptyset\,\,\,{\rm if}\,\,\,
i\not=j
$$
(so $\eta$ does not depend on $\Delta$).
Let $1>r>0.$
Then there exits
 $\dt>0$ and
 a finite subset ${\cal G}\subset C(X)$ satisfying the
following:

For any unital separable simple \CA\,  $A$ with tracial rank no more
than one and any unital $\dt$-${\cal G}$-multiplicative \morp\, $L:
C(X)\to A$ for which
\beq\label{dig1}
\mu_{\tau\circ L}(O_a)\ge \Delta(a)\tforal \tau\in T(A)
\eneq
and for all $1>a\ge \eta,$ there exist mutually orthogonal non-zero
projections $p_1,p_2,...,p_m$ in $A$ such that
\beq\label{dig2}
\tau(p_i)\ge r \Delta(\eta)\tforal \tau\in T(A), \,\,\,
i=1,2,...,m\tand\\
\|L(f)-[PL(f)P+\sum_{i=1}^m f(x_i)p_i]\|<\ep\tforal f\in {\cal F},
\eneq
where $P=1-\sum_{i=1}^m p_i.$

\end{lem}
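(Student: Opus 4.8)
The idea is to split off, near each $x_i$, a projection $p_i$ whose trace is controlled by the mass hypothesis and which is ``subordinate'' (up to a tiny error) to $L$ of a cut--off function supported close to $x_i$; orthogonality of the $p_i$ will come for free from disjointness of the cut--offs. To set up: we may assume ${\cal F}={\cal F}^*$ and $\eta<1$; write $M=\max\{1,\max_{f\in{\cal F}}\|f\|\}$, fix $r<r_1<1$, and put $\sigma=\Delta(\eta)\in(0,1)$, now a \emph{fixed} positive constant (this is why $\dt$ and ${\cal G}$ below may depend on $\Delta$). By Urysohn's lemma choose $g_i,g_i'\in C(X)$ with $0\le g_i,g_i'\le 1$, $g_i=1$ on $O_\eta(x_i)$, $\operatorname{supp}g_i\subset O_{2\eta}(x_i)$, $g_i'=1$ on $\operatorname{supp}g_i$, $\operatorname{supp}g_i'\subset O_{2\eta}(x_i)$. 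Then $g_i'g_i=g_i$, while $g_ig_j=g_i'g_j'=0$ for $i\ne j$ (the $O_{2\eta}(x_i)$ are disjoint), and, since $\operatorname{supp}g_i'\subset O_{2\eta}(x_i)$, the hypothesis on $\eta$ gives $\|(f-f(x_i)1)g_i'\|<\ep/4$ for all $f\in{\cal F}$.

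For the projections I would use a mild strengthening of Lemma \ref{Pro}: its proof in fact produces, for any prescribed $\gamma>0$ (replacing the ad hoc constants $1/16,1/4$ there by suitable multiples of $\gamma$), a $\dt_1=\dt_1(\sigma,r_1,\gamma)>0$ so that under its hypotheses the resulting projection $e$ additionally satisfies $\|ae-e\|<\gamma$. Fix $\gamma>0$ small (pinned down at the end), let $\dt_1$ be the corresponding constant, and choose a finite ${\cal G}\supset{\cal F}\cup\{1,g_i,g_i',fg_i'\colon i\le m,\ f\in{\cal F}\}$ and $\dt>0$ small enough that any unital $\dt$-${\cal G}$-multiplicative \morp\ $L$ satisfies $\|L(g_i')L(g_i)-L(g_i)\|<\dt_1$, $\|L(f)L(g_i')-L(fg_i')\|<\dt$ and $\|L(g_i')L(g_j')\|<\dt$ for $i\ne j$ (these follow from $g_i'g_i=g_i$, $g_i'g_j'=0$ and approximate multiplicativity). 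Now let $L$ be as in the statement. Since $g_i\ge 0$ and $g_i=1$ on $O_\eta(x_i)$, for every $\tau\in T(A)$ we get $\tau(L(g_i))=\int g_i\,d\mu_{\tau\circ L}\ge\mu_{\tau\circ L}(O_\eta(x_i))\ge\Delta(\eta)=\sigma$, and $0\le L(g_i),L(g_i')\le 1$. Applying the strengthened Lemma \ref{Pro} with $a=L(g_i')$, $b=L(g_i)$ and $r_1$ in place of $r$ gives projections $e_i\in A$ with $\tau(e_i)\ge r_1\sigma$ for all $\tau$ and $\|L(g_i')e_i-e_i\|<\gamma$. From $e_i\approx L(g_i')e_i$, $e_j\approx L(g_j')e_j$ and $\|L(g_i')L(g_j')\|<\dt$ one gets $\|e_ie_j\|<2\gamma+\dt$ for $i\ne j$; a standard perturbation (orthogonalizing $m$ pairwise almost orthogonal projections inductively) then yields mutually orthogonal projections $p_1,\dots,p_m\in A$ with $\|p_i-e_i\|<\gamma'$, where $\gamma'=\gamma'(m,2\gamma+\dt)\to 0$. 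Taking $\gamma,\dt$ small enough that $\gamma'<(r_1-r)\sigma$ forces $\tau(p_i)\ge r_1\sigma-\gamma'\ge r\sigma>0$, so the $p_i$ are non-zero of the required trace.

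For the estimate, for $f\in{\cal F}$ we have $\|(L(f)-f(x_i)1)L(g_i')\|\le\|L(f)L(g_i')-L(fg_i')\|+\|L((f-f(x_i)1)g_i')\|<\dt+\ep/4$, and $\|L(g_i')p_i-p_i\|<\gamma+2\gamma'$, whence
$$\|(L(f)-f(x_i)1)p_i\|\le\|(L(f)-f(x_i)1)L(g_i')\|+\|L(f)-f(x_i)1\|\,\|p_i-L(g_i')p_i\|<\ep/4+\dt+2M(\gamma+2\gamma')=:\ep_0,$$
and likewise $\|p_i(L(f)-f(x_i)1)\|<\ep_0$ using ${\cal F}={\cal F}^*$ and $L(f)^*=L(f^*)$. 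Choosing $\gamma,\dt$ small we arrange $\ep_0<\ep/2$. Put $Q=\sum_ip_i$, $P=1-Q$. Expanding $L(f)=(P+Q)L(f)(P+Q)$ and cancelling against $PL(f)P+\sum_if(x_i)p_i$, the remaining part equals $\sum_i(L(f)-f(x_i)1)p_i+\big(\sum_ip_i(L(f)-f(x_i)1)\big)P$; since each $(L(f)-f(x_i)1)p_i$ is unchanged by right multiplication by $p_i$ and the $p_i$ are mutually orthogonal, $\|\sum_i(L(f)-f(x_i)1)p_i\|\le\max_i\|(L(f)-f(x_i)1)p_i\|<\ep_0$, and similarly for the second sum. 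Hence $\|L(f)-[PL(f)P+\sum_if(x_i)p_i]\|<2\ep_0<\ep$ for all $f\in{\cal F}$, as required.

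The one genuinely delicate point is the production of the $p_i$: an arbitrary projection of trace $\ge r\Delta(\eta)$ in $\overline{L(g_i)AL(g_i)}$ need not be subordinate to the ``part of $L(g_i)$ near $1$'', so $L(f)L(g_i)\approx f(x_i)L(g_i)$ by itself does not control $L(f)p_i$ — cutting $L(g_i)$ spectrally at a level comparable to $\sigma$ blows the error up by a factor $\sim\sigma^{-1}$ and destroys the final bound. The slack--bump device ($g_i\prec g_i'$, both inside $O_{2\eta}(x_i)$) together with the refinement of Lemma \ref{Pro} giving $\|L(g_i')e_i-e_i\|<\gamma$ is exactly what removes this factor; the cost is the (routine) strengthening of Lemma \ref{Pro}, which its proof clearly allows.
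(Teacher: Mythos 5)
Your overall strategy is sound and genuinely different from the paper's (the paper proves this by contradiction, passing to $B=\prod_nA_n$ and the quotient $B/\oplus_nA_n$, where the maps become a single unital \hm\, $\phi$, producing the projections via \ref{ProC} inside the mutually orthogonal hereditary subalgebras $\overline{\phi(g_i)\,\cdot\,\phi(g_i)}$ and then invoking Lemma 3.2 of \cite{Lncf} for the norm estimate; your argument is direct and quantitative, and the strengthening of Lemma \ref{Pro} you invoke is indeed available from its proof). However, there is one concrete step that fails as written: the claim that, because each $a_i:=(L(f)-f(x_i)1)p_i$ satisfies $a_i=a_ip_i$ with the $p_i$ mutually orthogonal, one has $\|\sum_ia_i\|\le\max_i\|a_i\|$. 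Orthogonality of the right supports only gives $(\sum_ia_i)(\sum_ja_j)^*=\sum_ia_ia_i^*$, hence $\|\sum_ia_i\|\le(\sum_i\|a_i\|^2)^{1/2}\le\sqrt{m}\,\max_i\|a_i\|$, and the factor $\sqrt m$ cannot be dropped: in $M_2$ take $p_1=e_{11},\,p_2=e_{22},\,a_1=e_{11},\,a_2=e_{12}$; then $a_i=a_ip_i$, $\|a_i\|=1$, but $\|a_1+a_2\|=\sqrt2$. This matters here because your per-term bound $\ep_0$ is necessarily larger than $\ep/4$: the $\ep/4$ oscillation term is dictated by the given $\eta$ and cannot be shrunk, since $g_i$ must dominate the indicator of $O_\eta(x_i)$ to secure $\tau(p_i)\ge r\Delta(\eta)$. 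So the final estimate becomes $2\sqrt m\,\ep_0$, which exceeds $\ep$ already for $m\ge 4$, and the concluding inequality is not established.

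The gap is repairable within your framework, but it needs the disjointness of the supports rather than just orthogonality of the $p_i$. Setting $d_i=(f-f(x_i)1)g_i'$, one has $\sum_i(L(f)-f(x_i)1)p_i\approx\sum_iL(d_i)p_i$ up to errors controlled by $\dt,\gamma,\gamma'$ (coefficients depending only on $m$ and $M$); since $d_ig_j'=0$ for $i\ne j$, each cross term satisfies $L(d_i)p_j\approx L(d_i)L(g_j')p_j\approx L(d_ig_j')p_j=0$ (put $g_j'$, $fg_i'$ and the relevant products into ${\cal G}$), whence $\sum_iL(d_i)p_i\approx L(\sum_id_i)Q$ with $\|\sum_id_i\|=\max_i\|d_i\|\le\ep/4$ because the $d_i$ have disjoint supports; the adjoint-side sum is handled the same way using ${\cal F}={\cal F}^*$. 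With that replacement (and $\dt,\gamma$ chosen after $m$, $M$, $r$, $\Delta(\eta)$), your direct proof goes through and yields the same conclusion as the paper's sequence-algebra argument, with explicit constants in place of the compactness-style contradiction.
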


\begin{proof}
Suppose that the lemma is false (for the above $\ep,$ ${\cal F}, $
$\Delta$  and $\{x_1,x_2,...,x_m\}$).

Let $\eta>0$ be such that
\beq\label{dig-1}
|f(x)-f(x')|<\ep/4\tforal f\in {\cal F},
\eneq
if ${\rm dist}(x, x')<2\eta.$
 We may assume that $O_{2\eta}(x_i)\cap
O_{2\eta}(x_j)=\emptyset,$ $i\not=j,$ $i,j=1,2,...,m.$

Let $g_i$ be a function in $C(X)$ such that  $0\le g_i(x)\le 1$
for all $x\in X,$ $g_i(x)=1$ if ${\rm dist}(x,x_i)<\eta$ and
$g_i(x)=0$ if ${\rm dist}(x, x_i)\ge 2\eta,$ $i=1,2,...,m.$ Put
${\cal G}_0=\{g_i: i=1,2,...,m\}.$

Then, there exists a sequence of unital separable simple \CA s with
tracial rank no more than one and a sequence of $\dt_n$-${\cal
G}_n$-multiplicative \morp\, $L_n: C(X)\to A_n$ for a sequence of
decreasing positive numbers $\dt_n\to 0$ and a sequence of finite
subsets $\{{\cal G}_n\}$ with $\cup_{n=1}^{\infty}{\cal G}_n$ is
dense in $C(X)$ such that
\beq\label{dig-2-}
\mu_{\tau\circ L_n}(O_a)\ge \Delta(a)\tforal \tau\in T(A)\andeqn
\rforal 1>a\ge \eta\\\label{dig-2}
 \lim\inf_n\{\inf
\{\max\{\|L_n(f)-[P_nL_n(f)P_n+\sum_{i=1}^m f(x_i)p_{i,n}]\|: f\in
{\cal F}\}\} \}\ge \ep,
\eneq
where infimum is taken among all possible mutually orthogonal
non-zero projections $p_{1,n},p_{2,n},...,p_{m,n}$ with
$\tau(p_{i,n})\ge r\Delta(\eta)$ for all $\tau\in T(A_n)$ and
$P_n=1_{A_n}-\sum_{i=1}^n p_{i,n}$ in $A_n.$

Let $B=\prod_{n=1}^{\infty} A_n,$ let $Q=B/\oplus_{n=1}^{\infty}A_n$
and $\Pi: B\to Q$ be the quotient map. Define $\Phi: C(X)\to B$ by
$\Phi(f)=\{L_n(f)\}$ and $\phi=\Pi\circ \Phi.$ Then $\phi: C(X)\to
Q$ is a unital \hm.

By (\ref{dig-2-}),
$$
\tau(L_n(g_i))\ge \mu_{\tau\circ L_n}(O_\eta)\ge \Delta(\eta).
$$
for all $\tau\in T(A).$
 It follows from \ref{ProC},  there exists a projection $p_{i,
n}'\in \overline{L_n(g_i)AL_n(g_i)}$ such that
\beq\label{dig-3}
\tau(p_{i,n}')\ge r \Delta(\eta) \tforal \tau\in
T(A_n),\,\,\,i=1,2,...,m.
\eneq
for all $n\ge n_0$ for some $n_0\ge 1.$  Define $P_i=\{p_{i, n}'\}$
(with $p_{i,n}'=1$ for $n=1,2,...,n_0$), and $q_i=\Pi(P_i),$
$i=1,2,....,m.$ Note that
\beq\label{dig-4}
q_i\in \overline{\phi(g_i)A\phi(g_i)},\,\,\,i=1,2,...,m.
\eneq
It follows from Lemma 3.2 of \cite{Lncf} that
\beq\label{dig-5}
\|\phi(f)-[q\phi(f)q+\sum_{i=1}^m f(x_i)q_i]\|<\ep/2\tforal f\in
{\cal F},
\eneq
where $q=1-\sum_{i=1}^mq_i.$  It follows that, for some sufficiently
large $n_1\ge n_0,$
\beq\label{dig-6}
\|L_n(f)-[P_nL_n(f)P_n+\sum_{i=1}^m f(x_i)p_{i,n}']\|<\ep\rforal f\in
{\cal F}
\eneq
for all $n\ge n_1,$ where $P_n=\sum_{i=1}^mp_{i,n}'.$   By
(\ref{dig-3}), this contradict with (\ref{dig-2}).

\end{proof}

\begin{lem}\label{small}
Let $X$ be a connected finite CW complex, let $\xi\in X$ be a point
and let $Y=X\setminus \{\xi\}.$ Suppose that $K_0(C_0(Y))=\Z^k\oplus
{\rm Tor}(K_0(C_0(Y)))$ and $g_1, g_2,...,g_k$ are generators of
$\Z^k.$ Suppose that $\phi: C(X)\to A$ (for some unital separable
simple \CA\, with tracial rank one or zero) is a $ \dt$-${\cal
G}$-multiplicative \morp\, for which $[\phi](g_i)$ is well defined
($i=1,2,...,k$), where $\dt$ is a positive number and ${\cal G}$ is
a finite subset of $C(X),$ and
\beq\label{small-1}
|\tau([\phi](g_i))|<\sigma\tforal \tau\in T(A),\,\,\,i=1,2,...,k
\eneq
for some $1>\sigma>0.$
Then, for any $\ep>0$ and any finite
subset ${\cal F},$ any $1>r>0$ and any finite subset ${\cal
H}\subset A,$ there exists a projection $p\in A$ and a unital \SCA\,
$B=\oplus_{j=1}^m C(X_j, M_{r(j)}),$ where $X_j=[0,1],$ or $X_j$ is
a single point, with $1_B=p$ and a unital $(\dt+\ep)$-${\cal
G}$-multiplicative \morp\, $L: C(X)\to B$ such that
\beq\label{small-1+}
\|\phi(f)-[(1-p)\phi(f)(1-p)+L(f)]\|<\ep\tforal f\in {\cal F}\andeqn\\
|t_{j,x}([L](g_i)|<(1+r)\sigma\,\,\,\,j=1,2,...,k\andeqn x\in X_j.
\eneq
(We use  $t_{j,x}$ for $\tau_{j,x}\otimes Tr_R$ on $B\otimes M_R,$
where $Tr_R$ is the standard trace on $M_R.$)
Moreover,
$$
\|pa-ap\|<\ep\rforal a\in{\cal H}.
$$
\end{lem}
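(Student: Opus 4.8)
The plan is to produce $p$, $B$ and $L$ directly from the fact that $TR(A)\le 1$ (equivalently, that $A$ has the TAI property, Definition~\ref{dtr1}), and then to control the point traces $t_{j,x}([L](g_i))$ by a compactness argument of exactly the type used in the proof of Lemma~\ref{TTL}.

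First I would make the $K$-theoretic data concrete. Write $g_i=[e_i]-[f_i]$ with $e_i,f_i$ projections in $M_N(C(X))$ for a common $N$, and enlarge ${\cal G}$ so that it contains the matrix entries of the $e_i$ and $f_i$; since $[\phi](g_i)$ is assumed well defined, for the given (small) $\dt$ the elements $\phi^{(N)}(e_i)$ and $\phi^{(N)}(f_i)$ lie within $1/4$ of projections $E_i,F_i\in M_N(A)$ with $[\phi](g_i)=[E_i]-[F_i]$. The hypothesis then says $|\tau(E_i)-\tau(F_i)|<\sigma$ for all $\tau\in T(A)$, and by compactness of $T(A)$ there is $\sigma'<\sigma$ with $|\tau(E_i)-\tau(F_i)|\le\sigma'$ for all $\tau$ and all $i$.

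Next I would apply the TAI property to the finite set consisting of $\phi({\cal G})$, the products $\phi(f)\phi(g)$ for $f,g\in{\cal G}$, the set ${\cal H}$, and the matrix entries of $E_i$ and $F_i$, with a small tolerance $\ep'$ and a small trace bound $\eta$. This gives a projection $p$ that almost commutes with all of these, with $\tau(1-p)<\eta$ for every $\tau\in T(A)$, a \SCA\ $B=\oplus_j C(X_j,M_{r(j)})$ (each $X_j$ equal to $[0,1]$ or a point) with $1_B=p$, and, by 2.3.9 of~\cite{Lnbk}, a unital \morp\ $L\colon C(X)\to B$ with $\|L(f)-p\phi(f)p\|<\ep'$ for $f\in{\cal G}\cup{\cal F}$. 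The displayed estimate $\|\phi(f)-[(1-p)\phi(f)(1-p)+L(f)]\|<\ep$ and the relation $\|pa-ap\|<\ep$ are then immediate from the almost-commutation of $p$; and a routine estimate (using that $p$ almost commutes with $\phi({\cal G})$ and that $\phi$ is $\dt$-${\cal G}$-multiplicative) shows $L$ is $(\dt+\ep)$-${\cal G}$-multiplicative once $\ep'$ and the commutator defect are small, so that $[L](g_i)=[E_i']-[F_i']$ with $E_i',F_i'$ projections within $o(1)$ of $p^{(N)}E_ip^{(N)}$ and $p^{(N)}F_ip^{(N)}$.

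The only step requiring real work is the trace bound $|t_{j,x}([L](g_i))|<(1+r)\sigma$ for all the point traces of $B$, and this is where I would argue by contradiction, following Lemma~\ref{TTL}. If no admissible $(p,B,L)$ satisfied this, one would obtain a sequence $(p_n,B_n,L_n)$ with commutator defects and $\sup_{\tau}\tau(1-p_n)$ tending to $0$, and point traces $t_{j_n,x_n}$ on $B_n$ with $|t_{j_n,x_n}([L_n](g_i))|\ge(1+r)\sigma$ for a fixed index $i$ (after passing to a subsequence). Extending $t_{j_n,x_n}$ to a state $T_n$ on $A$ and using $\|L_n(f)-p_n\phi(f)p_n\|\to 0$, the almost-commutation of $p_n$ with $E_i,F_i$, the fact that close projections have equal trace, and $\tau(1-p_n)\to 0$ uniformly, one gets $t_{j_n,x_n}([L_n](g_i))=T_n(E_i)-T_n(F_i)+o(1)$. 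A weak-$*$ cluster point $T$ of $\{T_n\}$ is a tracial state on $A$ — the usual computation shows $T_n(ab)-T_n(ba)\to 0$ for $a,b$ in the dense union $\bigcup_n{\cal G}_n$, because $t_{j_n,x_n}$ is a trace on $B_n$ and $L_n$ is asymptotically multiplicative there — and $\tau\mapsto\tau([\phi](g_i))=\tau(E_i)-\tau(F_i)$ is weak-$*$ continuous, so $|T([\phi](g_i))|=\lim_n|t_{j_n,x_n}([L_n](g_i))|\ge(1+r)\sigma>\sigma$, contradicting the hypothesis. Thus the main obstacle is precisely the bookkeeping in this limit argument: keeping track of the $N$-fold amplification, the passage between $p_n^{(N)}E_ip_n^{(N)}$ and the genuine projections $E_i'$, and the vanishing of $\tau(1-p_n)$, so that the limiting functional is genuinely tracial and genuinely evaluates $[\phi](g_i)$; all the needed ingredients already appear in the proof of Lemma~\ref{TTL}.
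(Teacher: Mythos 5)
Your proposal is correct and follows essentially the same route as the paper: the paper also obtains $(p,B,L)$ from the tracial rank one (TAI) approximation together with 2.3.9 of \cite{Lnbk}, writes $g_i=[p_i]-[q_i]$ in matrices over $C(X)$, and proves the point-trace bound by the same contradiction argument as in Lemma \ref{TTL}, where the point traces $t_{j_k,x_k}$ composed with the cutdown maps cluster to a tracial state of $A$ violating (\ref{small-1}). The only differences are bookkeeping (the paper works with $L_{n}'\otimes\mathrm{id}_{M_R}$ applied to $\phi_R(p_i-q_i)$ rather than first perturbing $\phi^{(N)}(e_i),\phi^{(N)}(f_i)$ to projections $E_i,F_i$), so no further comment is needed.
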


\begin{proof}
The proof is similar to that of \ref{TTL}.  Let $p_j, q_j\in
M_R(C(X))$ such that
$$
[p_j]-[q_j]=g_j,\,\,\,j=1,2,...,k
$$
for some integer $R\ge 1.$
There exists of a sequence of projections $p_n\in A$ such that
\beq\label{small-4}
\lim_{n\to\infty}\|cp_n-p_nc\|=0\tforal c\in A,
\eneq
and there exists a sequence of \SCA s
$B_n=\oplus_{j=1}^{m(n)}C(X_{j,n}, M_{r(j,n)})$ (where
$X_{j,n}=[0,1]$ or $X$ is a single point) with $1_{B_n}=p_n$ such
that
\beq\label{sm-5}
\lim_{n\to\infty}{\rm dist}(p_ncp_n, B_n)=0\andeqn
\lim_{n\to\infty} \sup_{\tau\in T(A)}\{\tau(1-p_n)\}=0.
\eneq
For sufficiently large $n,$ there exists a \morp\, $L_n': p_nAp_n
\to B_n$ such that
$$
\lim_{n\to\infty}\|L_n'(a)-p_nap_n\|=0 \tforal a\in A.
$$
(see 2.3.9 of \cite{Lnbk}).
We have
\beq\label{sm-5+}
\lim_{n\to\infty}\|\phi(f)-[(1-p_n)\phi(f)(1-p_n)+L_n'\circ
\phi(f)]\|=0\rforal f\in C(X).
\eneq
Define  $L_{n,R}': M_R(A)\to M_R(A)$ by $L_n'\otimes {\rm
id}_{M_R}$ and $\phi_R: M_R(C(X))\to M_R(A)$ by
$\phi_R=\phi\otimes {\rm id}_{M_R}.$

Suppose that (for some fixed $1>r>0$) there exists a subsequence
$\{n_k\},$ $\{j_k\}$ and $\{x_k\}\in [0,1]$ such that
\beq\label{sm-6}
t_{j_k,x_k} (L_{n,R}'\circ \phi_R(p_i-q_i))\ge (1+r)\sigma
\eneq
for all $k.$
Define a state $T_k: A\to \C$ by $T_k(a)=t_{j_k, x_k}(a),$
$k=1,2,....$ Let $T$ be a limit point. Note $T_k(1_A)=1.$
Therefore $T$ is a state on $A.$ Then, by (\ref{sm-6}),
\beq\label{TTL-8+}
T([\phi](g_i))\ge (1+r)\sigma.
\eneq
However, it is easy to check that $T$ is a tracial state. This
contradicts with (\ref{small-1}). So the lemma follows by choosing
$B$ to be $B_n,$ $p$ to be $p_n$ and $L$ to be $L_n'\circ L$ for
some sufficiently large $n.$

\end{proof}

\begin{lem}\label{digB}
Let $A$ be a unital separable simple \CA\, with tracial rank no more
than one. Let $p_1, p_2,...,p_n$ be a finite subset of projections
in $A,$ and let $L: C(X)\to A$ be a \morp\, with $L(1_{C(X)})$ being
a projection. Let  $d_1, d_2,...,d_n$ be positive numbers and
$\Delta: (0,1)\to (0,1)$ be a non-decreasing map and let $\eta>0.$

Suppose that
\beq\label{digb-1}
\tau(p_i)\ge a_i\andeqn \mu_{\tau\circ L}(O_a)\ge \Delta(a)\tforal
a\ge \eta
\eneq
for all $\tau\in T(A).$

Then, for any $1>r>0,$ any $1>\dt>0,$ any finite subset ${\cal
G}\subset C(X)$ and any finite subset ${\cal H}\subset A,$  there
exists a projection $E\in A,$ a \SCA\, $B=\oplus_{j=1}^L C(X_j,
M_{r(j)})$ with $1_B=E,$ ($X_j=[0,1],$ or $X_j$ is a point),
projections $p_i', p_1''$ with $p_i'\in B,$ and \morp\, $L_1:
C(X)\to B$ with $L_1(1_{C(X)})$ being a projection satisfying the
following:
\beq\label{digb-1+}
\|Ea-aE\|&<&\dt\tforal a\in {\cal H}\cup\{L(f): g\in {\cal G}\},\\
\|p_i-(p_i'\oplus p_i'')\|&<&\dt,\,\,\, i=1,2,...,n,\\
\|L(f)-[EL(f)E+L_1(f)]\|&<&\dt\tforal f\in {\cal G},\\
t_{j,x}(p_i')&\ge& rd_i,\,\,\,i=1,2,...,n\andeqn \\
\mu_{t_{j,x}\circ L_1}(O_a)&\ge& r\Delta(O_a)\tforal a\ge \eta
\eneq
for all $x\in X_j$ and $j=1,2,...,L.$ Moreover,
$$
\tau(1-E)<\ep\tforal \tau\in T(A).
$$

If $L': C(X)\to A$ is another $\dt$-${\cal G}$-multiplicative
\morp\, such that
\beq\label{dig++1}
|\tau\circ L'(g)-\tau\circ L(g)|<\dt\tforal g\in {\cal G},
\eneq
we may further require that
\beq\label{digb++2}
\|EL'(f)-L'(f)E\|<\dt,\,\,\,
|L'(f)-[EL'(f)E+L_1(f)]\|<\dt\tforal f\in {\cal G},\\
|t_{j,x}\circ L_1(f)-t_{j,x}\circ L_1'(f)|<\ep\tand
\mu_{t_{j,x}\circ L_1'}(O_a)\ge r\Delta(a)
\eneq
for all $x\in X_j,$ $j=1,2,...,L,$ for $a\ge \eta$ and for all $f\in
{\cal G}$

\end{lem}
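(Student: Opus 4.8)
The plan is to follow the template of Lemmas~\ref{TTL} and \ref{small}: first extract the interval-algebra structure guaranteed by $TR(A)\le 1$, perform a single cut-down, and then verify the commutation, trace and measure estimates by the ``limiting tracial state'' contradiction argument used there (as also in Lemma~\ref{Dig}). Throughout I read the $a_i$ in the displayed hypothesis as the given numbers $d_i$, and $\Delta(O_a)$ in the $L'$-part as $\Delta(a)$.

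By Definition~\ref{dtr1} and the TAI property (7.1 of \cite{Lnplms}) there are projections $q_k\in A$ with $\|q_ka-aq_k\|\to0$ for all $a\in A$, sub-$C^*$-algebras $B_k=\oplus_j C(X_{j,k},M_{r(j,k)})$ ($X_{j,k}=[0,1]$ or a point) with $1_{B_k}=q_k$, asymptotically multiplicative contractive completely positive maps $\Theta_k\colon q_kAq_k\to B_k$ with $\|\Theta_k(q_kaq_k)-q_kaq_k\|\to0$ for $a$ in a dense subset of $A$, and $\sup_\tau\tau(1-q_k)\to0$ (see 2.3.9 of \cite{Lnbk}). I would set $E=q_k$, $B=B_k$ for a large $k$, let $e_0'\in B$ be a projection close to $\Theta_k(EL(1_{C(X)})E)$ (a standard perturbation, e.g.\ Lemma~2.5.4 of \cite{Lnbk}), and define $L_1(f)=e_0'\,\Theta_k(EL(f)E)\,e_0'$, so that $L_1(1_{C(X)})=e_0'$ and $L_1$ is $(\dt+o(1))$-${\cal G}$-multiplicative. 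Similarly let $p_i'\in B$ be a projection close to $\Theta_k(Ep_iE)$ and $p_i''$ a projection in $(1-E)A(1-E)$ close to $(1-E)p_i(1-E)$; since $p_i'\le E$ and $p_i''\le 1-E$ these are orthogonal, and using $\|Ep_i-p_iE\|\to0$ one arranges $\|p_i-(p_i'\oplus p_i'')\|$ as small as desired. For $k$ large the estimates $\|Ea-aE\|<\dt$ on ${\cal H}\cup\{L(f):f\in{\cal G}\}$, $\|L(f)-[EL(f)E+L_1(f)]\|<\dt$ on ${\cal G}$, and $\tau(1-E)<\dt$ are then immediate.

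For the trace bound $t_{j,x}(p_i')\ge rd_i$: if it failed for every $k$ one gets $k_l\to\infty$, summands $j_l$ and points $x_l\in X_{j_l,k_l}$ with $t_{j_l,x_l}(p_i')<rd_i$; the functionals $a\mapsto t_{j_l,x_l}(\Theta_{k_l}(q_{k_l}aq_{k_l}))$ are states of $A$ (they send $1_A$ to $1$), and since $\Theta_{k_l}$ is asymptotically multiplicative and $t_{j_l,x_l}$ is a trace on $B_{k_l}$, any weak-$*$ limit point is a tracial state $T$; then $T(p_i)\le rd_i<d_i$ contradicts $\tau(p_i)\ge d_i$ for $\tau=T$. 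The measure bound $\mu_{t_{j,x}\circ L_1}(O_a)\ge r\Delta(a)$ for $a\ge\eta$ is obtained the same way, after reducing by compactness of $X$ (and of the relevant range of radii), exactly as in the proof of Lemma~\ref{Dig}, to finitely many bump functions $g$ supported on balls; one uses lower semicontinuity of $\nu\mapsto\nu(O)$ on open sets under weak-$*$ limits together with $\mu_{T\circ L}(O_a)\ge\Delta(a)$ for the limiting tracial state, any $\eta$-scale discrepancy being absorbed into the factor $r<1$. For the assertions about a second map $L'$: choosing ${\cal G}$ large enough to contain bump functions witnessing the measure lower bound for a fine net of balls, and $\dt<\ep$, the hypothesis $|\tau\circ L'(g)-\tau\circ L(g)|<\dt$ on ${\cal G}$ forces $\mu_{\tau\circ L'}$ to satisfy an $r$-scaled lower bound of the same shape, so the same $E,B$ work; put $L_1'(f)=e_0'\Theta_k(EL'(f)E)e_0'$. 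Then $\|EL'(f)-L'(f)E\|<\dt$, $\|L'(f)-[EL'(f)E+L_1'(f)]\|<\dt$ and $\mu_{t_{j,x}\circ L_1'}(O_a)\ge r\Delta(a)$ follow as for $L_1$, while $|t_{j,x}\circ L_1(f)-t_{j,x}\circ L_1'(f)|<\ep$ holds because, for $k$ large, $t_{j,x}\circ\Theta_k$ is within $\ep/3$ of a tracial state of $A$ on the finite set involved, $E$ is nearly central, and $|\tau\circ L(f)-\tau\circ L'(f)|<\dt$.

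The main obstacle is the uniform bookkeeping: a single pair $(E,B)$ must make all conclusions hold for every summand index $j$ and every point $x\in X_j$ simultaneously and for a continuum of balls, so the compactness/limit argument has to be run once globally rather than pointwise, and the various $\eta$- and $\Delta$-level mismatches between the hypotheses and the desired conclusions must be absorbed into the slack afforded by $r<1$ and by the freedom to enlarge ${\cal G}$; this is routine but must be set up carefully. Apart from that, the proof is a direct adaptation of Lemmas~\ref{TTL}, \ref{small} and \ref{Dig}.
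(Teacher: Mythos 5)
Your argument is correct and is essentially the paper's own proof: both extract from $TR(A)\le 1$ a sequence of nearly central projections $E_n$, interval subalgebras $B_n$ and completely positive maps $\Phi_n$ (2.3.9 of \cite{Lnbk}), and obtain the uniform bounds $t_{j,x}(p_i')\ge rd_i$ and $\mu_{t_{j,x}\circ L_1}(O_a)\ge r\Delta(a)$ (and the $L'$-statements) by the same contradiction argument in which a weak-$*$ limit point of the states $t_{j_k,x_k}\circ\Phi_{n_k}$ is shown to be a tracial state of $A$, exactly as in Lemmas \ref{TTL}, \ref{small} and \ref{Dig}. Your additional step of cutting by a projection $e_0'$ near $\Theta_k(EL(1_{C(X)})E)$ so that $L_1(1_{C(X)})$ is a projection only makes explicit what the paper leaves implicit in taking $L_1=\Phi_n\circ L$.
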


\begin{proof}

There exists a sequence of projections $E_n\in A$ and a sequence of
\SCA\, $B_n=\oplus_{j=1}^{L_n}C(X_{j,n}, M_{r(j,n)})$ such that
\beq\label{digb-2}
\lim_{n\to\infty}\|E_na-aE_n\|=0\tforal a\in A.
\eneq
One then obtains a sequence of projections $p_{i,n}'\in B_n,$
$p_{i,n}''\in (1-E_n)A(1-E_n)$ and a sequence of \morp s $\Phi_n:
A\to B_n$ (see 2.3.9 of \cite{Lnbk}) such that
\beq\label{digb-3}
\lim_{n\to\infty}\|p_i-(p_{i,n}'+p_{i,n}'')\|=0 \andeqn
\lim_{n\to\infty}\|a-[E_naE_n+\Phi_n(a)]\|=0
\eneq
for all $a\in A.$ Moreover,
\beq\label{digb-3+}
\lim_{n\to\infty}\sup_{\tau\in T(A)}\{\tau(1-e_n)\}=0.
\eneq

Suppose that there exists a subsequence $\{n_k\}$ such that
\beq\label{digb-4}
t_{j_{n,k},x_k}(p_{i,n}')<rd_i,\,\,\,i=1,2,...,n.
\eneq
Define $T_k(a)=t_{j_{n,k},x_k}(\Phi_{n_k}(a))$ for $a\in A.$ Let $T$
be a limit point. Then $T(1_A)=1.$ So $T$ is a state. It is easy to
see that it is also a tracial state. Then
\beq\label{digb-5}
T(p_i)\le rd_i,\,\,\,i=1,2,...,n.
\eneq
A contradiction.

Suppose that there exists a subsequence $\{n_k\}$ such that
\beq\label{digb-6}
\mu_{t_{j_{n_k},x_k}\circ \Phi_{n_k}\circ L}(O_{a_k})<r\Delta(a_k)
\eneq
for some $1>a_k\ge \eta$ and for all $k.$ Again, use the above
notation $T$ for a limit of $\{t_{j_{n_k},x_k}\circ \Phi_{n_k}\}.$
Then $T$ is a tracial state so that
\beq\label{digb-7}
\mu_{T\circ L}(O_a)\le r\Delta(a)
\eneq
for some $a\ge \eta.$ Another contradiction.

The first part of the lemma follows by choosing $L_1$ to be
$\Phi_n\circ L,$ $p_i'$ to be $p_{1,n}'$  and $p_i''$ to be
$p_{1,n}''$ for some sufficiently large $n.$

The last part follows from a similar argument.

\end{proof}

\begin{lem}\label{ProM}
Let $A$ be a unital separable simple \CA\, with tracial rank no more
than one. Suppose that  $p, q\in A$ are two projections such that
$$
\tau(p)\ge D\andeqn \tau(q)\ge D \tforal \tau\in T(A).
$$
Then, for any $1>r>1,$  there are projections $p_1\le p$ and $q_1\le
q$ such that
\beq\label{prom-1}
[p_1]=[q_1]\,\,\,{\rm in}\,\,\, K_0(A)\andeqn \tau(p_1)=\tau(q_1)\ge
r \cdot D
\eneq
for all $\tau\in T(A).$
\end{lem}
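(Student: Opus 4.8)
The plan is to use the tracial approximate divisibility of $A$ (see \ref{tdvi}), together with the semicontinuity of the trace on projections, to cut $p$ and $q$ down to projections that have the same "size" up to control on traces, and then to match their $K_0$-classes. First I would fix $r$ with $0<r<1$ and choose an integer $m$ large so that $\frac{m-1}{m}>r$; by \ref{ProD} applied to $p$ (respectively $q$) there is a projection $p'\le p$ with $\frac{m}{m}\tau(p)>\tau(p')>\frac{m-1}{m}\tau(p)\ge rD$ for all $\tau\in T(A)$, and likewise $q'\le q$. This already gives the trace estimate we want; what remains is to replace $p',q'$ by subprojections whose $K_0$-classes coincide, while keeping the traces above $rD$.

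The key step is to exploit tracial approximate divisibility directly. Given a small $\varepsilon>0$ and an integer $N$ (to be chosen), \ref{tdvi} produces a projection $e\in A$ and a finite-dimensional subalgebra $D=\oplus_i M_{s(i)}$ with $s(i)\ge N$, $1_D=e$, which almost commutes with $\{p',q'\}$ and with $\tau(1-e)$ uniformly small. Compressing $p'$ and $q'$ by $e$ and correcting by small perturbations, one gets projections $\tilde p\le p'$, $\tilde q\le q'$ that (up to unitary equivalence by a unitary close to $1$) lie in $D$ and whose traces are still close to those of $p',q'$. Inside $D=\oplus_i M_{s(i)}$ a projection is determined by its rank in each block, and since each block matrix algebra has size $\ge N$, I can shrink each block-rank by at most one to make $\tilde p$ and $\tilde q$ dominated by projections $p_1,q_1\in D$ that have the \emph{same} rank in every block of $D$ — hence $[p_1]=[q_1]$ in $K_0(D)\subset K_0(A)$ and in fact $\tau(p_1)=\tau(q_1)$ for every $\tau$, because a trace on $A$ restricted to $D$ is a weighted sum of the normalized block traces and the two projections agree block by block. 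Choosing $N$ large (depending on $r$ and $D$) and $\varepsilon$ small, the loss incurred in passing from $p',q'$ to $p_1,q_1$ is controlled so that $\tau(p_1)=\tau(q_1)\ge rD$ for all $\tau\in T(A)$.

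The main obstacle I expect is the bookkeeping in the last paragraph: one must be careful that the small-unitary perturbations used to push the compressed projections into $D$ are compatible with the order relations $p_1\le p$ and $q_1\le q$ (so one should conjugate $p,q$ themselves by the same small unitary, or equivalently work inside $pAp$ and $qAq$ from the start), and that the rank-adjustments in the various blocks of $D$ are made consistently so as to preserve $[p_1]=[q_1]$ exactly in $K_0(A)$, not merely approximately. A cleaner route that avoids some of this is: since $A$ has stable rank one and strict comparison of projections is governed by traces on a simple $C^*$-algebra of tracial rank $\le 1$, one shows directly that two projections with $\tau(p_1)=\tau(q_1)$ for all $\tau$ and with $p_1,q_1$ in the same finite-dimensional subalgebra are Murray--von Neumann equivalent; this is where the tracial-rank-one hypothesis is really used. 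Either way, the heart of the matter is arranging equal traces on the nose, and the divisibility from \ref{tdvi} is precisely what makes that possible.
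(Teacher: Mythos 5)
There is a genuine gap at the central step of your argument. Tracial approximate divisibility (\ref{tdvi}) only produces a finite dimensional subalgebra $D=\oplus_i M_{s(i)}$ that \emph{almost commutes} with the finite set $\{p',q'\}$; it does not say that $ep'e$ and $eq'e$ are approximately contained in $D$. The compressed projections live (approximately) in the relative commutant of $D$ inside $eAe$, and in general no small perturbation can move them into $D$: projections at distance less than $1$ are unitarily equivalent and hence have identical traces, while for each fixed $\tau\in T(A)$ the projections of $D$ realize only finitely many trace values, determined by the (uncontrollable) traces of the minimal projections of $D$. Since \ref{tdvi} gives you no way to prescribe those values, the claim that $\tilde p,\tilde q$ ``lie in $D$'' after a small correction fails, and with it the block-rank matching that was supposed to produce $[p_1]=[q_1]$. (What divisibility legitimately gives is the ability to cut a projection into roughly equal mutually equivalent pieces, as in \ref{ProD}; not to absorb it into $D$.) Your alternative ``cleaner route'' does not close the gap either: in a simple \CA\, with $TR(A)\le 1$, equal traces do not imply Murray--von Neumann equivalence ($K_0$ may contain infinitesimal elements), and strict comparison only yields subequivalence from a strict, uniformly bounded trace inequality, which you have not arranged -- \ref{ProD} cannot pin $\tau(p_1)$ into a narrow band like $[rD, r_1D]$ uniformly in $\tau$ when $\tau(p)$ itself varies.

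For comparison, the paper avoids finite dimensional algebras altogether and uses the TAI structure with fiberwise trace control (as in \ref{TTL} and \ref{small}): one approximates $p$ and $q$ within $1/2$ by $p_0'+p_1'$ and $q_0'+q_1'$ with $p_1',q_1'$ in an interval algebra $B=\oplus_j C(X_j,M_{r(j)})$ whose normalized fiber traces of $p_1',q_1'$ stay above $r_1D$ and whose block sizes satisfy $r(j)\ge 2/((r_1-r)D)$. Inside each block one cuts a constant-rank subprojection $p_{1,j}\le \pi_j(p_1')$ with fiber trace in $(rD,r_1D]$; since its pointwise rank is dominated by that of $\pi_j(q_1')$, it is subequivalent to $\pi_j(q_1')$ within $C(X_j,M_{r(j)})$ via a partial isometry $v_j$. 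Summing over blocks and using the $1/2$-closeness to transfer back to genuine subprojections of $p$ and $q$ yields $p_1\le p$, $q_1\le q$ with $[p_1]=[q_1]$ and $\tau(p_1)=\tau(q_1)\ge rD$. If you want to keep your outline, you would need to replace the ``push into $D$'' step by this kind of fiberwise constant-rank cutting inside an interval algebra, where both the trace band and the comparison can be controlled exactly.
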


\begin{proof}
Fix $1>r_1>r>0.$

Similar argument as in \ref{small} leads to the following: there are
mutually  orthogonal projections $p_0', p_1'$ and mutually
orthogonal projections $q_0', q_1'$ such that
\beq\label{prom-2}
\|p_0+p_1'-p\|<1/2,\,\,\, \|q_0+q_1'-q\|<1/2
\eneq
and $p_1', q_1'\in B=\oplus_{j=1}^L C(X_j, M_{r(j)}),$ where
$X_j=[0,1],$ or $X_j$ is a single point,
\beq\label{prom-3}
t_{j,x}(p_1')> r_1D\andeqn t_{j,x}(q_1')> r_1D
\eneq
for $x\in X_j$ and $j=1,2,...,L.$ Moreover, as in 3.4 of
\cite{Lntr1}, $r(j)\ge {2\over{(r_1-r)D}}.$ There is a projection
$p_{1, j}\in C(X_j, M_{r(j)})$ such that $p_{1,j}\le \pi_j(p_1')$
and
\beq\label{prom-4}
r_1D\ge t_{j,x}(p_{1,j})>rD
\eneq
for $x\in X_j$ and $j=1,2,...,L,$ where $\pi_j: B\to C(X_j,
M_{r(j)})$ is a projection.

Since
$$
t_{j,x}(p_{1,j})\le t_{j,x}(q_1')
$$
for all $x\in X_j,$ $j=1,2,...,L.$ There exists a partial isometry
$v_j\in C(X_j,M_{r(j)})$ such that
$$
v_j^*v_j=p_{1,j}\andeqn v_jv_j^*\le \pi_j(q_1'),
$$
$j=1,2,...,L.$

Define $p_1''=\sum_{j=1}^Lp_{1,j}$ and $v=\sum_{j=1}^Lv_j.$ Then
$$
p_1''\le p_1, v^*v=p_1''\andeqn vv^*\le q_1'.
$$
Moreover,
$$
\tau(p_1'')\ge rD\tforal \tau\in T(A).
$$
By (\ref{prom-2}), there exists  projection $p_1 \le p$ and a
projection $q_1\le q$ such that
\beq\label{prom-10}
[p_1]=[p_1'']=[vv^*]=[q_1].
\eneq
Note that
$$
\tau(p_1)=\tau(q_1)\ge r\cdot D\tforal \tau\in T(A).
$$

\end{proof}

\begin{lem}{\rm (cf. Lemma 5.5 of \cite{Lntr1})}\label{NDIV}
Let $B$ be a unital separable amenable  \CA\, and let $A$ be a
unital simple \CA\, with $TR(A)\le 1.$ For any $\ep>0,$ any finite
subset ${\cal F}\subset B,$ any $\sigma>0,$  any integer $k\ge 1,$
and integer $K\ge 1$
and  any finite subset ${\cal F}_1\subset A.$ Suppose that $\phi,
\psi: B\to A$ are two unital positive linear maps. Then, there is a
projection $p\in A,$ a \SCA\,
$C_0=\oplus_{i=1}^{n_1}(C([0,1], M_{d(i)})\oplus \oplus_{j=1}^{n_2}M_{r(j)}$ with
$d(i), r(j)\ge K$ and a \SCA\, $C$ of $A$ with $C=M_k(C_0)$ and with $1_C=p$ and
unital positive linear maps $\phi_0, \psi_0: B\to C_0$ such that
\beq
\|[\phi(f),\, p]\|&<&\ep, \,\,\,\|[\psi(f),\,p]\|<\ep\tforal f\in
{\cal F};\\
\|[x, \, p]\|&<&\ep\tforal x\in {\cal F}_1;\\
&&\hspace{-1.3in}\|\phi(f)-((1-p)\phi(f)(1-p)\oplus \phi_0^{(k)}(f))\|<\ep,\\
&&\hspace{-1.3in}\|\psi(f)-((1-p)\psi(f)(1-p)\oplus \psi_0^{(k)}(f))\|<\ep \tforal f\in
{\cal F}\tand\\
\tau(1-p)&<&\sigma\tforal \tau\in T(A),
\eneq
where
\beq
\phi_0^{(k)}(f)&=&{\rm diag}(\overbrace{\phi_0(f), \phi_0(f),...,
\phi_0(f)}^k)\tand\\
\psi_0^{(k)}(f)&=&{\rm diag}(\overbrace{\psi_0(f), \psi_0(f),...,
\psi_0(f)}^k)\tforal f\in B.
\eneq

\end{lem}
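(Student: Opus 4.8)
The plan is to combine the tracial-rank-one decomposition of $A$ with the tracial approximate divisibility stated in \ref{tdvi}, applied to a sufficiently large finite subset that controls both $\phi$ and $\psi$. First I would fix a finite subset ${\cal G}\subset B$ large enough (and $\dt$ small enough) so that, since $B$ is amenable, both $\phi$ and $\psi$ can be replaced, up to $\ep/4$ on ${\cal F}$, by $\dt$-${\cal G}$-multiplicative completely positive linear maps; then $\phi({\cal G})\cup\psi({\cal G})\cup{\cal F}_1$ is a finite subset of $A$ that I feed into the structural theorems for $A$. Using \ref{dtr1} (TAI) I obtain a projection $e\in A$ and a \SCA\, $D=\oplus_j M_{s(j)}(C(Y_j))$ with $Y_j=[0,1]$ or a point, $1_D=e$, such that $e$ almost commutes with $\phi({\cal G}),\psi({\cal G})$ and ${\cal F}_1$, the compressions $e\phi(f)e$, $e\psi(f)e$ are within $\ep/8$ of $D$, and $\tau(1-e)<\sigma/2$ for all $\tau\in T(A)$.

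Next I would cut down inside $D$ to arrange the extra multiplicity-$k$ structure and the lower bound $K$ on the sizes. Each summand $M_{s(j)}(C(Y_j))$ of $D$ is, for $A$ simple with $TR(A)\le 1$, automatically of large matrix size (as in Proposition 3.4 of \cite{Lntr1}); more importantly, I apply \ref{tdvi} (tracial approximate divisibility) to the finite set consisting of the $D$-compressions of $\phi({\cal G}),\psi({\cal G}),{\cal F}_1$ and the unit of $D$, with the integer parameter $N$ chosen to exceed $kK$. This yields a further projection $p'\le e$ and a finite-dimensional \SCA\, $E=\oplus M_{m(i)}$ with each $m(i)\ge kK$, $1_E=p'$, $p'$ almost commuting with everything in sight, and $\tau(e-p')<\sigma/2$. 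Inside $E$ I can write $E=M_k(E_0)$ with $E_0=\oplus M_{m(i)/k}$ and each block of $E_0$ of size $\ge K$; combining $E_0$ with the $C([0,1],-)$ summands of $D$ that survive the cut, I produce $C_0=\oplus_i C([0,1],M_{d(i)})\oplus\oplus_j M_{r(j)}$ with $d(i),r(j)\ge K$, and $C=M_k(C_0)$ sitting as a \SCA\, of $A$ with $1_C=p:=p'$.

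Finally I would define $\phi_0,\psi_0:B\to C_0$ by composing $\phi,\psi$ with the compression to $E_0$ (via the conditional-expectation-type maps $A\to pAp$ and $pAp\to C$, then projecting onto the first corner of $M_k(C_0)$), so that the $M_k$-amplifications $\phi_0^{(k)},\psi_0^{(k)}$ approximate $p\phi(\cdot)p$, $p\psi(\cdot)p$ on ${\cal F}$; the point is that because $p$ lies in the divisible part $E=M_k(E_0)$, the compression $p\phi(f)p$ is, up to the small error accumulated above, genuinely of the amplified form ${\rm diag}(\phi_0(f),\dots,\phi_0(f))$. Collecting the almost-commutation estimates, the three approximation estimates, and $\tau(1-p)<\tau(1-e)+\tau(e-p')<\sigma$, and renaming the cumulative error as $\ep$, gives the statement. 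The main obstacle I anticipate is the bookkeeping: one must choose the order of applications (TAI first, then tracial divisibility on the cut-down algebra) and the sizes of ${\cal G}$, $\dt$, $N$, $K$ so that \emph{all} error terms — those from making $\phi,\psi$ almost multiplicative, from the two cut-downs, and from replacing $p\phi(f)p$ by its amplified normal form — telescope to below the prescribed $\ep$, while simultaneously keeping the block sizes above $K$; the structural ingredients themselves are exactly \ref{dtr1} and \ref{tdvi}, so no genuinely new analysis is required, only careful quantifier management (cf. Lemma 5.5 of \cite{Lntr1}).
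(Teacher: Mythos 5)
There is a genuine gap at the step where you ``combine $E_0$ with the $C([0,1],\cdot)$ summands of $D$ that survive the cut'' to produce $C_0$ and $C=M_k(C_0)$. The interval algebra $D$ obtained from your first (TAI) cut-down and the finite-dimensional algebra $E$ obtained afterwards from tracial approximate divisibility only \emph{almost} commute with each other and with the compressions of $\phi,\psi$; they do not generate a subalgebra of the prescribed form $M_k(C_0)$ with $C_0$ a direct sum of interval and matrix blocks of size at least $K$, and, more importantly, there is no reason the compression $p\phi(f)p$ is approximately of the amplified form ${\rm diag}(\phi_0(f),\dots,\phi_0(f))$ relative to any such combination: the compressed elements lie approximately in $D$ but are far from central in $D$, so cutting $D$ (or a corner of it) into $k\times k$ blocks does not diagonalize them. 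The paper avoids this by reversing the order of the two structural steps. It applies tracial approximate divisibility \emph{first}, obtaining a finite-dimensional $D=\oplus_i M_{R(i)}$ with $R(i)$ large which almost commutes with $\phi({\cal F}),\psi({\cal F}),{\cal F}_1$; writing $R(i)=m_ik+s_i$ it cuts down to $D_1\cong M_k(D_0)$ with unit $p'$ and $\tau(1-p')<\sigma/2$, and fixes unitaries $w_i\in D_1$ with $w_i^*E_1w_i=E_i$, where $E_1=1_{D_0}$. Only then does it use $TR(E_1AE_1)\le 1$ to find, \emph{inside the corner} $E_1AE_1$, a projection $q_1$ and a subalgebra $C_0=\oplus_i C([0,1],M_{d(i)})\oplus\oplus_j M_{r(j)}$ with $d(i),r(j)\ge K$ almost containing the compressions, and it invokes 3.2 of \cite{LnTAF} (this is where amenability of $B$ actually enters) to replace $q_1\phi(\cdot)q_1$ and $q_1\psi(\cdot)q_1$ by genuine unital positive maps $\phi_0,\psi_0$ into $C_0$. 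Setting $p=\sum_{i=1}^k w_i^*q_1w_i$, the near-commutation of the $w_i$ with $\phi(f)$ gives $p\phi(f)p\approx\sum_i w_i^*q_1\phi(f)q_1w_i\approx\phi_0^{(k)}(f)$, which is precisely the identity your ordering cannot deliver, since the interval structure has to be produced inside the corner of the divisible piece, not imported from an earlier decomposition of $A$.

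A second, smaller, defect: your opening move of replacing $\phi,\psi$ by almost multiplicative maps ``since $B$ is amenable'' is unjustified and unnecessary. The maps are merely unital positive linear maps, and amenability of $B$ provides no approximation of them by almost multiplicative ones; in the correct argument amenability is used only through 3.2 of \cite{LnTAF}, to push the compressions into $C_0$ as honest positive maps. Your other ingredients (TAI with block sizes forced above $K$ via simplicity, the estimate $\tau(1-p)<\sigma$ by splitting into two cut-downs) are fine, but the proof as proposed does not go through without restructuring it along the lines above.
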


\begin{proof}
Fix $\ep_1>0$ and a finite subset ${\cal G}\subset B.$  We assume
$\ep_1<\ep/16.$
Choose an integer $N$ such that
${k\over{N}}<\sigma/4.$

Since $A$ is tracially approximately divisible, there exists a
projection $q\in A$ and a finite dimensional \SCA\,
$D=\oplus_{i=1}^nM_{R(i)}$ with $R(i)\ge N$ and with $1_D=p$ such
that
\beq\label{Ndiv-0}
\|[x, \, y]\|<\ep_1/8k\tforal x\in {\cal F}_1\cup \{\phi(f),
 \psi(f): f\in {\cal G}\}\
  \andeqn y\in D
  \eneq
   with
 $\|y\|\le 1$
  and $\tau(1-q)<\sigma/4\tforal \tau\in T(A).$
Write $R(i)=m_ik+s_i,$ where $m_i>0$ and $s_i\ge 0$ are integers
such that $s_i<k,$ $i=1,2,...,n.$ Since $R(i)\ge N,$
\beq\label{Ndiv-1}
{s_i\over{R(i)}}<\sigma/4,\,\,\, i=1,2,...,n.
\eneq
Let $\{e_{l,j}^{(i)}\}_{1\le l,j\le R(i)}$ be a matrix unit for
$M_{R(i)},$ $i=1,2,...,n.$ Choose
$e_i=\sum_{j=1}^{m_ik}e_{j,j}^{(i)}.$ Define $D_1=\sum_{i=1}^n
e_iM_{R(i)}e_i.$ Then $D_1\cong M_k(D_0)$ and $D_0=\sum_{i=1}^n
M_{m_i}.$

Put $p'=\sum_{i=1}^ne_i.$
 Then, by (\ref{Ndiv-1}), we estimate that
\beq\label{Ndiv-2}
\tau(1-p')=\tau(1-q)+\tau(q-\sum_{i=1}^ne_i)<\sigma/4+\sigma/2=\sigma/2
\eneq
for all $\tau\in T(A).$ We have that
\beq\label{Ndiv-3}
\|[x,\, y]\|<\ep_1/8k\tforal x\in {\cal F}_1\cup\{\phi(f),\psi(f):
f\in {\cal G}\}\andeqn y\in D_1\,\,\,{\rm with}\,\,\, \|y\|\le 1.
\eneq
Let $E_1, E_2,...,E_k$ be mutually orthogonal and mutually
equivalent projections in $M_k(D_0)$ with $E_1=1_{D_0}.$ Let
$w_i\in D_1$ be a unitary such that
$$
w_i^*E_1w_i=E_i,\,\,\,i=1,2,....
$$
Since $TR(E_1AE_1)\le 1,$ there exists a projection $e_1\in E_1AE_1$ and a \SCA\, $C_0$ of $E_1AE_1$ with
$C_0=\oplus_{i=1}^{n_1} (C([0,1], M_{d(i)})\oplus \oplus_{j=1}^{n_2}M_{r(i)}$ with
$d(i), r(j)\ge K,$ $1\le i\le n_1$ and $1\le i\le n_2,$ and with $1_{C_0}=q_1$ such that
\beq\label{Ndiv-3-1}
\|[x, \,q_1]\|&<&\ep_1/16k\tforal x\in {\cal F}_1\cup\{p'\psi(f)p',p'\phi_0'(f)p': f\in {\cal G}\};\\
{\rm dist}(q_1xq_1, C_0)&<&\ep_1/16k\tforal x\in {\cal F}_1\cup\{p'\psi(f)p',p'\phi_0'(f)p': f\in {\cal G}\}\andeqn\\\label{Ndiv-3-1-1}
t(E_1-q_1)&<&\sigma/16k\tforal t\in T(E_1AE_1).
\eneq
It follows from 3.2 of \cite{LnTAF} that there exists a unital \morp\, $\phi_0, \psi_0: B\to C_0$ such that
\beq\label{Ndiv-3-2}
\|q_1\phi(f)q_1-\phi_0(f)\|<\ep/16k\andeqn \|q_1\psi_0(f)q_1-\psi_0(f)\|<\ep/16k\tforal f\in {\cal F},
\eneq
provided that $\ep_1$ is small enough and ${\cal G}$ is large enough.
Put $p=\sum_{i=1}^kw_i^*q_1w_i$ and $q_i=w_i^*q_1w_i,$ $i=1,2,...,k.$ Then, by (\ref{Ndiv-2}) and (\ref{Ndiv-3-1-1}),
\beq\label{Ndiv-4-1}
\tau(1-p)<\sigma\tforal \tau\in T(A).
\eneq

We estimate that
\beq\label{Ndiv-4}
\sum_{i=1}^kw_i^*q_1\phi(f)q_1w_i&=&\sum_{i=1}^kq_iw_i^*q_1\phi(f)q_1w_i\\
&\approx_{\ep_1/8k}& \sum_{i=1}^kq_i\phi(f)w_i^*q_1w_i\hspace{0.2in} {\rm ( by\,\,\,(\ref{Ndiv-3-1}) \andeqn (\ref{Ndiv-0}))}\\\label{Ndiv-4+1}
&=&\sum_{i=1}^kq_i\phi(f)q_i\,\,\,\,\,\,\,{\rm on}\,\,\,{\cal F}.
\eneq
Thus, by (\ref{Ndiv-3-1}), (\ref{Ndiv-4+1}) and (\ref{Ndiv-3-2}),
\beq\label{Ndiv-5}
p\phi(f)p&\approx_{\ep_1/4}&\sum_{i=1}^k q_i\phi(f)q_i\\
&\approx_{\ep_1/8k}& \sum_{i=1}^k w_i^*q_1\phi(f)q_1w_i\\\label{Ndiv-5+1}
&\approx_{\ep/16k} &\sum_{i=1}^k w_i^*\phi_0(f)w_i\,\,\,{\rm on}\,\,\,{\cal F}.
\eneq

Exactly the same argument shows that
\beq\label{Ndiv-6}
p\psi(f)p&\approx_{\ep/4}& \sum_{i=1}^kw_i^*\psi_0(f)w_i\,\,\,{\rm on}\,\,\,
 {\cal F}.
\eneq
The lemma follows by combining together (\ref{Ndiv-5+1}),
(\ref{Ndiv-6}), (\ref{Ndiv-3}), (\ref{Ndiv-3-1}) and (\ref{Ndiv-4-1}).

\end{proof}

\section{Approximate unitary equivalence}

\begin{lem}\label{DL}
Let $X$ be a connected finite CW complex and let
$Y=X\setminus\{\xi\},$ where $\xi\in X$ is a point. Let
$K_0(C(Y))=G=\Z^k\oplus Tor(G)$ and $K_0(C(X))=\Z\oplus G.$
Fix  $\kappa\in  Hom_{\Lambda}(\underline{K}(C_0(Y)),
\underline{K}({\cal K})).$ Put $K=\max\{|\kappa(g_i)|:
g_i=({\overbrace{0,...,0}^{i-1}, 1,0,...,0)}\in \Z^k\}.$ Then, for
any $\dt>0$  any finite subset ${\cal G}\subset C(X)$ and any
finite subset ${\cal P}\subset \underline{K}(C_0(Y)),$  there
exists an integer $N(K)\ge 1$ (which depends on $K,$ $\dt,$ ${\cal
G}$ and ${\cal P},$ but not $\kappa$) and a unital $\dt$-${\cal
G}$-multiplicative \morp\, $L: C(X)\to M_{N(k)}$ such that
\beq\label{DL1}
[L|_{C_0(Y)}]|_{\cal P}=\kappa|_{\cal P}.
\eneq
\end{lem}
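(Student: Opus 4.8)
The statement is a realization/approximation result: any $KL$-class $\kappa$ on $C_0(Y)$ that takes values bounded by $K$ on the free generators can be realized, up to the prescribed $\delta$-${\cal G}$-multiplicativity and up to ${\cal P}$, by a contractive completely positive map into a matrix algebra whose size $N(K)$ is controlled by $K$ (and by $\delta,{\cal G},{\cal P}$) but \emph{not} by $\kappa$ itself. The plan is to reduce to the building blocks of $\underline{K}$ and splice together finitely many "model" maps. First I would fix a finite CW decomposition of $X$ and recall the Dadarlat--Loring picture: $\underline{K}(C_0(Y))$ is determined by the finitely many groups $F_{k_0}\underline{K}(C_0(Y))$ for a suitable $k_0$ (using \ref{Kund}), so it suffices to realize $\kappa$ on a fixed finite set of generators of $F_{k_0}\underline{K}(C_0(Y))$ containing ${\cal P}$; these generators live in $K_0, K_1$ and the $K_i(\,\cdot\,,\Z/m\Z)$ for $m\mid k_0$, and each is represented by an explicit (virtual) projection/unitary in some $M_R(C_0(Y)^\sim)$ or $M_R(C_0(Y)\otimes C_m)^\sim$, with $R$ depending only on $X$ and $k_0$.

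The key construction is then: for each such standard generator $g$, build a finite-dimensional "test map" $\theta_g\colon C(X)\to M_{n(g)}$ whose induced $KK$-datum is the corresponding elementary class (a point evaluation plus a small correction carrying exactly $g$), with $n(g)$ a fixed constant depending only on $X$, $k_0$, $\delta$ and ${\cal G}$. For the free $K_0$-generators $g_1,\dots,g_k$ one uses $\kappa(g_i)\in\Z$ with $|\kappa(g_i)|\le K$: take $|\kappa(g_i)|$ copies (with the appropriate sign, i.e.\ using the complementary projection) of the basic map realizing $g_i$; this is where the bound $K$ enters and keeps the total matrix size at most linear in $K$. For the torsion generators of $K_0$ and for the $K_1$- and $\Z/m\Z$-generators the coefficient $\kappa(g)$ ranges over a \emph{finite} group, so there are only finitely many possible values and one simply picks, for each, a fixed model map of bounded size. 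Now form $L_0\colon C(X)\to M_N$ as the direct sum of a large point-evaluation block (to absorb the "trivial part" and make $L_0$ unital) together with the finitely many blocks chosen above; then $[L_0|_{C_0(Y)}]|_{\cal P}=\kappa|_{\cal P}$ by additivity of $[\,\cdot\,]$ on direct sums (and the fact that point evaluations kill $\underline K(C_0(Y))$), and $N=N(K)$ is bounded by a function of $K,\delta,{\cal G},{\cal P}$ alone. Finally, by absorbing one more point-evaluation summand of controlled size one can make $L$ unital and $\delta$-${\cal G}$-multiplicative (the basic maps are themselves $\delta$-${\cal G}$-multiplicative for $\delta,{\cal G}$ chosen at the start, and a direct sum of $\delta$-${\cal G}$-multiplicative maps is again $\delta$-${\cal G}$-multiplicative); this gives the required $L$.

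The main obstacle I anticipate is \textbf{producing the elementary test maps with controlled target dimension and making the dependence $N(K)$ genuinely uniform in $\kappa$}. One must check that each standard generator of $F_{k_0}\underline K(C_0(Y))$ can be realized by an honest finite-dimensional (almost) homomorphism $C(X)\to M_{n}$ — not merely by a formal $KK$-element — with $n$ bounded independently of $\kappa$; this uses that $X$ is a finite CW complex and that such $K$-theory classes are detected by finite data, together with the Dadarlat--Loring/Eilenberg--Ruelle type machinery already invoked in \ref{ddbot} and \ref{Kund}. The subtlety is purely bookkeeping: separating the "unbounded" free part (handled by the explicit linear-in-$K$ multiplicity) from the "bounded" torsion part (handled by a finite look-up table of fixed model maps), and verifying that after direct-summing everything stays $\delta$-${\cal G}$-multiplicative and unital. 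Once the model maps are in hand, the splicing and the verification of (\ref{DL1}) via additivity of $[\,\cdot\,]|_{\cal P}$ on direct sums is routine.
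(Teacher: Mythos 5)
There is a genuine gap, and it sits exactly at the point you flag and then wave away as ``purely bookkeeping.'' Your whole scheme rests on the existence of the elementary ``test maps'' $\theta_g\colon C(X)\to M_{n(g)}$: finite-dimensional, unital, $\dt$-${\cal G}$-multiplicative maps whose restriction to $C_0(Y)$ realizes a prescribed elementary class of $Hom_{\Lambda}(\underline{K}(C_0(Y)),\underline{K}({\cal K}))$ (and, for the ``residual'' part, realizes each of the finitely many torsion-valued classes --- note you cannot prescribe values generator-by-generator, since an arbitrary assignment need not be a $\Lambda$-homomorphism, so your ``look-up table'' must realize whole classes $\kappa_0$, which is the same realization problem again). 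Nothing in \ref{Kund} or \ref{ddbot} produces such maps; those are definitions of $\underline{K}$, $KL$ and of Bott maps, not realization theorems. This existence statement is not bookkeeping --- it \emph{is} the content of the lemma. The paper's proof supplies it by quoting Dadarlat--Loring (4.3 and 5.3 of \cite{DL1}): every $\kappa$ is realized by an asymptotic morphism $\{\phi_t\}\colon C_0(Y)\to{\cal K}$; for large $t$ one has $[\phi_t]|_{\cal P}=\kappa|_{\cal P}$ and $\dt$-${\cal G}$-multiplicativity, one compresses by a finite-rank projection $E$ almost commuting with $\phi_t({\cal G})$, and one makes the map unital on $C(X)$ via $L(f)=f(\xi)E+E\phi_t(f-f(\xi))E$. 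Without this (or an equivalent) input, your direct-sum construction has nothing to sum.

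A second, structural point: the uniformity ``$N$ depends on $K$ but not on $\kappa$'' is obtained in the paper not by your explicit linear-in-$K$ multiplicity count, but by a cheap finiteness argument: since $K_i(C_0(Y))$ is finitely generated, $Hom_{\Lambda}(\underline{K}(C_0(Y)),\underline{K}({\cal K}))=Hom_{\Lambda}(F_m\underline{K}(C_0(Y)),F_m\underline{K}({\cal K}))$ for some $m$, and once $|\kappa(g_i)|\le K$ there are only finitely many possible $\kappa$ (the torsion and coefficient data take values in finite groups), so one sets $N(K)=\max_\kappa N(\kappa)$. The linear bound you are aiming for, together with the decomposition $\kappa=\kappa_0+\sum_i m_i\kappa_i$ and the direct sum with multiplicities $|m_i|$, is precisely the content of the next lemma, \ref{LDL}, which the paper deduces \emph{from} \ref{DL}. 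So your proposal essentially reproves \ref{LDL} while assuming the hard part of \ref{DL}; to close it you must either prove the elementary realizations directly or cite the Dadarlat--Loring asymptotic-morphism realization as the paper does.
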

 (Note that the lemma includes the case that $K=0.$)
\begin{proof}

Choose $\dt_0>0$ and a finite subset ${\cal G}_0\subset C_0(Y)$ such
that, for any pair of $\dt_0$-${\cal G}_0$-multiplicative \morp s
from $C_0(Y)$ to any \CA, $[L_i]|_{\cal P}$ is well-defined and
\beq\label{DL3-}
[L_1]|_{\cal P}=[L_2]|_{\cal P},
\eneq
provided that
$$
L_1\approx_{\dt_0}L_2 \,\,\,{\rm on}\,\,\, {\cal G}_0.
$$
It follows from 4.3 and 5.3 of \cite{DL1} that there exists an
asymptotic morphism  $\{\phi_t: t\in [1, \infty)\}: C_0(Y)\to {\cal
K}$ such that
\beq\label{DL2}
[\{\phi_t\}]=\kappa.
\eneq
Note that, for each $t\in [1, \infty),$ $\phi_t$ is a \morp\, and
$$
\lim_{t\to\infty} \|\phi_t(ab)-\phi_t(a)\phi_t(b)\|=0
$$
for all $a, b\in C_0(Y).$
Define $\dt_1=\min\{\dt_0/2, \dt/2\}$ and ${\cal G}_1={\cal
G}_0\cup{\cal G}.$
It follows that, for sufficiently large $t,$
\beq\label{DL3}
[\phi_t]|_{\cal P}=\kappa|_{\cal P}
\eneq
and $\phi_t$ is $\dt_1$-${\cal G}_1$ multiplicative. Choose a
projection $E\in {\cal K}$ such that
\beq\label{DL4}
\|E\phi_t(a)-\phi_t(a)E\|<\dt_1/4\tforal a\in {\cal G}_2,
\eneq
where ${\cal G}_2={\cal G}_1\cup\{ab: a, b\in {\cal G}_1\}.$
Define $L: C(X)\to E{\cal K} E$ by $L(f)=f(\xi)E+E\phi_t(f-f(\xi))E$
for $f\in C(X).$ It is easy to see that $L$ is a $\dt$-${\cal
G}$-multiplicative \morp\, and
\beq\label{DL4+}
[L|_{C_0(Y)}]|_{\cal P}=\kappa|_{\cal P}.
\eneq
Define the rank of $E$ to be $N(\kappa).$ Note that $E{\cal K}E\cong
M_{N(\kappa)}.$
Note that since $K_i(C_0(Y))$ is finitely generated, by \cite{DL2},
$$
Hom_{\Lambda}(\underline{K}(C_0(Y)), \underline{K}({\cal
K}))=Hom_{\Lambda}(F_m\underline{K}(C_0(Y)), F_m\underline{K}({\cal
K}))
$$
for some integer $m\ge 1.$ Thus, when $K$ is given, there are only
finitely many different $\kappa$ so that $|\kappa(g_i)|\le K$ for
$i=1,2,...,k.$ Thus such $N(K)$ exists by taking the maximum of
those $N(\kappa).$

\end{proof}

\begin{lem}\label{LDL}

Let $X$ be a connected finite CW complex and let
$Y=X\setminus\{\xi\},$ where $\xi\in X$ is a point. Let
$K_0(C(Y))=G=\Z^k\oplus Tor(G)$ and $K_0(C(X))=\Z\oplus G.$
For any $\dt>0,$ any  finite subset ${\cal G}\subset C(X)$ and any
finite subset ${\cal P}\subset \underline{K}(C_0(Y)),$  there
exists an integer $N(\dt, {\cal G}, {\cal P})\ge 1$ satisfying the
following:

Let   $\kappa\in  Hom_{\Lambda}(\underline{K}(C_0(Y)),
\underline{K}({\cal K}))$ and let  $K=\max\{|\kappa(g_i)|:
g_i=({\overbrace{0,...,0}^{i-1}, 1,0,...,0)}\in \Z^k\}.$ There exists
an integer $N(K)\ge 1$ and a unital $\dt$-${\cal G}$-multiplicative
\morp\, $L: C(X)\to M_{N(k)}$ such that
\beq\label{Ldl-1}
[L]|_{\cal P}=\kappa|_{\cal P}\tand
{N(K)\over{\max\{K,1\}}} &\le & N(\dt, {\cal G}, {\cal P}).
\eneq
\end{lem}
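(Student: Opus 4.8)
\medskip
\noindent\textbf{Proof proposal.}
The plan is to upgrade Lemma \ref{DL} to the stated quantitative form. That lemma already produces, for any $\kappa,$ a unital $\dt$-${\cal G}$-multiplicative \morp\ from $C(X)$ into a matrix algebra realizing $\kappa|_{\cal P};$ the only new point is that its rank may be kept linear in $K.$ This is achieved by writing an arbitrary $\kappa$ with $|\kappa(g_i)|\le K$ as a $\Z$-linear combination, of total coefficient length $O(K),$ of boundedly many \emph{fixed} classes, realizing each of those fixed classes once and for all via Lemma \ref{DL}, and then taking the corresponding (finite, signed) direct sum of the resulting maps. Two elementary facts make this work: a direct sum of $\dt$-${\cal G}$-multiplicative maps is again $\dt$-${\cal G}$-multiplicative (norms are computed coordinatewise), and, once the maps are sufficiently multiplicative, $[\,\cdot\,|_{C_0(Y)}]|_{\cal P}$ is additive under finite direct sums and changes sign when $\kappa$ is replaced by $-\kappa$ (the usual $KK$-calculus of \cite{DL2}; cf.\ \ref{Dppu}).

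\medskip
First I would analyse $\Gamma:=Hom_{\Lambda}(\underline{K}(C_0(Y)),\underline{K}({\cal K})).$ Since $K_*(C_0(Y))$ is finitely generated, $\Gamma$ is a finitely generated abelian group. By the Universal Coefficient Theorem, the homomorphism $A\colon\Gamma\to\Z^k,$ $A(\kappa)=(\kappa(g_1),\dots,\kappa(g_k)),$ is, after identifying $Hom(K_0(C_0(Y)),\Z)$ with $\Z^k,$ the canonical surjection $KK(C_0(Y),{\cal K})\to Hom(K_0(C_0(Y)),\Z),$ whose kernel is isomorphic to $\Ext^1_{\Z}(K_1(C_0(Y)),\Z)$ and hence \emph{finite}. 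Thus $A(\Gamma)$ is a free subgroup of $\Z^k$ of some rank $r';$ fix a $\Z$-basis $b_1,\dots,b_{r'}$ of $A(\Gamma),$ choose lifts $\tilde b_1,\dots,\tilde b_{r'}\in\Gamma$ with $A(\tilde b_l)=b_l,$ and enumerate $\ker A=\{f_1,\dots,f_p\}.$ Since $(c_l)\mapsto\sum_l c_l b_l$ is an injective linear map $\Q^{r'}\to\Q^k,$ there is a constant $C_0>0$ (depending only on $Y$) with $|c_l|\le C_0\,\|\sum_j c_j b_j\|_\infty$ for all $l$ and all $(c_l)\in\Q^{r'}.$ Every $\kappa\in\Gamma$ has a unique expression $\kappa=\sum_l c_l\tilde b_l+f$ with $c_l\in\Z$ and $f\in\ker A$ (because $A(\kappa)=\sum_l c_l b_l$ determines the $c_l,$ and then $\kappa-\sum_l c_l\tilde b_l\in\ker A$); if $|\kappa(g_i)|\le K$ for all $i,$ then $\|\sum_l c_l b_l\|_\infty\le K,$ so $\sum_l|c_l|\le r'C_0K.$

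\medskip
Next, using \ref{Dppu}, fix $\dt'\in(0,\dt]$ and a finite ${\cal G}'\supseteq{\cal G}$ so that $[\,\cdot\,|_{C_0(Y)}]|_{\cal P}$ is well defined on unital $\dt'$-${\cal G}'$-multiplicative maps $C(X)\to M_d$ and is additive under finite direct sums of such. Apply Lemma \ref{DL} at level $(\dt',{\cal G}')$ to each of the finitely many classes $\pm\tilde b_1,\dots,\pm\tilde b_{r'},f_1,\dots,f_p,$ and pad each resulting map with a point evaluation $\pi_\xi$ (which contributes $0$ to $[\,\cdot\,|_{C_0(Y)}]$) to obtain unital $\dt'$-${\cal G}'$-multiplicative maps $L_{\pm\tilde b_l},L_{f_j}\colon C(X)\to M_{M_1}$ realizing these classes on ${\cal P},$ with a common rank $M_1\ge1$ depending only on $\dt,{\cal G},{\cal P}$ (and $Y$). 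Now, given $\kappa$ with $|\kappa(g_i)|\le K,$ write $\kappa=\sum_l c_l\tilde b_l+f$ and let $L$ be the direct sum of $L_f$ together with, for each $l,$ $|c_l|$ copies of $L_{\mathrm{sgn}(c_l)\tilde b_l}.$ Then $L\colon C(X)\to M_{N(K)}$ with $N(K)=M_1\bigl(1+\sum_l|c_l|\bigr)\le M_1(1+r'C_0K),$ the map $L$ is unital and $\dt'$-${\cal G}'$-multiplicative (hence $\dt$-${\cal G}$-multiplicative), and by additivity $[L|_{C_0(Y)}]|_{\cal P}=\sum_l c_l\,\tilde b_l|_{\cal P}+f|_{\cal P}=\kappa|_{\cal P}.$ Finally $N(K)/\max\{K,1\}\le M_1(1+r'C_0K)/\max\{K,1\}\le M_1(1+r'C_0),$ so $N(\dt,{\cal G},{\cal P}):=M_1(1+r'C_0)$ works (it equals $M_1$ when $r'=0,$ in which case $\Gamma$ is finite and $K\equiv0$).

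\medskip
The main obstacle is precisely the passage from the unquantified rank in Lemma \ref{DL} to a rank linear in $K$: this is supplied by the finiteness of $\ker A$ together with the boundedness of the inverse of $(c_l)\mapsto\sum_l c_l b_l,$ after which the direct-sum additivity of $[\,\cdot\,|_{C_0(Y)}]|_{\cal P}$ converts the linear bound on the coefficients into the linear bound on the rank. The only care needed is in choosing $(\dt',{\cal G}')$ fine enough that this additivity, including the sign change for the classes $-\tilde b_l,$ is valid on ${\cal P}.$
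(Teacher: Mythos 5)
Your proof is correct and follows essentially the same route as the paper's: both decompose $\kappa$ into an integer combination, with coefficients bounded linearly in $K,$ of finitely many \emph{fixed} classes each realized once by Lemma \ref{DL}, and then take direct sums, so that the total rank grows linearly in $K$ while the class on ${\cal P}$ adds up to $\kappa|_{\cal P}.$ The only cosmetic difference is that the paper uses the elementary classes $\kappa_i$ dual to the standard generators $g_i$ (plus the $K=0$ case of Lemma \ref{DL}, whose uniform bound already encodes the finiteness of the classes vanishing on all $g_i$), whereas you work with a basis of the image $A(\Gamma)$ and make the finite kernel and the coefficient-comparison constant explicit.
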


\begin{proof}
Fix $\dt,$ ${\cal P}$ and ${\cal G}.$ Let $N(0)$ and $N(1)$ be in
\ref{DL} corresponding to the case that $K=0$ and $K=1.$  Define
$$
N(\dt, {\cal G}, {\cal P})=kN(1)+N(0).
$$
Fix $\kappa\in Hom_{\Lambda}(\underline{K}(C_0(Y)),
\underline{K}({\cal K})).$ Suppose that $\kappa(g_i)=m_i,$ $i=1,2,...,k.$
For each $i,$ ($i=0,1,2,...,k$)  there is $\kappa_i\in
Hom_{\Lambda}(\underline{K}(C_0(Y)), \underline{K}({\cal K}))$ such
that
\beq\label{Ldl-3}
\hspace{-4in}\kappa_0(g_i)&=&0,\,\,\,i=1,2,...,k,\\
\kappa_i(g_j)&=&0,\,\,\, {\rm if}\,\,\, m_i=0,\,\,\,j=1,2,...,k\\
 \kappa_i(g_i)&=& {\rm sign} (m_i)\cdot 1 \,\,\,{\rm
(in}\,\,\,\Z{\rm )}\,\andeqn \kappa_i(g_j)=0\,\,\,{\rm
 if}\,\,\,j\not=i,\\
{\rm if}\,\,\,m_i&\not=&0, \,\,i=1,2,...,k\andeqn\\
\kappa_0&+&\sum_{i=1}^km_i\kappa_i=\kappa.
\eneq
By \ref{DL}, there exists a unital $\dt$-${\cal G}$-multiplicative
\morp\, $L_i: C(X)\to M_{N(1)}$ such that
\beq\label{Ld1-4}
[L_i|_{C_0(Y)}]|_{\cal P}=\kappa_i|_{\cal P},\,\,\,i=0,1,2,...,k.
\eneq
Put $N=N(0)+\sum_{i=1}^k|m_i|N(1).$
Define $L: C(X)\to M_N$ by
\beq\label{Ldl-5}
L(f)=L_0(f)\oplus \oplus_{i=1}^k {\bar L}_i(f),
\eneq
for all $f\in C(X),$ where
\beq\label{Ldl-6}
{\bar L}_i(f)={\rm
diag}(\overbrace{L_i(f),L_i(f),...,L_i(f)}^{|m_i|}),\,\,\,i=1,2,....,k.
\eneq
One estimates that
\beq\label{Ldl-7}
{N\over{\max\{K,1\}}}={N(0)+\sum_{i=1}^k|m_i|N(1)\over{\max\{K,1\}}}\le
N(0)+kN(1)=N(\dt, {\cal G}, {\cal P}).
\eneq

\end{proof}

\begin{lem}\label{Tr}
Let $X$ be a connected finite CW complex with $K_0(C(X))=\Z\oplus
G,$ where $G=\Z^k\oplus Tor(G)=K_0(C_0(Y))$ and $Y=X\setminus
\{\xi\}$ for some point $\xi\in X.$  For any $\sigma>0,$ there
exists $\dt>0$ and a finite subset ${\cal G}\subset C(X)$ satisfying
the following:

For any unital separable \CA\, $A$ with $T(A)\not=\emptyset$ and any
unital $\dt$-${\cal G}$-multiplicative \morp\, $L: C(X)\to A,$ one
has
\beq\label{tr-1}
|\tau\circ [L](g_i)|<\sigma\tforal \tau\in T(A),
\eneq
where $g_i=({\overbrace{0,...,0}^{i-1}, 1,0,...,0)}\in \Z^k$ and
$\tau$ is the state on $K_0(C(X))$ induced by the tracial state
$\tau.$

\end{lem}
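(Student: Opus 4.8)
The plan is to reduce the statement to one \emph{exact} computation with traces of projections, the only real ingredient being that $X$ is connected. First I would fix, for each $i=1,\dots,k$, a concrete representative of $g_i$. Since $g_i$ lies in $G=K_0(C_0(Y))$, and $K_0(C_0(Y))$ is precisely the kernel of the evaluation homomorphism $K_0(C(X))\to\Z$ at $\xi$ (the sequence $0\to C_0(Y)\to C(X)\to\C\to 0$ being split exact), we may write $g_i=[p_i]-[q_i]$ in $K_0(M_R(C(X)))$ for a single integer $R$, with $p_i\in M_R(C(X))$ a projection and $q_i={\rm diag}(\overbrace{1_{C(X)},\dots,1_{C(X)}}^{n_i},0,\dots,0)$ a constant projection; moreover, since $X$ is connected the rank function $x\mapsto\Tr(p_i(x))$ is constant, and since the evaluation of $g_i$ at $\xi$ vanishes this constant must be $n_i$. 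Thus
$$
\Tr(p_i(x))=\Tr(q_i(x))=n_i\rforal x\in X,\ i=1,\dots,k.
$$
This coincidence of rank functions is the whole point of the argument.

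Next I would record the exact identity behind the estimate. Let $A$ be a unital \CA\ with $T(A)\neq\emptyset$, let $\tau\in T(A)$, and let $L\colon C(X)\to A$ be \emph{any} unital positive linear map; write $p_i=(a^{(i)}_{st})_{s,t=1}^{R}$ with $a^{(i)}_{st}\in C(X)$. Because $\tau\circ L$ is a state on $C(X)$ with associated Borel probability measure $\mu_{\tau\circ L}$, one has
$$
(\tau\otimes\Tr_R)((L\otimes\id_{M_R})(p_i))=\sum_{s=1}^{R}(\tau\circ L)(a^{(i)}_{ss})=\int_X\Tr(p_i(x))\,d\mu_{\tau\circ L}(x)=n_i ,
$$
while, since $L(1_{C(X)})=1_A$, the element $(L\otimes\id_{M_R})(q_i)$ is literally the constant projection of rank $n_i$, so $(\tau\otimes\Tr_R)((L\otimes\id_{M_R})(q_i))=n_i$ as well. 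Thus, before any perturbation, the two ``ideal'' trace values agree exactly, and so far no multiplicativity of $L$ has been used.

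Finally I would bring in the multiplicativity, only to control the passage from $(L\otimes\id_{M_R})(p_i)$ to an honest projection. I would take ${\cal P}\subset\underline{K}(C(X))$ to contain $g_1,\dots,g_k$ and take ${\cal G}\subset C(X)$ to contain all the entries $a^{(i)}_{st}$ together with their pairwise products. Then, as in \ref{Dppu}, for $\dt$ small enough $[L]|_{\cal P}$ is well defined for every unital $\dt$-${\cal G}$-multiplicative \morp\ $L\colon C(X)\to A$; in particular $(L\otimes\id_{M_R})(p_i)$ is self-adjoint and $\|((L\otimes\id_{M_R})(p_i))^2-(L\otimes\id_{M_R})(p_i)\|<R^2\dt$, so by the usual functional calculus there is a projection $e_i\in M_R(A)$ with $\|e_i-(L\otimes\id_{M_R})(p_i)\|\le\rho(\dt)$, where $\rho(\dt)\to 0$ as $\dt\to 0$ and $\rho$ does not depend on $A$. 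Since $[L](g_i)=[e_i]-[(L\otimes\id_{M_R})(q_i)]$ by definition, combining with the previous paragraph gives
$$
|\tau([L](g_i))|=|(\tau\otimes\Tr_R)(e_i)-n_i|\le R\,\rho(\dt)\rforal\tau\in T(A).
$$
It then suffices to choose $\dt$ (and, if necessary, enlarge ${\cal G}$) so that $R\,\rho(\dt)<\sigma$; since $R$, the functions $a^{(i)}_{st}$ and the modulus $\rho$ depend only on $X$ and the chosen generators, the resulting $\dt$ and ${\cal G}$ depend only on $\sigma$ and $X$, as required.

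I do not expect a genuine obstacle here. The one substantive observation is the one in the first paragraph: connectedness of $X$ forces $g_i$ to be a difference of two projections with identical (and hence cancelling) constant rank functions, which collapses the whole estimate to a two-term triangle inequality. The remaining points are routine bookkeeping — enlarging ${\cal P}$ and ${\cal G}$ and shrinking $\dt$ so that $[L](g_i)$ is well defined, and checking that the nearby projection $e_i$ approximates $(L\otimes\id_{M_R})(p_i)$ in norm uniformly over all $A$.
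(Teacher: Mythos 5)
Your argument is correct, and it takes a genuinely different route from the paper. The paper proves the lemma by contradiction: assuming failure, it takes a sequence of algebras $A_n$, maps $L_n$ with $\dt_n\to 0$, and traces $\tau_n$ violating the bound, forms $B=\prod_n A_n$ and the quotient $B/\oplus_n A_n$, passes to a limit trace $T$, and observes that the induced unital \hm\, $\phi=\Pi\circ L$ satisfies ${\bar T}\circ\phi_{*0}(g_i)=0$, contradicting $|\tau_{n}\circ[L_{n}](g_i)|\ge\sigma/2$ along a subsequence. The underlying reason ${\bar T}\circ\phi_{*0}(g_i)=0$ is exactly the fact you isolate at the outset: since $X$ is connected and $g_i$ lies in the kernel of evaluation at $\xi$, it is a difference $[p_i]-[q_i]$ of projections with identical constant rank functions, so any trace pairs it to zero. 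You use this fact directly and quantitatively: the exact identity $(\tau\otimes\Tr_R)\bigl((L\otimes\id)(p_i)\bigr)=n_i=(\tau\otimes\Tr_R)\bigl((L\otimes\id)(q_i)\bigr)$ holds for every unital positive $L$, and the only role of $\dt$-${\cal G}$-multiplicativity is to replace $(L\otimes\id)(p_i)$ by a nearby projection $e_i$ representing $[L](g_i)$, with an error $R\,\rho(\dt)$ uniform in $A$. This buys an explicit modulus (an explicit $\dt$ and ${\cal G}$ for given $\sigma$, depending only on $X$ and the chosen representatives), makes no use of separability of $A$, and avoids the product-algebra compactness machinery; the paper's proof is non-quantitative but fits the pattern it reuses in the neighboring lemmas (\ref{TTL}, \ref{small}, \ref{Dig}), where the same limit-trace trick handles statements that are less amenable to a closed-form estimate. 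The routine points you defer — well-definedness of $[L]|_{\cal P}$ and the identification $[L](g_i)=[e_i]-[(L\otimes\id)(q_i)]$ with trace independent of the choice of nearby projection — are indeed covered by the standard conventions recalled in \ref{Dppu}, so I see no gap.
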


\begin{proof}
Suppose that the lemma is false.

Then there exists a sequence of unital  separable \CA s $A_n$ and
a sequence of $\dt_n$-${\cal G}_n$-multiplicative \morp s $L_n:
C(X)\to A_n,$ where $\dt_n\downarrow 0$ and ${\cal G}_n$ is a
sequence of finite subsets such that  ${\cal G}_n\subset {\cal
G}_{n+1}$ and $\cup_{n=1}^{\infty}{\cal G}_n$ is dense in $C(X)$
and there exists $\tau_n\in T(A_n)$ such that
\beq\label{tr-2}
|\tau_n\circ [L_n](g_i)|\ge \sigma/2
\eneq
for some $i\in \{1,2,...,k\}.$

Let $B=\prod_{n=1}^{\infty} A_n.$ Define $t_n(\{a_n\})=\tau_n(a_n).$
Then $t_n$ is a tracial state of $B.$ Let $T$ be a limit point of
$\{t_n\}.$ One obtains a subsequence $\{n_k\}$ such that
\beq\label{tr-3}
T(\{a_n\})=\lim_{k\to\infty}\tau_{n_k}(a_{n_k})
\eneq
for any $\{a_n\}\in B.$ Note for any $a\in
\oplus_{n=1}^{\infty}A_n\subset B,$
$
T(a)=0.
$
It follows that $T$ defines a tracial state ${\bar T}$ on
$B/\oplus_{n=1}^{\infty}A_n.$ Let $\Pi: B\to
B/\oplus_{n=1}^{\infty}A_n$  be the quotient map. Define $L: C(X)\to
B$ by $L(f)=\{L_n(f)\}.$ Put $\phi=\Pi\circ L.$ Then $\phi$ is  a
unital \hm. Therefore
\beq\label{tr-5}
{\bar T}\circ \phi_{*0}(g_i)=0.
\eneq
It follows that there is a subsequence $\{n_k'\}\subset \{n_k\}$
such that
\beq\label{tr-6}
\lim\tau_{n_k'}\circ [L_{n_k'}](g_i)=0.
\eneq
But this contradicts with (\ref{tr-2}).

\end{proof}

\begin{lem}\label{LLLL}
Let $C(X)$ be a connected finite CW complex and ${\cal P}\subset
\underline{K}(C(X)).$ There exists $\dt>0$ and there exists a finite
subset ${\cal G}\subset C(X)$ satisfying the following: for any
unital \CA\, $A,$ and any unital $\dt$-${\cal G}$-multiplicative
\morp\, $L: C(X)\to A,$ there exists  $\kappa\in
Hom_{\Lambda}(\underline{K}(C(X)), \underline{K}(A))$ such that
\beq\label{LLLL-1}
[L]|_{\cal P}=\kappa|_{\cal P}.
\eneq
\end{lem}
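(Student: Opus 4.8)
The plan is to argue by contradiction, combining the standard sequence-algebra device with the fact that $C(X)$ has finitely generated $K$-theory. First I would replace ${\cal P}$ by a larger finite subset ${\cal P}'\subset\underline K(C(X))$ which is ``complete'' for the problem. Since $X$ is a finite CW complex, $K_i(C(X))$ is finitely generated, so by \cite{DL2} there is an integer $k_0\ge 1$ with $Hom_\Lambda(\underline K(C(X)),\underline K(A))=Hom_\Lambda(F_{k_0}\underline K(C(X)),F_{k_0}\underline K(A))$ for \emph{every} unital \CA\, $A,$ and $F_{k_0}\underline K(C(X))$ is finitely presented over the ring of Bockstein operations. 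Let ${\cal P}'$ consist of ${\cal P}$ together with (representatives of) a finite generating set and a finite defining set of relations for $F_{k_0}\underline K(C(X)),$ plus enough elements to witness the action of the finitely many relevant Bockstein operations on these generators. Then for any unital $A$ and any additive family $\{\kappa(x)\}_{x\in{\cal P}'}$ valued in $\underline K(A)$ that respects these finitely many relations and commutes with these Bockstein operations, there is a unique $\kappa\in Hom_\Lambda(\underline K(C(X)),\underline K(A))$ extending it. Also fix, via \ref{Dppu} and \ref{ddbot}, a $\dt_0>0$ and a finite ${\cal G}_0\subset C(X)$ so that $[L]|_{{\cal P}'}$ is well defined whenever $L$ is $\dt_0$-${\cal G}_0$-multiplicative. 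Hence it suffices to produce $\dt\le\dt_0$ and ${\cal G}\supset{\cal G}_0$ such that, for every such $L,$ the partial data $[L]|_{{\cal P}'}$ already satisfies all those relations and Bockstein compatibilities; then the extension $\kappa$ exists and $\kappa|_{\cal P}=[L]|_{\cal P}.$

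Suppose no such $(\dt,{\cal G})$ exists. Then there are unital \CA s $A_n,$ unital $\dt_n$-${\cal G}_n$-multiplicative \morp s $L_n\colon C(X)\to A_n$ with $\dt_n\downarrow 0,$ ${\cal G}_n\subset{\cal G}_{n+1}$ and $\cup_n{\cal G}_n$ dense in $C(X),$ such that $[L_n]|_{{\cal P}'}$ violates one of the (finitely many) relations or Bockstein compatibilities. Put $B=\prod_{n=1}^\infty A_n,$ $Q=B/\oplus_{n=1}^\infty A_n,$ let $\Pi\colon B\to Q$ be the quotient map, set $\Phi(f)=\{L_n(f)\}$ and $\phi=\Pi\circ\Phi.$ Exactly as in the proofs of \ref{TTL} and \ref{Tr}, $\phi\colon C(X)\to Q$ is a unital homomorphism, so by naturality of the Bockstein operations $[\phi]$ is a genuine element of $Hom_\Lambda(\underline K(C(X)),\underline K(Q)).$

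The heart of the argument is to push the genuineness of $[\phi]$ back down to the $A_n.$ For each $x\in{\cal P}'$ the class $[\phi](x)$ is represented by a projection (resp.\ unitary, resp.\ element carrying the $\Z/k\Z$-coefficient data) over $M_N(Q)$ with $N$ depending only on ${\cal P}',$ hence lifts through $M_N(B)=\prod_n M_N(A_n)$ to a sequence; unwinding the identifications, $[\phi](x)=\Pi_*\bigl(\{[L_n](x)\}_n\bigr)$ in the relevant $K$-group of $Q.$ If $r$ is one of our relations, then $[\phi](r)=0;$ the corresponding combination $\{[L_n](r)\}_n$ occurs at uniformly bounded matrix size, and the vanishing of $\Pi_*$ of it means the witnessing Murray--von Neumann equivalence (resp.\ homotopy of unitaries, resp.\ analogous witness in the $\Z/k\Z$-picture) in $Q$ lifts to $B$ and, after truncation for large $n$ and the usual correction to genuine partial isometries / unitaries, yields the corresponding equivalence in $A_n$ up to a fixed stabilisation; cancellation in the abelian groups $K_i(A_n)$ and their $\Z/k\Z$-analogues then forces $[L_n](r)=0$ for all large $n.$ The same reasoning disposes of the finitely many Bockstein compatibilities. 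Taking $n$ beyond all these thresholds contradicts the choice of $L_n,$ which proves the lemma.

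The step I expect to be the genuine obstacle is this last one: $K$-theory does not commute with infinite products, so one cannot simply write $\underline K(Q)=\prod_n\underline K(A_n)/\oplus_n\underline K(A_n),$ and the transfer works \emph{only} because ${\cal P}'$ is finite, so everything in sight lives at a single bounded matrix level where truncation and cancellation are available; getting the lifting-and-correction bookkeeping right for the $K_1$- and $\Z/k\Z$-coefficient pieces is the fiddly part. (One could instead try to lift $\phi$ to honest homomorphisms $\psi_n\colon C(X)\to A_n$ using semiprojectivity of $C(X)$ for a finite CW complex and then compare $[\psi_n]$ with $[L_n]|_{\cal P}$ through \ref{Dppu}; but the bounded-matrix-size argument above avoids any appeal to semiprojectivity and works for arbitrary, possibly non-separable, $A$.)
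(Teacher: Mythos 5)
Your argument is essentially correct, but note that the paper does not prove this lemma at all: it simply cites Proposition 2.4 of \cite{Lnhomp}, which rests on the Dadarlat--Loring multicoefficient framework of \cite{DL2}. So your proposal is a genuinely different (self-contained) route. What you do differently: using finite generation of $K_*(C(X))$ and the identification (\ref{dkl1}), you reduce the existence of $\kappa$ to finitely many group relations and Bockstein compatibilities among the elements of an enlarged finite set ${\cal P}'$, and you verify these for every sufficiently multiplicative $L$ by the product/quotient device already used in the proofs of \ref{TTL}, \ref{Tr} and \ref{Dig}. The decisive point is exactly the one you isolate: each relation, and each witness to its vanishing (a partial isometry after adding $1_k$, a finite product of exponentials exhibiting membership in $U_0$, and their $\Z/k\Z$-coefficient analogues over $A_n\otimes C_k$ with $C_k$ nuclear, so that the quotient by $\oplus_n A_n$ is compatible with $\otimes\, C_k$), lives at a matrix size bounded in terms of ${\cal P}'$ alone; hence it lifts from $Q=\prod_n A_n/\oplus_n A_n$ to $\prod_n A_n$ and, after truncation and the standard perturbation to genuine partial isometries or unitaries, descends to $A_n$ for all large $n$, which contradicts the choice of the $L_n$. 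No statement that $\underline{K}$ commutes with products is needed, and only cancellation in the abelian groups $K_i(A_n)$ (not in the algebras) is used, so the argument is valid for arbitrary, possibly non-separable, unital targets --- the same generality as the cited result. What the citation buys instead is that all the coefficient/Bockstein bookkeeping (realizing the operations in $\Lambda$ by fixed elements acting at bounded matrix level, compatibly with $\Pi$) is already done in \cite{DL2} and \cite{Lnhomp}; in your write-up that part is only sketched, but it is routine rather than a gap.

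One caution: drop the parenthetical alternative. For ${\rm dim}\, X\ge 2$ the algebra $C(X)$ is not (weakly) semiprojective, and homomorphic liftings of $\phi$ through $\prod_n A_n$ do not exist in general --- Voiculescu's example on $C(\T\times\T)$ recalled at the beginning of Section 4 (see also Theorem 4.2 of \cite{GL2}) already rules them out. Your main argument, which deliberately avoids any such lifting, is the correct one.
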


This is known (see Prop. 2.4 of \cite{Lnhomp}).

\begin{lem}\label{3p}

Let $X\in {\bf X}$ be a finite simplicial complex.  Let  $\ep>0,$
let $\ep_1>0,$ let $\eta_0>0,$ let ${\cal F}\subset C(X)$ be a
finite subset, let $N\ge 1$ and $K\ge 1$ be positive integers and
let $\Delta: (0,1)\to (0,1)$ be a non-decreasing map. There exist
$\eta>0,$ $\dt>0,$ a finite subset ${\cal G}$ and  a finite subset
${\cal P}\subset \underline{K}(C(X))$
satisfying the following:

Suppose that $A$ is a unital separable simple \CA\, with tracial
rank no more than one and $\phi, \psi: C(X)\to A$ are two unital
$\dt$-${\cal G}$-multiplicative \morp s such that
\beq\label{3p-1}
\mu_{\tau\circ\phi}(O_a)&\ge &\Delta(a)\tforal a\ge \eta,\\
 |\tau\circ
\phi(g)-\tau\circ \psi(g)|&<&\dt\tforal g\in {\cal G}
\eneq
for all $\tau\in T(A)$ and
\beq\label{3p-2}
[\phi]|_{\cal P}&=&[\psi]|_{\cal P}.
\eneq

Then, for any $\ep_0>0,$  there are four mutually orthogonal projections $P_0, P_1,$
$P_2$ and $P_3$ with $P_0+P_1+P_2+P_3=1_A,$ there is a unital
\SCA\, $B_1\subset (P_1+P_2+P_3)A(P_1+P_2+P_3)$ with
$1_B=P_1+P_2+P_3,$ where $B_1$ has the form $B_1=\oplus_{j=1}^s
C(X_j, M_{r(j)})$ with $P_1, P_2, P_3\in B_1,$ where $X_j=[0,1],$
or $X_j$ is a point, there are unital \hm s $\phi_1, \psi_1:
C(X)\to B,$ where $B=P_3B_1P_3,$ there exists a finite dimensional
\SCA\, $C_0\subset P_1BP_1$ with $1_{C_0}=P_1$ and there exists a
unital $\ep$-${\cal F}$-multiplicative \morp\, $\phi_2: C(X)\to
C_0$ and mutually orthogonal projections $p_1, p_2,...,p_m\in
P_2B_1P_2$ and a unitary $u\in A$ such that
\beq\label{3p-3}
\|\phi(f)-[P_0\phi(f)P_0+\phi_2(f)+\sum_{i=1}^mf(x_i)p_i+\phi_1(f)]\|<\ep/2
\tand\\
\|{\rm ad}\, u\circ \psi(f)-[P_0({\rm ad}\, u\circ
\psi(f))P_0+\phi_2(f)+ \sum_{i=1}^m f(x_i)p_i+\psi_1(f)]\|<\ep/2
\eneq
for all $f\in {\cal F},$ where $\{x_1,x_2,...,x_m\}$ is
$\ep_1$-dense in $X$ and $P_2=\sum_{i=1}^m p_i,$
\beq\label{3p-4}
N\tau(P_0+P_1)<\tau(p_i)\,\,\, Kt_{j,x}(P_1+P_2)\le
t_{j,x}(P_3)\\\label{3p-4+} \mu_{T\circ \phi_1}(O_a)\ge
\Delta(a)/4,\,\, \mu_{T\circ \psi_1}(O_a)\ge \Delta(a)/4\tforal a\ge
\eta_0\\\label{3p-4++} |T\circ \psi_1(f)-T\circ
\phi_1(f)|<\ep\tforal f\in {\cal F},
\eneq
for all  $\tau\in T(A),\,\,\,i=1,2,...,m,$  for all $x\in X_j,$
$j=1,2,...,s$ and for all $T\in T(B).$  Moreover, for any finite
subset ${\cal H}\subset A,$ one may require that
\beq\label{3p-5 }
\|aP_0-P_0a\|<\ep_0\andeqn (1-P_0)a(1-P_0)\in_{\ep} B_1\tforal a\in {\cal
H}.
\eneq

\end{lem}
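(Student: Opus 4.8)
\emph{Approach.}
The plan is to run the usual scheme: use $TR(A)\le 1$ to pull $\phi,\psi$ into a \SCA\ of the form $\oplus_jC(X_j,M_{r(j)})$ with enormous multiplicities, park the $K$-theoretic excess of $\phi$ (and $\psi$) in a tiny finite-dimensional corner carrying an almost multiplicative map $\phi_2$, turn the remaining almost multiplicative maps into genuine \hm s by the results of Sections~4 and~8, then digitalize along a fixed $\ep_1$-dense set and conjugate $\psi$ to match; throughout, $\phi$ and $\psi$ are kept yoked together. Put $k_0=k!$, where $k$ is the largest order of a torsion element of $K_i(C(X))$ ($i=0,1$), so $k_0=1$ when $K_*(C(X))$ is torsion free; fix an $\ep_1$-dense $\{x_1,\dots,x_m\}\subset X$ whose $2\eta$-balls will be pairwise disjoint ($m$ depends only on $X,\ep_1$). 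The output is chosen in the order: first ${\cal P}\subset\underline{K}(C(X))$, large enough to contain generators $g_1,\dots,g_k$ of the free part of $K_0(C_0(Y))$ ($Y=X\setminus\{\xi\}$) as in \ref{Tr} and \ref{small} together with all the finite $K$-theory subsets required by \ref{LDL} (and \ref{DL}), by \ref{TAM}/\ref{CTAM} (resp.\ \ref{Ntorsion}), by \ref{TAML2}, and by \ref{Dig} and \ref{digB}, for $\ep/16$ and an enlargement of ${\cal F}$; then $\eta\le\eta_0$, small enough that the $2\eta$-balls are disjoint and that $\Delta(\eta)$ meets the two size constraints below; then ${\cal G}$ (large) and $\dt$ (small), consistently with \ref{LLLL}, \ref{Tr}, \ref{NDIV}, \ref{TTL}, \ref{digB} and \ref{Dig}.

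\emph{Steps 1 and 2.}
Apply \ref{NDIV} with $B=C(X)$, amplification $k_0$, matrix-size bound a huge integer $R$, with ${\cal F}_1={\cal H}$ and ${\cal F}$ taken to be our enlarged ${\cal G}$: this gives $e_0\in A$, a \SCA\ $B_1\cong M_{k_0}(C_0)$ of $A$ with $1_{B_1}=e_0$, $C_0=\oplus_i\bigl(C([0,1],M_{d(i)})\oplus\oplus_j M_{r(j)}\bigr)$, $d(i),r(j)\ge R$, and unital maps $\phi_0,\psi_0\colon C(X)\to C_0$ with $e_0\phi(\cdot)e_0\approx\phi_0^{(k_0)}$ and $e_0\psi(\cdot)e_0\approx\psi_0^{(k_0)}$ on ${\cal G}$, with $e_0$ nearly commuting with ${\cal H}$, and with $\tau(1-e_0)$ tiny for all $\tau\in T(A)$. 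Using \ref{TTL}, \ref{digB} and \ref{small} to transfer $T(A)$-uniform estimates to the normalized traces $t_{j,x}$ of $B_1$, one arranges in addition that $|t_{j,x}\circ\phi_0(g)-t_{j,x}\circ\psi_0(g)|$ is small on ${\cal G}$, that $[\phi_0]|_{\cal P}=[\psi_0]|_{\cal P}=:\kappa$, that $\mu_{t_{j,x}\circ\phi_0}(O_a),\mu_{t_{j,x}\circ\psi_0}(O_a)\ge\Delta(a)/2$ for $a\ge\eta$, and, via \ref{Tr}, that $|t_{j,x}(\kappa(g_i))|<\sigma'$ with $\sigma'$ as small as we please. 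Since $|t_{j,x}(\kappa(g_i))|<\sigma'$, the value of $\kappa$ on the $j$-th matrix summand is an integer of size $<\sigma' r(j)$, so \ref{LDL} produces a \emph{single} $\dt$-${\cal G}$-multiplicative $\phi_2\colon C(X)\to C_0'$, with $C_0'$ finite dimensional of rank $\le\sigma'\,N(\dt,{\cal G},{\cal P})\,r(j)$ in the $j$-th summand, realizing the reduced part of $\kappa$ on ${\cal P}$. Because the $d(i),r(j)$ are huge while ${\rm rank}\,C_0'$ is negligible in trace, $C_0'$ embeds as a corner $P_1C_0P_1$-type \SCA\ of $C_0$ (set $P_1=1_{C_0'}$), and the splitting is effected summand by summand via the structure results of Sections~4 and~8 (using \ref{CD} on the matrix summands and analogous reasoning through \ref{TAM} on the interval summands, the trace and measure data from Step~1 being available): up to a unitary, $\phi_0\approx\phi_0'\oplus\phi_2$ and $\psi_0\approx\psi_0'\oplus\phi_2$ — the \emph{same} $\phi_2$, since $\kappa$ is common to $\phi$ and $\psi$ — where now $[\phi_0']|_{\cal P}=[\psi_0']|_{\cal P}$ is the restriction of a point evaluation and the trace and measure bounds survive with a further harmless loss.

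\emph{Steps 3, 4 and assembly.}
Apply \ref{Dig} to $\phi_0'$ and to $\psi_0'$ at $\{x_1,\dots,x_m\}$ with parameter close to $1$, obtaining projections $p_1,\dots,p_m$ resp.\ $q_1,\dots,q_m$ of trace $\ge r\Delta(\eta)$; subdividing by \ref{ProD} to constant ranks in each summand (and absorbing the discarded pieces into the residual part) we may assume $t_{j,x}(p_i)=t_{j,x}(q_i)$ for all $x$, whence $p_i\sim q_i$ in $\oplus_jC(X_j,M_{r(j)})$ and a unitary identifies the $q_i$ with the $p_i$. Thus both $\phi_0'$ and $\psi_0'$ now read $\approx[{\rm residual}]+\sum f(x_i)p_i$, with $P_2:=\sum p_i$ and residual maps $\phi_0'',\psi_0''$ into $P_3C_0P_3$, $P_3:=1_{C_0}-P_1-P_2$, still having $[\,\cdot\,]|_{\cal P}$ the restriction of a point evaluation, measures $\ge\Delta(a)/4$ for $a\ge\eta_0$, and remaining trace close on ${\cal F}$. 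Now use \ref{TAM}/\ref{CTAM} on the $C([0,1],M_n)$-summands and \ref{TAML2} on the $M_n$-summands when $X\in{\bf X}_0$, and \ref{Ntorsion} applied to the $k_0$-amplified picture when $X\in{\bf X}$ has torsion, to replace $\phi_0'',\psi_0''$ on $P_3C_0P_3$ by genuine unital \hm s $\phi_1,\psi_1$ to within $\ep/8$ on ${\cal F}$. Finally put $B=P_3B_1P_3$, $P_0:=1_A-(P_1+P_2+P_3)$ (it contains $1-e_0$), let $u$ be the product of the unitaries from Steps~2 and~4 made the identity on $P_0$, and identify $C_0'$, $\phi_2$ with their $k_0$-amplifications inside $B_1$. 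Reassembling the approximations of Steps~1--4 gives the two displayed $\ep/2$-estimates for $\phi$ and ${\rm ad}\,u\circ\psi$ on ${\cal F}$; the trace and measure conclusions for $\phi_1,\psi_1$ are the propagated forms of the estimates in Steps~1--3; $N\tau(P_0+P_1)<\tau(p_i)$ holds because $\tau(P_0),\tau(P_1)$ can be forced below $r\Delta(\eta)/2N$ (by taking $\sigma'$ and $\tau(1-e_0)$ small) while $\tau(p_i)\ge r\Delta(\eta)$; $Kt_{j,x}(P_1+P_2)\le t_{j,x}(P_3)$ holds because $t_{j,x}(P_1+P_2)$ can be forced below $(K+2)^{-1}$ (by taking $\eta$, hence $\Delta(\eta)$, small and subdividing the $p_i$); and the clause about ${\cal H}$ and $P_0$ is precisely what Step~1 provides.

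\emph{The main obstacle.}
I expect the two genuinely delicate points to be, first, the torsion case $X\in{\bf X}\setminus{\bf X}_0$: there an almost multiplicative map into a $C([0,1],M_n)$-summand can be straightened to a \hm\ only after passing to its $k_0$-fold amplification (\ref{Ntorsion}), so one must make the amplification built into \ref{NDIV} mesh with that step and then descend, keeping track of $\phi_0$ versus $\phi_0^{(k_0)}$ everywhere; and second, Step~2, namely producing one and the same $\phi_2$ for both $\phi$ and $\psi$ and carrying out the direct-sum splitting $\phi_0\approx\phi_0'\oplus\phi_2$, $\psi_0\approx\psi_0'\oplus\phi_2$ while simultaneously controlling $KK$-data, traces and measures. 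This is what dictates the careful ordering of the choices of ${\cal P}$, of the smallness parameter $\sigma'$, and of the huge lower bound $R$ on the matrix sizes; the remainder of the argument is the by-now routine propagation of estimates through the structure.
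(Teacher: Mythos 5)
Your outline has the right overall shape (tracial divisibility, a finite--dimensional corner carrying the $K$-theoretic excess, straightening via Sections 4 and 8, trace bookkeeping), but the step you dispose of in one sentence is exactly the one the paper has to work for. The splitting in your Step 2 --- $\phi_0\approx\phi_0'\oplus\phi_2$ and $\psi_0\approx\psi_0'\oplus\phi_2$ with one and the same $\phi_2$ absorbing the common class $\kappa$ on $\underline{K}(C_0(Y))$, leaving residual maps with point-evaluation $KK$-data --- does not follow from the results you cite. Lemmas \ref{TAML2}, \ref{TAM}, \ref{CTAM} are existence theorems (close to a homomorphism \emph{when} the restricted $KK$-data is already that of a point evaluation), and \ref{CD} is a uniqueness theorem whose target must be a \emph{simple} algebra of tracial rank zero; none of them lets you peel a direct summand carrying the nontrivial part of $\kappa$ off an almost multiplicative map into $C_0$, and on the interval summands $C([0,1],M_{d(i)})$ there is no uniqueness statement for almost multiplicative maps available in the paper at all. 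The paper's actual mechanism is the missing idea: via \ref{LDL} it builds pairs $\lambda_j,\ {\bar \lambda}_j$ realizing $+[\pi_j\circ\phi_1'|_{C_0(Y)}]$ and its inverse, so that $\lambda\oplus{\bar \lambda}$ together with the remaining point masses has the same restricted $KK$-data as a point evaluation; it then applies \ref{CD} inside a finite-dimensional corner carved out of the point-mass projections $e_i$ to trade part of those point masses for $\lambda\oplus{\bar \lambda}$, folds ${\bar \lambda}$ (the map $\Lambda$) into $\phi_1'$ and $\psi_1'$ to cancel their $KK$-obstruction, and keeps $\lambda$ as the common $\phi_2$. Without this device (or an equivalent one) your $\phi_0'$, $\psi_0'$ with point-evaluation $KK$ are posited rather than produced, and Step 4 has nothing to straighten.

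Your reordering also breaks the digitalization/matching step. In the paper the point masses are extracted and matched while still inside the simple algebra $A$: \ref{Dig} (whose proof rests on \ref{ProC}, hence on simplicity and $TR(A)\le 1$) yields $p_i'$ for $\phi$ and $p_i''$ for $\psi$, \ref{ProD} and \ref{ProM} give $K_0$-equivalent subprojections, and the unitary $u$ of the conclusion is precisely the one with $u^*q_i''u=q_i'$; only afterwards is \ref{NDIV} used to cut into $B_1$. You apply \ref{NDIV} first and then invoke \ref{Dig} for $\phi_0',\psi_0'$ mapping into corners of the non-simple $C_0$, where \ref{Dig} does not apply; extracting constant-rank point-mass projections from an almost multiplicative map into $C([0,1],M_n)$ is a real issue (spectral projections of balls need not have constant rank along $[0,1]$), and the only tool of that kind in the paper, \ref{G}, is for homomorphisms with its own hypotheses. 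These two points --- the construction of the common $\phi_2$ by the $\lambda\oplus{\bar\lambda}$ trick, and performing the digitalization and unitary matching inside $A$ before cutting down --- are where the proof actually lives, and the proposal leaves both open.
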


\begin{proof}
Without loss of generality, we may assume that $X$ is connected.
There is an integer $k'\ge 1$
such that any torsion element in $K_i(C(X))$ has order smaller than $k'.$
Put $k_0=(k')!.$

Let $\ep>0, $ $\ep_1>0,$ ${\cal F}\subset C(X),$ $N$ and $K$ are
given.
Let $\ep>\ep_2>0$ and ${\cal F}_0\supset {\cal F}$ satisfying the
following: if $L_1, L_2: C(X)\to B$ are two unital $\ep_2$-${\cal
F}_0$ multiplicative \morp s (to any unital \CA\, $B$ with
$T(B)\not=\emptyset$) such that
\beq\label{3p+n}
\mu_{T\circ L_1}(O_a)\ge \Delta(a)/2m\tforal a\ge \eta_0\andeqn
|T\circ L_1(f)-T\circ L_2(f)|<\ep_2
\eneq
for all $f\in {\cal G}_0,$ then
\beq\label{3p-n+1}
\mu_{T\circ L_2}(O_a)\ge \Delta(a)/4m\tforal a\ge \eta_0,
\eneq
$m=1,2,...,k_0.$
Define $\Delta_1(a)=\Delta(a)/4$ for $a\in (0,1).$ Let $\eta_1>0$ (in place of $\eta$),
$\dt_1>0$ (in place of $\dt$), ${\cal G}_1\subset C(X)$ (in place of ${\cal G}$) be a finite subset and ${\cal
P}_1\subset \underline{K}(C(X))$ (in place of ${\cal P}$) be a finite subset required by \ref{CTAM} for
$\ep_2/16$ ( in place of $\ep$), ${\cal F}_0$ (in place of ${\cal
F}$) and for both $\Delta_1$ and  $\Delta_1/k_0$ (in place of
$\Delta$) above.
We may assume that $\dt_1<\ep/64.$

By choosing smaller $\dt_1$ and large ${\cal G}_1,$ we may assume
that, if $L_1, L_2: C(X)\to B$ are two \morp s (for any unital \CA s
$B$ with $T(B)\not=\emptyset$) and
$$\mu_{\tau\circ L_1}(O_a)\ge
\Delta(a)/m\rforal a \ge \eta_1 $$ for all $\tau\in T(B),$ then
\beq\label{3p++}
\mu_{\tau\circ L_2}(O_a)\ge 3\Delta(a)/4m\tforal a\ge \eta_1
\eneq
for all $\tau\in T(B)$ and $m=1,2,...,k_0,$ whenever
\beq\label{3p++1}
L_1\approx_{\dt_1} L_2\,\,\,{\rm on}\,\,\, {\cal G}_1.
\eneq

Let $\xi\in X$ be a point in $X$ and let $Y=X\setminus\{\xi\}.$
Write $K_0(C(X))=\Z\oplus G,$ where $\Z$ is given by the rank and $G=\Z^k\oplus
Tor(G)=K_0(C_0(Y)).$ Put $g_i=(\overbrace{0,0,...,0}^{i-1}, 1,
0,...,0),$ $i=1,2,...,k.$
We may assume that $g_i\in {\cal P}_1.$ In fact, since $K_i(C(X))$
is finitely generated ($i=0,1$), we may assume that $[L]$ well
defines an element in $KK(C(X), A)$ for any $\dt_1$-${\cal
G}_1$-multiplicative \morp\, $L: C(X)\to A$ (for any unital \CA\,
$A$)\,(see \ref{LLLL}).  For convenience, without loss of
generality, we may further assume that,
\beq\label{3p-5+1}
[L_1]|_{{\cal P}_1}=[L_2]|_{{\cal P}_1}
\eneq
for any pair of $\dt_1$-${\cal G}_1$ multiplicative \morp s for
which
$$
L_1\approx_{\dt_1}L_2\,\,\,{\rm on}\,\,\, {\cal G}_1.
$$
We may also assume that ${\cal F}\subset {\cal G}_1.$
Let $\eta_2>0$ (in place of $\eta$) required by \ref{CD} such that
\beq\label{3p-6}
|f(x)-f(x')|<\dt_1/32\tforal f\in {\cal G}_1
\eneq
if ${\rm dist}(x,x')<\eta_2.$ We may assume that $\eta_2<\ep_1/2.$

Let $s\ge 1$ for which there exists an $\eta_2/2$-dense subset
$\{x_1,x_2,...,x_m\}$ of $X$ such that $O_i\cap O_j=\emptyset$
($i\not=j$), where
$$
O_i=\{x: x\in X: {\rm dist}(x, x_i)<\eta_2/2s\}.
$$
We may assume that $\eta_2<\eta_1/2.$
Let $\sigma_1 ={1\over{k_0(2+m)\eta_2}}.$
Let $\dt_2>0,$ let ${\cal G}_2\subset C(X)$ and ${\cal P}_2\subset
\underline{K}(C(X))$ be finite subsets required by \ref{CD} for the
above $\dt_1/2$ (in place of $\ep$), ${\cal G}_1$ (in place of
${\cal F}$) and $\eta_2$ (in place of $\eta$), $\sigma_1$ (in place
of $\sigma$) and $s$ above.

For convenience, without loss of generality, we may assume that
$\dt_2<\dt_1/16$ and ${\cal G}_2\supset {\cal F}\cup {\cal G}_2$ and
${\cal P}_2\supset {\cal P}_1.$
Without loss of generality, we may also assume that ${\cal G}_2$ is
in the unit ball of $C(X).$

Let $\eta=\eta_2/s.$
Let $\dt_3>0$ and ${\cal G}_3\subset C(X)$ be a finite subset be
as required by \ref{Dig} for $\dt_1/16$ ( in place of $\ep$),
${\cal G}_1$ ( in place of ${\cal F}$) and $\eta$ above.
Choose integer $N_1\ge 64m$ such that
\beq\label{3p-6+1}
{1\over{N_1}}<\dt_2/4.
\eneq
Let $N(\dt_2/2, {\cal G}_2, {\cal P}_2)$ be as in \ref{LDL} and put
$$
\sigma_2={\Delta(\eta_2/2s)\over{32N_1(K+2)(N+2)(1+N(\dt_2/2, {\cal
G}_2, {\cal P}_2))}}
$$
Without loss generality, we may assume that $N(\dt_2/2, {\cal G}_2,
{\cal P}_2)\ge 2.$
We may assume that ${\cal G}_3\supset {\cal G}_2$ and
$\dt_3<\dt_2/2.$
By \ref{Tr}, there exists $\dt_4>0$ and a finite subset ${\cal
G}_4\subset C(X)$ such that for any unital \CA\, $B$ with
$T(B)\not=\emptyset$ and for any unital $\dt_4$-${\cal
G}_4$-multiplicative \morp\, $L: C(X)\to B,$
\beq\label{3p-10}
\tau\circ [L](g_i)<\sigma_2/k_0.
\eneq
Let $\dt=\min\{\dt_4/2, \dt_3/2\},$ ${\cal G}={\cal
G}_4\cup {\cal G}_3$ and ${\cal P}={\cal P}_2\cup {\cal P}_1.$ Let
${\cal P}'={\cal P}\cap \underline{K}(C_0(Y)).$  Suppose that $\phi$
and $\psi: C(X)\to A$ are two unital $\dt$-${\cal G}$-multiplicative
\morp s, where $A$ is a unital separable simple \CA\, with tracial
rank one or zero, satisfy the assumptions of the lemma for the above
chosen $\dt,$ ${\cal G}$ and ${\cal P}.$

In particular, we may assume that $[\phi]$ and $[\psi]$ define the
same element in $KK(C(X), A),$ since $K_i(C(X))$ is finitely
generated.

It follows from \ref{Dig} that there exist mutually orthogonal
projections $p_1', p_2',...,p_m'\in A$  and mutually orthogonal
projections $p_1'', p_2'',...,p_m'' \in A$ such that
\beq\label{3p-11}
\|\phi(g)-[P'(\phi(g))P'+\sum_{j=1}^m f(x_j)p_j']\|<\dt_1/16k_0\andeqn\\
\|\psi(g)-[P''(\psi(g))P''+\sum_{j=1}^m f(x_j)p_j'']\|<\dt_1/16k_0
\eneq
for all $g\in {\cal G}_1,$ where $P'=1-\sum_{j=1}^m p_i'$ and
$P''=\sum_{j=1}^m p_i''.$ Moreover
\beq\label{3p-12}
\tau(p_i')\ge (1-{1\over{100k_0}})\Delta(\eta)\andeqn \tau(p_i'')\ge
(1-{1\over{100k_0}})\Delta(\eta)
\eneq
for all $\tau\in T(A),$ $i=1,2,...,m.$ By applying \ref{ProD}, and
by replacing $p_i'$ by one of its subprojection and $p_i''$ by one
of its subprojection, respectively, we replace (\ref{3p-12}) by the
following:
\beq\label{3p-12+}
\tau(p_i')\ge (1/2)(1-{1\over{100k_0}})\Delta(\eta),\,\,\, \sum_{i=1}^m
\tau(p_i')<1/2,\\
\tau(p_i'')\ge (1/2)(1-{1\over{100k_0}})\Delta(\eta)\andeqn  \sum_{i=1}^m
\tau(p_i'')<1/2
\eneq
for all $\tau\in T(A),$ $i=1,2,...,m.$
By \ref{ProM}, there are projections $q_i'\le p_i'$ and $q_i''\le
p_i''$ such that
\beq\label{3p-13}
\tau(q_i')\ge ({1\over{2}}-{1\over{100k_0}})\Delta(\eta)\andeqn [q_i'']=[q_i']\,\,\,({\rm
in}\,\,\, K_0(A)),
\eneq
$i=1,2,...,m.$ There is a unitary $u\in A$ such that
\beq\label{3p-14}
u^*q_i''u=q_i',\,\,\,i=1,2,...,m.
\eneq
Therefore, we have
\beq\label{sp-14+1}
\|\phi(f)-[Q'\phi(f)Q'+\sum_{j=1}^m f(x_j)q_j']\|<\dt_1/16k_0\andeqn\\
\|{\rm ad}\, u\circ \psi(f)-[Q'{\rm ad}\, u\circ
\psi(f)Q'+\sum_{j=1}^m f(x_j)q_j']\|<\dt_1/16k_0
\eneq
for all $f\in {\cal G}_1.$
We also have that
\beq\label{3p-14+1/2}
\sum_{i=1}^m \tau(q_i')<1/2\tforal \tau\in T(A).
\eneq
Note that
\beq\label{sp-14+2}
[Q'\phi Q'|_{C_0(Y)}]|=[\phi|_{C_0(Y)}]=[\psi|_{C_0(Y)}]=[Q'({\rm
ad}\, u\circ \psi)Q'|_{C_0(Y)}].
\eneq
Let ${\cal H}\subset C(X)$ be given.
Since $A$ has tracial rank no more than one, by applying \ref{NDIV},
 we obtains a projection
$E\in A$ and a unital \SCA\, $B_1=\oplus_{j=1}^L C(X_j, M_{ r(j)})$
($X_j=[0,1]$ or $X_j$ is a single point) with $1_{B_1}=1-E$
satisfying the following:
\beq\label{3p-15}
\|\phi(f)-[E(\phi(f))E +\sum_{i=1}^mf(x_i)e_i+
\phi_1'(f)]\|<\dt_1/8k_0\\
\|{\rm ad}\, u\circ \psi(f)-[E({\rm ad}\, u\circ
\psi(f))E+\sum_{i=1}^m f(x_i)e_i+\psi_1'(f)]\|<\dt_1/8k_0
\eneq
for all $f\in {\cal G}_1$ and
\beq\label{3p-16}
\|Ea-aE\|<\min\{\ep_0/2,\dt_1/8k_0\}\andeqn (1-E)a(1-E)\in_{\min\{\ep_0/2,\dt_1/8k_0\}} B_1
\eneq
for all  $a\in
{\cal H}\cup {\cal G}_1,$
where $\sum_{i=1}^m e_i=E_1\le 1-E$ and $\phi_1',\psi_1': C(X)\to
B_2= (1-E-E_1)B_1(1-E-E_1)$ is a $\dt_1/4k_0$-${\cal
G}_1$-multiplicative \morp\, and $e_i\in B_1,$ $i=1,2,...,m.$ We may
also assume that
\beq\label{3p-16+1}
r(j)>{16\over{\dt_2}},\,\,\,j=1,2,...,L
\eneq
(see 3.3 of \cite{Lntr1}).
Moreover
\beq\label{3p-17}
\tau(E)<\sigma_2/4k_0\andeqn \tau(e_i)>({1\over{2}}-{1\over{50k_0}})\Delta(\eta)\tforal
\tau\in T(A)
\eneq
Furthermore, by applying \ref{NDIV}, we may assume that
$\phi_1'$ and $\psi_1'$ have the form
\beq\label{3p-div1}
\phi_1'(f)={\rm
diag}(\overbrace{\phi_{1,0}'(f),\phi_{1,0}'(f),...,\phi_{1,0}'(f)}^{k_0})\andeqn
\\\label{3p-div-2}
\psi_1'(f)={\rm
diag}(\overbrace{\psi_{1,0}'(f),\psi_{1,0}'(f),...,\psi_{1,0}'(f)}^{k_0})
\eneq
for all $f\in C(X).$

Since ${\cal H}$ above is arbitrarily given, we may choose ${\cal
H}$ sufficiently large so that
\beq\label{3p-18}
[E\phi E]|_{\cal P}&=&[E({\rm ad}\, u\circ \psi) E]|_{\cal P}\andeqn
[\phi_1']|_{\cal P}=[\psi_1']|_{\cal P}.
\eneq
In particular, we may assume that (by (\ref{3p-17}) and
(\ref{3p-10})) 
\beq\label{3p-19}
\tau([\phi_1'](g_i))<\sigma_2+\sigma_2/16 \andeqn
\tau([\psi_1'](g_i))<\sigma_2+\sigma_2/16
\tforal \tau\in T(A).
\eneq
Denote by $\Phi, \Psi: C(X)\to B_1$ the maps defined by, for all $f\in C(X),$
\beq\label{3p-19+1}
\Phi(f)=\sum_{i=1}^m f(x_i)e_i+\phi_1'(f)\andeqn
\Psi(f)=\sum_{i=1}^m f(x_i)e_i+\psi_1'(f).
\eneq
 Denote by $\pi_j: B_1\to C(X_j, M_{{\bar
r}(j)})$ the projection. By \ref{digB}, we may assume that
\beq\label{3p-19+2}
\mu_{t_{j,x}\circ\pi_j\circ \Phi}(O_a)\ge
{63\Delta(a)\over{64}}\andeqn   \mu_{t_{j,x}\circ\pi_j\circ
\Psi}(O_a)\ge {63\Delta(a)\over{64}} \rforal a\ge \eta,
\eneq
where $t_{j,x}$ is the standard normalized trace evaluated at $x\in
X_j,$ $j=1,2,...,L.$ We may also assume that
\beq\label{3p-19++2}
|t_{j,x}\circ \Phi(f)-t_{j,x}\circ \Psi(f)|<\dt_1/4\tforal f\in
{\cal F}.
\eneq
Also by \ref{digB}, we may also assume that
\beq\label{3p-19+3}
t_{j,x}(\pi_j(e_i))>{3\Delta(\eta)\over{8}},\,\,\,i=1,2,...,m,
\eneq
for $x\in X_j$ and $j=1,2,...,L.$ Moreover,
\beq\label{3p-19+4}
\sum_{i=1}^m t_{j,x}(\pi_j(e_i))<1/2.
\eneq
By
\ref{small}, we may further assume that
\beq\label{3p-20}
t_{j,x}([\phi_1'](g_i))<{\sigma_2\over{1-\sigma_2/4}}<{4\sigma_2\over{3}},
\eneq
 $j=1,2,...,L.$
Note that every projection in $C(X_j, M_{r(j)})$ is unitarily equivalent
to a constant projection, $j=1,2,...,L.$ Choose a
rank one projection $e_{11}^{(j)}$ in $C(X_j, M_{r(j)}).$ Put
$$
K_j=r(j) \max_i\{t_{j,x}([\phi_1'](g_i))\},\,\,\,j=1,2,...,L.
$$
It follows from \ref{LDL} that there is a $\dt_2/2$-${\cal
G}_2$-multiplicative \morp\, $\lambda_i. {\bar \lambda_i}:C(X)\to
M_{R(j)}$ such that
\beq\label{3p-21}
[\lambda_j|_{C_0(Y)}]|_{{\cal P}'}=[\pi_j\circ
\phi_1'|_{C_0(Y)}]|_{{\cal P}'}\andeqn [{\bar
\lambda}_j|_{C_0(Y)}]|_{{\cal P}'}=-[\pi_j\circ
\phi_1'|_{C_0(Y)}]|_{{\cal P}'},
\eneq
where $\pi_j: B_1\to C(X_j, M_{r(j)})$ is the projection and
$R(j)\le K_jN(\dt_2/2, {\cal G}_2, {\cal P}_2).$

To simplify notation, we now identify $M_{R(j)}$ with a \SCA\, $C_j$
of $\pi_j(B_1)$ with constant matrices (with rank one projection
$e_{11}^{(j)}$ given earlier). We compute that
\beq\label{3p-22}
t_{j,x}(1_{C_j})&=&R(j)t_{j,x}(e_{11}^{(j)})\\
&\le & K_jN(\dt_2, {\cal G}_2, {\cal P}_2)\cdot
t_{j,x}(e_{11}^{(j)})\hspace{1in}\, {\rm (see\,\,\,
(\ref{3p-20})\,)}\\\label{3p-22+1}
 &<& (4/3)\sigma_2\cdot
N(\dt_2, {\cal G}_2, {\cal
P}_2)={\Delta(\eta/2s)\over{24N_1(N+2)(K+2)}}.
\eneq
for all $x\in X_j,$ $j=1,2,...,L.$
Since (by (\ref{3p-19+3}))
$$
(N_1+2)t_{j,x}(1_{C_j})<t_{j,x}(\pi_j(e_i)),\,\,\,i=1,2,...,m,
$$
there exists a projection $e_{j,i}'\le \pi_j(e_i)$  and a projection
$e_{j,i}''\le \pi_j(e_i)-e_{j,i'}$ so that
\beq\label{3p-23}
[e_{j,i}']=N_1[1_{C_j}]\andeqn [e_{j,i}'']=2[1_{C_j}]\,\,\,{\rm
in}\,\,\, K_0(C(X_j, M_{r(j)})).
\eneq
Put $e_i'=\sum_{j=1}^Le_{j,i}'$ and $e_i''=\sum_{j=1}^Le_{j,i}'',$
$i=1,2,...,m.$
 Define $\lambda: C(X)\to M_2(B_1)$ by
\beq\label{3p-25}
\lambda(f)=\sum_{j=1}^L \lambda_j(f)\oplus {\bar \lambda}_j(f)\oplus
\sum_{i=1}^m f(x_i)e_i' +\sum_{i=2}^m f(x_i)e_i''
\eneq
for all $f\in C(X).$ In fact,  there exists a finite dimensional
\SCA\, $C'\subset B_1$ such that $\lambda$ maps $C(X)$ into
$M_2(C')$ unitally.

Consider \hm\, $h: C(X)\to M_2(B_1)$  defined by
\beq\label{3p-26}
h(f)=\sum_{i=1}^mf(x_i)(e_i'+e_i'')\rforal f\in C(X).
\eneq
Define $E'=\sum_{i=1}^me_i'+e_i''.$ There is a unitary $w\in
M_2(B_1)$ such that
$$
w^*\lambda(1_{C(X)})w=E'.
$$
Moreover, by (\ref{3p-23}), we can choose $w$ so that ${\rm ad}\,
w^*\circ h$ maps $C(X)$ into $M_2(C').$ Let $C=w^*C'w.$ So, in
particular, $C$ is of finite dimension. Note that $C\subset
E'B_1E'.$

We compute that
\beq\label{3p-27}
[{\rm ad}\, w\circ \lambda]|_{\cal P}&=&[h]|_{\cal P},\\
\mu_{t\circ h}(O_{\eta_2/2s}(x_i))&\ge &{1\over{2+m}}\ge
\sigma_1\cdot \eta_2
\eneq
for all $t\in T(E'CE')$ and
\beq\label{2p-28}
 |t(h(f))-t(\lambda(f))|<1/(1+m)N_1<\dt_2 \tforal f\in {\cal
G}_2
\eneq
and for all $t\in T(E'CE').$ By the choices of $\dt_2$ and ${\cal G}_2$
and applying \ref{CD}, we obtain a unitary $w_1\in E'CE'\subset B_1$
such that
\beq\label{2p-29}
{\rm ad}\, w_1\circ {\rm ad}\, w\circ \lambda\approx_{\dt_1/2}h
\,\,\,{\rm on}\,\,\, {\cal G}_1.
\eneq

 Put
\beq\label{3p-30-}
\Lambda(f)={\rm ad}\, w_1\circ {\rm ad}\, w\circ
(\sum_{j=1}^L{\bar \lambda}_j(f))\andeqn
\phi_2(f)={\rm ad}\,w_1\circ {\rm ad}\, w\circ
(\sum_{j=1}^L\lambda_j(f))
\eneq
for all $f\in C(X).$ Note that there exists a finite dimensional \CA\, $C_0$ \,($\cong
\oplus_{j=1}^LC_j$) such that $\phi_2: C(X)\to C_0$ unitally.
We have
\beq\label{3p-30}
\hspace{-0.4in}\|\phi(f)-[E(\phi(f))E+\phi_2(f)+\sum_{i=1}^mf(x_i){\bar
e}_i+\Lambda(f)+\phi_1'(f)]\|<\dt_1\andeqn\\\label{3p-30+}
\hspace{-0.2in}\|{\rm ad}\, u\circ \psi(f)-[E({\rm ad}\, u\circ
\psi(f))E+\phi_2(f)+\sum_{i=1}^mf(x_i){\bar
e_i}+\Lambda(f)+\psi_1'(f)]\|<\dt_1
\eneq
for all $f\in {\cal G}_1,$ where ${\bar
e_i}=(e_i-e_i'-e_i'')+w_1^*(w^*e_i'w)w_1,$ $i=1,2,...,m.$

Note that, by (\ref{3p-22+1}) and (\ref{3p-19+3}),
\beq\label{3p-31}
t_{j,x}(\pi_j({\bar e}_i))&>&
{3\Delta(\eta)\over{8}}-2t_{j,x}(1_{C_j})\\\label{3p-31+}
 &\ge &
{3\Delta(\eta/2s)\over{8}}-{2\Delta(\eta/2s)\over{24N_1(N+2)(K+2)}}>
{\Delta(\eta/2s)\over{4}}
\eneq
for all $x\in X_j$ and $j=1,2,...,L.$
Since
\beq\label{3p-32}
&&(N+1)(\tau(E)+t_{j,x}(1_{C_j}))+\sigma_2/16<\\
&&(N+1)({\sigma_2\over{4}}+{\Delta(\eta_2/2s)\over{24N_1(N+2)(K+2)}})+\sigma_2/16<
t_{j,x}(\pi_j({\bar e_i})),
\eneq
by (\ref{3p-17}), there is a projection $p_{j,i}\le \pi_j({\bar
e_i})$ such that
\beq\label{3p-33}
(N+1)(\tau(E)+ t_{j,x}(1_{C_j}))+\sigma_2/16\ge
t_{j,x}(p_{j,i})>(N+1)(\tau(E)+ t_{j,x}(1_{C_j})).
\eneq
Set $p_i=\sum_{j=1}^Lp_{j,i}.$

 We compute that
\beq\label{3p-33+}
t_{j,x}(\pi_j(p_i))=t_{j,x}(p_{j,i})&<&
(N+1)\sigma_2/4+{\Delta(\eta/2s)\over{24N_1(K+2)}}+\sigma_2/16\\\label{3p-33+1}
&=&{35\Delta(\eta/2s)\over{3\cdot 4\cdot 64\cdot N_1(K+2)}}+{\sigma_2\over{16}}\\\label{3p-33+1n}
&&\rforal x\in X_j, j=1,2,...,L,\\
 \tau(p_i)&>&(N+1)(\tau(E)+
\tau(1_{C_j}))\andeqn\\\label{3p-33+2}
\hspace{-0.4in}t_{j,x}(1_{C_j})+\sum_{i=1}^mt_{j,x}(\pi_j(p_i))&=&
t_{j,x}(1_{C_j})+\sum_{i=1}^mt_{j,x}(\pi_j(p_i))\\
&&\hspace{-0.8in}\le {\Delta(\eta_2/2s)\over{24N_1(N+2)(K+2)}}
+{m35\Delta(\eta/2s)\over{3\cdot 4\cdot 64\cdot
N_1(K+2)}}+{m\sigma_2\over{16}}
\\\label{3p-33++}
&&\hspace{-0.8in}< {\Delta(\eta_2/2s)\over{3\cdot 4\cdot 64(K+2)}}
\eneq
for all $\tau\in T(A).$ Since $N_1\ge 32m,$ from (\ref{3p-31+}),
(\ref{3p-33}) (and (\ref{3p-32})), it follows that
\beq\label{3p-34}
t_{j,x}({\bar e}_1-p_i)>
K(\tau(E)+t_{j,x}(1_{C_j})+\sum_{i=1}^mt_{j,x}(p_i))
\eneq
for all $x\in X_j,$ $j=1,2,...,L.$
Define
\beq\label{3p-35}
B=(1_{B_1}-\sum_{i=1}^mp_i)B_1((1_{B_1}-\sum_{i=1}^mp_i).
\eneq
Define
\beq\label{3p-36}
\phi_1''(f)&=&\sum_{i=1}^mf(x_i)({\bar
e}_i-p_i)+\Lambda(f)+\phi_1'(f)\andeqn\\
\psi_1''(f)&=&\sum_{i=1}^mf(x_i)({\bar
e}_i-p_i)+\Lambda(f)+\psi_1'(f)
\eneq
for all $f\in C(X).$ So we view $\phi_1''$ and $\psi_1''$ as maps
from $C(X)$ into $B.$ Note that, by (\ref{3p-21}),
\beq\label{3p-37}
[\phi_1'']|_{\cal P}=[\pi_{\xi}]|_{\cal P},
\eneq
where $\pi_{\xi}$ is the point-evaluation at $\xi.$
 By (\ref{3p-18}), we also have
 \beq\label{3p-38}
[\psi_1'']|_{\cal P}=[\pi_{\xi}]|_{\cal P}.
\eneq
From (\ref{3p-19+2}) and (\ref{3p-33+}), by the choices of
$\dt_1$ and ${\cal G}_1,$ we have
\beq\label{3p-39}
\mu_{T\circ \phi_1''}(O_a)&\ge&
{63\Delta(a)\over{64}}-79\sigma_2/48\ge 2\Delta_1(a)\andeqn\\
\mu_{T\circ \psi_1''}(O_a)&\ge& 2\Delta_1(a)\,\,\,\tforal T\in T(B)
\eneq
and for all $a\ge \eta_1.$

When $X\in {\bf X}_0,$  by  applying \ref{CTAM}, we
obtain unital \hm s $\phi_1, \phi_2: C(X)\to B$ such that
\beq\label{3p-40}
\|\phi_1(f)-\phi_1''(f)\|&<&\ep/16 \andeqn\\\label{3p-41}
\|\psi_1(f)-\psi_1''(f)\|&<&\ep/16
\eneq
for all $f\in {\cal F}_0.$ By the choice of $\ep_2$ and ${\cal
F}_0,$ we have
\beq\label{3p-41+}
\mu_{T\circ \phi_1}(O_a)\ge \Delta_1(a)\andeqn \mu_{T\circ
\psi_1}(O_a)\ge \Delta_1(a)
\eneq
for all $a\ge \eta_0.$
Furthermore, we may also assume that
\beq\label{3p-42}
|T\circ \phi_1(f)-T\circ \psi_1(g)|<\ep
\eneq
for all $f\in {\cal F}$ and $T\in T(B).$ Thus lemma follows by
combing (\ref{3p-40}), (\ref{3p-41}) with (\ref{3p-30}),
(\ref{3p-30+}), (\ref{3p-34}) and (\ref{3p-33+1n}), as well as (\ref{3p-16}).

When $X\in {\bf X}\setminus {\bf X}_0,$ we will use
(\ref{3p-div1}) and (\ref{3p-div-2}). By considering each
$\phi_{1,0}'$ and $\psi_{1,0}'$ individually (with a modification),
by applying \ref{Ntorsion} and \ref{RTAM}  in stead of \ref{TAM},  we also obtain
$\phi_1$ and $\psi$ satisfying (\ref{3p-40}) and (\ref{3p-41}) as
desired.

\end{proof}

\begin{rem}\label{RR1}

{\rm When $X=I\times \T$ or $X$ is a connected one dimensional
finite CW complex, the proof of \ref{3p} is much easier. In the case
that $X=I\times \T,$ a \morp\, $\phi: C(X)\to B,$ where
$B=\oplus_{j=1}^sC(X_j, M_{r(j)})$ with $X_j=[0,1]$ or a point, has
the following property:
$$
[\phi]=[\pi_\xi],
$$
if $\phi$ is unital $\dt$-${\cal G}$-multiplicative for some small
$\dt>0$ and some finite subset ${\cal G}\subset C(X),$ where
$\pi_\xi(f)=f(\xi)\cdot 1_B$ for all $f\in C(X)$ and $\xi\in X.$
So \ref{CTAM} can be applied directly.}
\end{rem}

\begin{cor}\label{CC1}
Let $X\in {\bf X}.$ Let $\ep>0,$
let $\eta_0>0,$
let ${\cal F}\subset C(X)$ and let $\Delta: (0,1)\to (0,1)$ be a
non-decreasing map. Then there exists $\eta>0,$ $\dt>0,$ a finite
subset ${\cal G}\subset C(X)$ satisfying the following:

Suppose that $A$ is a unital separable simple \CA\, with $TR(A)\le
1$ and $\phi: C(X)\to A$ is a unital $\dt$-${\cal G}$-multiplicative
\morp\, such that
\beq\label{CC1-1}
\mu_{\tau\circ \phi}(O_a)&\ge& \Delta(a)\tforal a\ge \eta.
\eneq

Then, for any $\ep_0>0,$ for any integer $K\ge 1,$  there are mutually orthogonal
projections $P_0$ $P_1$ and $P_2$ with $P_0+P_1+P_2=1_A,$ there exists a
unital \SCA\, $B=\oplus_{j=1}^sC(X_j, M_{r(j)})$ with $P_1=1_B,$
where $X_j=[0,1],$ or $X_j$ is a point, a finite dimensional \SCA\, $D,$ a unital completely positive linear
map $\phi_2: C(A)\to D$ and there exists a unital
\hm\, $\phi_1: C(X)\to B$ such that
\beq\label{CC1-2}
\|\phi(f)-(P_0\phi(f)P_0+\phi_2(f)+\phi_1(f))\|<\ep\tforal f\in {\cal F}
\eneq
and
\beq\label{CC1-2+}
K\tau(P_0+P_2)<\tau(P_1)\tforal \tau\in T(A).
\eneq
Moreover, for any finite subset ${\cal H}\subset A,$ one may require
that
\beq\label{CC1-3}
\|aP_0-P_0a\|<\ep_0
\tforal a\in {\cal H}\cup \phi({\cal F}).
\eneq

\end{cor}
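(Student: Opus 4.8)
The plan is to apply Lemma \ref{3p} with $\psi=\phi$ and then repackage the four projections it produces into the three required here. Since $\psi=\phi,$ the two hypotheses of \ref{3p}, namely (\ref{3p-1}) and (\ref{3p-2}), are automatic for $\phi$ against itself ($|\tau\circ\phi(g)-\tau\circ\phi(g)|=0<\dt$ and $[\phi]|_{\cal P}=[\phi]|_{\cal P}$), so no genuinely new argument is needed: the work is entirely bookkeeping together with one short trace computation.

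For the choice of constants, given $\ep,$ $\eta_0,$ ${\cal F},$ $\Delta$ (and, for the last clause, an integer $K\ge 1$), I would invoke \ref{3p} with these same $\ep,$ $\eta_0,$ ${\cal F},$ $\Delta,$ with $\ep_1>0$ arbitrary (the $\ep_1$-density of the points produced by \ref{3p} plays no role here), with $N:=1,$ and with $2K$ in place of its $K.$ This yields $\eta,$ $\dt,$ ${\cal G}$ and a finite subset ${\cal P}\subset\underline{K}(C(X));$ these are the constants of the corollary (so $\dt$ and ${\cal G}$ are allowed to depend on $K$). Now if $A,\phi$ satisfy the hypotheses and $\ep_0>0,$ ${\cal H}\subset A$ are given, then \ref{3p} applied to $\psi=\phi$ (with ${\cal H}\cup\phi({\cal F})$ in place of its ${\cal H}$) produces mutually orthogonal $Q_0,Q_1,Q_2,Q_3$ with sum $1_A,$ a unital \SCA\, $B_1=\oplus_j C(X_j,M_{r(j)})$ with $1_{B_1}=Q_1+Q_2+Q_3$ and $Q_1,Q_2,Q_3\in B_1,$ a unital \hm\, $\phi_1: C(X)\to Q_3B_1Q_3,$ a finite dimensional \SCA\, $C_0\subset Q_1B_1Q_1$ with $1_{C_0}=Q_1,$ a unital $\ep$-${\cal F}$-multiplicative \morp\, $\phi_2': C(X)\to C_0,$ mutually orthogonal $p_1,\dots,p_m\in Q_2B_1Q_2$ with $\sum_i p_i=Q_2,$ and the estimates $\|\phi(f)-[Q_0\phi(f)Q_0+\phi_2'(f)+\sum_i f(x_i)p_i+\phi_1(f)]\|<\ep/2$ on ${\cal F},$ $\tau(Q_0+Q_1)<\tau(p_i),$ $2Kt_{j,x}(Q_1+Q_2)\le t_{j,x}(Q_3),$ and $\|aQ_0-Q_0a\|<\ep_0$ for $a\in{\cal H}\cup\phi({\cal F}).$

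Next I would repackage these pieces: set $P_0:=Q_0,$ $P_1:=Q_3,$ $P_2:=Q_1+Q_2,$ so $P_0+P_1+P_2=1_A.$ Because $Q_3\in B_1$ and each $X_j$ is $[0,1]$ or a point, $\pi_j(Q_3)$ has constant rank, so $B:=Q_3B_1Q_3$ is again of the form $\oplus_j C(X_j,M_{r'(j)})$ with $1_B=P_1,$ and $\phi_1$ is a unital \hm\, into $B.$ The map $f\mapsto\sum_i f(x_i)p_i$ factors through the point-evaluations at $x_1,\dots,x_m,$ hence has finite dimensional range, lying in $Q_2B_1Q_2;$ so $D:=C_0\oplus\mathrm{span}\{p_1,\dots,p_m\}$ is a finite dimensional \SCA\, of $(Q_1+Q_2)B_1(Q_1+Q_2)\subseteq A$ with $1_D=P_2,$ and $\phi_2: C(X)\to D$ defined by $\phi_2(f):=\phi_2'(f)+\sum_i f(x_i)p_i$ is unital and completely positive. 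Since $Q_0,\,Q_1+Q_2,\,Q_3$ are mutually orthogonal with sum $1_A,$ the displayed \ref{3p} estimate reads precisely as (\ref{CC1-2}) (note $\ep/2<\ep$), and (\ref{CC1-3}) is immediate.

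It then remains to deduce (\ref{CC1-2+}). For a fixed $\tau\in T(A),$ the restriction $\tau|_{B_1}$ is a positive tracial functional on $B_1,$ hence an integral of the normalized traces $t_{j,x};$ integrating $2Kt_{j,x}(Q_1+Q_2)\le t_{j,x}(Q_3)$ gives $2K\tau(Q_1+Q_2)\le\tau(Q_3),$ in particular $\tau(Q_2)\le\tau(Q_3)/(2K).$ Also $\tau(Q_0)\le\tau(Q_0+Q_1)<\tau(p_i)$ for each $i$ (using $N=1$), so summing over $i$ gives $m\tau(Q_0)<\tau(Q_2),$ whence $K\tau(Q_0)<K\tau(Q_2)/m\le\tau(Q_3)/(2m)\le\tau(Q_3)/2.$ Adding, $K\tau(P_0+P_2)=K\tau(Q_0)+K\tau(Q_1+Q_2)<\tau(Q_3)/2+\tau(Q_3)/2=\tau(P_1),$ as required. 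I expect no real obstacle: everything substantive is already contained in \ref{3p}, and the only point needing care is securing enough slack in this last inequality, which is exactly why \ref{3p} is invoked with $2K$ and $N=1.$
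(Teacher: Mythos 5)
Your proposal is correct and follows the paper's own route: the paper's entire proof of \ref{CC1} is the one-liner ``choose $\psi=\phi$ and apply \ref{3p}'', which is exactly what you do. The additional bookkeeping you supply (invoking \ref{3p} with $N=1$ and $2K$, merging $Q_1,Q_2$ into $P_2$ with $D=C_0\oplus {\rm span}\{p_1,\dots,p_m\}$, and integrating the fibrewise inequality $2Kt_{j,x}(Q_1+Q_2)\le t_{j,x}(Q_3)$ against the measures representing $\tau|_{B_1}$) is sound and simply makes explicit what the paper leaves to the reader, including the fact that the constants then depend on $K$ exactly as they do in \ref{3p}.
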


\begin{proof}
Choose $\psi=\phi$ and then apply \ref{3p}.

\end{proof}

\begin{thm}\label{MT1}

Let $X$ be a finite simplicial complex  in ${\bf X}.$
Let  $\ep>0,$  let ${\cal F}\subset C(X)$ be a finite subset and let
$\Delta: (0,1)\to (0,1)$ be a non-decreasing map. There exists
$\eta>0,$ $\dt>0,$ a finite subset ${\cal G}\subset C(X)$ a finite
subset ${\cal P}\subset \underline{K}(C(X))$ and a finite subset
${\cal U}\subset {\cal U}(M_{\infty}(C(X)))$ satisfying the
following:

Suppose that $A$ is a unital separable simple \CA\, with tracial
rank no more than one and $\phi, \psi: C(X)\to A$ are two unital
$\dt$-${\cal G}$-multiplicative \morp s
 such that
\beq\label{MT1-1}
\mu_{\tau\circ\phi}(O_a)&\ge &\Delta(a)\tforal a\ge \eta,\\
 |\tau\circ
\phi(g)-\tau\circ \psi(g)|&<&\dt\tforal g\in {\cal G},
\eneq
for all $\tau\in T(A),$
\beq\label{MT1-2}
[\phi]|_{\cal P}=[\psi]|_{\cal P}\tand
 {\rm
dist}(\phi^{\ddag}({\bar z}), \psi^{\ddag}({\bar z}))< \dt
\eneq
for all $z\in {\cal U}.$ Then there exists a unitary $u\in A$ such
that
\beq\label{MT1-3}
{\rm ad}\, u\circ \psi\approx_{\ep} \phi\,\,\,\,{\rm on}\,\,\, {\cal
F}.
\eneq

\end{thm}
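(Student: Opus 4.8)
The plan is to reduce the general simple target algebra $A$ with $TR(A)\le 1$ to the already-prepared situation of maps into $\oplus_j C(X_j,M_{r(j)})$ with $X_j=[0,1]$ or a point, and then invoke the decomposition Lemma~\ref{3p} together with the uniqueness statements proved in Sections~3 and 8. The overall structure mirrors the classical ``cut-down + uniqueness on the two pieces'' argument familiar from the $TR=0$ theory, but now the finite-dimensional building blocks carry an interval.

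\textbf{Step 1 (choice of data).} Given $\ep$, ${\cal F}$ and $\Delta$, first I would fix the output of Lemma~\ref{3p}: feed it with $\ep$, a small $\ep_1$, a small $\eta_0$, ${\cal F}$, and large integers $N,K$ (to be chosen so that, after cutting down, the errors in traces and in $U(-)/CU(-)$ are swallowed by $1/K$ and $1/N$). Lemma~\ref{3p} produces $\eta$, $\dt$, ${\cal G}$ and ${\cal P}\subset\underline K(C(X))$; I would also enlarge ${\cal P}$ to include the classes of a finite set ${\cal U}\subset U(M_\infty(C(X)))$ of generators of $K_1$, and choose ${\cal U}$, $\dt$, ${\cal G}$ compatible with \ref{Dppu} so that $\phi^\ddag,\psi^\ddag$ are defined on $\overline{\langle {\cal U}\rangle}$ and $\mathrm{dist}(\phi^\ddag(\bar z),\psi^\ddag(\bar z))<\dt$ translates into $\mathrm{dist}(\overline{\langle\phi\rangle(z)},\overline{\langle\psi\rangle(z)})$ being correspondingly small. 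All of this is bookkeeping that the earlier sections have set up.

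\textbf{Step 2 (apply Lemma~\ref{3p}).} For $\phi,\psi$ satisfying the hypotheses, Lemma~\ref{3p} gives orthogonal projections $P_0,P_1,P_2,P_3$ with $P_0+P_1+P_2+P_3=1_A$, a subhomogeneous $B_1=\oplus_j C(X_j,M_{r(j)})$, unital \hm s $\phi_1,\psi_1\colon C(X)\to B:=P_3B_1P_3$, a \morp\ $\phi_2$ into a finite-dimensional $C_0\subset P_1BP_1$, point-mass projections $p_i$ on an $\ep_1$-dense set $\{x_i\}$ with $P_2=\sum p_i$, and a unitary $u_0\in A$ with
\[
\phi\approx_{\ep/2} P_0\phi P_0+\phi_2+\textstyle\sum f(x_i)p_i+\phi_1,\qquad
\mathrm{ad}\,u_0\circ\psi\approx_{\ep/2} P_0(\mathrm{ad}\,u_0\circ\psi)P_0+\phi_2+\textstyle\sum f(x_i)p_i+\psi_1
\]
on ${\cal F}$, together with the size estimates $N\tau(P_0+P_1)<\tau(p_i)$, $Kt_{j,x}(P_1+P_2)\le t_{j,x}(P_3)$, the trace lower bounds $\mu_{T\circ\phi_1}(O_a),\mu_{T\circ\psi_1}(O_a)\ge\Delta(a)/4$ for $a\ge\eta_0$, and $|T\circ\psi_1(f)-T\circ\phi_1(f)|<\ep$ on ${\cal F}$. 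Note the middle block $\phi_2+\sum f(x_i)p_i$ is \emph{identical} for both sides, so it contributes nothing and can be ignored.

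\textbf{Step 3 (uniqueness on the big interval block).} It remains to unitarily match $\phi_1$ and $\psi_1$ inside $B=P_3B_1P_3=\oplus_j C(X_j,M_{r(j)})$ up to $\ep$ on ${\cal F}$. Here I would invoke Corollary~\ref{egl2} (the $\dim Y\le 1$ version of \ref{Tegl}, equivalently \ref{GL2}): its hypotheses are exactly $[\phi_1]|_{\cal P}=[\psi_1]|_{\cal P}$ on a suitable finite ${\cal P}$, the measure lower bounds (which hold with $\sigma\eta_0$ replaced by $(\Delta(a)/4)$ and can be made to satisfy the two-scale condition by choosing the small $\eta_0$ in Step~1), the trace comparison $|T\circ\phi_1(g)-T\circ\psi_1(g)|<\dt'$ (which follows from \ref{3p-4++} after shrinking $\ep$), and $\mathrm{dist}(\phi_1^\ddag(\bar z),\psi_1^\ddag(\bar z))<1/8K'\pi$ on ${\cal U}$, plus a rank lower bound $\min_j r(j)\ge L$. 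The rank bound is where $K$ enters: $Kt_{j,x}(P_1+P_2)\le t_{j,x}(P_3)$ and the divisibility/\ref{3p-16+1}-type estimates force each $r(j)$ large once $K$ was chosen large, so the rank hypothesis of \ref{egl2} is met. The $KK$-data $[\phi_1]|_{\cal P}=[\psi_1]|_{\cal P}$ comes from \ref{3p-2} via \ref{3p-37}, \ref{3p-38}; that $\phi_1^\ddag$ and $\psi_1^\ddag$ are close on ${\cal U}$ comes from $\mathrm{dist}(\phi^\ddag,\psi^\ddag)<\dt$ restricted to the $P_3$-corner, using that the other corners contribute determinants controlled by their (tiny) traces via \ref{DD} and Phillips' estimates as in the proof of \ref{Tegl}. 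Corollary~\ref{egl2} then yields a unitary $w\in B$ with $\mathrm{ad}\,w\circ\psi_1\approx_\ep\phi_1$ on ${\cal F}$.

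\textbf{Step 4 (assemble).} Set $v=(P_0\oplus\mathbf 1_{C_0\text{-block}}\oplus\mathbf 1_{P_2}\oplus w)\,u_0\in A$ (the identity on the corners untouched, $w$ on the $P_3$-corner), composed with $u_0$. Then $\mathrm{ad}\,v\circ\psi\approx_\ep\phi$ on ${\cal F}$ by adding up the three approximations of Steps~2--3. This finishes the proof, after absorbing the finitely many error constants into the quantifier choices of Step~1.

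\textbf{Main obstacle.} The delicate point is the simultaneous control of the three quantitative invariants on the \emph{small} corners $P_0,P_1,P_2$: the trace of $P_0$ is tiny but nonzero, the finite-dimensional $\phi_2$ and the point masses sit in $P_1\oplus P_2$, and we must ensure (i) these corners carry no obstruction in $KL$ after restricting ${\cal P}$ — handled by making them part of the point-evaluation class via \ref{small}, \ref{LDL}, \ref{Tr} inside \ref{3p} — and (ii) the discrepancy of $\phi^\ddag$ vs.\ $\psi^\ddag$ localizes to the $P_3$-corner with the constant improved from $\dt$ to $1/8K'\pi$. This last improvement is exactly the ``determinant absorption'' computation already carried out in the proof of \ref{Tegl} (equations \ref{tegl-11}--\ref{tegl-15}); reusing it here, with $P_0+P_1+P_2$ playing the role of the small $P_0$ there, is the crux. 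Everything else is standard $\ep/3$-assembly and careful ordering of the (many) $\dt$'s and $\eta$'s.
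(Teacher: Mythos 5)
Your overall strategy matches the paper's: fix constants from the uniqueness results, apply Lemma~\ref{3p} to decompose $\phi$ and ${\rm ad}\,u_0\circ\psi$ into a small corner, a common finite-dimensional/point-mass block, and two homomorphisms $\phi_1,\psi_1$ into $P_3B_1P_3$, localize the $U/CU$-distance to the $P_3$-corner by a determinant estimate, and match $\phi_1$ with $\psi_1$ by Corollary~\ref{egl2}. Up to that point your Steps 1--3 are essentially the paper's argument.

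However, Step 4 has a genuine gap: you take the unitary to be the identity on the $P_0$-corner, but the two corner maps $\phi_0(f)=P_0\phi(f)P_0$ and $\psi_0(f)=P_0({\rm ad}\,u_0\circ\psi)(f)P_0$ are \emph{different} completely positive maps, and there is no reason they are close in norm; only the trace of $P_0$ is small, which controls nothing at the level of $\|\phi_0(f)-\psi_0(f)\|$ (this can be of size $2\|f\|$). Consequently ${\rm ad}\,v\circ\psi$ differs from $\phi$ on ${\cal F}$ by the uncontrolled quantity $\|\phi_0(f)-\psi_0(f)\|$, and your ``$\ep/3$-assembly'' does not close. This is exactly where the paper invokes Theorem~\ref{GL2}: the point-mass block $\sum_i f(x_i)p_i$ is not merely an inert common summand to be ignored, it is the absorbing summand $\sigma(f)$ of \ref{GL2}, and the inequality $N\tau(P_0+P_1)<\tau(p_i)$ produced by Lemma~\ref{3p} (with $N=l$ from \ref{GL2}) is what makes \ref{GL2} applicable; it yields a unitary $w_2\in (P_0+P_2)A(P_0+P_2)$ with $\|w_2^*(\psi_0(f)\oplus\sum_if(x_i)p_i)w_2-(\phi_0(f)\oplus\sum_if(x_i)p_i)\|<\ep/4$ on ${\cal F}$, and it is $w_2\oplus 1_{C_0}\oplus w$ (composed with $u_0$) that finishes the proof. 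To repair your write-up you must also verify the $K$-theoretic and exponential-length hypotheses of \ref{GL2} for $\phi_0,\psi_0$ (the paper gets $[\phi_0]|_{\cal P}=[\psi_0]|_{\cal P}$ and the $CU$-distance estimate for the corner from \ref{Dppu}/6.2 of \cite{Lntr1}), so the $P_0$-corner cannot simply be dismissed as ``carrying no obstruction.''
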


\begin{proof}

 Let $\eta_1>0$ be as in \ref{egl2} for $\ep/4$ and ${\cal F}.$
 Let
$\sigma_1=\Delta(\eta_1)/4\eta_1.$ Let $\eta_0>0$ ( in place of
$\eta$) and $K_1\ge 1$ (in place of $K$) be as in \ref{egl2} for $\ep/4$ and ${\cal F}$ above. Let
$\sigma_0=\Delta(\eta_0)/4\eta_0$ (in place of $\sigma$). Let
$\dt_1>0$ (in place of $\dt$), ${\cal G}_1\subset C(X)$ (in
 place of ${\cal G}$), ${\cal P}_1\subset \underline{K}(C(X))$ ( in place of  ${\cal
 P}$),
 ${\cal U}_1\subset U(M_{\infty}(C(X)))$ (in place of ${\cal U}$) and $L_1\ge 1$ (in place of $L$) be finite subsets
 required by \ref{egl2}.

Let $L=8\pi+1.$  Let $\dt_2>0$ (in place of $\dt$),  ${\cal
G}_2\subset C(X)$ (in place of ${\cal G}$), ${\cal P}_2\subset
\underline{K}(C(X))$ (in place of ${\cal P}$), ${\cal U}_2\subset
U(M_{\infty}(C(X)))$ (in place of ${\cal U}$), $l\ge 1$ and
$\ep_1>0$ be as required by \ref{GL2} for $\ep/4$ and ${\cal F}.$
Let $\ep_2=\min\{\dt_1/2, \dt_2/2\}$ and ${\cal F}_2={\cal F}\cup
{\cal G}_1\cup {\cal G}_2.$
Let $\ep_3>0$ be a number smaller than $\ep_2.$
Let $N=l$ and $K>16/\min\{\sigma\eta, \sigma_1\eta_1, \dt_1\}.$ Let
$\eta_2>0,$  let $\dt_3>0$ (in place of $\dt$), let ${\cal
G}_3\subset C(X)$ (in place of ${\cal G}$), let ${\cal P}_3\subset
\underline{K}(C(X))$ be required by \ref{3p} for $\ep_3$ ( in place
of $\ep$) $\ep_1,$ $\min\{\eta_1, \eta_0\}$ (in place of $\eta_0$)
and ${\cal F}_2$ (in place of ${\cal F}$).

Let $\eta=\min\{\eta_1, \eta_0, \eta_2\}$ and let
$\dt_4=\min\{\Delta(\eta)/4, \dt_3, 1/32K_1\pi\}.$

Let $\dt$ be a positive number which is smaller than $\dt_4$ and let
${\cal G}$ be a finite subset containing ${\cal G}_3.$
Let ${\cal P}\subset \underline{K}(C(X))$ be a finite which contains
${\cal P}_1\cup {\cal P}_2\cup{\cal P}_3$ and the image of ${\cal
U}$ in $\underline{K}(C(X)).$

Suppose that $A$ is a unital separable simple \CA\, with tracial
rank one or zero and suppose $\phi, \psi: C(X)\to A$ are two unital
$\dt$-${\cal G}$-multiplicative \morp s which satisfy the assumption
of the theorem for the above $\dt,$ ${\cal G},$ ${\cal P}$ and
${\cal U}.$

It follows from \ref{3p} that there are four  mutually orthogonal
projections $P_0, P_1$ $P_2$ and $P_3$ with $P_0+P_1+P_2+P_3=1_A,$
there is a unital \SCA\, $B_1\subset (P_1+P_2+P_3)A(P_1+P_2+P_3)$
with $1_{B_1}=P_1+P_2+P_3$ and $P_1, P_2, P_3\in B_1,$ where $B_1$
has the form $B_1=\oplus_{j=1}^s C(X_j, M_{r(j)})$ and
 where $X_j=[0,1],$ or $X_j$ is a point, there are
unital \hm s $\phi_1, \psi_1: C(X)\to P_3B_1P_3,$ there exists a
finite dimensional \SCA\, $C_0\subset P_1B_1P_1$ with $1_{C_0}=P_1,$
there exists a unital $\ep_3$-${\cal F}_2$-multiplicative \morp\,
$\phi_2: C(X)\to C_0$ and mutually orthogonal projections $p_1,
p_2,...,p_m\in B_1$ and a unitary $v\in A$ such that
\beq\label{pMp-3}
\|\phi(f)-[P_0\phi(f)P_0+\phi_2(f)+\sum_{i=1}^mf(x_i)p_i+\phi_1(f)]\|<\ep_3/2
\tand\\\label{pMp-3+1}
\|{\rm ad}\, v\circ \psi(f)-[P_0({\rm ad}\, v\circ
\psi(f))P_0+\phi_2(f)+ \sum_{i=1}^m f(x_i)p_i+\psi_1(f)]\|<\ep_3/2
\eneq
for all $f\in {\cal F}_2,$ where $\{x_1,x_2,...,x_m\}$ is
$\ep_1$-dense in $X$ and $P_2=\sum_{i=1}^m p_i,$
\beq\label{pMp-4}
N\tau(P_0+P_1)<\tau(p_i),\,\,Kt_{j,x}(P_1+P_2)\le
t_{j,x}(P_3)\\\label{pMp-4+1} \mu_{T\circ \phi_1}(O_a)\ge
\Delta(a)/4,\,\,\mu_{T\circ \psi_1}(O_a)\ge \Delta(a)/4\tforal a\ge
\min\{\eta_0, \eta_1\}\\\label{pMp-4++} \andeqn |T\circ
\phi_1(f)-T\circ \psi_1(f)|<\ep_3\tforal f\in {\cal F}_2,
\eneq
for all  $\tau\in T(A),$ $x\in X_j,$ $j=1,2,...,m$ and for all $T\in
T(B).$  Moreover, for any finite subset ${\cal H}\subset A,$ one may
require that
\beq\label{pMp-5}
\|aP_0-P_0a\|<\ep_3\andeqn (1-P_0)a(1-P_0)\in_{\ep_3}B_1\rforal a\in {\cal
H}.
\eneq
We may also assume that $r(j)\ge L_1$ for $j=1,2,...,s.$
Put $\phi_0(f)=P_0\phi(f)P_0,$ $\psi_0(f)=P_0({\rm ad}\, u\circ
\psi(f))P_0,$ $\phi_3(f)=\phi_2(f)+\sum_{i=1}^m f(x_i)p_i+\phi_1(f)$
and  $\psi_3(f)=\phi_2(f)+\sum_{i=1}^m f(x_i)p_i+\psi_1(f)$
 for $f\in C(X).$

Since
\beq\label{pMp-6}
{\rm dist}(\phi^{\ddag}({\bar z}), \psi^{\ddag}({\bar
z}))<\dt\tforal z\in {\cal U},
\eneq
with a sufficiently large ${\cal H}$ (and sufficiently small
$\ep_3$),  by 6.2 of  \cite{Lntr1}, we may assume that
\beq\label{pMp-7}
{\rm dist}(\phi_0^{\ddag}({\bar z}), \psi_0^{\ddag}({\bar z}))<2\dt
\eneq
for all $z\in {\cal U}.$  Furthermore, we may also assume that
\beq\label{pMp-8}
{\rm dist}(\phi_3^{\ddag}({\bar z}), \psi_3^{\ddag}({\bar z}))<2\dt
\eneq
for all $z\in {\cal U}.$
Denote by $D$ the determinant function on $B_1.$ We compute that
\beq\label{pMp-9}
D(\phi_1(z)\psi_1(z)^*)< 4\dt\tforal z\in {\cal U}.
\eneq
It follows that
\beq\label{pMp-10}
{\rm dist}(\phi_1^{\ddag}({\bar z}), \psi_1^{\ddag}({\bar
z}))<1/8K_1\pi\tforal z\in {\cal U}.
\eneq
We may also assume (with sufficiently large ${\cal U}$ and
sufficiently small $\ep_3)$ that
\beq\label{pMp-10+}
[\phi_1]|_{\cal P}=[\psi_1]|_{\cal P}\andeqn\\\label{pMp-10++}
[\phi_0]|_{\cal P}=[\psi_0]|_{\cal P}.
\eneq
By (\ref{pMp-4+1}), (\ref{pMp-4++}) and (\ref{pMp-10}) and by
applying \ref{egl2}, we obtain a unitary $w_1\in B$ such that
\beq\label{pMp-11}
{\rm ad}\, w_1\circ \psi_1\approx_{\ep/4} \phi_1\,\,\,{\rm
on}\,\,\,{\cal F}.
\eneq
By applying \ref{GL2}, we also have a unitary $w_2\in
(P_0+P_2)A(P_0+P_2)$ such that
\beq\label{pMp-12}
\|w_2^*(\psi_0(f)\oplus \sum_{i=1}^m f(x_i)p_i)w_2- (\phi_0(f)\oplus\sum_{j=1}^mf(x_i)p_i)\|<\ep/4\tforal f\in {\cal F}.
\eneq
The theorem then follows from the combination of (\ref{pMp-3}),
(\ref{pMp-3+1}), (\ref{pMp-11}) and (\ref{pMp-12}).

\end{proof}

\begin{df}\label{DM}
Let $C=PM_k(C(X))P$ for some finite CW complex $X$ and for some
projection $P\in M_k(C(X)).$ Suppose that the rank of $P$ is $m.$
Let $t$ be a state on $C.$ Then there is a Borel probability
measure $\mu_{t},$ such that
\beq\label{LD-1}
t(f)=\int_X L_x(f(x))d\mu_t\tforal f\in C,
\eneq
where $L_x$ is a state on $M_m.$ If $t\in T(C),$ then
$L_x(f(x))=tr(f(x)),$ where $tr$ is the normalized trace on $M_m.$
There is an integer $n\ge 1$ and a rank one trivial projection $e\in
M_n(C)$ such that $eM_n(C)e\cong C(X).$  It follows that there is a
unitary $u\in M_n(C)$ and a projection $Q\in M_{kn}(C)$ such that
$u^*Cu=QM_{k}(eM_n(C)e)Q.$

Suppose that $A$ is a unital \CA, $s$ is a state on $A$ and
suppose that $\phi: C\to A$ is a \morp.  Then
$$
s\circ \phi(f)=\int_X L_x(f(x))d\mu_{\tau\circ \phi}\tforal f\in
C,
$$
where $L_x$ is a state on $M_m.$

Let $\tau\in T(C)$ and  let $\phi^{(n)}: M_n(C)\to M_n(A)$ be the
\hm\, induced by $\phi.$ Denote by ${\tilde \phi}: C(X) \to
\phi^{(n)}(e)M_n(A)\phi^{(n)}(e)$ the restriction of $\phi^{(n)}$
on $eM_n(C)e. $  It follows that the probability measure
$\mu_{\tau\circ {\tilde \phi}}$ induced by $\tau\circ {\tilde
\phi}$ is equal to $\mu_{\tau\circ \phi}.$

\end{df}

\begin{cor}\label{MC1}
Let $X$ be a finite simplicial complex in ${\bf X}.$
Let  $\ep>0,$  let ${\cal F}\subset C=PM_k(C(X))P,$ where $P\in
M_k(C(X))$ is a projection, be a finite subset and let $\Delta:
(0,1)\to (0,1)$ be a non-decreasing map. There exists $\eta>0,$
$\dt>0,$ a finite subset ${\cal G},$ a finite subset ${\cal
P}\subset \underline{K}(C)$ and a finite subset ${\cal U}\subset
{\cal U}(M_{\infty}(C))$ satisfying the following:

Suppose that $A$ is a unital separable simple \CA\, with tracial
rank no more than one and $\phi, \psi: C\to A,$ where are two
unital $\dt$-${\cal G}$-multiplicative \morp s
 such that
\beq\label{MC1-1}
\mu_{\tau\circ\phi}(O_a)&\ge &\Delta(a)\tforal a\ge \eta,\\
 |\tau\circ
\phi(g)-\tau\circ \psi(g)|&<&\dt\tforal g\in {\cal G},
\eneq
for all $\tau\in T(A),$
\beq\label{MC1-2}
[\phi]|_{\cal P}=[\psi]|_{\cal P}\tand {\rm
dist}(\phi^{\ddag}({\bar z}), \psi^{\ddag}({\bar z}))< \dt
\eneq
for all $z\in {\cal U}.$ Then there exists a unitary $u\in A$ such
that
\beq\label{MC1-3}
{\rm ad}\, u\circ \psi\approx_{\ep} \phi\,\,\,\,{\text on}\,\,\, {\cal
F}.
\eneq

\end{cor}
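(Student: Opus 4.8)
\textbf{Proof plan for Corollary \ref{MC1}.}

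The plan is to reduce the statement about $C = PM_k(C(X))P$ to the already-proved Theorem \ref{MT1} about $C(X)$ itself, by passing through a trivial rank-one projection and a matrix amplification, exactly along the lines set up in Definition \ref{DM}. First I would invoke the observation in \ref{DM}: there is an integer $n \ge 1$ and a rank-one trivial projection $e \in M_n(C)$ with $eM_n(C)e \cong C(X)$, and a unitary $u_0 \in M_n(C)$ together with a projection $Q \in M_{kn}(C)$ so that $u_0^* C u_0 = Q M_k(eM_n(C)e) Q$. Thus $C$ is, up to a unitary conjugation and a corner, a matrix algebra over $C(X)$. Given $\phi, \psi: C \to A$, amplify to $\phi^{(n)}, \psi^{(n)}: M_n(C) \to M_n(A)$ and restrict to the corner $eM_n(C)e \cong C(X)$, obtaining $\tilde\phi, \tilde\psi: C(X) \to \phi^{(n)}(e)M_n(A)\phi^{(n)}(e)$ and $\psi^{(n)}(e)M_n(A)\psi^{(n)}(e)$ respectively. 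Since $\phi$ and $\psi$ are $\dt$-${\cal G}$-multiplicative with $|\tau\circ\phi(g)-\tau\circ\psi(g)|<\dt$ on ${\cal G}$, for sufficiently small $\dt$ and large ${\cal G}$ the projections $\phi^{(n)}(e)$ and $\psi^{(n)}(e)$ are close, hence (after a further small unitary adjustment inside $M_n(A)$, absorbed into the final unitary) may be taken equal; call this projection $p \in M_n(A)$. One checks, using the last sentence of \ref{DM}, that $\mu_{\tau\circ\tilde\phi} = \mu_{\tau\circ\phi}$ for $\tau \in T(A)$, so the measure lower bound $\mu_{\tau\circ\phi}(O_a) \ge \Delta(a)$ transfers verbatim to $\tilde\phi$ (after adjusting $\Delta$ by the constant factor coming from $\tau(p) = 1/\text{rk}(P) \cdot (\text{something})$; more precisely one uses $\Delta'(a) = \Delta(a)/R$ for the appropriate $R$ depending on the rank of $P$, which is harmless since the conclusion of \ref{MT1} only needs some non-decreasing map).

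Next I would translate the $KK$-data and the determinant data. Because $C$ and $C(X)$ are Morita equivalent, $\underline{K}(C) = \underline{K}(C(X))$ and $KL(C,A) = KL(C(X), pM_n(A)p)$ canonically (the amplification/corner maps induce isomorphisms), so the hypothesis $[\phi]|_{\cal P} = [\psi]|_{\cal P}$ on a finite ${\cal P} \subset \underline{K}(C)$ becomes $[\tilde\phi]|_{{\cal P}'} = [\tilde\psi]|_{{\cal P}'}$ for the corresponding finite ${\cal P}' \subset \underline{K}(C(X))$. Similarly $U(M_\infty(C))/CU(M_\infty(C)) = U(M_\infty(C(X)))/CU(M_\infty(C(X)))$, and the map $\phi^{\ddag}$ on $U(M_\infty(C))/CU$ is compatible with $\tilde\phi^{\ddag}$ under this identification (this is where I would cite 6.2 of \cite{Lntr1} for the stability of the $\ddag$-maps under approximate multiplicativity, exactly as in the proof of \ref{MT1}); hence $\text{dist}(\phi^{\ddag}(\bar z), \psi^{\ddag}(\bar z)) < \dt$ for $z \in {\cal U}$ yields $\text{dist}(\tilde\phi^{\ddag}(\bar z), \tilde\psi^{\ddag}(\bar z)) < \dt'$ for $z$ in the corresponding finite subset ${\cal U}' \subset U(M_\infty(C(X)))$. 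Here one must be slightly careful: $pM_n(A)p$ is a unital corner of a simple $C^*$-algebra with $TR \le 1$, hence itself a unital separable simple $C^*$-algebra with $TR \le 1$ (tracial rank is preserved under passing to corners by a hereditary subalgebra argument — this is standard and I would cite it, e.g. from \cite{Lnplms} or the appropriate lemma in \cite{Lnbk}), so \ref{MT1} applies with $A$ replaced by $pM_n(A)p$.

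Then, given $\ep > 0$ and ${\cal F} \subset C$, I would run the parameters backwards: choose the finite subset ${\cal F}_0 \subset eM_n(C)e \cong C(X)$ consisting of the "matrix entries" of the elements of ${\cal F}$ (so that control on ${\cal F}_0$ gives control on ${\cal F}$, up to a fixed constant depending on $k$ and $n$), and feed ${\cal F}_0$, a suitable $\ep_0$, and the adjusted $\Delta'$ into Theorem \ref{MT1}, which returns $\eta', \dt', {\cal G}_0, {\cal P}_0, {\cal U}_0$ for $C(X)$. Pulling these back through the corner/amplification identification produces the required $\eta, \dt, {\cal G}, {\cal P}, {\cal U}$ for $C$. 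Applying \ref{MT1} to $\tilde\phi$ and $\tilde\psi$ gives a unitary $w \in pM_n(A)p$ with $\text{ad}\, w \circ \tilde\psi \approx_{\ep_0} \tilde\phi$ on ${\cal F}_0$; amplifying $w$ appropriately (and combining with the unitaries used to identify $C$ with $QM_k(eM_n(C)e)Q$ and to make $\phi^{(n)}(e) = \psi^{(n)}(e)$) produces a unitary $u \in A$ — here one uses that the "complementary" corners $(1-p)M_n(A)(1-p)$ can be handled because $\phi^{(n)}$ and $\psi^{(n)}$ restricted there are approximately unitarily equivalent by the same data applied to the constant-rank pieces, or more simply one works entirely inside $M_n(A)$ and then compresses back to $A = 1_A \cdot M_n(A) \cdot 1_A$ after noting $1_A$ is a full projection of a fixed size — giving $\text{ad}\, u \circ \psi \approx_\ep \phi$ on ${\cal F}$. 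The main obstacle, I expect, will be the bookkeeping in this last step: tracking how a unitary implementing equivalence of the $C(X)$-corner amplifications descends to a genuine unitary of $A$ conjugating $\psi$ to $\phi$ on the original finite subset ${\cal F} \subset C$, with all the $\ep$'s and the non-unital corner sizes matching up. This is routine but delicate, and is precisely the kind of reduction that the setup in \ref{DM} was designed to make possible; no genuinely new analytic input beyond \ref{MT1} and the Morita-invariance of the relevant invariants should be needed.
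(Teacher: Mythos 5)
Your proposal is correct and takes essentially the same route as the paper, whose proof of \ref{MC1} is just the two-sentence remark that the general case reduces to $M_k(C(X))$ via the setup of \ref{DM} and then follows from \ref{MT1}; you have simply spelled out that standard Morita/corner reduction. One small repair: $\phi^{(n)}(e)$ and $\psi^{(n)}(e)$ need not be close in norm, but after perturbing them to projections they have the same class in $K_0(M_n(A))$ (include the class of $e$ in ${\cal P}$), and since $TR(A)\le 1$ implies stable rank one, cancellation makes them unitarily equivalent, which is all your argument actually uses since the conjugating unitary is absorbed into the final $u$.
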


\begin{proof}
It is standard (using \ref{DM}) that the general case can be reduced
to the case that $C=M_k(C(X)).$ It is then  clear that this
corollary follows from \ref{MT1}.

\end{proof}

\begin{NN}
{\rm It should be noted in the case that $X=I\times \T,$ or $X$ is
an $n$-dimensional torus, in the above \ref{MT1} and \ref{MC1},
one may only consider  ${\cal U}\subset U(C).$ Moreover, in the
case that $X$ is a finite simplicial complex with torsion
$K_1(C(X)),$ then the map $\phi^{\ddag}$ and $\psi^{\ddag}$ can be removed entirely  (see Corollary 2.14 of \cite{EGL2}).
}
\end{NN}

\section{AH-algebras}

Let $X$ be a compact metric space and let $A$ be a unital simple
\CA\, with $T(A)\not=\emptyset.$ Suppose that
 $\phi: C(X)\to A$ is a
unital monomorphism.  Then $\mu_{\tau\circ \phi}$ is a strictly
positive probability Borel measure. Fix $a\in (0,1).$ Let
$\{x_1,x_2,...,x_m\}\subset X$ be an $a/4$-dense subset. Define
$$
d(a,i)=(1/2)\inf\{\mu_{\tau\circ  \phi}(B_{a/4}(x_i)): \tau\in
T(A)\},\,\,\,i=1,2,...,m.
$$
Fix a non-zero positive function $g\in C(X)$ with $g\le 1$ whose
support contained in $B_{a/4}(x_i).$ Then, since $A$ is simple,
$\inf\{\tau(\phi(g)): \tau\in T(A)\}>0.$  It follows that
$d(a,i)>0.$ Put
$$
\Delta(a)=\min\{d(a,i): i=1,2,...,m\}.
$$
For any $x\in X,$ there exists $i$ such that $B_a(x)\supset
B_{a/4}(x_i).$ Thus
\beq\label{ADD1}
\mu_{\tau\circ\phi}(B_a(x))\ge \Delta(a)\tforal \tau\in T(A).
\eneq
Note that $\Delta$ gives a non-decreasing map from $(0,1)\to (0,1).$

This proves the following:

\begin{prop}\label{Padd}
Let $X$ be a compact metric space and let $A$ be a unital simple
\CA\, with $T(A)\not=\emptyset.$ Suppose that $\phi: C(X)\to A$ is
a unital monomorphism.  Then there is a non-decreasing map
$\Delta: (0,1)\to (0,1)$ such that
\beq\label{Padd-1}
\mu_{\tau\circ \phi}(O_a)\ge \Delta(a)\tforal \tau\in T(A)
\eneq
for all open balls $O_a$ of $X$ with radius $a\in (0,1).$

\end{prop}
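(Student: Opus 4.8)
The plan is to follow the construction sketched just above the statement and to make explicit the two points that require genuine (if standard) input: the strict positivity of $\inf_{\tau\in T(A)}\tau(\phi(g))$ for a nonzero positive $g$, and the monotonicity of the resulting map.

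First I would fix $a\in(0,1)$ and, using compactness of $X$, choose a finite $a/4$-dense subset $\{x_1,\dots,x_m\}\subset X$. For each $i$ pick $g_i\in C(X)$ with $0\le g_i\le 1$, $g_i\not\equiv 0$, and with support contained in $B_{a/4}(x_i)$. Since $\phi$ is a monomorphism, $\phi(g_i)$ is a nonzero positive element of $A$. Here is the key step: because $A$ is unital and simple, every tracial state on $A$ is faithful, and $T(A)$ is nonempty and weak-$*$ compact (a closed subset of the state space); hence $\tau\mapsto\tau(\phi(g_i))$ is a strictly positive continuous function on a nonempty compact set, so $c_i:=\inf_{\tau\in T(A)}\tau(\phi(g_i))>0$. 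Since $g_i\le\chi_{B_{a/4}(x_i)}$ pointwise on $X$, for every $\tau\in T(A)$ we get
\[
\mu_{\tau\circ\phi}(B_{a/4}(x_i))=\int_X\chi_{B_{a/4}(x_i)}\,d\mu_{\tau\circ\phi}\ge\int_X g_i\,d\mu_{\tau\circ\phi}=\tau\circ\phi(g_i)\ge c_i.
\]

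Next I would set $\Delta_0(a)=\tfrac12\min\{c_i:1\le i\le m\}$, which lies in $(0,1/2]$ (the upper bound because each $\mu_{\tau\circ\phi}$ is a probability measure). Given any $x\in X$ and any open ball $O_a=B_a(x)$, the $a/4$-density yields an index $i$ with $\dist(x,x_i)<a/4$, hence $B_{a/4}(x_i)\subset B_a(x)$, so
$\mu_{\tau\circ\phi}(O_a)\ge\mu_{\tau\circ\phi}(B_{a/4}(x_i))\ge c_i\ge 2\Delta_0(a)>\Delta_0(a)$
for all $\tau\in T(A)$. Finally, to obtain a genuinely non-decreasing map I would define $\Delta(a)=\sup\{\Delta_0(b):0<b\le a\}$; since a ball of radius $a$ contains the concentric ball of any smaller radius $b$, the bound $\mu_{\tau\circ\phi}(O_a)\ge\Delta_0(b)$ holds for every $b\le a$, whence $\mu_{\tau\circ\phi}(O_a)\ge\Delta(a)$, and $\Delta$ is non-decreasing with values in $(0,1)$.

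There is no real obstacle here; the only delicate point is invoking faithfulness of tracial states on a unital simple $C^*$-algebra together with the weak-$*$ compactness of $T(A)$ to secure the \emph{uniform} lower bound $c_i>0$, after which the passage to a monotone $\Delta$ is purely cosmetic.
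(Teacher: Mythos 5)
Your argument is correct and follows essentially the same route as the paper: choose an $a/4$-dense set, take bump functions supported in the small balls, and use simplicity to get the uniform lower bound $\inf_{\tau\in T(A)}\tau(\phi(g_i))>0$ (your justification via faithfulness of tracial states on a unital simple $C^*$-algebra together with weak-$*$ compactness of $T(A)$ is exactly the standard reason the paper leaves implicit). Your final passage to $\Delta(a)=\sup\{\Delta_0(b):0<b\le a\}$ merely makes explicit the monotonicity that the paper asserts without comment, and it is verified correctly.
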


\begin{df}\label{Full}
Let $C$ be a \CA. Let $T=N\times K: C_+\setminus \{0\}\to \N\times
\R_+\setminus\{0\}$ be a map. Suppose that $A$ is a unital \CA\,
and $\phi: C\to A$ is a \hm.   Let ${\cal H}\subset C_+\setminus
\{0\}$ be a finite subset.  We say that $\phi$ is $T$-${\cal
H}$-full if there are $x_{a,i}\in A,$ $i=1,2,...,N(a)$ with
$\|x_{a,i}\|\le K(a),$ $i=1,2,...,N(a),$ such that
$$
\sum_{i=1}^{N(a)}x_{a, i}^*\phi(a)x_{a,i}=1_A
$$
for all $a\in {\cal H}.$
The \hm\,  $\phi$ is said to be $T$-full, if
$$
\sum_{i=1}^{N(a)}x_{a, i}^*\phi(a)x_{a,i}=1_A
$$
for all $a\in A_+\setminus\{0\}.$ If $\phi$ is $T$-full, then $\phi$ is injective.

\end{df}

\begin{prop}\label{Fdis}
Let $X$ be a finite CW complex, let $P\in M_k(C(X))$ be a
projection and let $C_1=PM_k(C(X))P.$  Suppose that $T=N\times N:
C_+\setminus\{0\}\to \N\times \R_+\setminus\{0\}$ is a map. Then
there exists a non-decreasing map $\Delta: (0,1)\to (0,1)$
associated with $T$ satisfying the following:

 For any $\eta>0,$ there is a finite subset ${\cal H}\subset (C_1\otimes C(\T))_+\setminus\{0\}$ such that, for any unital
\CA\, $B$ with $T(B)\not=\emptyset$ and any unital \morp\,  $\phi:
C\to B$ which is $T$-${\cal H}$-full, one has that
\beq\label{Fdis-1}
\mu_{\tau\circ \phi}(O_a)\ge \Delta(a)\rforal a\ge \eta
\tforal a\in (\eta, 1).
\eneq

\end{prop}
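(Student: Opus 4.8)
Here $C$ denotes $C_1\otimes C(\T)$, which we identify with $(P\otimes 1)M_k(C(X\times\T))(P\otimes 1)$, and we follow the notation of \ref{Full}, writing $T=N\times K$ with $N(\cdot)\in\N$ and $K(\cdot)\in\R_+\setminus\{0\}$. Set $Y:=X\times\T$, a finite CW complex; for a state $t$ on $C$ let $\mu_t$ be the Borel probability measure on $Y$ produced by \ref{DM}, so that $t(f)=\int_Y L_{(x,s)}(f(x,s))\,d\mu_t$, where $L_{(x,s)}$ is a state on the fibre $P(x)M_kP(x)$. Since $t(1_C)=1$ and $\mu_t$ is a probability measure, $L_{(x,s)}(P(x))=1$ for $\mu_t$-a.e.\ $(x,s)$.

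The plan is first to build $\Delta$ out of $T$ alone. For each integer $j\ge 1$ I would fix a finite $(2^{-j}/4)$-dense subset $\{y^j_1,\dots,y^j_{m_j}\}$ of $Y$ (discarding any point whose $X$-coordinate lies in a component where $P$ vanishes, since there the corresponding summand of $C_1$ is zero), and for each $i$ a scalar function $g^j_i\in C(Y)$ with $0\le g^j_i\le1$, $g^j_i(y^j_i)=1$ and $\operatorname{supp}g^j_i\subset \overline{B_{2^{-j}/16}(y^j_i)}$. Put $h^j_i:=g^j_i\cdot(P\otimes 1)\in C_+\setminus\{0\}$, and set $d(j,i):=\bigl(N(h^j_i)\,K(h^j_i)^2\bigr)^{-1}>0$, $\dt_j:=\min_{i\le m_j}d(j,i)>0$, and finally
$$\Delta(a):=\min\{\dt_1,\dots,\dt_{j(a)}\},\qquad j(a):=\lfloor\log_2(1/a)\rfloor+1.$$
Being a finite minimum of positive reals, $\Delta(a)>0$; and since $j(a)$ decreases as $a$ increases, $\Delta$ is non-decreasing. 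Thus $\Delta$ depends only on $T$, $C_1$ and these auxiliary choices. Taking this minimum rather than the single value $\dt_{j(a)}$ is the one genuinely delicate point of the argument: $T$ is an arbitrary map carrying no monotonicity, yet $\Delta$ must be non-decreasing while still satisfying $\Delta(a)\le\dt_j$ for every dyadic scale $2^{-j}\le a$ that could be invoked below.

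Now fix $\eta>0$, set $J:=j(\eta)$, and let $\mathcal H:=\bigcup_{j=1}^{J}\{h^j_i:1\le i\le m_j\}$, a finite subset of $C_+\setminus\{0\}$. Let $\phi:C\to B$ be a unital \morp\ that is $T$-$\mathcal H$-full, and fix $\tau\in T(B)$. The core estimate: for each $h\in\mathcal H$ there are $x_1,\dots,x_{N(h)}\in B$ with $\|x_l\|\le K(h)$ and $\sum_l x_l^*\phi(h)x_l=1_B$; applying $\tau$ and using $x_l^*\phi(h)x_l=\phi(h)^{1/2}(x_lx_l^*)\phi(h)^{1/2}\le\|x_l\|^2\phi(h)$ in trace gives $1\le N(h)K(h)^2\,\tau(\phi(h))$, hence $\tau(\phi(h^j_i))\ge d(j,i)$ for every $h^j_i\in\mathcal H$. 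For the measure bound, let $O_a\subset Y$ be an open ball of radius $a\ge\eta$ and put $j:=j(a)$, so $j\le J$ and $2^{-j}\le a$; by $(2^{-j}/4)$-density there is an $i$ with the centre of $O_a$ within $2^{-j}/4$ of $y^j_i$, and a routine triangle inequality gives $\operatorname{supp}g^j_i\subset O_a$. Since $g^j_i$ is scalar-valued, $L_{(x,s)}(h^j_i(x,s))=g^j_i(x,s)L_{(x,s)}(P(x))=g^j_i(x,s)$ $\mu_{\tau\circ\phi}$-a.e., so
$$\tau\circ\phi(h^j_i)=\int_Y g^j_i\,d\mu_{\tau\circ\phi}\le\mu_{\tau\circ\phi}\bigl(\operatorname{supp}g^j_i\bigr)\le\mu_{\tau\circ\phi}(O_a),$$
whence $\mu_{\tau\circ\phi}(O_a)\ge d(j,i)\ge\dt_j\ge\Delta(a)$. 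As $\tau\in T(B)$ was arbitrary, this is precisely \eqref{Fdis-1}. The items I have left implicit — the disintegration \ref{DM} applied to the state $\tau\circ\phi$ on $C$, and the metric bookkeeping of the covering constants — are routine, and the construction of $\Delta$ described above is the only step that needs care.
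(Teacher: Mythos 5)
Your proposal is correct and takes essentially the same route as the paper: scalar test functions supported in small balls, the tracial fullness estimate $\tau(\phi(h))\ge 1/\bigl(N(h)K(h)^2\bigr)$, and a density/covering argument to pass to arbitrary balls of radius $a\ge\eta$. The differences are cosmetic: you work directly on $X\times\T$ with $g\cdot 1_C$ rather than reducing to $C(X)$ via \ref{DM}, and your dyadic discretization of scales makes the positivity and monotonicity of $\Delta$ (which the paper obtains by setting $\Delta(a)=\min\{\Delta'(b):b\ge a\}$) explicit.
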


\begin{proof}
To simplify notation, using \ref{DM}, without loss of generality, we
may assume that $C=C(X).$
Fix $1>a>0.$
 Let $\{x_1, x_2,...,x_n\}$ be an $a/4$-dense subset of
$X.$ Let $f_i$ be a positive function in $C(X)$ with $0\le f_i\le
1$ whose support is in $B_{a/4}(x_i)$ and  contains
$B_{a/6}(x_i),$ $i=1,2,...,m.$ Define $\Delta': (0,1)\to (0,1)$ by
\beq\label{Fdistn}
\Delta'(a)={1\over{\max\{N(f_i)K(f_i)^2: 1\le i\le m\}}}.\\
\eneq
Define
$$
\Delta(a)=\min\{\Delta'(b): b\ge a\}.
$$
It is clear that $\Delta$ is non-decreasing.

Now, let $B$ be a unital \CA\,  with $T(B)\not=\emptyset$ and let
 $\phi: C\to B$ be a unital \morp\, which is $T$-${\cal
 H}$-full.
For each $i,$ there are $x_{i,j},$ $j=1,2,...,N(f_i),$ with
$\|x_{i,j}\|\le N(f_i)$ such that
\beq\label{Fdis-2}
\sum_{j=1}^{N(f_i)}x_{i,j}^*\phi(f_i)x_{i,j}=1_B,\,\,\,i=1,2,...,m.
\eneq
Fix a $\tau\in T(B).$ There exists $j$ such that
\beq\label{Fdis_3}
\tau(x_{i,j}^*\phi(f_i)x_{i,j})\ge {1\over{N(f_i)}}.
\eneq
It follows that
\beq\label{Fdis-4}
\|x_{i,j}x_{i,j}^*\|\tau(\phi(f_i)) &\ge &
\tau(\phi(f_i)^{1/2}x_{i,j}x_{i,j}^*\phi(f_i)^{1/2})\\
&=& \tau(x_{i,j}^*\phi(f_i)x_{i,j})\ge {1\over{N(f_i)}}.
\eneq
It follows that
\beq\label{Fdis-5}
\tau(\phi(f_i))\ge {1\over{N(f_i)K(f_i)^2}}.
\eneq
This holds for all $\tau\in T(B),$ $i=1,2,...,m.$ Now for any open
ball $O_a$ with radius $a.$  Suppose that $y$ is the center. Then
$y\in B_{a/4}(x_i)$ for some $1\le i\le m.$ Thus
$$
O_a\supset B_{a/4}(x_i).
$$
It follows that
\beq\label{Fdis-6}
\mu_{\tau\circ \phi}(O_a)\ge \tau(f_i)\ge
{1\over{N(f_i)K(f_i)^2}}\ge \Delta(a)
\eneq
for all $\tau\in T(B).$
It is then clear that, when $\eta>0$ is given, such finite subset
${\cal H}$ exists.

\end{proof}

\begin{df}\label{DJ}
{\rm An AH-algebra $C$ is said to have property (J) if $C$ is
isomorphic to an inductive limit $\lim_{n\to\infty}(C_n, \phi_j),$
where $\oplus_{j=1}^{R(i)}P_{n,j}M_{r(n,j)}(C(X_{n,j}))P_{n,j},$
where $X_{n,j}$ is an one dimensional finite CW complex or a
simplicial complex in ${\bf X}$ and where $P_{n,j}\in
M_{r(n,j)}(C(X_{n,j}))$ is a projection, and each $\phi_j$ is
injective. }
\end{df}

\begin{thm}\label{MT2}
Let $C$ be a unital  AH-algebra with property (J).  Let $T=N\times
K: C_+\setminus\{0\}\to \N\times \R_+\setminus \{0\}.$
Then, for any $\ep>0$ and any finite subset ${\cal F}\subset C,$
there exists $\dt>0$ and a finite subset ${\cal G}\subset C,$  a
finite subset ${\cal P}\subset \underline{K}(C)$ and a finite
subset ${\cal U}\subset U(M_{\infty}(C))$ satisfying the
following:
Suppose that $A$ is a unital separable simple \CA\, with tracial
rank one or zero and $\phi, \psi: C\to A$ are two unital
$\dt$-${\cal G}$-multiplicative \morp s such that $\phi$  is
$T$-${\cal H}$-full, where ${\cal H}={\cal G}\cap
(C_+\setminus\{0\}),$
\beq\label{2MT-2}
 |\tau\circ \phi(g)-\tau\circ \psi(g)|<\dt\tforal g\in {\cal G}
\eneq
for all $\tau\in T(A),$
\beq\label{2MT-3}
[\phi]|_{\cal P}&=&[\psi]|_{\cal P}\tand\\\label{2MT-4} {\rm
dist}(\phi^{\ddag}({\bar z}), \psi^{\ddag}({\bar z}))&<&\dt
\eneq
for all $z\in {\cal U}.$ Then there exists a unitary $u\in A$ such
that
\beq\label{2MT-5}
{\rm ad}\, u\circ \psi\approx_{\ep} \phi\,\,\,{\text on}\,\,\, {\cal
F}.
\eneq

\end{thm}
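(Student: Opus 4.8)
The plan is to reduce \ref{MT2} to the building-block statement \ref{MC1}, using \ref{Fdis} to convert the fullness of $\phi$ into the measure-distribution hypothesis that \ref{MC1} requires. Invoking property (J), I would fix a decomposition $C=\lim_{n\to\infty}(C_n,\phi_n)$ with $C_n=\oplus_{j=1}^{R(n)}P_{n,j}M_{r(n,j)}(C(X_{n,j}))P_{n,j}$, each $X_{n,j}$ a finite simplicial complex in ${\bf X}$ (a one-dimensional complex being triangulable and lying in ${\bf X}_0$), and each connecting map, hence each $\phi_{n,\infty}:C_n\to C$, injective. Associated with the given $T$ I fix a non-decreasing $\Delta:(0,1)\to(0,1)$ as produced by \ref{Fdis}. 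Given $\ep$ and ${\cal F}$, choose $n_0$ and a finite subset ${\cal F}_0\subset C_{n_0}$ with $\phi_{n_0,\infty}({\cal F}_0)\approx_{\ep/8}{\cal F}$; regrouping summands over connected components, we may assume $C_{n_0}=\oplus_{j=1}^s D_j$ with $D_j=Q_jM_{k_j}(C(Y_j))Q_j$, each $Y_j$ connected and in ${\bf X}$.

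Next, for each $j$ I would apply \ref{MC1} to $D_j$ --- for $\ep/8$, for $Q_j{\cal F}_0Q_j$ together with a few auxiliary elements of $D_j$, and for $\Delta$ viewed on $D_j$ (with the fullness constants pulled along $\phi_{n_0,\infty}$) --- obtaining $\eta_j,\dt_j>0$ and finite subsets ${\cal G}_j\subset D_j$, ${\cal P}_j\subset\underline{K}(D_j)$, ${\cal U}_j\subset U(M_{\infty}(D_j))$. Set $\eta=\min_j\eta_j$; let ${\cal G}_{n_0}=\cup_j{\cal G}_j$ enlarged to also contain ${\cal F}_0$, all products of pairs of its elements, and the finite full-set ${\cal H}_{n_0}\subset(C_{n_0})_+\setminus\{0\}$ furnished by \ref{Fdis} for this $\eta$; put ${\cal P}_{n_0}=\cup_j{\cal P}_j$, ${\cal U}_{n_0}=\cup_j{\cal U}_j$. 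Transport to $C$: take ${\cal G}$ to contain $\phi_{n_0,\infty}({\cal G}_{n_0})$ --- so that ${\cal H}={\cal G}\cap(C_+\setminus\{0\})\supset\phi_{n_0,\infty}({\cal H}_{n_0})$ --- put ${\cal P}=(\phi_{n_0,\infty})_*({\cal P}_{n_0})$, ${\cal U}=\phi_{n_0,\infty}({\cal U}_{n_0})$, and choose $\dt\le\min_j\dt_j$ small enough that $\dt$-${\cal G}$-multiplicativity on $C$, and $\dt$-closeness on ${\cal G}$ in trace, on ${\cal P}$ in $\underline{K}$, and on ${\cal U}$ in the determinant invariant $(\cdot)^{\ddag}$, all descend to the corresponding $\dt_j$-statements on each $D_j$ for maps of the form $\phi\circ\phi_{n_0,\infty}$.

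Finally, given $\phi,\psi$ satisfying the hypotheses, set $\phi'=\phi\circ\phi_{n_0,\infty}$, $\psi'=\psi\circ\phi_{n_0,\infty}:C_{n_0}\to A$. Since ${\cal H}_{n_0}\subset{\cal G}_{n_0}$ and $\phi$ is $T$-${\cal H}$-full, $\phi'$ is full at ${\cal H}_{n_0}$ in the sense of \ref{Full}, so \ref{Fdis} gives $\mu_{\tau\circ\phi'}(O_a)\ge\Delta(a)$ for all $a\ge\eta$ and $\tau\in T(A)$. For small $\dt$ the elements $\phi(\phi_{n_0,\infty}(1_{D_j}))$ and $\psi(\phi_{n_0,\infty}(1_{D_j}))$ lie within $O(\dt)$ of a system of mutually orthogonal projections $E_1,\dots,E_s$ with $\sum_jE_j=1_A$; perturbing and re-routing (a standard manoeuvre costing $O(\dt)$, which the generous building-block constants absorb) replaces $\phi',\psi'$ by genuine block maps $\bigoplus_j\phi'_j$ and $\bigoplus_j\psi'_j$ with $\phi'_j,\psi'_j:D_j\to E_jAE_j$, changing the $KL$-class on ${\cal P}$, the tracial values on ${\cal G}$, the maps $(\cdot)^{\ddag}$ on ${\cal U}$, and the measure bound only within the slack. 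Each pair $(\phi'_j,\psi'_j)$ then meets the hypotheses of \ref{MC1} on $D_j$, producing a unitary $u_j\in E_jAE_j$ with ${\rm ad}\,u_j\circ\psi'_j\approx_{\ep/8}\phi'_j$ on $Q_j{\cal F}_0Q_j$; set $u=\sum_ju_j\in A$. Unwinding the perturbation and using $\phi_{n_0,\infty}({\cal F}_0)\approx_{\ep/8}{\cal F}$ gives ${\rm ad}\,u\circ\psi\approx_{\ep}\phi$ on ${\cal F}$, as required.

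The step I expect to be the main obstacle is exactly the passage in the last paragraph: going from the merely approximately multiplicative $\phi,\psi$ on all of $C$ to honest block maps on $C_{n_0}$, one must perturb $\phi(\phi_{n_0,\infty}(1_{D_j}))$ to exactly orthogonal projections and redefine the maps while keeping under control every invariant that feeds \ref{MC1} --- the $KL$-data on ${\cal P}$, the tracial values on ${\cal G}$, the de la Harpe--Skandalis type invariant $(\cdot)^{\ddag}$ on ${\cal U}$, and, crucially, the measure lower bound coming from \ref{Fdis}. This is what dictates the quantifier order above: the building-block constants $\dt_j$ (and with them ${\cal P}_j,{\cal U}_j,{\cal G}_j$) must be fixed before $\dt$, and the finite full-set ${\cal H}_{n_0}$ of \ref{Fdis} must be built into ${\cal G}$ from the outset. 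Everything else is bookkeeping of quantifiers.
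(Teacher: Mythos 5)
Your proposal takes essentially the same route as the paper's (very terse) proof: reduce to a finite stage $C_n$ of the AH-decomposition, reduce to a single summand by aligning the images of the summand units, and then combine \ref{MC1} with \ref{Fdis}, which converts the $T$-${\cal H}$-fullness of $\phi$ into the measure lower bound that \ref{MC1} needs. The ``re-routing'' step you single out as the main obstacle is exactly the paper's one-line appeal to stable rank one: since $[\phi]|_{\cal P}=[\psi]|_{\cal P}$ (with the classes of the summand units in ${\cal P}$) gives $[\phi(p_j)]=[\psi(p_j)]$ in $K_0(A)$, a single unitary conjugation lets one assume $\phi(p_j)=\psi(p_j)$ for all $j$, after which the blockwise application of \ref{MC1} proceeds as you describe.
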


\begin{proof}
We may write $C=\overline{\cup_{n=1}^{\infty}C_n},$ where each
$C_n=\oplus_{j=1}^{m(n)} C_{n,j},$
$C_{n,j}=P_{n,j}M_{r(n,j)}(C(X_{n,j}))P_{n,j},$ $X_{n,j}$ is a
point, a connected finite CW complex of dimension 1,
 or $X_{n,j}$ is a  finite simplicial
complex in ${\bf X}$ and $P_{n,j}\in M_{r(n,j)}(C(X_{n,j}))$ is a
projection.

Fix a finite subset ${\cal F}\subset C$ and $\ep>0.$ Without loss of
generality, we may assume that ${\cal F}\subset C_n$ for some $n\ge
1.$  Let $p_1, p_2,...,p_{m(n)}$ be the identities of the each
summand of $C_n.$ Since $A$ is stable rank one, conjugating a
unitary, without loss of generality, we may assume that
$\phi(p_i)=\psi(p_i),$ $i=1,2,...,m(n).$ It is then clear that we
may reduce the general case to the case that $C_n$ has only one
summand. Then the theorem follows from the combination of \ref{MC1}
and \ref{Fdis}.

\end{proof}

\begin{cor}\label{FMC1}
Let $C$ be a unital AH-algebra with property (J) and let $A$ be a
unital simple \CA\, with $TR(A)\le 1.$ Suppose that $\phi: C\to A$
is unital monomorphism. Then, for any $\ep>0,$ and finite subset
${\cal F}\subset C,$ there exists $\dt>0,$ a finite subset ${\cal
P}\subset \underline{K}(C),$ a finite subset ${\cal U}\subset
U(M_{\infty}(C))$ and a finite subset ${\cal H}\subset C$ satisfying
the following:

if $\psi: C\to A$ is another unital monomorphism with
\beq\label{FMC1-1}
[\phi]|_{\cal P}&=&[\psi]|_{\cal P}\\
{\rm dist}(\phi^{\ddag}({\bar z}), \psi^{\ddag}({\bar
z}))&<&\dt\tforal z\in {\cal
U}\andeqn\\
|\tau\circ \phi(g)-\tau\circ \psi(g)|&<&\dt\tforal g\in {\cal H},
\eneq
then there exists a unitary $u\in A$ such that
$$
{\rm ad}\, u\circ \psi\approx_{\ep} \phi\,\,\,{\rm on}\,\,\,{\cal
F}.
$$
\end{cor}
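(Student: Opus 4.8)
The plan is to deduce Corollary \ref{FMC1} from Theorem \ref{MT2} by producing, from the fixed monomorphism $\phi$, a suitable fullness datum $T = N\times K$ and then choosing all the quantified objects relative to that $T$. First I would use the fact that $\phi$ is a monomorphism of $C$ into the unital simple \CA\ $A$: for each $c\in C_+\setminus\{0\}$ the element $\phi(c)$ is a nonzero positive element of a unital simple \CA, so it is full, and by a standard compactness/partition-of-unity argument (or by the usual fact that a nonzero positive element in a unital simple \CA\ has the property that finitely many of its conjugates sum to the identity with norm control) there exist $x_1,\dots,x_{N(c)}\in A$ with $\|x_i\|\le K(c)$ and $\sum_i x_i^*\phi(c)x_i = 1_A$. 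Recording $N(c)$ and $K(c)$ as $c$ ranges over $C_+\setminus\{0\}$ defines a map $T=N\times K\colon C_+\setminus\{0\}\to \N\times\R_+\setminus\{0\}$, and by construction $\phi$ is $T$-full, hence in particular $T$-${\cal H}$-full for every finite subset ${\cal H}\subset C_+\setminus\{0\}$.

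Next I would feed this $T$ and the given $\ep>0$, ${\cal F}\subset C$ into Theorem \ref{MT2}. That theorem returns $\dt>0$, a finite subset ${\cal G}\subset C$, a finite subset ${\cal P}\subset\underline{K}(C)$ and a finite subset ${\cal U}\subset U(M_\infty(C))$, with the property that any two unital $\dt$-${\cal G}$-multiplicative maps $\phi',\psi'\colon C\to A$ into a unital separable simple \CA\ of tracial rank at most one, with $\phi'$ being $T$-${\cal H}$-full for ${\cal H}={\cal G}\cap(C_+\setminus\{0\})$, satisfying the $KL$, trace and determinant conditions, are approximately unitarily equivalent to within $\ep$ on ${\cal F}$. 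I would then set ${\cal H}:={\cal G}$ (or ${\cal G}\cap (C_+\setminus\{0\})$ together with enough of ${\cal G}$ to control the multiplicativity and trace conditions); since $\phi$ is an honest \hm\ it is automatically $\dt$-${\cal G}$-multiplicative and $T$-${\cal H}$-full, and any unital monomorphism $\psi$ is also a \hm, hence $\dt$-${\cal G}$-multiplicative. The hypotheses \eqref{FMC1-1} are then exactly the hypotheses \eqref{2MT-2}--\eqref{2MT-4} of Theorem \ref{MT2}, so that theorem yields a unitary $u\in A$ with ${\rm ad}\,u\circ\psi\approx_\ep\phi$ on ${\cal F}$, which is the assertion.

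The only genuine point requiring care — and the step I expect to be the main (minor) obstacle — is the construction of $T$: one must verify that a single map $T=N\times K$ can be chosen so that $\phi$ is $T$-full in the precise sense of Definition \ref{Full}, i.e. that the norm bounds $K(c)$ and the counts $N(c)$ can be recorded uniformly in a way compatible with how Theorem \ref{MT2} (via \ref{Fdis}) consumes them. This is handled by the observation in the AH-algebra section: since $A$ is unital and simple and $\phi(c)\ne 0$ for $c\ne 0$, for each such $c$ one fixes once and for all a finite collection $\{x_{c,i}\}$ realizing $\sum_i x_{c,i}^*\phi(c)x_{c,i}=1_A$, and then $N(c):=$ (number of terms), $K(c):=\max_i\|x_{c,i}\|$. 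No uniformity over $c$ is needed because Definition \ref{Full} only requires the relation to hold elementwise on the finite set ${\cal H}$. One should also note, as in the proof of \ref{MT2}, that after conjugating by a unitary one may assume $\phi$ and $\psi$ agree on the (finitely many) identities of the summands of the relevant $C_n$ so that the reduction inside \ref{MT2} goes through; but since we are merely invoking \ref{MT2} as a black box with the correct $T$, this bookkeeping is already internal to that proof. Everything else is a direct substitution of hypotheses.
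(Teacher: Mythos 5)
Your argument is correct, but it takes a different (somewhat more roundabout) route than the paper. The paper proves \ref{FMC1} directly from Corollary \ref{MC1}: using property (J) it reduces to a single building block $C_n\cong PM_k(C(X))P$ with $X\in {\bf X}$ containing ${\cal F}$ (injectivity of the connecting maps makes $\phi|_{C_n}$ again a monomorphism), and then invokes Proposition \ref{Padd}, which converts the monomorphism $\phi$ into a lower bound $\Delta$ for the measures $\mu_{\tau\circ \phi}$; the hypotheses of \ref{MC1} are then verified verbatim. You instead convert $\phi$ into a fullness datum $T$ — using that $\phi(c)$ is a nonzero positive, hence full, element of the unital simple algebra $A$, so that finitely many elements $x_{c,i}$ give $\sum_i x_{c,i}^*\phi(c)x_{c,i}=1_A$ with recorded bounds, which is exactly what Definition \ref{Full} asks for, and you are right that no uniformity in $c$ is needed — and then feed $T$ into Theorem \ref{MT2}. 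Since \ref{MT2} is established before \ref{FMC1} from \ref{MC1} and \ref{Fdis} (so there is no circularity), and since its quantifiers let $\dt,{\cal G},{\cal P},{\cal U}$ depend only on $T,\ep,{\cal F}$ while your $T$ depends only on the fixed $\phi$ and $A$, the substitution respects the quantifier order of \ref{FMC1}; taking ${\cal H}={\cal G}$ then matches the trace hypothesis, and both monomorphisms are trivially $\dt$-${\cal G}$-multiplicative. What your route buys is that \ref{MT2} is used as a black box (the reduction to $C_n$ and the unitary adjustment of summand identities are internal to its proof); what the paper's route buys is directness, bypassing the fullness/\ref{Fdis} machinery by extracting $\Delta$ straight from $\mu_{\tau\circ\phi}$ via \ref{Padd}. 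In substance the two arguments are parallel, since inside the proof of \ref{MT2} your $T$ is converted back into such a $\Delta$ anyway.
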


\begin{proof}
Write $C={\overline{\cup_n C_n}},$ where each $C_n$ is a finite
direct sum of \CA s with the form as described in \ref{MC1}.  Fix
a finite subset ${\cal F}$ and $\ep>0.$ Without loss of
generality, we may assume that ${\cal F}\subset C_n.$ To simplify
notation further, we may assume that $C_n$ has the form
$PM_k(C(X))P$ for some $X\in {\bf X}.$ Since $\phi$ is a given
monomorphism, by \ref{Padd}, there exists a non-decreasing map
$\Delta: (0,1)\to (0,1)$ such that
$$
\mu_{\tau\circ \phi}(O_a)\ge \Delta(a)
$$
for all $a\in (0,1).$ Thus conclusion follows by applying
\ref{MC1}.

\end{proof}

\begin{cor}\label{FMC2}
Let $C$ be a unital AH-algebra with property (J) and let $A$ be a
unital simple \CA\, with $TR(A)\le 1.$  Suppose that $\phi, \psi:
C\to A$ are two unital monomorphisms. Then $\phi$ and $\psi$ are
approximately unitarily equivalent if and only if
\beq\
[\phi] &=&[\psi]\,\,\,{\rm in}\,\,\,KL(C,A),\\
\phi_{\sharp}&=&\psi_{\sharp}\andeqn \phi^{\ddag}=\psi^{\ddag}.
\eneq
\end{cor}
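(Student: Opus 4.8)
The plan is to deduce this from Corollary~\ref{FMC1}, which already carries all the hard work (ultimately Theorem~\ref{MT1}, and through Proposition~\ref{Padd} the reduction, for monomorphisms into a simple \CA, of the hypotheses to a measure lower bound). The necessity of the three conditions is routine; the sufficiency is a standard $\ep_n\downarrow 0$ limiting argument.

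\emph{Necessity.} Suppose $\phi$ and $\psi$ are approximately unitarily equivalent, say $\lim_n\|{\rm ad}\,u_n\circ\phi(a)-\psi(a)\|=0$ for all $a\in C$. For each $n$ the \hm\, ${\rm ad}\,u_n\circ\phi$ induces the same class as $\phi$ in $KL(C,A)$ (conjugation by a unitary acts trivially on $\underline{K}$), the same map on $Aff(T(C))$ (because $\tau\circ{\rm ad}\,u_n=\tau$ for every $\tau\in T(A)$), and the same homomorphism $U(C)/CU(C)\to U(A)/CU(A)$ (conjugation preserves $CU(A)$). Letting $n\to\infty$ and using continuity of the traces, norm-continuity of the induced maps on unitaries modulo $CU$, and the fact that for a fixed finite subset ${\cal P}\subset\underline{K}(C)$ the restriction $[\,\cdot\,]|_{\cal P}$ is locally constant on \hm s, one obtains $[\phi]=[\psi]$ in $KL(C,A)$, $\phi_\sharp=\psi_\sharp$ and $\phi^\ddag=\psi^\ddag$.

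\emph{Sufficiency.} Fix an increasing sequence of finite subsets ${\cal F}_1\subset{\cal F}_2\subset\cdots$ of $C$ whose union is dense, and a sequence $\ep_n\downarrow 0$. For each $n$ apply Corollary~\ref{FMC1} to the monomorphism $\phi$, the number $\ep_n$ and the set ${\cal F}_n$, obtaining $\dt_n>0$, a finite subset ${\cal P}_n\subset\underline{K}(C)$, a finite subset ${\cal U}_n\subset U(M_{\infty}(C))$ and a finite subset ${\cal H}_n\subset C$. Since $C$ is separable and satisfies the UCT, $KL(C,A)=Hom_\Lambda(\underline{K}(C),\underline{K}(A))$, so $[\phi]=[\psi]$ in $KL(C,A)$ gives $[\phi]|_{{\cal P}_n}=[\psi]|_{{\cal P}_n}$ for every $n$; from $\phi^\ddag=\psi^\ddag$ we get ${\rm dist}(\phi^\ddag({\bar z}),\psi^\ddag({\bar z}))=0<\dt_n$ for all $z\in{\cal U}_n$; and writing $g\in{\cal H}_n$ as $g=g_1+ig_2$ with $g_1,g_2\in C_{s.a}$ and invoking $\phi_\sharp=\psi_\sharp$ gives $\tau\circ\phi(g)=\tau\circ\psi(g)$, hence $|\tau\circ\phi(g)-\tau\circ\psi(g)|=0<\dt_n$ for all $\tau\in T(A)$. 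Thus Corollary~\ref{FMC1} applies and yields a unitary $u_n\in A$ with ${\rm ad}\,u_n\circ\psi\approx_{\ep_n}\phi$ on ${\cal F}_n$. Because $\ep_n\to 0$ and $\bigcup_n{\cal F}_n$ is dense in $C$, this forces $\lim_n\|{\rm ad}\,u_n\circ\psi(a)-\phi(a)\|=0$ for every $a\in C$; hence $\psi$ and $\phi$, equivalently $\phi$ and $\psi$, are approximately unitarily equivalent.

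The only substantive input is Corollary~\ref{FMC1}, so at this level there is essentially no obstacle; the two points requiring a little care are the (standard) verification that approximate unitary equivalence preserves the three invariants, and the identification $KL(C,A)=Hom_\Lambda(\underline{K}(C),\underline{K}(A))$ converting the $KL$-equality into equality on arbitrary finite subsets of $\underline{K}(C)$. Note also that it is precisely the fact that both $\phi$ and $\psi$ are monomorphisms into the simple \CA\, $A$ that allows the measure lower bound required by \ref{FMC1} to be extracted from $\phi$ alone via \ref{Padd} and then transported to $\psi$ through the trace comparison on ${\cal H}_n$.
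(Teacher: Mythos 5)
Your proposal is correct and follows exactly the route the paper intends: the paper states Corollary \ref{FMC2} without a separate proof precisely because it is the immediate $\ep_n\downarrow 0$ consequence of Corollary \ref{FMC1} that you spell out, with the exact equalities $[\phi]=[\psi]$ in $KL(C,A)$, $\phi_{\sharp}=\psi_{\sharp}$ and $\phi^{\ddag}=\psi^{\ddag}$ trivially verifying the approximate hypotheses of \ref{FMC1}, and the necessity direction being the standard check that conjugation and norm limits preserve the three invariants.
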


\begin{cor}\label{known}
Let $C$ be a unital separable simple \CA\, with $TR(C)\le 1$ and
satisfying the UCT and let $A$ be a unital simple \CA\, with
$TR(A)\le 1.$ Suppose that $\phi, \psi: C\to A$ are two unital \hm
s. Then $\phi$ and $\psi$ are approximately unitarily equivalent if
and only if
\beq\
[\phi]=[\psi]\,\,\,in \,\,\, KL(C,A)\\
\phi_{\sharp}=\psi_{\sharp}\andeqn \phi^{\ddag}=\psi^{\ddag}.
\eneq
\end{cor}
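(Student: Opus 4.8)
The plan is to deduce this as a special case of Corollary \ref{FMC2}, once it is known that $C$ is a unital AH-algebra with property (J). The necessity of the three conditions requires no restriction on $C$ and is routine: if $u_n\in A$ are unitaries with $\mathrm{ad}\,u_n\circ\psi\to\phi$ pointwise, then conjugation by a unitary leaves unchanged the class in $KL(C,A)$ (here one uses that $A$ is stably finite, so that $KL$ is the appropriate receptacle), the induced map $\Aff(T(C))\to\Aff(T(A))$, and the induced homomorphism $\psi^\ddag$ (here one uses that $CU(A)$ is invariant under inner automorphisms, so that $\overline{u\psi(v)u^*}=\overline{\psi(v)}$ in $U(A)/CU(A)$); passing to the limit on a fixed finite subset $\mathcal{P}\subset\underline{K}(C)$, on $C_{s.a}$, and on a generating set of $U(M_\infty(C))$ respectively then forces $[\phi]=[\psi]$ in $KL(C,A)$, $\phi_{\sharp}=\psi_{\sharp}$, and $\phi^{\ddag}=\psi^{\ddag}$.

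For the converse, observe first that a unital homomorphism out of the simple algebra $C$ is automatically injective, so $\phi$ and $\psi$ are monomorphisms. It then suffices to show that $C$ has property (J) in the sense of Definition \ref{DJ}. By the classification theory for unital separable simple amenable $C^*$-algebras of tracial rank at most one satisfying the UCT, $C$ is isomorphic to a unital simple AH-algebra which can be written as an inductive limit $\lim_{n\to\infty}(C_n,\imath_n)$ with $C_n=\oplus_j P_{n,j}M_{r(n,j)}(C(X_{n,j}))P_{n,j}$, each $X_{n,j}$ a finite CW complex of covering dimension at most one (so that $X_{n,j}\in{\bf X}_0\subset{\bf X}$), and with injective connecting maps (which simplicity of $C$ permits one to arrange); equivalently one may allow the $X_{n,j}$ to range over the larger family appearing in \ref{DJ}. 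Transporting $\phi$ and $\psi$ through such an isomorphism --- which preserves $KL$, the trace pairing, and $U(\cdot)/CU(\cdot)$, hence all three hypotheses --- Corollary \ref{FMC2} applies and gives that $\phi$ and $\psi$ are approximately unitarily equivalent.

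The substantive point is the middle step: that $C$ admits an AH-model of the kind allowed in Definition \ref{DJ} with injective connecting maps, i.e.\ that $C$ has property (J). This is precisely the structural input supplied by the classification theory for simple $C^*$-algebras of tracial rank one; once it is available the corollary reduces verbatim to Corollary \ref{FMC2} together with the elementary necessity argument of the first paragraph, so that the corollary recovers the uniqueness half of that classification.
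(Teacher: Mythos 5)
Your argument is correct and is essentially the paper's own: the necessity direction is the routine limit argument, and for sufficiency the paper likewise invokes the classification machinery — the range-of-invariant result (10.9 of \cite{Lntr1}, via Villadsen \cite{V}) produces a unital simple AH-algebra $B$ with property (J) having the same Elliott invariant as $C$, and the isomorphism theorem (10.4 of \cite{Lntr1}) gives $C\cong B$, after which Corollary \ref{FMC2} applies, exactly as you say. The one inaccurate detail is your parenthetical claim that the spaces $X_{n,j}$ may be taken of covering dimension at most one: this is impossible whenever $K_*(C)$ has torsion, since a direct limit of algebras $P M_r(C(X))P$ with $\dim X\le 1$ has torsion-free $K_0$ and $K_1$; however, your fallback to the full family of spaces allowed in Definition \ref{DJ} (which is not an ``equivalent'' formulation but the one actually needed, and the one the model of 10.9 of \cite{Lntr1} provides) is precisely what the paper uses, so the proof stands.
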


\begin{proof}
It follows from 10.9 of \cite{Lntr1} (by applying a theorem of
Villadsen \cite{V}) that there exists a unital simple AH-algebra
$B$ with property (J) such that
$$
(K_0(C), K_0(C)_+, [1_C], K_1(C), T(C))\cong (K_0(B), K_0(B)_+,
[1_B], K_1(B), T(B))
$$
(see 10.2 of \cite{Lntr1} for the meaning of the above). It follows
from 10.4 of \cite{Lntr1} that $C\cong B.$ Thus the corollary
follows.

\end{proof}

\begin{rem}\label{Lm}
{\rm
In \ref{known}, $C$ is assumed to have tracial rank no more than one, in particular, it has stable rank one.
Therefore, the maps $\phi^{\ddag}$ and $\psi^{\ddag}$ can be regarded as maps from $U((C)/CU(C)$ to
$U(A)/CU(A)$ (no need to go to matrix algebras).

It should be noted that Corollary \ref{known} can also be derived
from results in \cite{Lntr1}. It is important that in \ref{MT2}
and \ref{FMC1} \CA\,  $C$ is not assumed  be simple, in particular, $C$ could be commutative. These results
will be used in subsequent papers where we study the so-called the
Basic Homotopy Lemma (\cite{Lnnhp}) and asymptotic unitary
equivalence (\cite{Lnasym2}) in simple \CA s with tracial rank
one.

}
\end{rem}

\noindent


\begin{thebibliography}{BH}
\bibitem{BDR} B. Blackadar, M.  D\u ad\u arlat and M. R\o rdam, {\em The
real rank of inductive limit $C\sp *$-algebras},   Math. Scand. {\bf
69} (1991),  211--216.



\bibitem{BEEK} O. Bratteli, G. A.  Elliott, D. Evans and A. Kishimoto, {\em Homotopy of a pair of approximately commuting unitaries in a simple $C\sp *$-algebra},  J. Funct. Anal. {\bf 160} (1998),  466--523.

\bibitem{BDF1} L. G. Brown, R. G.  Douglas and P.A.  Fillmore, {\em Unitary equivalence modulo the compact operators and extensions of $C\sp{*} $-algebras} Proceedings of a Conference on Operator Theory (Dalhousie Univ., Halifax, N.S., 1973), pp. 58--128. Lecture Notes in Math., Vol. 345, Springer, Berlin, 1973.

\bibitem{BDF2}  L. G. Brown, R. G.  Douglas and P. A.  Fillmore {\em Extensions of $C\sp*$-algebras and $K$-homology}. Ann. of Math.  {\bf 105} (1977),  265--324.









\bibitem{jbC} J. B. Conway, {\em A Course in Functional Analysis}, 2nd ed. Springer -Verlag, 1990.







\bibitem{DL1} M. D\u ad\u arlat and T. A. Loring, {\em  The $K$-theory of abelian
subalgebras of AF algebras},   J. Reine Angew. Math.  {\bf 432}
(1992), 39--55.

\bibitem{DL2} M. Dadarlat and T.  Loring, {\em  A universal multicoefficient theorem for the Kasparov groups},
 Duke Math. J. {\bf 84} (1996),  355--377.



\bibitem{E1}G. A.  Elliott, {\em On the classification of $C\sp *$-algebras of real rank zero},
 J. Reine Angew. Math. {\bf 443} (1993), 179--219.

\bibitem{EG1} G. A. Elliott and G.  Gong, {\em On the classification of $C\sp *$-algebras of real rank zero. II},
 Ann. of Math. {\bf 144} (1996), 497--610.

\bibitem{EGLj} G. A. Elliott, G.  Gong and L. Li, {\em Injectivity of the connecting maps in AH inductive limit systems},
Canad. Math. Bull. {\bf 48} (2005),  50--68.


\bibitem{EGL2} G. A. Elliott, G. Gong and L. Li, {\em On the
classification of simple inductive limit $C\sp *$-algebras. II. The
isomorphism theorem},  Invent. Math.  168  (2007),  no. 2, 249--320.


\bibitem{Ex}R. Exel, {\em The soft torus and applications to
almots commuting matrics}, Pacific J. Math., {\bf 160} (1993),
207-217.

\bibitem{ER} G. A. Elliott and M. R\o rdam, {\em Classification of certain infinite simple \CA s, II},
Comment. Math. Hel {\bf 70} (1995), 615-638.




\bibitem{G} G. Gong, {\em On the classification of simple inductive limit
$C\sp
*$-algebras. I. The reduction theorem}  Doc. Math.  {\bf 7 }  (2002),
255--461

\bibitem{GL2} G. Gong and H. Lin, {\em Almost multiplicative morphisms and almost commuting matrices},
  J. Operator Theory  {\bf 40}  (1998),   217--275.

\bibitem{GL} G. Gong and H. Lin, {\em Classification of homomorphisms from
$C(X)$ to simple $C\sp *$-algebras of real rank zero},   Acta Math.
Sin. (Engl. Ser.)  {\bf 16}  (2000),   181--206.

\bibitem{GLk} G.  Gong and H. Lin, {\em Almost multiplicative morphisms and $K$-theory},
 Internat. J. Math. {\bf 11} (2000),  983--1000.


\bibitem{HV} P. Halmos and H. Vaughan, {\em Marriage problems},
Amer. J. Math. {\bf 72} (1950), 214-215.







\bibitem{Lncf} H. Lin, {\em Homomorphisms from $C\sp *$-algebras of continuous trace}, Math. Scand.  {\bf 86 } (2000),  249--272.


\bibitem{LnTAF} H. Lin, {\em Tracially AF $C\sp *$-algebras},  Trans. Amer. Math. Soc.  {\bf 353}  (2001),   693--722.

\bibitem{Lnplms} H. Lin, {\em Tracial topological ranks of \CA s},
Proc. London Math. Soc., {\bf 83} (2001), 199-234.

\bibitem{Lnbk} H. Lin, {\em An introduction to the classification of amenable $C\sp *$-algebras}, World Scientific Publishing Co., Inc., River Edge, NJ, 2001. xii+320 pp. ISBN: 981-02-4680-3.



\bibitem{Lnann} H. Lin, {\em Classification of simple $C\sp *$-algebras and higher dimensional
noncommutative tori},   Ann. of Math. (2) 157 (2003), no. 2,
521--544.








\bibitem{Lnwk} H. Lin, {\em  Weak semiprojectivity in purely infinite simple $C\sp *$-algebras},  Canad. J. Math.  {\bf 59}  (2007),   343--371.

\bibitem{Lnfur} H. Lin, {\em Furstenberg Transformations and Approximate
Conjugacy},  Canad. J. Math.  {\bf 60} (2008),  189--207.

\bibitem{Lntr1} H. Lin {\em Simple nuclear C*-algebras of tracial topological rank
one}, J. Funct. Anal. {\bf 251} (2007), 601-679.


\bibitem{Lncd} H. Lin {\em Classification of \hm s and dynamical systems}, Trans. Amer. Math. Soc. {\bf 359} (2007), 859-895.


\bibitem{Lnemb1} H. Lin, {\em Embedding crossed products into a unital simple AF-algebra},
preprint, arxiv,org/ OA/0604047

\bibitem{Lnhomp} H. Lin, {\em  Approximate homotopy of \hm s from $C(X)$ into a simple \CA\,},
Mem. Amer. Math. Soc., {\bf 963} (2010).


\bibitem{Lnemb2} H. Lin, {\em AF-embedding  of crossed products of AH-algebras by $\Z$ and
asymptotic AF-embedding},  Indiana Univ. Math. J.  {\bf 57}  (2008),  891--944.

\bibitem{Lnaut} H. Lin, {\em Asymptotically unitarily equivalence and
asymptotically inner automorphisms},  Amer. J. Math.  {\bf 131}  (2009),   1589--1677.

\bibitem{Z2} H. Lin{ \em AF-embedding of the crossed products of AH-algebras by finitely generated abelian
groups},  Int. Math. Res. Pap. IMRP  2008,  no. 3, Art. ID rpn007, 67 pp.

\bibitem{Lnapn} H. Lin, {\em Localizing the Elliott Conjecture at Strongly Self-absorbing \CA s, II, --
an appendix}, preprint, arXiv:math/OA/0709.1654.


\bibitem{Lnnhp} H. Lin, {\em Homotopy of unitaries in simple C*-algebras with tracial rank one},  J. Funct. Anal.
{\bf 258}  (2010),  1822--1882.


\bibitem{Lnasym2} H. Lin, {\em Asymptotically Unitary Equivalence and Classification of Simple Amenable C*-algebras},
preprint, arXiv:0806.0636




\bibitem{LM1} H. Lin and H. Matui, {\em Minimal dynamical systems and approximate
conjugacy},   Math. Ann.  {\bf 332}  (2005),   795--822.


\bibitem{LM2} H. Lin and H.  Matui, {\em Minimal dynamical systems on the product
of the Cantor set and the circle}.   Comm. Math. Phys.  {\bf 257}
(2005), 425--471.

\bibitem{LM3} H. Lin and H.
Matui, {\em Minimal dynamical systems on the product of the Cantor
set and the circle. II.},  Selecta Math. (N.S.)  {\bf 12} (2006),
199--239.





\bibitem{Lo} T. Loring, {\em $K$-theory and asymptotically commuting matrices},  Canad. J. Math.  {\bf 40}  (1988),   197--216.

\bibitem{Lo2} T. Loring, {\em The noncommutative topology of one-dimensional spaces},
Pacific J. Math. {\bf 136 } (1989), 145--158.


\bibitem{M} H. Matui, {\em AF embeddability of crossed products of AT algebras by the integers and its application},
  J. Funct. Anal. {\bf 192} (2002),  562--580.



\bibitem{Ph1} N. C. Phillips, N. {\em Reduction of exponential rank in direct
limits of $C\sp *$-algebras},  Canad. J. Math.  {\bf 46}  (1994),
818--853.


\bibitem{Pi} M. Pimsner, {\em  Embedding some transformation group $C\sp{*} $-algebras into AF-algebras},   Ergodic
Theory Dynam. Systems  {\bf 3}  (1983),  613--626.


 \bibitem{R1} M. R\o rdam, {\em On the structure of simple $C\sp *$-algebras tensored with a UHF-algebra. II}  J. Funct. Anal. {\bf 107} (1992),  255--269.



 \bibitem{RS} J. Rosenberg and C.  Schochet, {\em The KŸnneth theorem and the universal coefficient theorem for Kasparov's generalized $K$-functor}, Duke Math. J. {\bf 55} (1987),  431--474.

\bibitem{aT}  A. Toms {\em On the classification problem for nuclear C*-algebras}, preprint, math.OA/0509103.



\bibitem{V} J. Villadsen, {\em The range of the Elliott invariant of the
simple AH-algebras with slow dimension growth},  $K$-Theory  {\bf
15} (1998),  1--12.


\bibitem{V4} D. Voiculescu, {\em Asymptotically commuting finite rank unitary operators without commuting approximants} .  Acta Sci. Math. (Szeged)  {\bf 45}  (1983),   429--431.

\bibitem{V1} D. Voiculescu, {\em Almost inductive limit automorphisms and embeddings into AF-algebras}, Ergodic
Theory Dynam. Systems, {\bf 6} (1986), 475-484.

\bibitem{V2} D. Voiculescu, {\em A note on quasi-diagonal $C\sp *$-algebras and homotopy},
  Duke Math. J.  {\bf 62}  (1991),   267--271.

\bibitem{V3} D. Voiculescu, {\em Around quasidiagonal operators}  Integral Equations Operator Theory  {\bf 17}
  (1993), 137--149.

\bibitem{W1} W. Winter, {\em Localizing the Elliott Conjecture at Strongly Self-absorbing \CA
s}, preprint, arXiv: math.OA/0708.0283v3.


 \bibitem{Z} S.  Zhang, {\em $K\sb 1$-groups, quasidiagonality, and interpolation by multiplier projections}, Trans. Amer. Math. Soc. {\bf 325} (1991),  793--818.


\end{thebibliography}
\end{document}